\theoremstyle{plain}
        \newtheorem{theorem}{Theorem}[section]
        \newtheorem*{theorem*}{Theorem}
        \newtheorem*{conj*}{Conjecture}
        \newtheorem{lemma}[theorem]{Lemma}
        \newtheorem{cor}[theorem]{Corollary}
        \newtheorem{prop}[theorem]{Proposition}
\theoremstyle{definition}
        \newtheorem{definition}[theorem]{Definition}
        \newtheorem{open}{Open Problem}
\theoremstyle{remark}
        \newtheorem*{remark}{Remark}
        \newtheorem*{notation}{Notation}
        \newtheorem*{claim}{Claim}
        \newtheorem*{claim1}{Claim 1}
        \newtheorem*{claim2}{Claim 2}
        \newtheorem*{claim3}{Claim 3}
        \newtheorem*{case}{Case}
        \newtheorem*{convention}{Convention}
\numberwithin{equation}{section}
\providecommand{\defn}[1]{\emph{#1}}
\newcounter{mylistnum}
\newcounter{mylistnum2}
\newcommand{\mesh}{\operatorname{mesh}}
\newcommand{\diam}  {\operatorname{diam}}
\newcommand{\inte}  {\operatorname{int}}
\newcommand{\id} {\operatorname{id}}
\newcommand{\R}{\mathbb{R}}      % reelle Zahlen
\newcommand{\C}{\mathbb{C}}      % komplexe Zahlen
\newcommand{\N}{\mathbb{N}}      % natuerliche Zahlen
\newcommand{\Z}{\mathbb{Z}}      % ganze Zahlen
\newcommand{\Q}{\mathbb{Q}}      % ganze Zahlen
\newcommand{\T}{\mathbb{T}}      % Torus
\newcommand{\CDach}{\widehat{\mathbb{C}}}% Riemannsche Zahlenkugel
\newcommand{\RDach}{\widehat{\mathbb{R}}}% aequator in CDach
\newcommand{\D}{\mathbb{D}}      % Einheitskreis
\newcommand{\Dbar}{\overline{\mathbb{D}}}      % Einheitskreis
\providecommand{\abs}[1]{\lvert#1\rvert}
\providecommand{\norm}[1]{\lVert#1\rVert}
\renewcommand{\:}{\colon}   
\newcommand{\ra}{\rightarrow}
\newcommand{\crit}{\operatorname{crit}}
\newcommand{\post}{\operatorname{post}}
\newcommand{\CC}{\mathcal{C}}
\newcommand{\EC}{\mathcal{E}}
\newcommand{\X} {\mathbf{X}}
\newcommand{\E} {\mathbf{E}}
\newcommand{\V} {\mathbf{V}}
\newcommand{\W} {\mathbf{W}}
\newcommand{\odd}{\operatorname{odd}}
\newcommand{\even}{\operatorname{even}}
\newcommand{\Sim}[1]{\stackrel{#1}{\sim}}
\newcommand{\U}{\mathcal{U}}
\newcommand{\I}{\mathcal{I}}
\newcommand{\J}{\mathcal{J}}
\begin{document}

\title{Invariant Peano curves of expanding Thurston maps}

\date{\today} 

\author{Daniel Meyer}
\thanks{The author was partially supported by an NSF postdoctoral
  fellowship, the Swiss National Science Foundation, and the Academy of
  Finland (projects SA-11842 and SA-118634).} 
\address{Jacobs University, Campus Ring 1, 28759 Bremen, Germany}  
\email{dmeyermail@gmail.com}  

\subjclass[2010]{Primary: 37F20, Secondary: 37F10}
 
\keywords{Expanding Thurston map, invariant Peano curve}

\begin{abstract}
  We consider \emph{Thurston maps}, i.e., branched covering maps
  $f\colon S^2\to S^2$ that are \emph{postcritically finite}. 
  In addition, we assume that $f$ is \emph{expanding} in a suitable
  sense. 
  It is shown that each sufficiently high iterate $F=f^n$ of $f$ is
  \emph{semi-conjugate} to $z^d\colon S^1\to
  S^1$, where $d=\deg F$. More precisely, for such an $F$ we construct
  a \emph{Peano curve} $\gamma\colon S^1\to S^2$ (onto), such that $F\circ
  \gamma(z) = \gamma(z^d)$ (for all $z\in S^1$).    
\end{abstract}

\maketitle

\tableofcontents
\section{Introduction}
\label{sec:introduction}
A \defn{Thurston map} is a branched covering of the sphere $f\colon
S^2\to S^2$ that is \defn{postcritically finite}. 
A celebrated theorem of Thurston gives a \emph{topological
  characterization} of rational maps among Thurston maps (see
\cite{DouHubThurs}).  
In this paper we consider such
maps that are \defn{expanding} (see Section
\ref{sec:thurston-maps-as} for precise definitions). In the case when
$f$ is a rational map this means that the Julia set of $f$ is the
whole sphere. 

The main theorem is the following.

\begin{theorem}
  \label{thm:main}
  Let $f$ be an expanding Thurston map. Then for each sufficiently
  high iterate $F=f^n$ there is a 
  \emph{Peano curve} $\gamma\colon S^1 \to S^2$ (onto) such that
  $F(\gamma(z))= \gamma(z^d)$ (for all $z\in S^1$). Here $d=\deg
  F$. This means that 
  the following diagram commutes.
  \begin{equation*}
    \xymatrix{
      S^1 \ar[r]^{z^d} \ar[d]_{\gamma}
      &
      S^1 \ar[d]^{\gamma}
      \\
      S^2 \ar[r]_F & S^2
    }
  \end{equation*}
  Furthermore, we can approximate the Peano curve $\gamma$ as follows.
  There is a \emph{homotopy}
  $\Gamma\colon S^2\times [0,1]\to S^2$, with $\Gamma(z,0)=z$, such that
  \begin{equation*}
    \Gamma(z,1)=\gamma(z) \text{ for all } z\in S^1.
  \end{equation*}
  Here we view $S^1\subset S^2$ as the equator. 
\end{theorem}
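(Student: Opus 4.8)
I will realize $\gamma$ as a uniform limit of \emph{approximating loops} $\gamma_m\colon S^1\to S^2$ that trace out finer and finer cellular decompositions of $S^2$, organizing the construction so that the semiconjugacy and the homotopy drop out automatically.

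\textbf{Setup.} After passing to an iterate (and using the standard structure theory of expanding Thurston maps: for every expanding $f$ and every sufficiently large $n$ the iterate $f^{n}$ admits an $f^{n}$-invariant Jordan curve $\mathcal C\supset\post f=\post f^{n}$) I may assume $F=f^{n}$ has an $F$-invariant Jordan curve $\mathcal C\supset\post F$. This gives a \emph{cellular Markov partition}: the two closed Jordan regions bounded by $\mathcal C$ are the $0$-tiles, colored black and white; the \emph{$m$-tiles} are the closures of the connected components of $F^{-m}(S^2\setminus\mathcal C)$; there are $2d^{m}$ of them with $d=\deg F$; each is a Jordan region; $F$ carries every $(m{+}1)$-tile homeomorphically onto an $m$-tile, preserving colors after a choice of orientation; and the $(m{+}1)$-tiles refine the $m$-tiles as cell complexes. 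Expansion gives $\mesh$ of the $m$-th decomposition $\to 0$. On the model side I use $S^1=\mathbb R/\mathbb Z$ with $P(x)=dx$, marked at $\{0,\tfrac12\}$, and the $2d^{m}$ \emph{$m$-arcs} of length $(2d^{m})^{-1}$; for $m\ge 1$, $P$ maps each $m$-arc homeomorphically onto an $(m{-}1)$-arc, $d$-to-one overall.

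\textbf{The inductive construction.} I build, by induction on $m$, a closed curve $\gamma_m\colon S^1\to S^2$ together with a labeling assigning to each $m$-arc $I$ an $m$-tile $X_m(I)$, so that: (i) consecutive $m$-arcs receive $m$-tiles sharing an $m$-edge and of opposite colors, and the cyclic list of $X_m(I)$ is a Hamiltonian cycle through all $2d^{m}$ $m$-tiles; (ii) $I'\subset I\ \Rightarrow\ X_{m+1}(I')\subset X_m(I)$, and $\gamma_{m+1}=\gamma_m$ on the endpoints of the $m$-arcs; (iii) $F\circ X_{m+1}(I')=X_m(P(I'))$ and $F\circ\gamma_{m+1}=\gamma_m\circ P$. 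The base case $m=0$ is trivial ($X_0$ of the two $0$-arcs are the two $0$-tiles). The step $m=1$ is the one genuine choice — a way of ``routing'' a closed curve through the level-$1$ tiles realizing (i) — which I package as an isotopy $H^{0}$ of $\operatorname{id}_{S^2}$ rel $\post F$ deforming the equator-decomposition onto the $\mathcal C$-decomposition. Once level $1$ is fixed, (iii) forces all higher levels: $X_{m+1}(I')$ is the unique $(m{+}1)$-tile inside $X_m(I)$, with $I'\subset I$, that $F$ maps to $X_m(P(I'))$, i.e. the construction is pushed forward by lifting through the branched cover $F$ on the sphere and through $P$ on the circle, and one checks this lift stays consistent at every level.

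\textbf{Passing to the limit.} By (ii) and $\mesh_m\to 0$ one gets $\dist(\gamma_m,\gamma_{m+1})=O(\mesh_m)$, so $\gamma_m\to\gamma$ uniformly; $\gamma$ is continuous, and onto because every $m$-tile equals some $X_m(I)\subset\gamma_m(S^1)$ while $\mesh_m\to0$. Taking the limit in (iii) yields $F\circ\gamma=\gamma\circ P$, which is the asserted semiconjugacy. For the homotopy, realize each lifting step as a pseudo-isotopy $H^{m}$ of $S^2$ (obtained by lifting $H^{m-1}$ through $F$ and $P$), concatenate $H^{0},H^{1},\dots$ on the intervals $[1-2^{-m},1-2^{-m-1}]$ and reparametrize in $t$; the total displacement produced during stage $m$ is $O(\mesh_m)\to0$, so the concatenation extends continuously to $t=1$ and defines $\Gamma\colon S^2\times[0,1]\to S^2$ with $\Gamma(\cdot,0)=\operatorname{id}$ and $\Gamma(z,1)=\gamma(z)$ for $z$ on the equator $S^1\subset S^2$.

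\textbf{Main obstacle.} The crux is the inductive construction of Step 3 in the presence of \emph{branching}: $F$ is only a branched cover, so the local combinatorics of how $m$-tiles, $m$-edges and $m$-vertices subdivide is non-generic near $\post f$, and this is exactly what forces passing to an iterate in the first place. I expect the real work to be (a) showing the level-$1$ routing realizing condition (i) can be chosen at all — a planarity/Jordan-curve argument in the level-$1$ cell complex, compatible with the ramification data at the postcritical points — and (b) verifying that its iterated lifts through $F$ and $P$ remain mutually consistent (nesting and color-alternation preserved) at every level. A secondary but necessary point is pinning down the ``sufficiently high iterate'' clause via the existence of $f^{n}$-invariant Jordan curves through $\post f$ for all large $n$, and checking that the limit $\gamma$ depends only on the level-$1$ choice in a way that is genuinely compatible with the conjugacy.
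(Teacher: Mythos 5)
Your architecture is the paper's: pass to an iterate with an $F$-invariant Jordan curve $\mathcal{C}\supset\post$, build approximations $\gamma_m$ through the level-$m$ cell decomposition, encode the level-$1$ routing as a pseudo-isotopy $H^0$ rel $\post$, lift it repeatedly, and take a uniform limit using $\mesh\to 0$. But two steps fail as written. First, the equal-length parametrization is inconsistent with your own conditions: each $m$-arc of length $(2d^m)^{-1}$ contains exactly $d$ $(m{+}1)$-arcs, so nesting (ii) together with the requirement that $I'\mapsto X_{m+1}(I')$ be a bijection onto all $(m{+}1)$-tiles forces every $m$-tile to contain exactly $d$ $(m{+}1)$-tiles, i.e.\ every $0$-tile to contain exactly $d$ of the $2d$ $1$-tiles. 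Only the total is $2d$; the split is not even in general (this is special to symmetric Latt\`es examples). The same problem occurs if one works with edges: the subdivision matrix has column sums $d$ but not row sums $d$. The paper resolves this by taking the arc lengths from the Perron--Frobenius eigenvector of the subdivision matrix, which is exactly what makes $t\mapsto dt$ compatible with the combinatorics; some such nonuniform choice is unavoidable. Relatedly, your model traces a Hamiltonian cycle through all $2d^m$ tiles with consecutive tiles sharing an edge — a strong combinatorial demand you never justify — whereas the paper traces an Eulerian circuit through all $kd^m$ $m$-edges ($k=\#\post$), realized as the boundary circuit of a spanning tree of white tiles, whose existence is easy.

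Second, and more seriously, the step you defer as ``the main obstacle'' — choosing the level-$1$ routing so that it is the terminal stage of a pseudo-isotopy rel $\post$, i.e.\ so that $\gamma^1$ is orientation-preservingly isotopic to $\mathcal{C}$ rel $\post$ — is the actual content of the theorem, and it is not a routine planarity check. The paper's Section 9 exhibits a degree-$2$ expanding Thurston map $h$ for which, for \emph{every} Jordan curve $\mathcal{C}\supset\post$, every Eulerian circuit covering $\mathcal{C}$ lies in the wrong homotopy class rel $\post$, so no $H^0$ exists for $h$ itself. The iterate is needed not merely to produce an invariant curve but to guarantee that no $1$-tile joins opposite sides of $\mathcal{C}$ and that the tiles form no ``links''; only then can one assemble the spanning tree (main tree plus secondary trees, glued via complementary non-crossing partitions at the vertices) while controlling the cyclic order in which its boundary circuit visits the postcritical points. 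Without an argument here your limit curve may exist but will fail to be in the isotopy class needed for the semiconjugacy with $z^d$ — so the proof is incomplete precisely at the point where all the work lies.
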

In fact $\Gamma$ may be chosen to be a \emph{pseudo-isotopy}, meaning
it is an isotopy on $[0,1)$. 

\smallskip
The result may be
paraphrased as follows. Via $\gamma$ we can view the sphere $S^2$ as
a parametrized circle $S^1$. Wrapping this parametrized circle (which
is $S^2$) around itself $d$ times yields the map $F$. 

The existence of such a \defn{semi-conjugacy} $\gamma$ as above
follows for many rational maps $F$ of degree $2$ by work of Tan Lei,
M.~Rees, and M.~Shishikura (see \cite{MR1182664}, \cite{MR1149864},
and \cite{MR1765095}); the relevant construction of \emph{mating} is
reviewed  in Section~\ref{sec:cons-theor}. Milnor 
constructs such a Peano curve $\gamma$ (i.e., semi-conjugacy) for
one specific example $F$ (see \cite{MilnorMating}) in this setting. 
Kameyama gives a
sufficient criterion for the existence of $\gamma$ (in
\cite[Theorem~3.5]{MR1961296}). 

Note that the result is purely topological, i.e., does not depend
on $F$ being (equivalent to) a rational map or not.

\smallskip
We also prove the following converse statement to Theorem~\ref{thm:main}.
\begin{theorem}
  \label{thm:peano_implies_exp}
  Let $f\colon S^2\to S^2$ be a Thurston map such that for some
  iterate $F=f^n$ there exists a Peano curve $\gamma\colon S^1\to S^2$
  (onto) satisfying $F(\gamma(z))= \gamma(z^d)$ for all $z\in
  S^1$. Then $f$ is expanding.  
\end{theorem}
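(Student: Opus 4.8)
The plan is to prove the contrapositive in a quantitative form: the existence of the semi-conjugacy $\gamma$ forces the "tiles" of $F$ (and hence of $f$) to shrink. Recall that for a Thurston map the standard way to certify expansion is via a Jordan curve $\CC \ni \post(F)$: one forms the cell decompositions $\mathcal{D}^n$ obtained by pulling back $\CC$ under $F^n$, and $f$ is expanding iff $\mesh(\mathcal{D}^n)\to 0$ for some (hence any) such $\CC$ in a suitable metric. So the first step is to fix a convenient Jordan curve. The natural choice is $\CC := \gamma(S^1)$ itself — wait, that is all of $S^2$; instead I would take the image under $\gamma$ of a carefully chosen finite collection of radial arcs and the equator is not a curve in $S^2$ either. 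The cleaner route: transport the dyadic structure of $S^1$ through $\gamma$. On $S^1$ the map $z\mapsto z^d$ has the obvious Markov partition into the $d$ arcs $A_j = \{e^{2\pi i t} : t\in [j/d,(j+1)/d]\}$, and iterating gives the arcs $A_w$ indexed by words $w$ in $\{0,\dots,d-1\}^n$, each of diameter $d^{-n}$ in the arc-length metric.

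Second, push these forward: set $X_w := \gamma(A_w) \subset S^2$. Since $\gamma$ is continuous and surjective, $\{X_w : w\in\{0,\dots,d-1\}^n\}$ covers $S^2$ for each $n$, and the semi-conjugacy $F\circ\gamma(z)=\gamma(z^d)$ gives the exact Markov relation $F(X_{jw}) = X_w$ and more generally $F^k(X_{w}) = X_{\sigma^k w}$ where $\sigma$ is the shift. Third — and this is the crux — I must show $\max_w \diam X_w \to 0$ as $n\to\infty$. This is precisely a uniform-continuity statement for $\gamma$, but phrased the right way: $\gamma$ is a uniformly continuous map from the compact metric space $S^1$, so the modulus of continuity $\omega(\delta) := \sup\{\dist(\gamma(z),\gamma(z')) : |z-z'|\le\delta\}$ satisfies $\omega(\delta)\to 0$ as $\delta\to 0$; since $\diam A_w \le C d^{-n}$ we get $\diam X_w \le \omega(Cd^{-n}) \to 0$. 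Thus $\mesh$ of the covering by the $X_w$ tends to $0$.

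Fourth, I need to convert "a Markov covering with vanishing mesh" into the official notion of expansion used in the paper, which is stated in terms of pullbacks of a Jordan curve $\CC$ rather than an abstract covering. Here I would invoke the equivalence (established in the preliminary sections, or in Bonk–Meyer) that expansion is independent of the choice of $\CC\ni\post(f)$ and is equivalent to $\mesh \mathcal{D}^n(F,\CC)\to 0$ for any such $\CC$; combined with a comparison showing that if some $F$-invariant covering with the Markov property has vanishing mesh then so does $\mathcal{D}^n(F,\CC)$ for a suitable $\CC$ — e.g. choose $\CC$ to be a Jordan curve built from finitely many of the sets $X_w$ (at a fixed small level) that happens to contain $\post(F)$; because the $X_w$ are nested and shrink, the level-$n$ pullbacks of $\CC$ are subordinate to the level-$(n+N)$ covering by the $X_w$ for an appropriate offset $N$, forcing their mesh to $0$. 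Finally, expansion of $F=f^n$ immediately gives expansion of $f$, since $\mathcal{D}^{kn}(f,\CC)$ refines $\mathcal{D}^{k}(F,\CC)$ and a common iterate suffices; this last implication is routine.

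The main obstacle is the fourth step: $\gamma$ need not be injective, so the sets $X_w$ are genuinely just a covering, not a cell decomposition, and one must be careful that overlaps do not prevent building an honest Jordan curve $\CC$ through $\post(F)$ whose pullbacks are controlled by the $X_w$. I expect this to require either a direct argument with the $X_w$ (redefining "expanding" through an arbitrary Markov covering and checking it matches the paper's definition) or a mild general-position perturbation of the chosen finite union of $X_w$'s into a Jordan curve while keeping the mesh bound. Everything else — uniform continuity of $\gamma$, the exact Markov identities from the semi-conjugacy, and passing between $f$ and its iterate — is soft.
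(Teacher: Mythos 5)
Your first three steps are sound and in fact coincide with the core of the paper's argument: the semi-conjugacy together with uniform continuity of $\gamma$ shows that the $\gamma$-images of arcs of length $\le d^{-n}$ have diameter at most $\omega(d^{-n})\to 0$, where $\omega$ is the modulus of continuity. The genuine gap is exactly where you flag it, in step four, and neither of your proposed fixes closes it. The problem is that a covering of $S^2$ by small connected sets $X_w$ does not bound the diameter of a connected component of $S^2\setminus F^{-n}(\CC)$, nor of a component of $F^{-n}(U)$ for $U$ open: a single component can chain through arbitrarily many overlapping $X_w$'s and remain large, so ``subordinate to the level-$(n+N)$ covering'' gives nothing. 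Moreover, building a Jordan curve through $\post(F)$ out of finitely many $X_w$ is itself problematic, since these sets are Peano continua (images of arcs under the non-injective $\gamma$), not arcs.

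The paper closes this gap with a localization device that is absent from your sketch. It uses the Ha\"{i}ssinsky--Pilgrim reformulation of expansion (a finite open cover $\U^0$ by connected sets such that the components of $F^{-n}(U)$, $U\in\U^0$, have mesh tending to $0$), and for each $x^0\in S^2$ it chooses a Jordan-domain neighborhood $W(x^0)$ so small that distinct points of $F^{-1}(x^0)$ lie in distinct components of $F^{-1}(W(x^0))$, then sets $V(x^0):=\gamma\bigl(\bigcup\{I_j : x^0\in\gamma(I_j)\}\bigr)$, where the $I_j$ are the components of $\gamma^{-1}(W(x^0))$. This definition has two payoffs: the component of $F^{-n}(V(x^0))$ containing a given $x^n\in F^{-n}(x^0)$ is exactly the union of the $\gamma$-images of those components $J^n_j$ of $\phi_{d^n}^{-1}\bigl(\gamma^{-1}(W(x^0))\bigr)$ whose image contains $x^n$ (the separation property of $W(x^0)$ excludes arcs attached to other preimages of $x^0$); and since each such $J^n_j$ has length $\le d^{-n}$ while all their images share the common point $x^n$, that component has diameter $\le 2\omega(d^{-n})$. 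It is this ``all the arcs' images contain a common point'' argument that defeats the chaining problem; without it, or an equivalent substitute, your step four does not go through.
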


\smallskip
According to \emph{Sullivan's dictionary} there is a close
correspondence between the dynamics of rational maps and of Kleinian
groups \cite{MR806415}. Cannon-Thurston construct (in
\cite{MR2326947}) an invariant Peano
curve $\gamma\colon S^1\to S^2$ for the fundamental group of a
(hyperbolic) 
$3$-manifold $M^3$ that \defn{fibers over the circle}. 
Theorem~\ref{thm:main} may be viewed as the 
corresponding result in the case of rational maps. 
Thus it provides
another entry in Sullivan's dictionary.

\subsection{Group invariant Peano curves}
\label{sec:group-invar-peano}

We review the Cannon-Thurston
construction from \cite{MR2326947}. The purpose is to put
Theorem~\ref{thm:main} into perspective. 

Let $\Sigma$ be a compact hyperbolic $2$-manifold, and $\varphi\colon
\Sigma \to \Sigma$ be a \emph{pseudo-Anosov} homeomorphism. Consider
the equivalence relation on the product $\Sigma\times [0,1]$ given by 
$(x,0)\sim (\varphi(x),1)$. Then the $3$-manifold $M^3:= \Sigma\times
[0,1]/\sim$ is called a \emph{manifold that fibers over the
  circle}. Thurston has proved that $M^3$ admits a \emph{hyperbolic}
metric, see \cite{MR1855976}. 

\smallskip
The fundamental groups $\pi_1(\Sigma), \pi_1(M^3)$ are \emph{Gromov hyperbolic},
see \cite{MR919829} as well as \cite{MR1086648}. Thus they have 
\emph{boundaries at infinity}, which in this case are
$\partial_{\infty}\pi_1(\Sigma)= S^1$ and $\partial_{\infty}\pi_1(M^3)=S^2$.

This is seen by noting that $\pi_1(\Sigma)$ and 
hyperbolic $2$-space $\mathbb{H}^2$, as well
as $\pi_1(M^3)$ and hyperbolic $3$-space $\mathbb{H}^3$, are
\emph{quasi-isometric}. The 
boundary at infinity of $\mathbb{H}^2$ is $S^1$, the boundary at
infinity of $\mathbb{H}^3$ is $S^2$, the boundary of the disk,
respectively the
unit ball, in the Poincar\'{e} model of hyperbolic space.

\smallskip
The inclusion $\Sigma \to \Sigma\times\{0\} \to M^3$ induces an
inclusion of the fundamental groups $\imath \colon\pi_1(\Sigma)\to
\pi_1(M^3)$, which is a group homomorphism. In fact
$\imath(\pi_1(\Sigma))$ is a normal subgroup of $\pi_1(M^3)$. 
The map $\imath$ extends to the
boundaries at infinity $S^1=\partial_{\infty} \pi_1(\Sigma)$,
$S^2=\partial_{\infty}\pi_1(M^3)$ to a continuous map $\sigma\colon
S^1\to S^2$. 

It is well-known (and not very hard to show), that a non-trivial
normal subgroup $N\vartriangleleft G$ of a Gromov hyperbolic group $G$
has
the same boundary at infinity as $G$. Thus $\partial_{\infty}
\imath(\pi_1(\Sigma))=\partial_\infty (\pi_1(M^3))=S^2$. It follows
that the map $\sigma$ is \emph{onto}, i.e., a Peano curve. 

\smallskip
Each element $g\in \pi_1(\Sigma)$ acts (by left-multiplication) on
$\pi_1(\Sigma)$; this action extends to $S^1=\partial_{\infty}
\pi_1(\Sigma)$. Similarly each element $g\in \pi_1(M^3)$ acts on
$\pi_1(M^3)$ and this action extends to $S^2
= \partial_{\infty}\pi_1(M^3)$. The map $\sigma$ is \emph{invariant}
with respect to this group action, meaning that for every $g\in
\pi_1(\Sigma)$ it holds that $\imath(g) (\sigma(t))=
\sigma(g(t))$ for all $t\in S^1$. Thus the following diagram
commutes. 

\begin{equation*}
  \xymatrix{
    S^1 \ar[r]^{g} \ar[d]_{\sigma}
    &
    S^1 \ar[d]^{\sigma}
    \\
    S^2 \ar[r]_{\imath(g)} & S^2
  }
\end{equation*}

The invariant Peano curve $\gamma$ from Theorem~\ref{thm:main} is the
corresponding object to the group invariant Peano curve $\sigma$
according to Sullivan's dictionary. 

The Cannon-Thurston construction has been extended by Minsky in
\cite{MR1257060} and McMullen in \cite{MR1859018} to (some) cases
where $\Sigma$ is not compact.

\smallskip
% To some extend it is not surprising that the fundamental group of a
% hyperbolic $3$-manifold that fibers over the circle can be understood
% in terms of the the fundamental group of the $2$-manifold from which
% it has been constructed. 
In \cite{MR648524} Thurston asked whether (in
a sense) all hyperbolic $3$-manifolds arise as manifolds that fiber
over the circle. This has now become known as the \emph{virtual
  fibering conjecture}. It stipulates that every hyperbolic
$3$-manifold has a finite cover which fibers over the circle. This
would mean that we can understand every hyperbolic $3$-manifold in
terms of $2$-manifolds. See \cite{MR903863} for more background on
this conjecture, \cite{MR2399130} for recent progress. 

Theorem~\ref{thm:main} may be viewed as the solution of the problem
corresponding to the virtual fibering conjecture according to
Sullivan's dictionary. 

% If we
% identify $\pi_1(\Sigma),\pi_1(M^3)$ with the group of
% deck-transformations acting on 
% Choosing a basepoint $p\in \Sigma$, which we identify with
% $(p,0)\subset M^3$, there is a natural   
% Here $S^2$ is the boundary at infinity of hyperbolic $3$-space on
% which $\pi_1(M^3)$ acts by deck transformations.
% More precisely, 
% they show that there is a non-trivial normal subgroup $N$ of
% $\pi_1(M^3)$ such that $g\circ \gamma$ is a reparametrization of
% $\gamma$ for every $g\in N$. 
% \smallskip

\subsection{Consequences of Theorem  \ref{thm:main}}
\label{sec:cons-theor}

To not further increase the size of the present paper, we will develop 
the implications of the main theorem in a follow-up paper
\cite{exp_quotients}.  
They are outlined here briefly to put the result into perspective. 

\smallskip
Using the invariant Peano curve $\gamma \colon S^1\to S^2$ from
Theorem \ref{thm:main}, an
equivalence relation on $S^1$ is defined by
\begin{equation}
  \label{eq:eq_rel}
  s\sim t \Leftrightarrow \gamma(s)=\gamma(t),
\end{equation}
for all $s,t\in S^1$. Elementary topology yields that $S^1/\!\sim$ is
homeomorphic to $S^2$ and that $z^d/\!\sim\colon S^1/\!\sim \,\to
S^1/\!\sim$ 
is topologically conjugate to the map $F$. 
\begin{theorem}
  \label{thm:S1simS2}
  The following diagram commutes,
  \begin{equation*}
    \xymatrix{
      S^1/\!\sim \ar[r]^{z^d/\sim} \ar[d]_{h}
      &
      S^1/\!\sim \ar[d]^{h}
      \\
      S^2 \ar[r]_F & S^2.
    }
  \end{equation*}
  Here the homeomorphism $h\colon S^1/\!\sim \,\to S^2$ is given by $h\colon
  [s]\mapsto \gamma(s)$, for all $s\in S^1$. 
\end{theorem}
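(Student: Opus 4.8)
The plan is to show that the equivalence relation $\sim$ defined by $\gamma$ in~\eqref{eq:eq_rel} is closed (as a subset of $S^1\times S^1$), that $\gamma$ descends to a continuous bijection $h\colon S^1/\!\sim\,\to S^2$, and that the resulting quotient map is a homeomorphism by a compactness argument; the intertwining with $z^d$ and $F$ then follows formally from Theorem~\ref{thm:main}.

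First I would observe that $\gamma\colon S^1\to S^2$ is a continuous surjection between compact metrizable spaces, hence a closed quotient map onto its image $S^2$. The equivalence relation $R=\{(s,t): \gamma(s)=\gamma(t)\}$ is precisely the preimage of the diagonal of $S^2\times S^2$ under $\gamma\times\gamma$, so $R$ is closed in $S^1\times S^1$; consequently $S^1/\!\sim$ is Hausdorff, and being a continuous image of $S^1$ it is compact. The universal property of the quotient topology gives a continuous map $h\colon S^1/\!\sim\,\to S^2$, $h([s])=\gamma(s)$, which is well defined and injective by the definition of $\sim$, and surjective because $\gamma$ is onto. A continuous bijection from a compact space to a Hausdorff space is a homeomorphism, so $h$ is a homeomorphism. (That $S^1/\!\sim$ is in fact homeomorphic to $S^2$ is then automatic, and one could alternatively invoke Moore's theorem once one knows the fibers of $\gamma$ are connected and non-separating, but the direct argument via $h$ is cleaner and all that is needed here.)

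Next I would verify that $\sim$ is invariant under $z\mapsto z^d$, so that the induced map $z^d/\!\sim$ on $S^1/\!\sim$ is well defined and continuous: if $\gamma(s)=\gamma(t)$ then $\gamma(s^d)=F(\gamma(s))=F(\gamma(t))=\gamma(t^d)$ by the semi-conjugacy relation of Theorem~\ref{thm:main}, hence $s^d\sim t^d$. Continuity of $z^d/\!\sim$ follows from continuity of $z^d$ and the quotient topology. Finally, the commutativity of the square is a direct chase: for $[s]\in S^1/\!\sim$,
\begin{equation*}
  h\big((z^d/\!\sim)[s]\big)=h([s^d])=\gamma(s^d)=F(\gamma(s))=F\big(h([s])\big),
\end{equation*}
which is exactly $h\circ(z^d/\!\sim)=F\circ h$. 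Since $h$ is a homeomorphism, this also shows $F$ is topologically conjugate to $z^d/\!\sim$.

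I do not anticipate a serious obstacle here: the statement is, as the text says, \emph{elementary topology}, and the only inputs are the surjectivity and the semi-conjugacy property of $\gamma$ supplied by Theorem~\ref{thm:main}, together with the standard fact that a continuous bijection between a compact space and a Hausdorff space is a homeomorphism. The one point requiring a little care is confirming that $S^1/\!\sim$ is Hausdorff (equivalently, that $R$ is closed), which is what licenses the compact-to-Hausdorff argument; this follows immediately from continuity of $\gamma$ as indicated above. If one additionally wants the explicit identification $S^1/\!\sim\,\cong S^2$ phrased intrinsically (rather than through $h$), the mild subtlety is checking the hypotheses of Moore's decomposition theorem — that the non-degenerate fibers $\gamma^{-1}(w)$ are compact, connected, and do not separate $S^1$ — but for the present theorem this is not needed, since $h$ itself exhibits the homeomorphism.
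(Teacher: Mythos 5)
Your proposal is correct and is exactly the ``elementary topology'' argument the paper alludes to (the paper itself does not spell out a proof here, deferring details to the follow-up work): the only inputs are surjectivity and continuity of $\gamma$ and the semi-conjugacy $F\circ\gamma=\gamma\circ z^d$ from Theorem~\ref{thm:main}, plus the standard fact that a continuous bijection from a compact space to a Hausdorff space is a homeomorphism. One minor remark: the Hausdorffness of $S^1/\!\sim$ need not be checked in advance, since the compact-to-Hausdorff argument only uses compactness of the domain and Hausdorffness of $S^2$, and Hausdorffness of the quotient then follows a posteriori from $h$ being a homeomorphism.
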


The equivalence relation (\ref{eq:eq_rel}) may be constructed from
\emph{finite data}, more precisely from two finite families of finite
sets of rational numbers. 

The proper setting is as follows. 
For each $n\in \N$ two equivalence relations $\Sim{n,w}, \Sim{n,b}$ are
defined. The equivalence relation $\sim$ defined in (\ref{eq:eq_rel})
is the \emph{closure} of the union of all $\Sim{n,w}, \Sim{n,b}$. Each
$\Sim{n,w}$ is the \emph{pullback} of $\Sim{n-1,w}$ by $z^d$ (similarly
$\Sim{n,b}$ is the pullback of $\Sim{n-1,b}$). Thus $F$ can be
recovered (up to topological conjugacy) from the equivalence
relations $\Sim{1,w},\Sim{1,b}$. 

% View $S^1$ as the equator of
% $S^2$. The two hemispheres of $S^2\setminus S^1$ are each equipped
% with a 
% \defn{lamination}, i.e., disjoint ideal $n$-gons. The laminations are
% \emph{invariant} under $z^d$, in a suitable sense. They can be
% constructed as the \defn{upper semicontinuous closure} of the union
% of laminations $\LC^n$. Each $\LC^n$ is the \emph{pullback} of
% $\LC^{n-1}$ under $z^d$. Thus the initial laminations $\LC^1$ (there
% are two---one for each hemisphere) contain
% all information to recover $F$ (up to topological conjugacy). 

This provides a way to \defn{describe} expanding Thurston maps
effectively.

\smallskip
The description above may be viewed as a two-sided version of the
viewpoint introduced by Douady-Hubbard and Thurston
(\cite{DHOrsayI}, \cite{DHOrsayII}, \cite{ThurstonCombinatorics},
\cite{MR2508255}, see also \cite{MR1149864} and \cite{MR1761576}),
namely the combinatorial description of Julia sets in terms of
\emph{external rays}.  

\smallskip
Recently (analogously defined) \emph{random laminations} have been used
to study the scaling 
limits of planar maps (see \cite{MR2336042}, \cite{MR2438999}).

\smallskip
The description of $F$ as above yields in addition that $F$ arises as
a \defn{mating} of two polynomials. Mating of polynomials was
introduced by Douady and Hubbard \cite{MR728980} as a way to
geometrically combine two polynomials to form a rational map. We recall
the construction briefly.

Consider two monic polynomials $p_1$ and $p_2$ of the same degree with
connected and locally connected Julia sets. 
Let $K_1$ and $K_2$ be their filled-in Julia sets. For $j=1,2$ 
let 
\begin{equation*}
  \phi_{j}\colon \CDach\setminus \Dbar\to \CDach\setminus K_{j} 
\end{equation*}
be the Riemann maps, normalized by
$\phi_{j}(\infty)=\infty$ and 
\begin{equation*}
  \phi'_{j}(\infty)= \lim_{z\to \infty}z/\phi_{j}(z) >0
\end{equation*}
(in fact then 
$\phi'_{j}(\infty)=1$). 
By \emph{Carath\'{e}odory's theorem}
$\phi_{j}$ extends continuously 
to 
\begin{equation*}
  \sigma_{j}\colon S^1=\partial \Dbar\to\partial K_{j}. 
\end{equation*}
The \defn{topological mating} of $K_1$ and $K_2$ is obtained by identifying
$\sigma_1(z)\in \partial K_1$ with $\sigma_2(\bar{z})\in \partial
K_2$. More precisely, we consider the 
disjoint union of $K_1$ and $K_2$ and let
$K_1\amalg K_2$ be the quotient obtained from the equivalence
relation generated by $\sigma_1(z)\sim \sigma_2(\bar{z})$ (for all $z\in
S^1=\partial \D$). The map
% By \emph{Carath\'{e}odory's theorem}
% $\phi_{1,2}$ extend continuously 
% to $\sigma_{1,2}\colon \R/\Z= S^1=\partial \Dbar\to\partial K_{1,2}$. 
% The \defn{topological mating} of $K_1,K_2$ is obtained by identifying
% $\sigma_1(t)\in \partial K_1$ with $\sigma_2(-t)\in \partial
% K_2$. More precisely, we consider the 
% disjoint union of $K_1,K_2$ and let
% $K_1\amalg K_2$ be the quotient obtained from the equivalence
% relation generated by $\sigma_1(t)\sim \sigma_2(-t)$ (for all $t\in
% S^1$). The map
\begin{align*}
  &p_1\amalg p_2 \colon K_1\amalg K_2 \to K_1\amalg K_2,
  \intertext{given by}
  &(p_1\amalg p_2)|_{K_j}=p_j, \quad \text{for } j=1,2, 
\end{align*}
is well defined. If a map $f$ is topologically conjugate
to $p_1\amalg p_2$, we say that $f$ is obtained as a (topological)
mating. If both $K_1$ and $K_2$ have empty interior each of the maps
$\sigma_1$ and $\sigma_2$
descends to a Peano curve $\gamma\colon S^1 \to K_1\amalg K_2$ which
provides a semi-conjugacy of $z^d \colon S^1\to S^1$ to $p_1\amalg
p_2$ (here $d=\deg p_1=\deg p_2$).  

\smallskip
In particular it is known (see \cite{MR1182664}, \cite{MR1765095}, and
\cite{MR1149864}) that the \defn{mating} of two quadratic
polynomials $p_1=z^2+c_1$, $p_2=z^2+c_2$, where $c_1,c_2$ are
\emph{Misiurewicz points} (i.e., the critical point $0$ is strictly
preperiodic for $p_i$) not contained in conjugate limbs of the
Mandelbrot set, results in a map that is topologically conjugate to a
rational map $F$. The filled-in Julia sets of $p_1,p_2$ have empty
interior. The Julia set of $F$ is the whole sphere, hence $F$
is expanding.  
Thus a Peano curve $\gamma$ as in Theorem
\ref{thm:main} exists for such a map $F$. 

\smallskip
Recall that a \defn{periodic critical
  point} (of 
a Thurston map $f$) is a critical point $c$, such that $f^k(c)=c$ for
some $k\geq 1$. 
\begin{theorem}[\cite{exp_quotients}]
  \label{thm:mating1}
  Let $f\colon S^2\to S^2$ be an expanding Thurston map without
  periodic critical points. Then every sufficiently high iterate
  $F=f^n$ is obtained as a \defn{topological mating} of two
  polynomials.   
\end{theorem}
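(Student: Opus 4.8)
The plan is to use the invariant Peano curve $\gamma\colon S^1\to S^2$ from Theorem~\ref{thm:main} (applied to a sufficiently high iterate $F=f^n$, with $d=\deg F$) to split $S^2$ into two ``filled Julia set''-like pieces and to identify each with the dynamical plane of a polynomial. Concretely, write $S^1$ as the union of the closed upper and lower semicircles $S^1_+$ and $S^1_-$; under $z\mapsto z^d$ each of these is ``wrapped'' onto $S^1$ exactly once if we pass to a further iterate, so I will replace $F$ by a high enough iterate that the combinatorics are as clean as possible. Set $K_1=\gamma(S^1_+)$ and $K_2=\gamma(S^1_-)$. These are closed, connected, $S^2$-filling pieces whose union is $S^2$ and whose common boundary is (a quotient of) the image $\gamma(S^0)$ of the two fixed points $\pm 1$ together with the glued orbit. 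The equivalence relation \eqref{eq:eq_rel} restricted to $S^1_+$ (resp.\ $S^1_-$) is precisely the relation needed so that $S^1_+/\!\sim\, \cong K_1$ (resp.\ $S^1_-/\!\sim\, \cong K_2$); this is where the absence of periodic critical points enters, since it guarantees that $F$ has the right local degree pattern (every critical point strictly preperiodic) to behave like a Misiurewicz-type polynomial on each half.

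The key steps, in order, are: (1) Use the approximating pseudo-isotopy $\Gamma$ and the tile/cell structure underlying the construction of $\gamma$ (the families $\Sim{n,w},\Sim{n,b}$ mentioned after Theorem~\ref{thm:S1simS2}) to show that the two colour classes of tiles assemble, in the limit, into the two sets $K_1,K_2$, and that $K_1\cap K_2=\gamma(\partial S^1_+)$ is a dendrite-type set invariant under (a branch of) $F$. (2) Show that $F|_{K_i}$ is a postcritically finite branched covering of $K_i$ onto itself that is topologically conjugate to a polynomial restricted to its filled Julia set: here I would invoke the combinatorial characterization of polynomial Julia sets via the equivalence relation on $S^1$ generated by $z\mapsto z^d$ (the Douady--Hubbard ``pinched disk'' / lamination model referenced in the excerpt, \cite{DHOrsayI,DHOrsayII,ThurstonCombinatorics}), verifying that the restriction of $\sim$ to $S^1_\pm$ is exactly such a $d$-invariant lamination; since $f$ (hence $F$) has no periodic critical point, each resulting lamination is of Misiurewicz type with empty-interior Julia set, and by Thurston's realization theorem each is realized by an honest polynomial $p_i$ with locally connected, empty-interior $K_i$. (3) Identify the gluing: by construction $\gamma(z)=\gamma(\bar z)$ for $z$ on the boundary circle separating the two halves precisely when the Carathéodory maps $\sigma_1,\sigma_2$ of $p_1,p_2$ identify $\sigma_1(z)$ with $\sigma_2(\bar z)$, so $S^2$ with $F$ is topologically conjugate to $K_1\amalg K_2$ with $p_1\amalg p_2$, which is the definition of a topological mating.

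The main obstacle I expect is Step~(2): extracting from the purely topological invariant Peano curve $\gamma$ the statement that the restricted equivalence relation on a semicircle is genuinely a $d$-invariant \emph{polynomial} lamination — i.e.\ that it is the closure of a forward/backward-invariant set of ``leaves'' with no linking, satisfying the axioms (unlinked, totally invariant, Misiurewicz-finite) that allow one to quote Thurston's realization. One must rule out that $K_1$ or $K_2$ is degenerate or that the combinatorics of $F$ on one half fails to be polynomial-like (e.g.\ that the local degrees along $\partial S^1_\pm$ don't add up correctly); this is exactly where ``no periodic critical point'' is used, since a periodic critical point would force a Siegel/attracting-type behaviour incompatible with the Julia-set-is-everything picture, and conversely strict preperiodicity of all critical points yields the Misiurewicz normalization. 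Verifying invariance and the no-linking axiom for the restricted relation, and then feeding it into Thurston's theorem, is the technical heart; the rest (assembling $K_i$ from tiles, checking the gluing matches $\sigma_1,\sigma_2$) is bookkeeping with the cell structure already developed for Theorem~\ref{thm:main}.
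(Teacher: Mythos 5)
A preliminary caveat: this paper does not prove Theorem \ref{thm:mating1} at all --- it is stated as a result of the follow-up paper \cite{exp_quotients}, and Section \ref{sec:cons-theor} only sketches the intended mechanism. So your proposal can only be measured against that sketch and against what a correct proof must do.

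Measured that way, the central construction in your plan is wrong. You propose to obtain the two filled Julia sets as $K_1=\gamma(S^1_+)$ and $K_2=\gamma(S^1_-)$, the images of the two closed semicircles. This cannot work. The semicircles are not dynamically meaningful for $z\mapsto z^d$: $S^1_+$ is neither forward- nor backward-invariant, its image under $z^d$ is already all of $S^1$, and passing to a higher iterate only increases the covering degree --- it never makes a semicircle wrap ``exactly once'', contrary to what you assert. More fundamentally, in a topological mating of two polynomials whose filled Julia sets have \emph{empty interior} (which is the case here, by the no-periodic-critical-points hypothesis), the two Julia sets are \emph{not} complementary pieces of the sphere: since $\sigma_1,\sigma_2$ are onto $K_1,K_2$, every point of $K_1$ is identified with a point of $K_2$ and vice versa, so the images of \emph{both} $K_1$ and $K_2$ in the quotient $K_1\amalg K_2\cong S^2$ equal all of $S^2$. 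The picture of two hemispheres each carrying one filled Julia set is the picture for polynomials with interior, i.e., the situation of Theorem \ref{thm:mating2}, which is precisely excluded here. Accordingly, the Peano curve of a mating is $\gamma=q\circ\sigma_1$ with $\sigma_1$ (and likewise $z\mapsto\sigma_2(\bar z)$) defined on the \emph{whole} circle; the two polynomials must be recovered not by cutting the domain circle in half but by decomposing the equivalence relation $\sim$ of (\ref{eq:eq_rel}) into two $z^d$-invariant laminations, each living on all of $S^1$. That is exactly the role of the families $\Sim{n,w},\Sim{n,b}$ described after Theorem \ref{thm:S1simS2}: the white relation identifies parameters whose images coincide at an $n$-vertex through a \emph{white} connection, the black relation through a \emph{black} connection, and $\sim$ is the closure of their union. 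The split is by colour of the identifications, not by an arc decomposition of $S^1$; your $K_1\cap K_2=\gamma(\partial S^1_+)$ (the image of two points) being called a ``dendrite-type set'' is a symptom of the same confusion.

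The later steps of your plan are the right ones in spirit --- one must verify that each half of the relation is a genuine closed, unlinked, fully $z^d$-invariant lamination, realize each by an actual polynomial, and use the absence of periodic critical points to conclude that both filled Julia sets have empty interior so that the plain topological mating (rather than the further quotient $p_1\widehat{\amalg}\,p_2$) reproduces $F$. But these steps must be applied to the colour-decomposed laminations, and as set up your argument never produces those objects, so it does not get off the ground.
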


If at least one of the filled-in Julia sets $K_1,K_2$ has non-empty
interior, we can take a further quotient of $K_1\amalg K_2$ by
identifying the points of the closure of each bounded Fatou
component. Technically we take the \emph{closure} of the equivalence
relation (on the disjoint union of $K_1,K_2$) obtained from
$\sigma_1(z)\sim \sigma_2(\bar{z})$ (for all $z\in S^1=\partial \D$)
as well as  
$x\sim y$ if $x,y$ are in the closure of the \emph{same} bounded Fatou
component of $p_1$ or $p_2$.  

The maps $p_1,p_2$ descend to the quotient map
$p_1\widehat{\amalg} \,p_2$.  

\begin{theorem}[\cite{exp_quotients}]
  \label{thm:mating2}
  Let $f\colon S^2\to S^2$ be an expanding Thurston map with (at least
  one) periodic critical point. Then every sufficiently high iterate
  $F=f^n$ is topologically conjugate to a map $p_1\widehat{\amalg}\,
  p_2$ as above. 
\end{theorem}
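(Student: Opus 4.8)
The plan is to derive Theorem~\ref{thm:mating2} from Theorem~\ref{thm:main} together with the combinatorial picture of the equivalence relation $\sim$ on $S^1$ outlined in Section~\ref{sec:cons-theor}. After replacing $f$ by a sufficiently high iterate I arrange simultaneously that: Theorem~\ref{thm:main} provides the Peano curve $\gamma\colon S^1\to S^2$ with $F\circ\gamma(z)=\gamma(z^d)$ (together with the cell decompositions and the two-tile subdivision rule used to build it, coming from an invariant Jordan curve $\CC\supset\post(F)$); and every periodic critical point of $f$ is in fact a \emph{fixed} critical point of $F=f^n$ (only finitely many periods must be cleared). By Theorem~\ref{thm:S1simS2} I then identify $S^2$ with $S^1/\!\sim$, where $s\sim t\iff\gamma(s)=\gamma(t)$, under which identification $F$ becomes $z^d/\!\sim$.

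The first substantial step is to establish the \emph{two-sided laminar structure} of $\sim$. Using that neighbouring tiles of the cell decompositions carry opposite colours and that $\gamma$ traverses the tiles in the order dictated by the subdivision rule, one shows that a nontrivial identification $\gamma(s)=\gamma(t)$ is always forced either by a nested sequence of white $n$-tiles or by a nested sequence of black $n$-tiles. This produces two closed, $z^d$-invariant, \emph{unlinked} (``laminational'') equivalence relations $\Sim{w}:=\overline{\bigcup_n\Sim{n,w}}$ and $\Sim{b}:=\overline{\bigcup_n\Sim{n,b}}$ on $S^1$ with $\sim=\overline{\Sim{w}\vee\Sim{b}}$, where each $\Sim{n,w}$ is the $z^d$-pullback of $\Sim{n-1,w}$ and similarly for black. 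Up to conjugacy, $F$ is thus determined by the finite data $\Sim{1,w},\Sim{1,b}$.

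Next I would \emph{unmate}: realize $\Sim{w}$ and $\Sim{b}$ as the laminations of two postcritically finite polynomials. One produces monic degree-$d$ polynomials $p_1,p_2$, with filled Julia sets $K_1,K_2$ and Carath\'{e}odory maps $\sigma_1,\sigma_2\colon S^1\to\partial K_1,\,\partial K_2$, such that $\Sim{w}=\ker\sigma_1$ and $\Sim{b}$ is the pullback of $\ker\sigma_2$ under $z\mapsto\bar z$. The point is to check that the lamination $\Sim{w}$ (resp.\ $\Sim{b}$), being $z^d$-invariant and arising from an honest expanding Thurston map, satisfies the combinatorial conditions that guarantee it is realized by a polynomial, and to conjugate the branched cover induced by $z^d$ on $S^1/\!\Sim{w}=\partial K_1$ to $p_1|_{\partial K_1}$ (and similarly for $p_2$). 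Since each (now fixed) periodic critical point of $F$ forces a superattracting periodic Fatou component of the corresponding $p_i$, at least one of $K_1,K_2$ has nonempty interior, as in the hypothesis. Finally one reassembles: by the definition of $p_1\widehat{\amalg}\,p_2$ --- glue $\partial K_1$ to $\partial K_2$ via $z\mapsto\bar z$ through $\sigma_1,\sigma_2$, then collapse the closures of all bounded Fatou components of $p_1$ and $p_2$ --- and using that this last collapsing is exactly the extra identification that, via the closure operation, is forced by combining white and black identifications, one obtains a homeomorphism from the space of $p_1\widehat{\amalg}\,p_2$ onto $S^1/\overline{\Sim{w}\vee\Sim{b}}=S^1/\!\sim=S^2$ conjugating $p_1\widehat{\amalg}\,p_2$ to $z^d/\!\sim=F$. (The plain mating $p_1\amalg p_2$ would retain a nonempty Fatou set and so could not be conjugate to the expanding map $F$; the passage to $\widehat{\amalg}$ is precisely what repairs this.)

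The step I expect to be hardest is the unmating: promoting the abstract branched covers attached to $\Sim{w}$ and $\Sim{b}$ to genuine polynomials while tracking the critical data exactly --- showing that the critical points of $F$ distribute between the two sides with the correct local degrees and orbit portraits, that restricting to a single side creates no Thurston (Levy) obstruction, and that the fixed periodic critical points land precisely in the (to-be-collapsed) periodic Fatou components. Establishing that $\Sim{w}$ and $\Sim{b}$ are genuinely unlinked and that $\sim=\overline{\Sim{w}\vee\Sim{b}}$ is likewise delicate: it is the combinatorial core of the argument and rests entirely on the fine tile-by-tile structure of the construction of $\gamma$ in Theorem~\ref{thm:main}.
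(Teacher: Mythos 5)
You should first note that this paper does not actually prove Theorem~\ref{thm:mating2}: it is quoted from the follow-up paper \cite{exp_quotients}, and Section~\ref{sec:cons-theor} only sketches the relevant circle of ideas (the two families of equivalence relations $\Sim{n,w}$, $\Sim{n,b}$ whose closed union is $\sim$, and the quotient construction $p_1\widehat{\amalg}\,p_2$). Measured against that sketch, your strategy is the intended one: pass to $S^1/\!\sim$ via Theorem~\ref{thm:S1simS2}, split $\sim$ into a white and a black laminational part, realize each part by a polynomial, and reassemble, with the collapsing of Fatou-component closures in $\widehat{\amalg}$ accounting for the periodic critical points.

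The difficulty is that your text is itself only a roadmap, and the two steps you yourself flag as hardest are exactly the ones that constitute the proof. First, the claim that every identification $\gamma(s)=\gamma(t)$ is forced through nested white tiles or nested black tiles, that the resulting relations $\Sim{w}$ and $\Sim{b}$ are closed, $z^d$-invariant, unlinked equivalence relations, and that $\sim$ is the closure of their join, is asserted but not argued; nothing in your write-up extracts this from the tile-by-tile construction of $\gamma$. Second, the ``unmating'' step --- producing actual monic polynomials $p_1,p_2$ with $\ker\sigma_1=\Sim{w}$ and $\ker\sigma_2$ the reflected $\Sim{b}$ --- requires verifying that these laminations satisfy the realizability criteria for degree-$d$ invariant laminations (or, equivalently, that the induced topological polynomials are unobstructed and hence Thurston-equivalent to genuine polynomials with locally connected Julia sets), and that the critical/postcritical data of $F$ distributes correctly between the two sides. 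You name these verifications but do not perform them. There is also a small mismatch with the statement: by arranging that periodic critical points become \emph{fixed} you restrict to iterates $n$ divisible by the least common multiple of the periods, whereas the theorem asserts the conclusion for \emph{every} sufficiently high iterate; the fixed-point reduction is unnecessary and should be dropped or the general periodic case handled directly. As it stands the proposal is a correct plan consistent with the paper's announced approach, but not a proof.
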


The next theorem investigates the \emph{measure theoretic} mapping
properties of $\gamma$. 

\begin{theorem}[\cite{exp_quotients}]
  \label{thm:FmapsLebesgue}
  The Peano curve $\gamma$ maps \emph{Lebesgue measure} of $S^1$ to the
  \emph{measure of maximal entropy} (with respect to $F$) on $S^2$. 
\end{theorem}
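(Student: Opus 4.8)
The plan is to show that the push-forward $\nu:=\gamma_{*}\lambda$ of normalized Lebesgue measure $\lambda$ on $S^1$ equals the measure of maximal entropy $\mu_F$ of $F$. That $\nu$ is an $F$-invariant Borel probability measure is immediate from Theorem~\ref{thm:main}: since $\lambda$ is invariant under the $d$-th power map $z^d\colon S^1\to S^1$ and $F\circ\gamma=\gamma\circ z^d$, we have $F_{*}\nu=\gamma_{*}(z^d)_{*}\lambda=\gamma_{*}\lambda=\nu$. From the general theory of expanding Thurston maps I will borrow two facts about $F$: that $h_{\mathrm{top}}(F)=\log\deg F=\log d$, and that $F$ has a unique measure of maximal entropy $\mu_F$ --- which is moreover characterized by assigning every $n$-th level tile the mass $(2d^n)^{-1}$. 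Given the first two it suffices to prove $h_{\nu}(F)=\log d$; then $\nu$ maximizes entropy, hence $\nu=\mu_F$.

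The crux is to show that $\gamma$ is injective outside a $\lambda$-null set. Granting this, $\gamma$ restricts to a measure-theoretic isomorphism (mod $0$) of $(S^1,z^d,\lambda)$ onto $(S^2,F,\nu)$ --- the inverse map being Borel by the Lusin--Souslin theorem --- so that $h_{\nu}(F)=h_{\lambda}(z^d)=\log d$, as desired. To pin down the null set, recall from the construction of $\gamma$ in the proof of Theorem~\ref{thm:main} the underlying $F$-invariant Jordan curve $\mathcal{C}\supseteq\post(F)$: with $E^{n}:=F^{-n}(\mathcal{C})$ one has $E^{0}\subseteq E^{1}\subseteq\cdots$, the $n$-th level tiles are the closures of the components of $S^2\setminus E^{n}$, the circle $S^1$ is cut at level $n$ into $2d^{n}$ arcs of $\lambda$-length $(2d^{n})^{-1}$, $\gamma$ carries these arcs bijectively onto the $n$-th level tiles, and the endpoints of the $n$-th level arcs are mapped into $E^{n}$. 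Put $A:=\gamma^{-1}(\mathcal{C})$ and $N:=\gamma^{-1}\bigl(\bigcup_{n}E^{n}\bigr)=\bigcup_{n}(z^d)^{-n}(A)$. The invariance $\mathcal{C}\subseteq F^{-1}(\mathcal{C})$ yields $A\subseteq(z^d)^{-1}(A)$, so this is an increasing union and $N$ is completely invariant under $z^d$; as $\lambda$ is ergodic for $z^d$ we get $\lambda(N)\in\{0,1\}$, and $\lambda(N)=\lambda(A)=\nu(\mathcal{C})$ since $z^d$ preserves $\lambda$. On the other hand $\gamma$ is a continuous surjection and $\lambda$ has full support, so $\operatorname{supp}\nu=S^2$; since $\mathcal{C}$ is a proper closed subset of $S^2$ this excludes $\nu(\mathcal{C})=1$, and therefore $\lambda(N)=\nu(\mathcal{C})=0$.

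It remains to check that $\gamma$ is injective on $S^1\setminus N$. If $z\notin N$, then $z$ is not an endpoint of any level-$n$ arc (those endpoints lie in $N$), so $z$ is interior to a unique level-$n$ arc $I_{n}$ for every $n$; the $I_{n}$ are nested with lengths tending to $0$, hence $\bigcap_{n}I_{n}=\{z\}$, while $\gamma(z)$ lies in the interior of the corresponding $n$-th level tile (because $\gamma(z)\notin E^{n}$). Since distinct $n$-th level tiles meet only inside $E^{n}$, every $w$ with $\gamma(w)=\gamma(z)$ must lie in $I_{n}$ for all $n$, so $w=z$. With $N$ now under control one may also finish without invoking metric entropy: $\nu(E^{n})=0$ forces $\gamma^{-1}(\inte X)\subseteq I$ for each $n$-th level tile $X$ and its corresponding arc $I$, so $\nu(X)=\nu(\inte X)\le\lambda(I)=(2d^{n})^{-1}$; there are $2d^{n}$ such tiles with total $\nu$-mass $1$, hence each has $\nu$-mass exactly $(2d^{n})^{-1}$, matching $\mu_F$, and as the $n$-th level tiles shrink to points they generate the Borel $\sigma$-algebra, whence $\nu=\mu_F$.

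The main obstacle is precisely the a.e.-injectivity of $\gamma$, and inside it the bookkeeping of how $\gamma$ interlocks with the cell decomposition --- that the arcs of the $n$-th subdivision of $S^1$ correspond bijectively to the $n$-th level tiles and that the tile boundaries are exactly $F^{-n}(\mathcal{C})$ --- which has to be read off from the construction in Theorem~\ref{thm:main}. The one genuinely new ingredient is the ergodicity-plus-support argument that kills $N$ (the $\gamma$-preimage of the grand $F^{-1}$-preorbit of $\mathcal{C}$); the remaining inputs ($h_{\mathrm{top}}(F)=\log d$ together with uniqueness of $\mu_F$, or the equivalent tile-mass description of $\mu_F$) are standard in the theory of expanding Thurston maps.
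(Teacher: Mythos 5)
First, a caveat: this paper does not prove Theorem~\ref{thm:FmapsLebesgue} at all --- it is stated with a citation to the follow-up paper \cite{exp_quotients} --- so there is no in-text proof to compare against, and I can only assess your argument on its own terms. Your opening moves are sound: $\nu:=\gamma_*\lambda$ is $F$-invariant by the semi-conjugacy, and your ergodicity-plus-full-support argument showing $\lambda\bigl(\gamma^{-1}\bigl(\bigcup_n F^{-n}(\CC)\bigr)\bigr)=\nu(\CC)=0$ is correct and is exactly the kind of step one needs. The overall strategy (show $h_\nu(F)=\log d$ and invoke uniqueness of the measure of maximal entropy) is viable.

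However, the combinatorial backbone you ``read off from the construction'' is not what the paper does, and the proof breaks there. In Section~\ref{sec:constr-g} the circle is divided at level $n$ into $kd^n$ arcs $a^n_j$ (one per \emph{$n$-edge}, $k=\#\post$), of generally \emph{unequal} lengths $l(E^n_j)=d^{-n}l_{j}$ given by a Perron--Frobenius eigenvector --- not into $2d^n$ arcs of length $(2d^n)^{-1}$ in bijection with the $n$-tiles. Moreover $\gamma(a^n_j)$ is not an $n$-tile: by $\norm{\gamma-\gamma^n}_\infty\lesssim\Lambda^{-n}$ it is a set containing the $n$-edge $E^n_j$ and contained in a $C\Lambda^{-n}$-neighborhood of it, hence it straddles \emph{both} adjacent $n$-tiles and overlaps the images of neighboring arcs in sets of diameter comparable to the tiles themselves. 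Consequently your injectivity argument fails at its key step: knowing $\gamma(z)\in\inte X$ for an $n$-tile $X$ does \emph{not} force every $w\in\gamma^{-1}(\gamma(z))$ into a single arc $I_n$, since several arcs have images meeting $\inte X$. (What this localization actually yields is that each fiber is covered by a \emph{bounded number} of $n$-arcs, i.e.\ $\gamma$ is uniformly finite-to-one; that would suffice for $h_\nu(F)=\log d$ via Bowen/Ledrappier--Walters, but it is a different argument from the one you wrote, and genuine a.e.\ injectivity needs a real proof.) Finally, your fallback computation targets the wrong measure: the measure of maximal entropy of an expanding Thurston map assigns an $n$-tile the mass $d^{-n}w_c$, where the weights $w_w,w_b$ of the two colors are determined by an eigenvector condition and are in general \emph{not} equal to $1/2$; the characterization $\mu_F(X)=(2d^n)^{-1}$ for every $n$-tile $X$ is false, so matching $\nu$ against it cannot close the proof even if the arc--tile bookkeeping were repaired.
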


The polynomials into which $F$ \emph{unmates}, i.e., the polynomials
$p_1, p_2$ from Theorem~\ref{thm:mating1} and
Theorem~\ref{thm:mating2} can be found by a simple explicit
combinatorial algorithm. This is explained in \cite{unmating}.

As another application of Theorem \ref{thm:main} one obtains
\emph{fractal tilings}. Namely divide the circle $S^1=\R/\Z$ into $d$
intervals $[j/d, (j+1)/d]$ ($j=0,\dots, d-1$). It follows from Theorem
\ref{thm:main} that $F$ maps each set $\gamma([j/d,(j+1)/d])$ to the
whole sphere. The tiling lifts to the \emph{orbifold covering}, which
is either the Euclidean or the hyperbolic plane. 

\subsection{Outline}
\label{sec:outline}
The construction of the invariant Peano curve, i.e., the
proof of Theorem \ref{thm:main}, forms the core
of this work.  

In Section \ref{sec:example} an example is introduced that serves
to illustrate the construction throughout the paper. 

Section \ref{sec:thurston-maps-as} gives precise definitions of
expanding Thurston maps, as well as gathers facts from
\cite{expThurMarkov} relevant here. 
% Namely the map $f$ is described
% combinatorially as follows. 

We will fix a Jordan curve $\CC$ containing the set of all postcritical
points ($=\post(F)$). We construct
\emph{approximations} $\gamma^n\colon S^1\to S^2$, that
will go through $F^{-n}(\CC)$. The limit $\gamma=\lim_n \gamma^n$ will
be the desired Peano curve. 

The construction of $\gamma$ consists of two parts. 
In the first part (which is logically the second)
we assume that we can deform $\CC$ by a \emph{pseudo-isotopy
  rel.\ $\post(F)$} to $\gamma^1=F^{-1}(\CC)$. The
approximations $\gamma^n$ can then be constructed inductively by
repeated lifts. This is done 
in Section \ref{sec:appr-gn}. 

The correct \emph{parametrization} of $\gamma^n$ is done in Section
\ref{sec:constr-g}.

\medskip
The second part is the construction of the
pseudo-isotopy $H^0$ rel.\ $\post(F)$, which deforms the Jordan curve
$\CC$ to the first approximation $\gamma^1$. 

We \emph{color} one component of
$S^2\setminus \CC$ white, the other black. Preimages of these Jordan
domains by $F$ then form the \defn{black/white $1$-tiles}. 

At each vertex (of $1$-tiles) we will declare which white/black
$1$-tiles are \defn{connected}. These connections will be described by
\defn{complementary non-crossing partitions}. 

Connections at all vertices will be defined in
such a way that the \defn{white tile graph} forms a \defn{spanning
  tree}. The ``outline'' of this spanning tree forms the first
approximation $\gamma^1$. The main work consists of making sure that
$\gamma^1$ lies in the right homotopy class (that $\CC$ can be
deformed to $\gamma^1$ by a pseudo-isotopy rel.\ $\post(F)$).   

\smallskip
Section \ref{sec:some-topol-lemm} assembles some standard topological
lemmas needed in the following.  

In Section \ref{sec:connections} the necessary background about
connections and com\-ple\-men\-ta\-ry
%complementary 
non-crossing partitions is developed.  

The desired pseudo-isotopy $H^0$ (equivalently the spanning tree of
white $1$-tiles) is
constructed in 
Section \ref{sec:construction-h0}. It is here that we (possibly) need
to take an iterate $F=f^n$ (in order to be in the right homotopy
class). 

In Section \ref{sec:comb-constr-gamm} an alternative
\emph{combinatorial} way to construct the approximations
$\gamma^n$ is presented. An \emph{$n$-tile} is the
preimage of a component of $S^2\setminus \CC$ by $F^n$. At each
\emph{$n$-vertex} of such an $n$-tile we define which $n$-tiles are
connected. Following the ``outline'' of one connected component as
before yields the approximation $\gamma^n$. These \emph{connections of 
  $n$-tiles} are constructed inductively in a purely combinatorial
fashion. 

\smallskip
Theorem~\ref{thm:peano_implies_exp} (existence of a Peano
curve which semi-conjugates $z^d$ to $F$ implies expansion) is proved
in Section~\ref{sec:invar-peano-curve}. 

\smallskip
The question arises whether it is necessary to take an iterate $F=f^n$
in Theorem~\ref{thm:main}.  While we do not have a definite answer, we
give an example in Section \ref{sec:an-example} which shows (in the
opinion of the author) that the answer is likely yes. More precisely,
for the considered example $h$ there exists no pseudo-isotopy $H^0$ as
required (there is one for the second iterate $h^2$).

We finish with some open problems in Section
\ref{sec:open-probl-concl}.  

\subsection{Acknowledgments}
\label{sec:acknolegements}
The author wishes to thank Juan Rivera-Letelier for many fruitful
discussions; Stanislav Smirnov, Mario Bonk, and Kari Astala for their
hospitality. Kevin Pilgrim and Tan Lei pointed out that Theorem
\ref{thm:main} should have a converse, i.e., that
Theorem~\ref{thm:peano_implies_exp} should hold.

\subsection{Example}
\label{sec:example}

We illustrate the proof using the following map $g$. 
It is a \defn{Latt\`{e}s map} (see \cite{Lattes},
\cite{milnor06:_lattes}). 

Map the square $[0,\frac{1}{2}]^2\subset \C$ to the upper half plane by a
Riemann map, normalized by mapping 
the vertices $0,\frac{1}{2}, \frac{1}{2} + \frac{1}{2}i,\frac{1}{2}i$ 
to $0,1,\infty,-1$. By Schwarz reflection this map can be extended 
to a meromorphic function $\wp\colon
\C\to \CDach$. This is the \defn{Weierstra\ss\ $\wp$-function} (up to a
M\"{o}bius transformation), it is (doubly) periodic with respect to
the lattice $L :=\Z^2$. Thus we may view $\wp$ as a (double) branched
covering map of the sphere by the torus $\T^2:=\C/L$.  

Color preimages of the upper half
plane by $\wp$ white, preimages of the lower half plane by $\wp$
black. The plane is then colored in a \defn{checkerboard} fashion. 
Consider the map 
\begin{align*}
  \psi\colon & \C\to \C,
  \\
  & z \mapsto 2 z.   
\end{align*}
We may view $\psi$ as a self-map of the torus $\T^2$. 
One checks that there is a (unique/well defined) map $g\colon \CDach\to
\CDach$ such that the diagram 
\begin{equation*}
  \xymatrix{
    \C \ar[r]^\psi \ar[d]_{\wp} &
    \C \ar[d]^{\wp}
    \\
    \CDach \ar[r]_g & \CDach
  }
\end{equation*}
commutes. The map $g$ is \defn{rational}, in fact
$g=4\frac{z(1-z^2)}{(z^2+1)^2}$.  
The Julia set of $g$ is the whole
sphere. 

\begin{figure}
  \centering
  \includegraphics[scale=0.5]{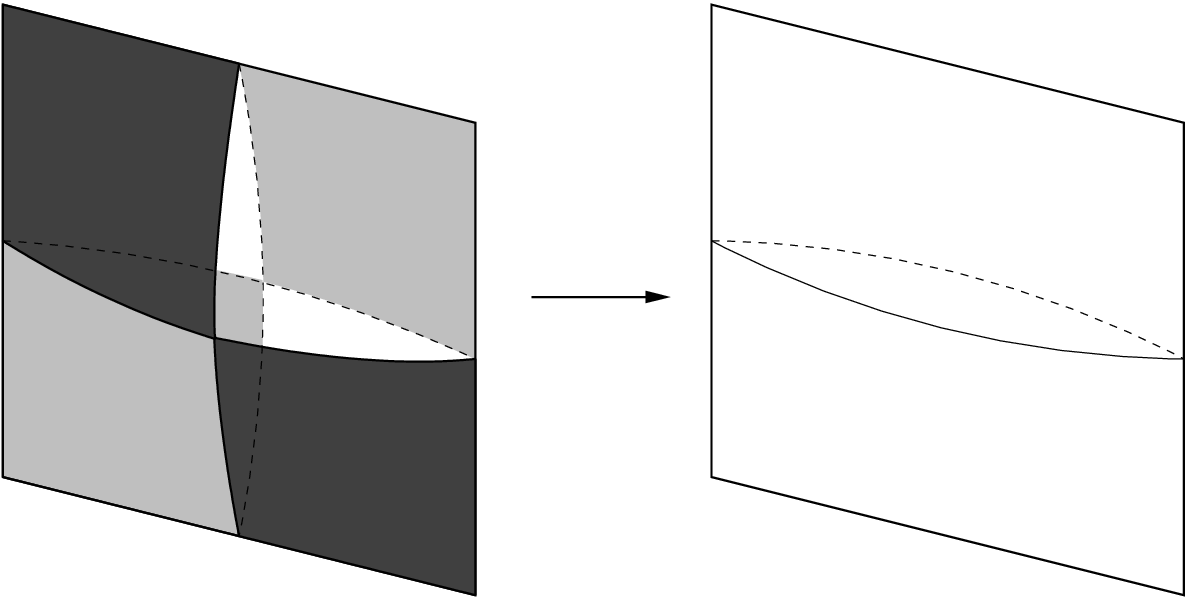}
  \begin{picture}(10,10)
    \put(-122,19){$\scriptstyle 0$}
    \put(0,-5){$\scriptstyle 1$}
    \put(-132,140){$\scriptstyle -1$}
    \put(-5,119){$\scriptstyle \infty$}
    \put(-172,-3){$\scriptstyle 1\mapsto 0$}
    \put(-240,5){$\scriptstyle \mapsto 1$}
    \put(-300,18){$\scriptstyle 0\mapsto 0$}
    \put(-313,80){$\scriptstyle \mapsto -1$}
    \put(-300,148){$\scriptstyle -1\mapsto 0$}
    \put(-233,135){$\scriptstyle \mapsto 1$}
    \put(-177,120){$\scriptstyle \infty \mapsto 0$}
    \put(-170,55){$\scriptstyle \mapsto -1$}
    \put(-148,80){$\scriptstyle g$}
    \put(-230, 65){$\scriptstyle \mapsto \infty$}
  \end{picture}
  \caption{The Latt\`{e}s map $g$.}
  \label{fig:mapg}
\end{figure}

One may describe $g$ as follows. 
Push the Euclidean metric of $\C$ to the (Riemann) sphere
$\CDach$ by $\wp$. 
In this metric
the sphere looks like a \defn{pillow}
(technically this is an \defn{orbifold}, see for example 
\cite[Appendix~E]{MR2193309} and \cite[Appendix~A]{McM}).  
Indeed by construction the upper and lower half plane are then both  
isometric to the square
$[0,\frac{1}{2}]^2$.
Two such squares glued along their boundary form the sphere. 
We \defn{color} one of these squares (say the upper half plane)
\defn{white}, the other square (the lower half plane) \defn{black}.
The map $g$ is now given as follows. Divide each of the two squares into
$4$ small squares (of side-length $\frac{1}{4}$). Color these $8$ small
squares in a checkerboard 
fashion white and black. Map one such small white square to the big white
square. This 
extends by reflection to the whole pillow, which yields the map
$g$. There are obviously many different ways to color and map the small
squares. 
The ``right'' way to do so (in order to obtain
$g$) is indicated in Figure \ref{fig:mapg}.
 
The $6$ vertices of the small squares at which $4$ small squares
intersect are the \defn{critical points} of
$g$. They are mapped by $g$ to $\{1,\infty,-1\}$; these points in turn   
are mapped to $0$, which is a fixed point. The set
$\{0,1,\infty,-1\}=\post(g)$ is the set of all 
\defn{postcritical points}.
  
The map $\wp$ is the \defn{orbifold covering map}. The pictures
explaining our construction will all be in the \defn{orbifold
  covering}, i.e., in $\C$. For example the Peano curve will be
constructed by certain approximating curves. These are more easily
visualized when lifted to $\C$.  

\subsection{The construction for the example}
\label{sec:outline-construction}

The construction is explained using the example $g$ defined in the
last section. 

\begin{figure}
  \centering
  \includegraphics[scale=0.5]{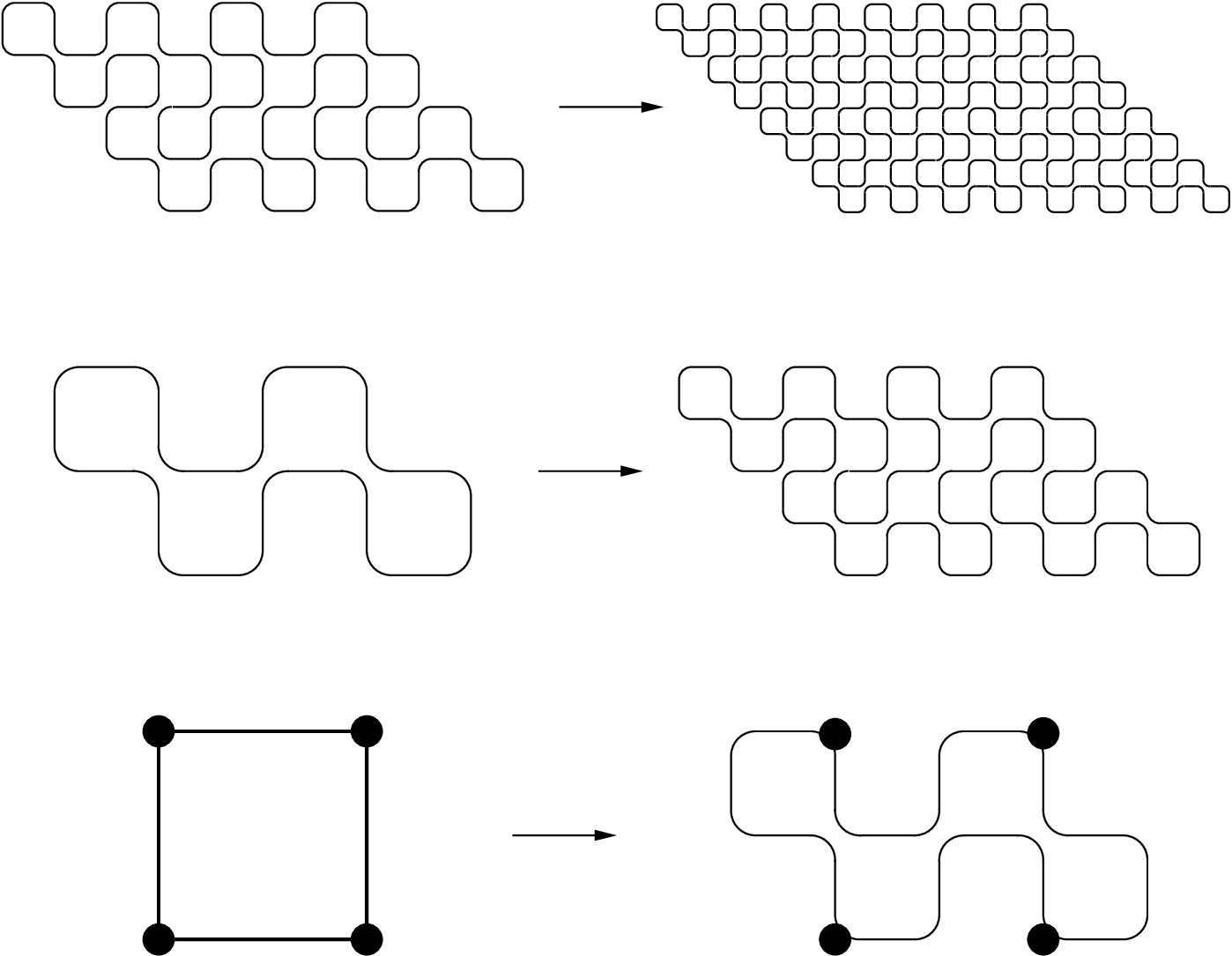}
  \begin{picture}(10,10)
    \put(-192,40){$ H^0$}
    \put(-186,140){$H^1$}
    \put(-182,240){$H^2$}
    \put(-320,10){$\gamma^0$}
    \put(-135,10){$\gamma^1$}
    \put(-320,110){$\gamma^1$}
    \put(-147,110){$\gamma^2$}
    \put(-330,210){$\gamma^2$}
    \put(-150,210){$\gamma^3$}
  \end{picture}
  \caption{Construction of $\gamma$ for the map $g$.}
  \label{fig:H0H1H2g}
\end{figure}

The \emph{$0$-th approximation} $\gamma^0$ of the Peano curve is the extended
real line 
$\RDach=\R\cup\{\infty\}\subset \CDach$. Note that $\RDach$ contains
all postcritical points of $g$. In the ``pillow'' model $\RDach$ is the
common boundary of the two squares. The picture in the orbifold
covering is shown in Figure \ref{fig:H0H1H2g} in the lower left. The
(lifts of the) postcritical points are the dots at the vertices. 

\smallskip
The upper and lower half planes (the two 
squares from which the ``pillow'' was constructed) are called the
\emph{$0$-tiles}. Their preimages  
 by $g$ (the small squares to the left in Figure \ref{fig:mapg}) are
 called the \emph{$1$-tiles}. We \emph{color} them 
white if they are preimages of the upper half plane,
otherwise black. There are four white as well as four black
$1$-tiles. The white 
$1$-tiles intersect at the \defn{critical points}, of which there are
six. At each critical point  
(\defn{$1$-vertex}) we define a \defn{connection}. This is an
assignment of which
$1$-tiles are connected and which are disconnected at this
$1$-vertex. Connections are defined in such a way that the resulting
\defn{white tile graph} is a spanning tree. This means it contains all
white $1$-tiles and no loops. In our example the white $1$-tiles are
connected at the three critical points labeled by
``$\scriptstyle{\mapsto -1}$'', 
``$\scriptstyle{\mapsto \infty}$'' in Figure \ref{fig:mapg}, and
disconnected at the others. The corresponding picture in the orbifold
covering is shown in the lower right of Figure \ref{fig:H0H1H2g}.

Following the boundary of this spanning tree gives the \defn{first
  approximation} of the Peano curve $\gamma^1$ (again indicated in
the lower right of Figure \ref{fig:H0H1H2g}). To obtain the curve
$\gamma^1$ on the pillow, one needs to ``fold the two squares that are
overlapping to the left and right on the back'' (where they intersect
in a critical point). 

We will need the following additional assumption on the
spanning tree. We have to be able to deform $\gamma^0$ to $\gamma^1$
by a \defn{pseudo-isotopy} $H^0$ that keeps the postcritical
points fixed. Recall that a pseudo-isotopy $H^0\colon S^2\times
[0,1]\to S^2$ is a homotopy that ceases to be an isotopy only at
$t=1$.   

The pseudo-isotopy is \defn{lifted} to (pseudo-isotopies)
$H^n$ by iterates $g^n$. The approximations of the Peano curve are
constructed inductively. Namely $\gamma^{n+1}$ is obtained as the
deformation of $\gamma^n$ by $H^n$. Each curve $\gamma^n$ goes through
$g^{-n} (\post)$. The limiting curve $\gamma$ is the desired Peano
curve.

\subsection{Notation}
\label{sec:notation}

The Riemann sphere is denoted by $\CDach=\C\cup\{{\infty}\}$. We denote
the $2$-sphere by $S^2$, when it is not assumed to be equipped
with a conformal structure. By $\inte U$ we denote the \defn{interior}
of a set $U$. The cardinality of a (finite) set $S$ is denoted by
$\#S$. The circle $S^1$ will often be identified with $\R/\Z$ whenever
convenient.  

For two non-negative expressions $A,B$ we write $A\lesssim
B$ if there is a constant $C>0$ such that $A\leq C B$. We refer to $C$
as $C(\lesssim)$. Similarly we write $A\asymp B$ if
$A/C \leq B\leq C A$ for a constant $C\geq 1$.

\begin{asparaitem}[$\centerdot$]
\item The \emph{$n$-iterate} of a map $f$ is denoted by $f^n$,
  $f^{-n}(A)$ denotes the preimage of a set $A$ by the iterate $f^n$. 
\item \emph{Upper indices} indicate the \defn{order} of an object,
  meaning $U^n$ is 
the preimage of some object $U^0$ by $f^n$ or $F^n$.
\item By $\crit=\crit(f)$, $\post=\post(f)$ we denote the \defn{set of
critical/postcritical points} (see next section).
\item The \defn{degree} of $F$ is denoted by $d$, the \defn{number of
    postcritical points} by $k$.
\item The \defn{local degree} of the map $F$ at $v\in S^2$ is denoted
  by $\deg_F(v)$ (see Definition \ref{def:f} (\ref{def:fbranch})). 
\item $\CC$ is a Jordan curve containing all postcritical points. 
\item \emph{Lower indices} $w,b$ denote whether objects are colored
  \emph{white} or \emph{black}.
\item $X^0_w,X^0_b$ denote the white and black \emph{$0$-tiles}
  (Section \ref{sec:thurston-maps-as}). 
\item The \emph{sets of all $n$-tiles, -edges, -vertices} are denoted by
  $\X^n,\E^n,\V^n$ (Section \ref{sec:thurston-maps-as}).
\item The \emph{expansion factor} of a fixed \emph{visual metric} for
  $F$ is denoted by $\Lambda$, see (\ref{eq:def_visuald}).
\item $\gamma^n$ is the \defn{$n$-th approximation} of the invariant
  Peano curve (Section \ref{sec:appr-gn}). 
\item $H^0$ is the \emph{pseudo-isotopy} that deforms $\CC$ to
  $\gamma^1$. $H^n$ is the \emph{lift} of $H^0$ by $F^n$, it is a
  pseudo-isotopy that deforms $\gamma^n$ to $\gamma^{n+1}$ (Definition
  \ref{def:pseudo-isotopy-h0}, Lemma
  \ref{lem:lift_degenerate_isotopies}).
\item $\alpha^n_j\subset \R/\Z$ is a point that is mapped by
  $\gamma^n$ (and subsequently by $\gamma$) to an $n$-vertex (Section
  \ref{sec:parametrizing-gn}). 
\item $\pi_w\cup\pi_b$ is a \emph{complementary non-crossing partition}. It
  describes which white/black $1$-tiles are connected at some
  $1$-vertex (Section \ref{sec:non-cross-part}).
\item A lower index ``$\epsilon$'' indicates a geometric realization
  of an object, where in a small neighborhood of each $1$-vertex we
  change tiles to ``geometrically represent the connection'' (Definition
  \ref{def:conn_geom_repres}).   
\end{asparaitem}

\section{Expanding Thurston maps as Subdivisions}
\label{sec:thurston-maps-as}

\begin{definition}\label{def:f}
  A \defn{Thurston map} is an orientation-preserving, postcritically
  finite, branched covering of the sphere, 
  \begin{equation*}
    f\colon S^2\to S^2.
  \end{equation*}
  To elaborate
  \begin{enumerate}
  \item\label{def:fbranch} $f$ is a \defn{branched cover} of the
    sphere $S^2$, meaning that locally we can write $f$ as $z\mapsto
    z^q$ after orientation-preserving homeomorphic changes of
    coordinates in domain and range. 

    More precisely for each point
    $v\in S^2$ there exists a $q\in \N$, (open) neighborhoods $V, W$
    of $v, w=f(v)$ and orientation-preserving homeomorphisms
    $\varphi\colon V\to \D$,  $\psi\colon W\to \D$ with $\varphi(v)=0$,
    $\psi(w)=0$ satisfying 
    \begin{equation*}
      \psi\circ f\circ \varphi^{-1}(z)= z^q,
    \end{equation*}
    for all $z\in \D$. 
    The integer $q=\deg_f(v)\geq 1$
    is called the \defn{local degree} of the map at $v$. A point $c$
    at which the local degree $\deg_f(c)\geq 2$ is called a
    \defn{critical point}. The set of all critical points is denoted
    by $\crit=\crit(f)$. There are only finitely many critical points since
    $S^2$ is compact. Note that no assumptions about the smoothness of
    $f$ are made.  
    % 
    % save counter of enumeration
    \setcounter{mylistnum}{\value{enumi}}
  \end{enumerate}

  \begin{enumerate}
    \setcounter{enumi}{\value{mylistnum}}
  \item\label{def:pcf} The map $f$ is \defn{postcritically finite},
    meaning that the set of \defn{postcritical points} 
    \begin{equation*}
      \post=\post(f):=\bigcup_{n\geq 1} \{f^n(c):c\in \crit(f)\}
    \end{equation*}
    is finite. As usual $f^n$ denotes the $n$-th iterate. We are only
    interested in the case when $\#\post(f)\geq 3$. 
    % 
    % save counter of enumeration
    \setcounter{mylistnum}{\value{enumi}}
  \end{enumerate}

  Consider a Jordan curve $\CC\supset \post$. The Thurston map $f$ is
  called \defn{expanding} if 
  \begin{enumerate}
    \setcounter{enumi}{\value{mylistnum}}
  \item 
    \label{def:fexpanding}
    \begin{equation*}
      \mesh f^{-n}(\CC) \to 0 \text{ as } n\to \infty.
    \end{equation*}
  \end{enumerate}
  Here $\mesh f^{-n}(\CC)$ is the maximal diameter of a
  component of $S^2\setminus f^{-n}(\CC)$. In
  \cite[Lemma~6.1]{expThurMarkov} it   
  was shown that this definition is independent of the chosen curve
  $\CC$. This notion of ``expansion'' agrees with the one by
  Ha\"{i}ssinsky-Pilgrim in \cite{HaiPil} (see
  \cite[Proposition~6.2]{expThurMarkov}).   
\end{definition}

% Postcritically finite rational maps have the property that it is
% ``easy to take preimages''. Consider a simply connected Jordan domain
% $U$ which does not contain a postcritical point. All branches of the
% inverse maps $f^{-1}\colon U\to S^2$ are well defined. Condition
% \ref{def:pcf} implies that $f^{-1}U$ does not contain a postcritical
% point. So we can take further preimages. So all branches of $f^{-n}$
% are well defined on $U$. 

% In \cite{expThurMarkov} it was shown that there is a (non-unique)
% Jordan curve $\CC\supset \post$ that it invariant with respect to an
% iterate $F=f^n$. By this we mean that $F(\CC)\subset \CC$. 

Fix a Jordan curve $\CC\supset \post$. Here and in the following, we
always assume that such a curve $\CC$ is \defn{oriented}. 
Let $U_w,U_b$ be the two components of $S^2\setminus \CC$, where $\CC$
is positively oriented as boundary of $U_w$.  
The closures of $U_w,U_b$ are denoted by
$X^0_w,X^0_b$. 
We \emph{color} $X^0_w$ \emph{white}, $X^0_b$ \emph{black}. We refer to
$X^0_w$ $(X^0_b)$ as the white (black) $0$\defn{-tile}. 

The closure
of one component of $f^{-n}(U_{w})$ or of $f^{-n}(U_b)$ is called an
$n$\defn{-tile}. In \cite[Proposition~5.17]{expThurMarkov} it was
shown that for such an $n$-tile $X$ the map
\begin{equation}
  \label{eq:fnXntoXhomeo}
  f^n\colon X\to X^0_{w,b}\quad \text{is a homeomorphism.}
\end{equation}
This means in particular that each $n$-tile is a closed Jordan
domain. The set of all $n$-tiles is denoted by $\X^n$. The definition
of ``expansion'' implies that $n$-tiles become arbitrarily small, this
is the (only) reason we require expansion.

\smallskip
In \cite[Theorem~14.2]{expThurMarkov} (see also\cite{CFPsubdiv_rat}) it
was shown that if $f$ is expanding, then for every sufficiently high
iterate $F=f^n$ we can choose $\CC$ to be 
\defn{invariant} with respect to $F$. This means that
$F(\CC)\subset \CC$ ($\Leftrightarrow \CC\subset F^{-1}(\CC)$). It
implies that each $n$-tile is contained in exactly one
$(n-1)$-tile. Furthermore, $F$ may be represented as a
\defn{subdivision} (see \cite[Chapter~12]{expThurMarkov} as well as
the ongoing work of Cannon, Floyd, and Parry 
\cite{CFPfinSub}, \cite{CFPexpcomplex}). We will require $\CC$ to be
$F$-invariant only in 
Section~\ref{sec:construction-h0}. This is clearly a convenience in
the proof, the author however feels that this assumption is not
strictly necessary.

% \smallskip
% On occasion we want to keep track to which $0$-tile an $n$-tile is
% mapped by $f^n$. To this end we \defn{color} the $0$-tile $X_1$
% \defn{white}, and $X_2$ \defn{black}. Each $n$-tile $X$ is colored
% white/black if it is mapped by $f^n$ to $X_1,X_2$. 

The set of all $n$-vertices is defined as 
\begin{equation}
  \label{eq:defVn}
  \V^n=f^{-n}(\post).
\end{equation}
Note that $\post=\V^0\subset \V^1 \subset \dots$ . Each point $v\in
\V^n$ is called an \emph{$n$-vertex}.

\smallskip
The postcritical points (or $0$-vertices) divide the curve $\CC$ into
$k=\#\post(f)$ closed Jordan arcs called \defn{$0$-edges}. 
The closure of one component of $f^{-n}(\CC)\setminus \V^n$ is called a
\defn{$n$-edge}. For each $n$-edge $E^n$ there is a $0$-edge $E^0$
such that $f^n(E^n)=E^0$. Furthermore the map $f^n\colon E^n\to E^0$
is a homeomorphism (\cite[Proposition~5.17]{expThurMarkov}). 
% The preimage of a $0$-edge under $f^n$ from
% (\ref{eq:fnXntoXhomeo}) is called an $n$-edge. 
The set of all
$n$-edges is denoted by $\E^n$, so that $f^{-n}(\CC)=\bigcup
\E^n$. There are $\#\E^n= k(\deg(f))^n$ $n$-edges.

Each $n$-edge will have an \defn{orientation}, meaning it has an 
\emph{initial} and a \emph{terminal} point. A
$0$-edge is 
\defn{positively oriented} if its orientation agrees with the one of
the Jordan curve $\CC$. Similarly, an $n$-edge $E^n$ is called positively
oriented if $f^n$ maps the initial/terminal point of $E^n$ to the
initial/terminal 
point of (the $0$-edge) $f^n (E^n)$. 

Each $n$-tile contains exactly $k=\#\post$ $n$-edges and $k$
$n$-vertices in its boundary.

% From (\ref{eq:fnXntoXhomeo}) it follows that each
% component of $S^2\setminus \bigcup\E^n$ is the interior of an $n$-tile.

% \begin{lemma}
%   \label{lem:nVn}
%   Let $k=\# (\V_1\setminus \V_0)$, $d=\deg f$ then
%   \begin{equation*}
%     \# (\V_{n+1}\setminus \V_n)= k d^n,
%   \end{equation*}
%   for all $n\in \N$.
% \end{lemma}

% \begin{proof}
%   Note that no $v\in \V_{n+1}\setminus \V_n$ is postcritical or
%   periodic. Thus each $v\in \V_{n+1}\setminus \V_n$ has exactly $d$
%   preimages. The claim follows by induction. 
% \end{proof}

\smallskip
The $n$-tiles, $n$-edges, $n$-vertices form a \defn{cell complex} when
viewed as $2$-, $1$-, and $0$-cells (see
\cite[Chapter~5]{expThurMarkov}). 

The $n$-edges and $n$-vertices form a \emph{graph} in the natural
way. Note that this graph may have multiple edges, but no loops.

\smallskip
We \defn{color} the $n$-tiles \defn{white} if they are preimages of
$X^0_w$, \defn{black} if 
they are preimages of $X^0_b$. Each $n$-edge is shared by two $n$-tiles
of different color. Thus $n$-tiles are colored in a ``checkerboard
fashion''. An oriented $n$-edge is positively oriented if and only if
it is positively oriented as boundary of the white $n$-tile it is
contained in (and negatively oriented as boundary of the black
$n$-tile it is contained in). 
The set of white $n$-tiles is denoted by $\X^n_w$, the set
of black $n$-tiles by $\X^n_b$.

\begin{lemma}
  \label{lem:whiteXconn}
  The $n$-tiles of each color are connected, meaning 
  \begin{equation*}
    \bigcup\X^n_w,\; \bigcup\X^n_b \quad \text{are connected sets}.
  \end{equation*}
\end{lemma}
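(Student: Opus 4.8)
The plan is to show that $\bigcup\X^n_w$ is connected; the statement for $\bigcup\X^n_b$ follows by the identical argument (or by symmetry, swapping colors). The key idea is to pass to the orbifold covering picture, or more directly, to use the fact that $X^0_w$ is a Jordan domain and that $f^n$ maps white $n$-tiles homeomorphically onto $X^0_w$. Concretely, I would observe that $\bigcup\X^n_w = \clos(f^{-n}(U_w))$, the closure of the preimage of the open Jordan domain $U_w$. So the real task is to show that $f^{-n}(U_w)$ is connected (its closure is then automatically connected).

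First I would reduce to the case $n=1$ by induction: if $\bigcup\X^{n-1}_w$ is connected, then since each white $n$-tile maps homeomorphically onto $X^0_w$ under $f$, and lies inside a unique $(n-1)$-tile, one wants to assemble connectivity from the bottom up. Actually the cleanest route is a direct argument for general $n$ using the branched cover structure. Since $f^n\colon S^2\to S^2$ is a branched covering and $U_w$ is a connected open set, $f^{-n}(U_w)$ is an open set whose components are permuted by the deck-type action; but in general a preimage of a connected set under a branched cover need \emph{not} be connected. So genuine input is needed. The input is that $X^0_w$ and $X^0_b$ share the \emph{entire} Jordan curve $\CC$ as common boundary, and $\CC\supset\post$. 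Here is the mechanism: pick any two white $n$-tiles $X, X'$. I want a chain of white $n$-tiles from $X$ to $X'$ where consecutive tiles share an $n$-vertex (or even an $n$-edge). The plan is to walk: from $X$, cross an $n$-edge into a black $n$-tile, then cross another $n$-edge of that black tile into a new white tile, and so on — in other words, the dual graph of the whole tiling $\X^n$ is connected (which follows from $S^2$ being connected and the tiles covering $S^2$), and I need to upgrade this to a chain staying within one color. Two white tiles meeting a common black tile meet it along $n$-edges; the boundary of a black $n$-tile is a Jordan curve subdivided into $k$ $n$-edges, and going around this Jordan curve one passes through white tiles on the other side. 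So the white tiles adjacent to a fixed black tile are all connected to each other through the vertices of that black tile — provided that at each $n$-vertex of the black tile, consecutive white tiles around that vertex are linked. That is automatic: the tiles around any vertex are cyclically arranged (the link of a vertex in the cell complex is a circle), alternating in color, so around a vertex $v$ the white tiles form an arc-connected family through $v$ itself. Hence all white tiles touching a given black tile are mutually connected, and combining this with connectivity of the full dual graph of $\X^n$ (obtained since $\bigcup\X^n = S^2$ is connected) gives connectivity of $\bigcup\X^n_w$.

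Let me organize the steps. \textbf{Step 1:} Note $\bigcup\X^n_w = \clos\bigl(f^{-n}(U_w)\bigr)$, so it suffices to connect any two white $n$-tiles by a chain. \textbf{Step 2:} Since $\bigcup\X^n = S^2$ and each $n$-tile is a closed Jordan domain, the adjacency graph on $\X^n$ (vertices = all $n$-tiles, edges = pairs sharing an $n$-edge) is connected; this is standard for a finite CW decomposition of the connected space $S^2$. \textbf{Step 3:} Colors alternate across $n$-edges (stated in the excerpt), so any chain in that adjacency graph alternates white/black. \textbf{Step 4:} Given a black $n$-tile $B$, show all white $n$-tiles sharing an $n$-edge with $B$ lie in one component of $\bigcup\X^n_w$: its boundary $\partial B$ is a Jordan curve carrying $k$ $n$-vertices and $k$ $n$-edges cyclically; consecutive $n$-edges of $B$ share an $n$-vertex $v$, and the two white tiles on the far side of those two edges both contain $v$, hence are connected through $\{v\}\subset\bigcup\X^n_w$. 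Iterating around $\partial B$ links all white neighbors of $B$. \textbf{Step 5:} Concatenate: for any two white $n$-tiles $X,X'$, take an alternating chain $X = W_0, B_1, W_1, B_2, \dots, W_m = X'$ from Step 2; by Step 4 each $W_{i-1}$ and $W_i$ lie in the same component of $\bigcup\X^n_w$, so $X$ and $X'$ do too. This proves $\bigcup\X^n_w$ is connected, and the black case is symmetric.

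The main obstacle is Step 2 — making precise that the adjacency graph of the $n$-tiles is connected and that chains can be taken to cross along $n$-edges rather than merely meeting at vertices. This is where one uses that $f^n\colon X \to X^0_{w,b}$ is a homeomorphism so the $n$-tiles are genuine closed Jordan domains (no pinch points internally) and that they tile $S^2$ as an honest cell complex; a point not on any $n$-edge lies in the interior of a unique $n$-tile, and a path in $S^2$ between interior points of $X$ and $X'$ can be perturbed to cross $f^{-n}(\CC)$ transversally only through interiors of $n$-edges, giving the desired chain. Once this transversality/general-position argument is in hand, Steps 3--5 are essentially combinatorial bookkeeping about the link of a vertex. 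I expect the author's proof may instead argue more slickly — e.g. by directly lifting a suitable arc in $X^0_w \cup X^0_b$ or by invoking a subdivision-rule fact from \cite{expThurMarkov} — but the vertex-link argument above is robust and uses only what has been set up in this section.
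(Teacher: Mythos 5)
Your proposal is correct, but it takes a genuinely different route from the paper. The paper's proof is much shorter and indirect: it first observes that $\bigcup\X^n_w$ (or $\bigcup\X^n_b$) is connected if and only if the $1$-skeleton $\bigcup\E^n$ is, and then argues by contradiction that if $\bigcup\E^n$ were disconnected, some component of $S^2\setminus\bigcup\E^n$ would fail to be simply connected — impossible, since every such component is the interior of an $n$-tile and hence an open Jordan domain. You instead build explicit chains: connectivity of the edge-adjacency (dual) graph of all $n$-tiles, the checkerboard coloring, and the cyclic arrangement of tiles around each $n$-vertex together let you link any two white tiles through the vertices of intermediate black tiles. Your approach is more constructive and in fact proves slightly more (any two white $n$-tiles are joined by a chain of white tiles meeting at $n$-vertices, which is close in spirit to the ``connection'' machinery of Sections 6--7), but it requires the general-position/finite-point-removal argument for connectivity of the dual graph that you correctly flag as the delicate step; one clean way to settle it is to note that the unions of tiles over the equivalence classes generated by edge-adjacency are closed, cover $S^2$, and pairwise meet only in the finite set $\V^n$, so removing $\V^n$ and using connectivity of $S^2\setminus\V^n$ forces a single class. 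The paper's argument avoids all of this bookkeeping at the cost of invoking the planar separation fact that a disconnected compact graph in $S^2$ must have a complementary component that is not simply connected. Both proofs ultimately rest on the same structural input, namely that the $n$-tiles are closed Jordan domains tiling $S^2$ with colors alternating across $n$-edges.
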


\begin{proof}
  Note that $\bigcup \X^n_w$ (or $\bigcup\X^n_b$) is connected if and
  only if $\bigcup \E^n$ is connected. 

  If $\bigcup\E^n$ is not connected, one component of $S^2\setminus
  \bigcup\E^n$ is not simply connected. This contradicts the fact that each
  such component is the interior of an $n$-tile, thus simply connected.
\end{proof}

In \cite[Chapter~8]{expThurMarkov} \emph{visual metrics} for an
expanding Thurston map $f$ were considered. If $n$-tiles have been
defined (in terms of a Jordan curve $\CC\supset \post$), we define
$m=m_{f,\CC}$ by
\begin{equation*}
  m(x,y) := \max\{n\in \N | \text{ there exist non-disjoint
    $n$-tiles } X\ni x, Y\ni y\},
\end{equation*}
for all $x,y\in S^2$, $x\neq y$. We set $m(x,x)= \infty$. 
A metric $\varrho$ on $S^2$ is called a \emph{visual metric} for $f$ if
there is a constant $\lambda>1$ (called the \emph{expansion factor} of
$\varrho$), such that
\begin{equation}
  \label{eq:def_visuald}
  \varrho(x,y) \asymp \lambda^{-m(x,y)},
\end{equation}
for all $x,y\in S^2$ and a constant $C=C(\asymp)$ independent of
$x,y$. Here it is understood that $\lambda^{-\infty}=0$. 

Visual metrics always exist, see \cite[Theorem~15.1]{expThurMarkov}, as
well as \cite{HaiPil}. In fact $\varrho$ can be chosen such that $f$ is an 
%  it was
% shown that if $f$ is expanding we can equip the
% sphere $S^2$ with a (non-unique) \defn{visual metric}
% $\abs{x-y}_{\SC}$ with respect to which $f$ is an 
\defn{expanding local similarity} with respect to $\varrho$. More precisely,
for each $x\in S^2$ there exists a neighborhood $U_x\ni x$,
such that
\begin{equation}
  \label{eq:expmetric}
  \frac{\varrho(f(x),f(y))}{\varrho(x,y)} = {\lambda}, 
\end{equation}
for all $y\in U_x\setminus\{x\}$. We do however not need this stronger
form. 

\bigskip
We fix a curve $\CC\supset \post(f)$ as well as an iterate $F=f^n$ for
now, assuming they have certain properties (more precisely, there is a
pseudo-isotopy $H^0$ as in the next section). In Section
\ref{sec:construction-h0} they will be chosen properly. Note that the
postcritical set of $F$ equals the postcritical set of $f$, which is
thus just denoted by ``$\post$''. Throughout the construction we
denote by
\begin{equation*}
  \boxed{\phantom{x}d := \deg F = (\deg f)^n, \quad k:=\#\post.\phantom{x}   }
\end{equation*}

From now on $m$-tiles, $m$-edges,
$m$-vertices are understood to be with respect to $(F,\CC)$, meaning they are 
$mn$-tiles, $mn$-edges, $mn$-vertices with respect to $(f,\CC)$.  

% \smallskip
% There are two reasons to pick an iterate $F=f^n$. The first is to
% assume that the 
% curve $\CC\supset\post$ is $F$-invariant as above. The second (more
% important one) is to ensure that $1$-tiles (again: with respect to
% $F$) are ``suitably small''. 

\smallskip
Clearly expansion of $f$ implies expansion of $F$. A visual metric for
$f$ with expansion factor $\lambda$ is a visual metric for $F$ with
expansion factor $\Lambda=\lambda^n$. Expression
(\ref{eq:expmetric}) continues to hold, where we have to
replace $\lambda$ by $\Lambda:=\lambda^n>1$. 
% The argument from which
% (\ref{eq:expmetric}) was obtained yields the following. 
% Here
% $\mathcal{N}_\epsilon(A)$ is the
% $\epsilon$-neighborhoods of a set $A\subset S^2$ defined in terms of
% the visual metric $\varrho$.  

\begin{lemma}
  \label{lem:Vepspre}
  Let $\varrho$ be a visual metric for $F$ with expansion factor
  $\Lambda$. Then there are an $\epsilon_0>0$ and a constant $K\geq 1$
  such that the following holds. For any 
  $\epsilon\in (0,\epsilon_0)$ let $\mathcal{N}(\V^1,\epsilon)$ be the
  $\epsilon$-neighborhood of $\V^1$ (defined in terms of $\varrho$).
  Then there is a neighborhood $V^1_\epsilon$ of $\V^1$ such that
  \begin{align*}
    &\mathcal{N}(\V^1,\epsilon/K) \subset V^1_\epsilon \subset
    \mathcal{N}(\V^1,\epsilon)
    \intertext{and for all $n\in \N$ the set $V = V^{n+1}_{\Lambda{^{-n}}\epsilon}:=
      F^{-n}(V^1_{\epsilon})$ satisfies}
    &\mathcal{N}(\V^{n+1},\Lambda^{-n}\epsilon/K) \subset V \subset
    \mathcal{N}(\V^{n+1},\Lambda^{-n}\epsilon).    
  \end{align*}
\end{lemma}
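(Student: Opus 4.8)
The plan is to build $V^1_\epsilon$ by hand as a union of small tile-complexes around each vertex of $\V^1$, and then obtain the $(n+1)$-level statement for free by pulling back under $F^n$ and using the local-similarity property \eqref{eq:expmetric} of the visual metric. First I would note that $\V^1$ is a finite set, so it suffices to handle a single vertex $v\in\V^1$ and then take the union. For each $v$, let $W_v$ be the union of all closed $N$-tiles containing $v$, for a fixed large integer $N$ chosen below. Since $F$ is expanding, $\mesh F^{-N}(\CC)\to 0$, so $\diam W_v \lesssim \Lambda^{-N}$; and since $v$ is an $1$-vertex (hence an $N$-vertex), $W_v$ is a closed neighborhood of $v$. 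The two inclusions $\mathcal N(\V^1,\epsilon/K)\subset V^1_\epsilon\subset\mathcal N(\V^1,\epsilon)$ are then just the statement that, for an appropriate relation between $\epsilon$ and $N$, the complex $W_v$ both contains a metric ball of radius $\asymp\epsilon$ about $v$ and is contained in one of radius $\asymp\epsilon$ about $v$. The inner inclusion uses that any point at $\varrho$-distance less than $c\Lambda^{-N}$ from $v$ lies in an $N$-tile meeting an $N$-tile containing $v$ (by definition of $m$ and \eqref{eq:def_visuald}), hence in $W_v$; the outer inclusion uses $\diam W_v \lesssim \Lambda^{-N}$. So set $\epsilon_0$ so that every $\epsilon<\epsilon_0$ determines a valid $N=N(\epsilon)$, put $V^1_\epsilon := \bigcup_{v\in\V^1} W_v$, and let $K$ absorb all the implicit constants from \eqref{eq:def_visuald} and the bounded overlap of the finitely many $W_v$.

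Next I would pass to level $n+1$. By definition $V := F^{-n}(V^1_\epsilon) = \bigcup_{v\in\V^1} F^{-n}(W_v)$, and since $W_v$ is a union of $N$-tiles, $F^{-n}(W_v)$ is a union of $(N+n)$-tiles; its components are precisely the tile-complexes around the various preimages $u\in F^{-n}(v)\subset\V^{n+1}$. (Here one uses \eqref{eq:fnXntoXhomeo}: $F^n$ restricted to each high-level tile is a homeomorphism, so preimages of $N$-tiles are $(N+n)$-tiles and incidence is preserved.) Thus $V$ is exactly the analogous construction one level scaled: around each $(n+1)$-vertex $u$ it contains the union of all $(N+n)$-tiles through $u$. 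The local similarity \eqref{eq:expmetric} (with $\lambda$ replaced by $\Lambda$) says $F^n$ scales the visual metric by $\Lambda^n$ near each point, so the tile-complex around $u$ has diameter $\asymp\Lambda^{-(N+n)} = \Lambda^{-n}\cdot\Lambda^{-N} \asymp \Lambda^{-n}\epsilon$, and it contains a metric ball about $u$ of comparable radius. Repeating the two-inclusion argument verbatim at this scale — with the \emph{same} constant $K$, since it only depended on the structural constant $C(\asymp)$ in \eqref{eq:def_visuald} and on bounded overlap, both of which are level-independent — yields
\[
  \mathcal N(\V^{n+1},\Lambda^{-n}\epsilon/K)\subset V\subset \mathcal N(\V^{n+1},\Lambda^{-n}\epsilon).
\]

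The step I expect to be the main obstacle is pinning down the precise bookkeeping in the first paragraph: choosing $N=N(\epsilon)$ and the single constant $K$ so that \emph{both} inclusions hold simultaneously and \emph{uniformly}, i.e. so that $K$ does not secretly depend on $\epsilon$ (equivalently on $N$). This requires using \eqref{eq:def_visuald} in the quantitative form $c_1\Lambda^{-m(x,y)}\le\varrho(x,y)\le c_2\Lambda^{-m(x,y)}$ and checking that the radius of the largest inscribed ball and the diameter of $W_v$ differ only by a bounded multiplicative factor determined by $c_1,c_2$ and by how many $1$-vertices can be within distance $\epsilon_0$ of each other — all of which are fixed once $F$ and $\CC$ are fixed. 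One also has to be slightly careful that $\epsilon/K$-neighborhoods of distinct vertices are handled correctly when two vertices of $\V^1$ are close; shrinking $\epsilon_0$ below the minimal $\varrho$-distance between distinct points of $\V^1$ disposes of this. Everything else is routine: finiteness of $\V^1$, the homeomorphism property \eqref{eq:fnXntoXhomeo} for the pullback, and the scaling relation $\Lambda=\lambda^n$ already recorded before the lemma.
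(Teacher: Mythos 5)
The paper gives no argument for Lemma~\ref{lem:Vepspre} at all: it is dispatched in one line by citing Lemmas~8.10 and~8.12 of \cite{expThurMarkov} (the diameter bound $\diam X^n\asymp\Lambda^{-n}$ for $n$-tiles, and the accompanying statement relating small $\varrho$-distance to membership in non-disjoint $n$-tiles). Your plan reconstructs the content of those citations: build $V^1_\epsilon$ as a union of tile stars at a level $N=N(\epsilon)$, pull back by $F^n$, use \eqref{eq:fnXntoXhomeo} to identify $F^{-n}(W_v)$ with the corresponding stars at level $N+n$ around the points of $\V^{n+1}=F^{-n}(\V^1)$, and rerun the two inclusions one level down. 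This is the right construction, and the level-$(n+1)$ half is clean; note that you do not even need the local similarity \eqref{eq:expmetric} there --- applying \eqref{eq:def_visuald} directly to $(N+n)$-tiles already gives the scaled inclusions with level-independent constants, which is exactly why the paper remarks that the stronger form \eqref{eq:expmetric} is not needed.

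Two points in your first paragraph need tightening. First, your inner inclusion lands in the wrong set: from $\varrho(x,v)$ small you conclude that $x$ lies in an $N$-tile \emph{meeting} an $N$-tile containing $v$, but you defined $W_v$ as the union of $N$-tiles \emph{containing} $v$; you must either enlarge $W_v$ to this double star (which only costs a bounded factor in the diameter bound, hence in $K$) or argue further. Second, and more substantively, the implication ``$\varrho(x,v)<c_1\Lambda^{-N}$ $\Rightarrow$ there exist non-disjoint $N$-tiles containing $x$ and $v$'' does not follow formally from \eqref{eq:def_visuald}: the lower bound $\varrho\ge c_1\Lambda^{-m(x,v)}$ only yields $m(x,v)>N$, i.e.\ non-disjoint tiles at the single level $m(x,v)$, and since $\CC$ is not assumed $F$-invariant at this stage of the paper, tiles of different levels need not be nested, so one cannot descend from level $m(x,v)$ to level $N$ for free. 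What is actually needed is the quantitative separation statement that disjoint $N$-tiles are at $\varrho$-distance $\gtrsim\Lambda^{-N}$ (equivalently, the monotonicity in $n$ of the relation ``there exist non-disjoint $n$-tiles containing $x$ and $y$''); this is precisely the ingredient the paper imports from \cite{expThurMarkov}, and it is the one step your plan would have to prove rather than assert.
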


The proof of this lemma follows immediately from
\cite[Lemmas~8.9 and 8.10]{expThurMarkov}.

\section{The approximations  $\gamma^n$}
\label{sec:appr-gn}

We begin the proof of Theorem~\ref{thm:main}. We assume (until the end
of Section~\ref{sec:construction-h0}) that $F$ ($=f^n$, the index
``$n$''  
however will be ``recycled'')  is an expanding Thurston map, and
$\CC\supset \post$ is a fixed Jordan curve. The $n$-tiles and
$n$-edges are defined in terms of $(F,\CC)$; see the 
previous section. Furthermore we fix a visual metric $\varrho$
for $F$ with expansion factor $\Lambda>1$; see
(\ref{eq:def_visuald}). Metrical properties and objects, such as the 
diameter and neighborhoods, will always be defined in terms of this
metric.  

\smallskip
The desired invariant Peano curve $\gamma$ will be constructed as the
limit of 
\defn{approximations} $\gamma^n$. 
Here $\gamma^0$ is the Jordan curve $\CC\supset\post$. The first
approximation $\gamma^1$ will be constructed in Section
\ref{sec:construction-h0}, 
more precisely a \defn{pseudo-isotopy}
$H^0$ (rel.\ $\post$) that deforms $\gamma^0$ to $\gamma^1$ will be
constructed.  

In this section the approximations $\gamma^n$ of the invariant Peano
curve will be constructed by repeated \defn{lifts} of $H^0$. These curves
are however not 
yet parametrized, they are \defn{Eulerian circuits}.  

\subsection{Pseudo-isotopies }
\label{sec:degen-isot}

\begin{definition}[Pseudo-isotopies]
  \label{def:degen-isot}
  A homotopy 
  \begin{equation*}
    H\colon S^2\times [0,1]\to S^2
  \end{equation*}
  is called a \defn{pseudo-isotopy} if it is an isotopy on $S^2\times
  [0,1)$. We always require that $H(x,0)= x$ on $S^2$. 
  If $H(\cdot,t)$ is 
  constant on a set $A\subset S^2$ it is an
  \defn{pseudo-isotopy rel.\ 
    $A$}; alternatively we then say that $H$ is \defn{supported} on
    $S^2\setminus A$. We interchangeably write $H_t(x)=H(x,t)$ to
    unclutter notation.    
\end{definition}

\begin{remark}
  Given a pseudo-isotopy $H_t$ as above it follows that $H_1$ is
  \emph{surjective} ($S^2\setminus \{\text{point}\}$ has different
  homotopy type than $S^2$) and \emph{closed} (since we are dealing with
  compact Hausdorff spaces). A pseudo-isotopy on a general space $S$
  is required to end in a surjective, closed map.  
\end{remark}

Our starting point is a
pseudo-isotopy $H^0=H^0(x,t)$ as follows. This is the central object
of the whole construction. In this and 
the following section we show that such a $H^0$ is sufficient to
construct the invariant Peano curve as desired. The construction of
$H^0$ itself will be done in Section~\ref{sec:construction-h0}. In
Lemma~\ref{lem:H0epsH0} an equivalent condition for the existence of
$H^0$ will be given.  
\begin{definition}[Pseudo-isotopy $H^0$]
  \label{def:pseudo-isotopy-h0}
  We consider a pseudo-isotopy $H^0$ with the following
  properties.
  \begin{enumerate}[($H^0$ 1)]
  \item 
    \label{item:H0_1}
    $H^0$ is a pseudo-isotopy rel.\ $\V^0=\post$ (the set of all
    postcritical points). 
  \item
    \label{item:H0_2}
    The set of all $0$-edges $\bigcup \E^0=\CC$ is deformed by $H^0$
    to $\bigcup\E^{1}$, 
    \begin{equation*}
      H^0_1\left(\bigcup\E^0\right)=\bigcup \E^{1}.
    \end{equation*}
    %
    % save counter of enumeration
    \setcounter{mylistnum}{\value{enumi}}
  \end{enumerate}

  To simplify the discussion we require that $H^0$ deforms the
  $0$-edges to $1$-edges as ``nicely as possible'' (see Lemma
  \ref{lem:H0maps_edges} below). The
  construction would still work however, without imposing the
  following two properties.

  \begin{enumerate}[($H^0$ 1)]
    % reset counter
    \setcounter{enumi}{\value{mylistnum}}
  \item 
    \label{item:H0_3}
%     For $t$ close to $1$ the pseudo-isotopy $H^0$ does only deform
%     points   
%     Remove an
%     $\epsilon$-neighborhood around each $1$-vertex from the sphere.  
%     More precisely
    Let $\epsilon_0>0$ be the constant from Lemma~\ref{lem:Vepspre}, 
    $0<\epsilon<\min \{ \epsilon_0, 1/2\}$, and
    $V^1_{\epsilon}$ be a neighborhood of
    $\V^{1}$ as in Lemma~\ref{lem:Vepspre}, 
%     and $\E^1_{\epsilon}:=\bigcup \E^{1}\setminus
%     V^1_{\epsilon}$. 
    we require that 
    \begin{align*}
%        & H^0_{1-\epsilon}\left({\textstyle \bigcup} \E^0\right)=\E^1_\epsilon
%       \text{ and}
%       \\
      & H^0\colon S^2 \times [1-\epsilon,1]\to S^2 \text{ is supported
        on } V^1_{\epsilon}.            
    \end{align*}
    So $H^0$ ``freezes'' on $S^2\setminus V^1_{\epsilon}$.
  \item
    \label{item:H0_4}
    Consider a $1$-vertex $v$. Only finitely many points of
    $\CC=\bigcup \E^0$ are deformed by $H^0$ to $v$. In other words, we
    require that
    \begin{equation*}
      \left\{x\in \bigcup \E^0 \bigm\vert
        H^0_1(x)=v\right\} \text{ is a finite set.} 
%      \left(\left[H^n(\cdot,1)\right]^{-1}\{v\}\right)\cap \bigcup\E^n
%      \text{ is a finite set.} 
    \end{equation*}    
    %
    % save counter of enumeration
    \setcounter{mylistnum}{\value{enumi}}
  \end{enumerate}
  One final assumption will be made on $H^0$. However the precise
  meaning will only be explained in Section
  \ref{sec:gamman+1-d-fold}. 
  \begin{enumerate}[($H^0$ 1)] 
    \setcounter{enumi}{\value{mylistnum}}
  \item
    \label{item:H0_5}
    View $\gamma^0=\CC$ as a circuit of $0$-edges. Let $\gamma^1$
    be the Eulerian circuit obtained from $H^0$, 
    see Definition \ref{def:gamma_n} (\ref{item:gamma_n_4}). Then
    \begin{equation*}
      F\colon \gamma^1 \to \gamma^0,     
    \end{equation*}
    is a $d$-fold cover, see Definition \ref{def:dfoldcover}. 
  \end{enumerate}
\end{definition}

Consider  $\{x_j\}:= (H^0_1)^{-1}(\V^{1}) \cap\CC$, the
set of points on $\CC= \bigcup \E^0$ 
that are mapped by $H^0_1$ to some $1$-vertex (each $x_j$ possibly
to a different one). 
Note that $\{x_j\}$  is finite by ($H^0$ \ref{item:H0_4}) and
$\{x_j\}\supset \post=\V^0$ by ($H^0$ \ref{item:H0_1}). 
Thus the points
$\{x_j\}$ divide $\CC$ (and each $0$-edge) into closed arcs
$A_j$. Recall that $d=\deg F, k=\#\post$.

\begin{lemma}
  \label{lem:H0maps_edges}
  There are $kd$ arcs $A_j$ as above.
  Furthermore 
  \begin{align*}
    & E^1_j:=H^0_1(A_j) \text{ is a $1$-edge and}
    \\
    & H^0_1\colon A_j \to E^1_j \text{ is a homeomorphism,}
    \intertext{for each $j$. On the other hand}
    &\text{ each $1$-edge $E^1$ is the image of one such $A_j$ by
      $H^0_1$.} 
  \end{align*}
\end{lemma}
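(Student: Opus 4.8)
The plan is to establish Lemma~\ref{lem:H0maps_edges} by first understanding what $H^0_1$ does to $\CC=\bigcup\E^0$ via the properties ($H^0$ \ref{item:H0_1})--($H^0$ \ref{item:H0_4}), and in particular property ($H^0$ \ref{item:H0_2}), which tells us $H^0_1(\CC)=\bigcup\E^1=F^{-1}(\CC)$. First I would fix one arc $A_j$ of $\CC\setminus\{x_j\}$ and show $E^1_j := H^0_1(A_j)$ is a single $1$-edge: since the endpoints of $A_j$ are consecutive points of $(H^0_1)^{-1}(\V^1)\cap\CC$, the open arc $\inte A_j$ maps into $\bigcup\E^1\setminus\V^1$, which is the disjoint union of the open $1$-edges; since $\inte A_j$ is connected, its image lies in a single open $1$-edge $\inte E^1_j$, and taking closures (using that $H^0_1$ is continuous and closed on the compact sphere) gives $H^0_1(A_j)\subseteq E^1_j$. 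For the reverse inclusion I would argue that $H^0_1(A_j)$ must be all of $E^1_j$: $H^0_1(A_j)$ is a closed connected subset of the arc $E^1_j$ containing at least one interior point, and one checks its endpoints must be the two $1$-vertices bounding $E^1_j$ — otherwise an endpoint $x_j$ of $A_j$ would map to an interior point of $E^1_j$, but an endpoint of $A_j$ maps by construction to a $1$-vertex; hence $H^0_1(A_j)=E^1_j$.

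Next I would show that $H^0_1|_{A_j}\colon A_j\to E^1_j$ is a homeomorphism. Since both are arcs and the map is a continuous surjection between compact Hausdorff spaces, it suffices to show injectivity. The key point is that $H^0$ is an \emph{isotopy} on $S^2\times[0,1)$, so $H^0_t|_\CC$ is an embedding for every $t<1$; passing to the limit $t\to 1$, the only way injectivity can fail on $\CC$ is by a collapse onto the ``frozen'' structure, but properties ($H^0$ \ref{item:H0_2}) and ($H^0$ \ref{item:H0_4}) are precisely designed to rule out collapsing an arc: ($H^0$ \ref{item:H0_4}) says only finitely many points of $\CC$ map to any given $1$-vertex, so no nondegenerate subarc of $A_j$ is collapsed to a vertex, and since $H^0_1(A_j)\subseteq E^1_j$ is a single $1$-edge with no self-crossings in the image, the map restricted to $A_j$ is monotone along the arc, hence injective. (Here I am implicitly using the standard fact — see Section~\ref{sec:some-topol-lemm} — that a monotone continuous surjection of an arc onto an arc with no point having a nondegenerate preimage-interval is a homeomorphism.)

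For the counting statement, that there are exactly $kd$ arcs $A_j$ and that every $1$-edge arises: by the previous paragraph the arcs $A_j$ partition $\CC$ and map homeomorphically onto $1$-edges, so distinct arcs either map to distinct $1$-edges or to the same one; since $H^0_1|_\CC$ is a limit of embeddings and $H^0_1(\CC)=\bigcup\E^1$ with the $1$-edges meeting only at $1$-vertices, the map $\CC\to\bigcup\E^1$ is ``generically one-to-one'' — more precisely, two distinct arcs $A_i\ne A_j$ cannot map onto the same $1$-edge $E^1$, since that would force a point of $\inte E^1$ to have two preimages, contradicting the fact that $H^0_t|_\CC$ is an embedding for $t<1$ together with property ($H^0$ \ref{item:H0_3}) (the freezing on $S^2\setminus V^1_\epsilon$, which controls what happens near $t=1$ away from the vertices). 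Hence the arcs $A_j$ are in bijection with the $1$-edges, and by the count recorded in Section~\ref{sec:thurston-maps-as} there are $\#\E^1 = k\,(\deg F) = kd$ of them, giving $kd$ arcs; and every $1$-edge is hit exactly once.

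I expect the main obstacle to be the injectivity/monotonicity argument at $t=1$: the map $H^0_1$ is genuinely non-injective on $S^2$ (it is a pseudo-isotopy, not an isotopy), so one must carefully use exactly the hypotheses ($H^0$ \ref{item:H0_1})--($H^0$ \ref{item:H0_4}) to confine the collapsing to the finite vertex set and prevent any nondegenerate subarc of a single $A_j$, or two distinct arcs, from being identified. The cleanest route is probably to phrase it via the limit of embeddings $H^0_t|_\CC$ ($t<1$) and a compactness argument, combined with ($H^0$ \ref{item:H0_3})--($H^0$ \ref{item:H0_4}) to handle the degeneration near the $1$-vertices; the topological lemmas of Section~\ref{sec:some-topol-lemm} should package this.
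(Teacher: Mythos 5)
Your overall strategy is the paper's: confine all possible collapsing of $H^0_1|_{\CC}$ to the finite set $(H^0_1)^{-1}(\V^1)\cap\CC$ by playing the freezing property ($H^0$ \ref{item:H0_3}) against the fact that $H^0_{1-\epsilon}$ is a homeomorphism. The paper's entire proof is in fact one observation applied three times: if $x\neq y$ on $\CC$ satisfy $H^0_1(x)=H^0_1(y)=z$ with $z$ not a $1$-vertex, then $z\notin V^1_\epsilon$ for small $\epsilon$, so by freezing $H^0_{1-\epsilon}(x)=H^0_1(x)=z=H^0_1(y)=H^0_{1-\epsilon}(y)$, contradicting injectivity of $H^0_{1-\epsilon}$. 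You state this mechanism correctly when ruling out two distinct arcs mapping onto the same $1$-edge, and you name it as the main obstacle at the end, but in the two places where it is actually needed you substitute arguments that do not work.

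First, surjectivity of $H^0_1\colon A_j\to E^1_j$: the failure mode is not ``an endpoint of $A_j$ maps to an interior point of $E^1_j$'' (that is already excluded by the definition of the points $x_j$); it is that \emph{both} endpoints $x_j,x_{j+1}$ map to the \emph{same} endpoint of $E^1_j$, in which case $H^0_1(A_j)$ is a proper subarc. This case is consistent with everything you establish in your first paragraph and must be killed by producing two distinct points of $\inte A_j$ with a common image in $\inte E^1_j$ (an intermediate-value ``mountain'' argument) and then invoking the freezing contradiction. Second, injectivity on $A_j$: the claim that the map is ``monotone along the arc, hence injective'' is unjustified --- any continuous map of an arc into an arc trivially has no self-crossings in its image, monotone maps need not be injective, and ($H^0$ \ref{item:H0_4}) only forbids collapsing a nondegenerate subarc onto a \emph{vertex}, not onto an interior point of $E^1_j$, nor does it forbid back-and-forth behavior. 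Both gaps close immediately with the same one-line freezing argument you already used for the distinct-arcs step; once that is run uniformly, the counting conclusion ($kd$ arcs in bijection with $\E^1$) follows as you say from ($H^0$ \ref{item:H0_2}) and $\#\E^1=kd$.
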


\begin{proof}
  Consider one arc $A_j$ as in the statement with endpoints
  $x_j,x_{j+1}$.  
  Note that $\bigcup \E^{1} \setminus \V^{1}$ is disconnected,
  each component is the interior of a $1$-edge. Thus
  \begin{equation*}
    H^0_1(\inte A_j)\subset \inte E^1_j,
  \end{equation*}
  for some $1$-edge $E^1_j$. Assume $H^0_1\colon A_j\to E^1_j$ is not
  a homeomorphism. 

  Assume first that $H^0_1(A_j)\ne E^1_j$. Then
  $H^0_1(x_j)=H^0_1(x_{j+1})$ and there are distinct points $x,y\in \inte A_j$
  mapped to the same point $z$ by $H^0_1$. But $z\in S^2\setminus
  V^1_\epsilon$ for sufficiently small $\epsilon$. Then
  \begin{equation*}
    H^0_{1-\epsilon}(x)=H^0_1(x)=H^0_1(y)=H^0_{1-\epsilon}(y), 
  \end{equation*}
  which is a contradiction ($H^0_{1-\epsilon}$ is a
  homeomorphism). Thus $H^0_1(A_j)=E^1_j$. Exactly the same argument
  shows that $H^0_1\colon A_j\to E^1_j$ is bijective, hence a
  homeomorphism.  

  \smallskip
  Using the previous argument again shows that distinct arcs $A_i,A_j$
  map to distinct $1$-edges $E^1_i,E^1_j$.  

  \smallskip
  Finally, since $H^0_1(\bigcup \E^0)=\bigcup \E^1$ (by ($H^0_1$
  \ref{item:H0_2})) each
  $1$-edge $E^1$ is the image of one such arc $A_j$ by $H^0_1$.

  Thus there is exactly one $A_j$ for each $1$-edge, meaning there are
  $kd$ such arcs. 
\end{proof}

\subsection{Lifts of pseudo-isotopies}
\label{sec:lifts-degen-isot}

\begin{lemma}[Lift of pseudo-isotopy]
  \label{lem:lift_degenerate_isotopies}
  Let $H\colon S^2\times [0,1]\to S^2$ be a pseudo-isotopy rel.\
  $\post=\V^0$. Then $H$ can be \defn{lifted uniquely} by $F$ to a
  pseudo-isotopy $\widetilde{H}$ rel.\ $\V^1$. This means that
  $F(\widetilde{H}(x,t))= H(F(x),t)$ for all $x\in S^2, t\in [0,1]$,
  i.e., the following diagram commutes.

  \begin{equation*}
    \xymatrix{
      S^2 \ar[r]^{\widetilde{H}} \ar[d]_F & S^2 \ar[d]^F
      \\
      S^2 \ar[r]_{H} & S^2
    }
  \end{equation*}
    
  Furthermore 
  \begin{enumerate}
  \item
    \label{item:lift1}
    if $H$ is a pseudo-isotopy rel.\ a set $S\subset S^2$,
    then the lift $\widetilde{H}$ is a pseudo-isotopy rel.\
    $F^{-1} (S)$.
  \item
    \label{item:lift2}
    Let $H^n$ be the lift of $H$ by an iterate $F^n$. Then
    \begin{equation*}
      \diam H^n:= \max_{x\in S^2} \diam \{H^n(x,t) \mid t\in [0,1]\}  
      \lesssim
      \Lambda^{-n}.
    \end{equation*}
    Here the diameter is measured with respect to the fixed visual metric
    with expansion factor $\Lambda>1$. The constant
    $C(\lesssim)$ is independent of $n$. 
  \end{enumerate}
\end{lemma}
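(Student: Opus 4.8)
The plan is to construct the lift $\widetilde H$ via the path-lifting property of the branched cover $F$, exactly as one lifts a homotopy through a covering map, with the branch points requiring a separate (elementary) argument.

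First I would handle existence and uniqueness of $\widetilde H$ as a continuous map. For each $x\in S^2$, consider the path $t\mapsto H(F(x),t)$ in $S^2$ starting at $F(x)$. Away from the postcritical set $F$ is a genuine covering map, so over $S^2\setminus\post$ this path lifts uniquely to a path starting at $x$, provided the path stays in $S^2\setminus\post$; since $H$ is rel.\ $\post$, a path $t\mapsto H(w,t)$ either is constant (if $w\in\post$) or never meets $\post$ for $t<1$ and meets it only possibly at $t=1$ — here I must use the structure of $\post$ and that $H_t$ is a homeomorphism for $t<1$. For $w\in F^{-1}(\post)$ define $\widetilde H(w,t)=w$; for $w\notin F^{-1}(\post)$ define $\widetilde H(w,t)$ to be the endpoint of the unique lift. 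Continuity in $(x,t)$ jointly follows from the standard homotopy-lifting argument on the covering $F\colon S^2\setminus F^{-1}(\post)\to S^2\setminus\post$, and continuity at points of $F^{-1}(\post)$ is checked locally using the normal form $z\mapsto z^q$ from Definition~\ref{def:f}(\ref{def:fbranch}): in these coordinates $H$ near such a point is small (it is rel.\ the center), so its $q$-th root branches are controlled. This gives a continuous $\widetilde H$ with $F\circ\widetilde H_t=H_t\circ F$ and $\widetilde H_0=\id$; uniqueness is immediate from uniqueness of path lifts together with the forced values on $F^{-1}(\post)$.

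Next I would verify that $\widetilde H$ is a pseudo-isotopy rel.\ $F^{-1}(\post)$, and more generally item~(\ref{item:lift1}). For $t<1$, $H_t$ is a homeomorphism; since $F$ is a branched cover, a lift of a homeomorphism that fixes the (branched) fiber structure is again a homeomorphism — concretely, $\widetilde H_t$ is a continuous bijection (its inverse is the lift of $H_t^{-1}$, which exists by the same construction applied to the pseudo-isotopy running backwards in a suitable sense, or one simply checks injectivity and surjectivity directly from $F\circ\widetilde H_t=H_t\circ F$ and finiteness of fibers) of the compact Hausdorff space $S^2$, hence a homeomorphism. Thus $\widetilde H$ is an isotopy on $[0,1)$, i.e.\ a pseudo-isotopy. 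If moreover $H$ is rel.\ $S$, then for $w\in F^{-1}(S)$ the path $t\mapsto H(F(w),t)$ is constant at $F(w)$, so its lift starting at $w$ is constant at $w$; hence $\widetilde H$ is rel.\ $F^{-1}(S)$. Applying this with $S=\post$ (so $F^{-1}(S)=\V^1$) gives the rel.\ $\V^1$ claim in the main statement.

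Finally, for item~(\ref{item:lift2}) I would argue by tracking tile sizes. Let $H^n$ be the lift of $H$ by $F^n$. For any $x$, the track $\{H^n(x,t):t\in[0,1]\}$ is a connected set mapped by $F^n$ into the track $\{H(F^n(x),t):t\in[0,1]\}$, which has some fixed diameter $D:=\diam H$ (independent of $n$) and in particular is contained in a bounded number of $0$-tiles. Pulling back by $F^n$: a connected set whose image under $F^n$ lies in a bounded union of $0$-tiles is itself contained in a bounded union of $n$-tiles (this is the standard tile-pullback fact from \cite{expThurMarkov}, following from \eqref{eq:fnXntoXhomeo} and connectivity as in Lemma~\ref{lem:whiteXconn}). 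By the definition of the visual metric \eqref{eq:def_visuald}, an $n$-tile has diameter $\asymp\Lambda^{-n}$, so a bounded union of $n$-tiles forming a connected set has diameter $\lesssim\Lambda^{-n}$ with constant independent of $n$. The main obstacle, and the step deserving the most care, is the joint continuity of $\widetilde H$ at the branch points $F^{-1}(\post)$ — i.e.\ checking that the lifted tracks shrink to the center there rather than jumping — which is where the local normal form and the fact that $H$ is supported away from $\post$ (indeed ``freezes'' near $\V^1$ by ($H^0$ \ref{item:H0_3})) must be invoked; everything else is a routine application of covering-space theory and the subdivision/visual-metric machinery already set up.
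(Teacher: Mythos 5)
Your construction of $\widetilde H$ by path lifting over the covering $F\colon S^2\setminus \V^1\to S^2\setminus\post$, with the branch points handled via the local normal form $z\mapsto z^q$, and your derivation of item~(\ref{item:lift1}) from uniqueness of lifts of constant paths, is exactly the standard route the paper delegates to \cite[Proposition~10.1]{expThurMarkov}; that part is fine (modulo the small point, which you should make explicit, that for $w\notin\V^1$ the path $t\mapsto H(F(w),t)$ may enter $\post$ at $t=1$, so the lift is first defined on $[0,1)$ and then extended by a limit, which exists because $F$ is proper with finite fibers).

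The gap is in item~(\ref{item:lift2}). The step ``a connected set whose image under $F^n$ lies in a bounded union of $0$-tiles is itself contained in a bounded union of $n$-tiles'' is false, and with it the diameter bound collapses. There are only two $0$-tiles, so \emph{every} subset of $S^2$ has image contained in a bounded union of $0$-tiles; and the preimage $F^{-n}(X^0_w)=\bigcup\X^n_w$ is a \emph{connected} union of $d^n$ white $n$-tiles (Lemma~\ref{lem:whiteXconn}) whose diameter does not go to $0$. So knowing that the track of $H^n(x,\cdot)$ is connected and maps into a set of diameter $D$ gives no control on its diameter. The correct argument needs two ingredients you omit: (i) a \emph{uniform subdivision} of the tracks --- by uniform continuity of $H$ on the compact $S^2\times[0,1]$ there is a partition $0=t_0<\dots<t_N=1$, with $N$ independent of the base point, such that each piece $H(y,[t_i,t_{i+1}])$ has diameter less than a Lebesgue number for the cover of $S^2$ by $0$-flowers (the connected open sets around $0$-vertices and $0$-tile interiors, i.e.\ the sets of \cite[Lemma~8.10]{expThurMarkov}); and (ii) the fact that the relevant lifted set is not an arbitrary connected subset of a preimage but the \emph{path lift} of each such piece, hence lies in a single connected component of $F^{-n}(\text{flower})$, which is an $n$-flower of diameter $\lesssim\Lambda^{-n}$. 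Chaining the $N$ consecutive lifted pieces then gives $\diam H^n(x,\cdot)\lesssim N\Lambda^{-n}$ with constants independent of $n$ and $x$. Without the evenly-covered (flower) localization in step (i), the pullback step in your write-up does not go through.
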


The proof follows from the standard lifting of paths, see
\cite[Proposition 10.1]{expThurMarkov}. For
property~(\ref{item:lift2}) see \cite[Lemma 10.3]{expThurMarkov}.

\smallskip
We now lift the pseudo-isotopy from the last subsection. Lifts
retain the properties of $H^0$.   
\begin{lemma}[Properties of $H^n$]
  \label{lem:Hn}
  Let $H^0$ be a pseudo-isotopy as in the 
  last subsection. Let $H^n$ be the lift of $H^0$ by $F^n$
  (equivalently the lift of $H^{n-1}$ by $F$). The lifts satisfy the
  following.

  \begin{enumerate}[\upshape($H^n$ 1)]
  \item 
    \label{item:Hn_1}
    $H^n$ is a pseudo-isotopy rel.\ $\V^n$ (the set of all
    $n$-vertices). 
  \item
    \label{item:Hn_2}
    The set of all $n$-edges $\bigcup \E^n$ is deformed by $H^n$
    to $\bigcup\E^{n+1}$, 
    \begin{equation*}
      H^n_1\left(\bigcup\E^n\right)=\bigcup \E^{n+1}.
    \end{equation*}
  \item 
    \label{item:Hn_3}
    Let $V^1_\epsilon$ be the neighborhood of $\V^1$ as in {\upshape($H^0$
    \ref{item:H0_3})}, see also Lemma~\ref{lem:Vepspre}. The set 
    $V=V^{n+1}_{\Lambda^{-n}\epsilon}:= F^{-n}(V^1_\epsilon)$, which
    is a neighborhood of $\V^{n+1}$, is such that
%     Let $\epsilon>0$ be chosen 
%     $\Lambda>1$ is the constant   
%     from Lemma \ref{lem:Vepspre}.
%     Let 
% be the $\Lambda^{-n}\epsilon$-neighborhood of
%     $\bigcup\V^{n+1}$.
    \begin{align*}
%       & U=(H^n_{1-\epsilon})^{-1}(U)
%       \text{ and}
%       \\
      & H^n\colon S^2 \times [1-\epsilon,1]\to S^2 \text{ is supported
        on }V.            
    \end{align*}
    So $H^n$ ``freezes'' on $S^2\setminus V$.
  \item
    \label{item:Hn_4}
    Consider an $(n+1)$-vertex $v$. Only finitely many points of
    $\bigcup \E^n$ are deformed by $H^n$ to $v$. In other words,
    \begin{equation*}
      \left\{x\in \bigcup \E^n \bigm\vert H^n_1(x)=v\right\} \text{ is a finite set.} 
%      \left(\left[H^n(\cdot,1)\right]^{-1}\{v\}\right)\cap \bigcup\E^n
%      \text{ is a finite set.} 
    \end{equation*}    
    %
    % save counter of enumeration
    \setcounter{mylistnum2}{\value{enumi}}
  \end{enumerate}
  We list the final property here. Again it will be explained and
  proved only in Section~\ref{sec:gamman+1-d-fold}.
  \begin{enumerate}[\upshape($H^n$ 1)]
    \setcounter{enumi}{\value{mylistnum2}}
  \item
    \label{item:Hn_5}
    Let $\gamma^n,\gamma^{n+1}$ be the Eulerian circuits from
    Definition \ref{def:gamma_n} (\ref{item:gamma_n_4}). Then 
    \begin{equation*}
      F\colon \gamma^{n+1}\to \gamma^n
    \end{equation*}
    is a $d$-fold cover in the sense of Definition
    \ref{def:dfoldcover}. 
  \end{enumerate}

\end{lemma}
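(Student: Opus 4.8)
The plan is to derive all of ($H^n$ \ref{item:Hn_1})--($H^n$ \ref{item:Hn_4}) from the corresponding properties of $H^0$ in Definition~\ref{def:pseudo-isotopy-h0} together with the lifting lemma (Lemma~\ref{lem:lift_degenerate_isotopies}), by induction on $n$, using throughout the commuting relations $F\circ H^n_t=H^{n-1}_t\circ F$ and $F^n\circ H^n_t=H^0_t\circ F^n$, as well as $\bigcup\E^{m+1}=F^{-1}(\bigcup\E^m)$ and $\V^{m+1}=F^{-1}(\V^m)$. Property ($H^n$ \ref{item:Hn_5}) is not treated here; its very statement requires the Eulerian circuits $\gamma^n$ of Definition~\ref{def:gamma_n}, and it is established in Section~\ref{sec:gamman+1-d-fold}.

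Properties ($H^n$ \ref{item:Hn_1}) and ($H^n$ \ref{item:Hn_3}) I expect to be essentially immediate. For ($H^n$ \ref{item:Hn_1}) I would apply Lemma~\ref{lem:lift_degenerate_isotopies}(\ref{item:lift1}) inductively: $H^{n-1}$ is a pseudo-isotopy rel.\ $\V^{n-1}\supset\V^0$, so its lift $H^n$ by $F$ exists and is a pseudo-isotopy rel.\ $F^{-1}(\V^{n-1})=\V^n$. For ($H^n$ \ref{item:Hn_3}), set $V:=F^{-n}(V^1_\epsilon)$, a neighborhood of $\V^{n+1}$ by Lemma~\ref{lem:Vepspre}; if $x\notin V$ then $F^n(x)\notin V^1_\epsilon$, so by ($H^0$ \ref{item:H0_3}) the path $t\mapsto F^n(H^n_t(x))=H^0_t(F^n(x))$ is constant for $t\in[1-\epsilon,1]$. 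Hence $t\mapsto H^n_t(x)$ is a continuous path inside the finite fiber $(F^n)^{-1}(\text{pt})$ and is therefore constant, i.e.\ $H^n$ freezes outside $V$ on $S^2\times[1-\epsilon,1]$.

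The substantive part is ($H^n$ \ref{item:Hn_2}) and ($H^n$ \ref{item:Hn_4}), which I would obtain by pulling the arc decomposition of Lemma~\ref{lem:H0maps_edges} back through $F^n$. First I would identify $\{x_j^n\}:=(H^n_1)^{-1}(\V^{n+1})\cap\bigcup\E^n$: using $\V^{n+1}=(F^n)^{-1}(\V^1)$ and the commuting relation, a point $x\in\bigcup\E^n$ lies in this set iff $F^n(x)\in(H^0_1)^{-1}(\V^1)\cap\CC=\{x_j\}$, so $\{x_j^n\}=(F^n|_{\bigcup\E^n})^{-1}(\{x_j\})$ is finite; the same computation with a single $(n+1)$-vertex (whose $F^n$-image is a $1$-vertex) together with ($H^0$ \ref{item:H0_4}) gives ($H^n$ \ref{item:Hn_4}). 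Since $\post\subset\{x_j\}$ we get $\V^n\subset\{x_j^n\}$, so the points $\{x_j^n\}$ subdivide $\bigcup\E^n$ into closed arcs $A_j^n$, each lying in a single $n$-edge and mapped homeomorphically by $F^n$ onto an arc $A_i$ of Lemma~\ref{lem:H0maps_edges}. I would then check, arc by arc, that $H^n_1(A_j^n)$ is an \emph{entire} $(n+1)$-edge and that $H^n_1|_{A_j^n}$ is a homeomorphism: from $F^n(H^n_1(\inte A_j^n))=H^0_1(\inte A_i)=\inte E^1_i$ and the fact that $(F^n)^{-1}(\inte E^1_i)$ is a disjoint union of $(n+1)$-edge interiors, connectedness confines $H^n_1(\inte A_j^n)$ to one such interior $\inte E^{n+1}_j$; the endpoints of $A_j^n$ map into $\V^{n+1}\cap E^{n+1}_j=\{\text{the two endpoints of }E^{n+1}_j\}$, and they hit \emph{both} endpoints because $F^n$ carries them to the $H^0_1$-images of the two distinct endpoints of $A_i$, i.e.\ to the two distinct endpoints of $E^1_i$. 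Injectivity on each $A_j^n$ and disjointness of the images of distinct arcs follow exactly as in Lemma~\ref{lem:H0maps_edges}: a collision would produce $x\neq y$ in $\bigcup\E^n$ with $H^n_1(x)=H^n_1(y)$ an interior point of an $(n+1)$-edge, hence (for small $\epsilon$) outside $V$, contradicting that $H^n_{1-\epsilon}$ is a homeomorphism. A count ($d^n$ arcs $A_j^n$ over each of the $kd$ arcs $A_i$, hence $kd^{n+1}=\#\E^{n+1}$ arcs in all) then makes the correspondence $A_j^n\leftrightarrow(n+1)$-edge a bijection, so $H^n_1(\bigcup\E^n)=\bigcup_j H^n_1(A_j^n)=\bigcup\E^{n+1}$, which is ($H^n$ \ref{item:Hn_2}).

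The step I expect to be the main obstacle is this last one: guaranteeing that the arcs pulled back by $F^n$ still map homeomorphically \emph{onto whole} $(n+1)$-edges and that no two of them fold together. This is where the ``freezing'' property ($H^0$ \ref{item:H0_3}) is used (to force any collision into the region where $H^n_{1-\epsilon}$ is still a homeomorphism) and where the homeomorphism statement in Lemma~\ref{lem:H0maps_edges} is used (to get distinctness of the edge endpoints). Everything else reduces cleanly to Lemma~\ref{lem:lift_degenerate_isotopies}.
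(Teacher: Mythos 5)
Your proposal is correct and follows essentially the same route as the paper: ($H^n$\,1) and ($H^n$\,3) from the lifting lemma, ($H^n$\,4) by pushing forward via $F^n$ and invoking ($H^0$\,4), and ($H^n$\,2) by pulling the arc decomposition of Lemma~\ref{lem:H0maps_edges} back through $F^n$ and counting. The only cosmetic difference is in how distinctness of the images of the pulled-back arcs is obtained: the paper invokes uniqueness of path lifts (two lifts of a path in $S^2\setminus\post$ with distinct initial points have distinct endpoints), while you rerun the ``freezing'' collision argument from Lemma~\ref{lem:H0maps_edges}; both work.
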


\begin{proof}
  ($H^n$ \ref{item:Hn_1}) is clear from Lemma 
  \ref{lem:lift_degenerate_isotopies} (\ref{item:lift1}). 

  \medskip
  ($H^n$ \ref{item:Hn_3}) follows directly from Lemma
  \ref{lem:Vepspre} and Lemma \ref{lem:lift_degenerate_isotopies}
  (\ref{item:lift1}). 

  \medskip
  ($H^n$ \ref{item:Hn_2}) Since $H^n$ is the lift of $H^0$ by $F^n$ we
  have
  \begin{align*}
    &F^n\left(H_1^n\left(\bigcup \E^n\right)\right) 
    = H_1^0\left(F^n\left(\bigcup \E^n\right)\right) 
    = H_1^0\left(\bigcup\E^0\right) 
    = \bigcup \E^1. 
    \intertext{Thus}
    &H_1^n\left(\bigcup \E^n\right) \subset \bigcup \E^{n+1}.
  \end{align*}
  To prove equality in the last expression consider $\inte E^1$, the
  interior of a $1$-edge. Let $U^0=\inte A^0=(H_1^0)^{-1}(\inte E^1)\cap \bigcup
  \E^0$ be the set in $\bigcup \E^0$ that is deformed by $H^0_1$ to
  $\inte E^1$. 
  This is an arc that does not contain a 
  postcritical point (see Lemma~\ref{lem:H0maps_edges}).
  
  Consider $U_1^n, \dots U^n_{d^n}\subset \bigcup \E^n$,
  the preimages of $U^0$ by $F^n$; they are disjoint arcs. Each
  $U_j^n$ is deformed by $H_1^n$ 
  to (the interior of) a $(n+1)$-edge (since
  $F^n(H^n_1(U_j^n))=H^0_1(F^n(U^n_j))=H^0_1(U^0)=\inte E^1$).   

  We remind the reader of the following elementary fact about
  lifts. Let $\sigma\colon 
  [0,1]\to S^2\setminus\post(F)$ be a path and
  $\widetilde{\sigma}_1,\widetilde{\sigma}_2$ two lifts by $F^n$
  with distinct initial points. Then the endpoints of
  $\widetilde{\sigma}_1,\widetilde{\sigma}_2$ are distinct. Indeed
  otherwise the lift of the reversed path $\sigma(1-t)$ would fail to
  be unique. 

  Therefore the $U_j^n$ are deformed by $H^n$ to (the interior of)
  $d^n$ \emph{distinct} $(n+1)$-edges. It follows that $\bigcup \E^n$
  is deformed by $H^n$ to $kd^{n+1}$ $(n+1)$-edges, meaning all of them.

  \medskip
  ($H^n$ \ref{item:Hn_4}) 
  Assume distinct points $\{x^n_j\}_{j\in \N}\subset \bigcup \E^n$ are
  deformed to some $(n+1)$-vertex $v^{n+1}$ by $H^n_1$. Then the
  (infinitely many different)
  points $x^0_j:=F^n(x^n_j)\in \bigcup \E^0$ are deformed by $H^0_1$
  to the $1$-vertex 
  $v^1:=F^n(v^{n+1})$, contradicting Property ($H^0$
  \ref{item:H0_4}). 
\end{proof}

From now on we assume that the pseudo-isotopies $H^n$ are given as
above. 

Consider  $\{x_j\}:= (H^n_1)^{-1}(\V^{n+1}) \cap \bigcup \E^n$, the
set of points on $\bigcup \E^n$ 
that are mapped by $H^n_1$ to some $(n+1)$-vertex (each $x_j$ possibly
to a different one). 
Note that $\{x_j\}$  is finite by ($H^n$ \ref{item:Hn_4}) and
$\{x_j\}\supset \V^n$ by ($H^n$ \ref{item:Hn_1}). 
Thus the points
$\{x_j\}$ divide $\bigcup \E^n$ (and each $n$-edge) into closed arcs
$A_j$.   

\begin{lemma}
  \label{lem:Hn_maps_edges}
  There are $kd^{n+1}$ such arcs $A_j$ as above. 
  Furthermore 
  \begin{align*}
    & E'_j:=H^n_1(A_j) \text{ is an } (n+1)\text{-edge and}
    \\
    & H^n_1\colon A_j \to E'_j \text{ is a homeomorphism,}
     \intertext{for each $j$. On the other hand}
     & \text{each } (n+1)\text{-edge } E' \text{ is the image of
       one such } A_j \text{ by } H^n_1.  
%\text{ on some $n$-edge }\widetilde{E}, 
%     \\
%     & (\text{that is mapped by } H^n_1 \text{ to } E' \text{ homeomorphically}).
  \end{align*}
\end{lemma}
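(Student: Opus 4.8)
The plan is to deduce the statement from its level-$0$ version, Lemma~\ref{lem:H0maps_edges}, by lifting along $F^n$. Write $\{x^0_i\}:=(H^0_1)^{-1}(\V^1)\cap\CC$ for the level-$0$ cut set, with associated closed arcs $A^0_1,\dots,A^0_{kd}$ as in Lemma~\ref{lem:H0maps_edges}. The first step will be to identify the present cut set: since $\bigcup\E^n=F^{-n}(\CC)$ and $F^n\circ H^n_1=H^0_1\circ F^n$ (Lemma~\ref{lem:lift_degenerate_isotopies}), for $x\in\bigcup\E^n$ one has $H^n_1(x)\in\V^{n+1}=F^{-n}(\V^1)$ exactly when $H^0_1(F^n(x))\in\V^1$, i.e.\ exactly when $F^n(x)\in\{x^0_i\}$; hence $\{x_j\}=F^{-n}(\{x^0_i\})$. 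Because $\{x^0_i\}\supset\post$, each $\inte A^0_i$ avoids $\post$, so $A^0_i$ lies in a single $0$-edge; since $F^n$ maps every $n$-edge homeomorphically onto a $0$-edge, it follows that each component arc $A_j$ of $\bigcup\E^n$ cut at $\{x_j\}$ lies in a single $n$-edge and that $F^n\colon A_j\to A^0_{i(j)}$ is a homeomorphism onto one of the level-$0$ arcs.

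Next I would fix such an $A_j$, set $A^0:=F^n(A_j)$, so that $E^1:=H^0_1(A^0)$ is a $1$-edge (Lemma~\ref{lem:H0maps_edges}) and $F^n(H^n_1(A_j))=H^0_1(F^n(A_j))=E^1$. This is precisely the configuration handled in the proof of property ($H^n$ \ref{item:Hn_2}): the connected set $H^n_1(\inte A_j)$ lies in $F^{-n}(\inte E^1)$, a disjoint union of open $(n+1)$-edges, hence in one of them, $\inte E'_j$; and since $F^n$ carries the closed $(n+1)$-edge $E'_j$ homeomorphically onto $E^1$, one gets $H^n_1(\inte A_j)=\inte E'_j$, whence $H^n_1(A_j)=E'_j$ (the image is compact, so closed, and trapped between $\inte E'_j$ and $E'_j$). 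For injectivity, if $x\neq y$ in $A_j$ had $H^n_1(x)=H^n_1(y)$, then applying $F^n$ would produce two distinct points of $A^0$ on which $H^0_1$ agrees, contradicting that $H^0_1\colon A^0\to E^1$ is a homeomorphism. Since a continuous bijection from the compact $A_j$ onto the Hausdorff $E'_j$ is a homeomorphism, this settles the first two assertions.

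For the count: there are $kd$ level-$0$ arcs, each inside one $0$-edge; since $F^n$ is a branched cover of degree $d^n$ with all branching over $\V^0$, each $0$-edge has exactly $d^n$ preimage $n$-edges, so each level-$0$ arc has exactly $d^n$ preimage arcs among the $A_j$, giving $kd\cdot d^n=kd^{n+1}$ arcs in total. For surjectivity onto $(n+1)$-edges, take an $(n+1)$-edge $E'$, put $E^1:=F^n(E')$, and let $A^0$ be the unique level-$0$ arc with $H^0_1(A^0)=E^1$ (Lemma~\ref{lem:H0maps_edges}). The $d^n$ arcs $A_j$ lying over $A^0$ are sent by $H^n_1$ to $(n+1)$-edges over $E^1$, and to \emph{distinct} ones, by the elementary property of lifts with distinct initial points used in the proof of ($H^n$ \ref{item:Hn_2}); as there are exactly $d^n$ such $(n+1)$-edges, this correspondence is a bijection, so $E'=H^n_1(A_j)$ for exactly one $j$.

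The only genuine work is the bookkeeping in the first and last steps --- checking $\{x_j\}=F^{-n}(\{x^0_i\})$ so that the level-$n$ arcs are precisely the $F^n$-preimages of the level-$0$ arcs, and matching the $d^n$ arcs over a level-$0$ arc $A^0$ with the $d^n$ edges over $H^0_1(A^0)$ --- and both facts are already contained in the proof of property ($H^n$ \ref{item:Hn_2}) in Lemma~\ref{lem:Hn}. An alternative that avoids the lifting reduction is to transcribe the proof of Lemma~\ref{lem:H0maps_edges} one level up, replacing $0$-/$1$-edges, $\V^0$/$\V^1$ and $V^1_\epsilon$ throughout by $n$-/$(n+1)$-edges, $\V^n$/$\V^{n+1}$ and $V^{n+1}_{\Lambda^{-n}\epsilon}$, and invoking ($H^n$ \ref{item:Hn_2})--($H^n$ \ref{item:Hn_4}) in place of ($H^0$ \ref{item:H0_2})--($H^0$ \ref{item:H0_4}).
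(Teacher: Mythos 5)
Your argument is correct, but your primary route is not the one the paper takes: the paper's entire proof of this lemma is the single line ``This follows exactly as in Lemma~\ref{lem:H0maps_edges}'', i.e.\ precisely the transcription-one-level-up that you relegate to a closing remark. What you do instead is a genuine reduction to the already-proved level-$0$ statement via the lifting relation $F^n\circ H^n_1=H^0_1\circ F^n$: you identify the cut set as $F^{-n}$ of the level-$0$ cut set, transport each arc $A_j$ homeomorphically down to a level-$0$ arc $A^0$, and pull the conclusion of Lemma~\ref{lem:H0maps_edges} back up through the homeomorphism $F^n\colon E'_j\to E^1$. This buys you something concrete: you never have to re-run the ``freezing'' argument with the neighborhood $V^{n+1}_{\Lambda^{-n}\epsilon}$ and the homeomorphism $H^n_{1-\epsilon}$ (properties ($H^n$~\ref{item:Hn_3})--($H^n$~\ref{item:Hn_4}) enter only implicitly through Lemma~\ref{lem:Hn}), and the count $kd^{n+1}$ together with the exact one-to-one matching of arcs over $A^0$ with $(n+1)$-edges over $E^1$ comes out as explicit bookkeeping rather than being left to the reader. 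The cost is that you lean on the distinct-lifts-have-distinct-endpoints fact from the proof of ($H^n$~\ref{item:Hn_2}) for the surjectivity step, so your proof is not self-contained relative to Lemma~\ref{lem:Hn}; the paper's direct transcription would instead reprove injectivity from the freezing property. Both are valid, and your injectivity argument (push a coincidence down to $A^0$ and contradict injectivity of $H^0_1|_{A^0}$, using that $F^n$ is injective on the $n$-edge containing $A_j$) is clean and complete.
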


\begin{proof}
  This follows exactly as in Lemma \ref{lem:H0maps_edges}.
%   Note that $\bigcup \E^{n+1} \setminus \V^{n+1}$ is disconnected,
%   each component is the interior of an $(n+1)$-edge. Thus
%   \begin{equation*}
%     H^n_1(\inte A_j)\subset \inte E'_j,
%   \end{equation*}
%   where $E'_j$ is an (n+1)-edge. Continuity implies that
%   $H^n_1(A_j)=E'_j$. Since $\inte E'_j = \bigcup_{\epsilon>0} \left(E'_j\cap
%     U^{n+1}_{\Lambda^n\epsilon}\right)$ we obtain that $H^n_1$ is a
%   homeomorphism on 
%   $\inte A_j$, thus on $A_j$.   
\end{proof}

\subsection{Eulerian circuits $\gamma^n$}
\label{sec:euler-circ-gamm}

We construct $\gamma^n$, the
$n$-th approximation of the invariant Peano curve, from the
pseudo-isotopies $H^n$.  
%Two remarks are in order.
% First the
% conditions are not yet sufficient for the limiting curve to have the
% desired properties. 
The curves $\gamma^n$ however do not yet have the
``right'' parametrization. Thus $\gamma^n$ will for now be an
\defn{Eulerian circuit} in $\bigcup \E^n$. However the
\defn{parametrization} of this Eulerian circuit will later still be
denoted by $\gamma^n(t)$. 

\begin{definition}
  \label{def:euler_circuit}
  An \defn{Eulerian circuit} is a closed edge path that traverses each
  edge exactly once.

  Consider now the graph of $n$-edges $\bigcup \E^n$, containing $k
  d^n$ $n$-edges. In this graph an
  Eulerian circuit is a finite sequence of oriented $n$-edges
  \begin{equation*}
    \gamma^n=E_0,\dots, E_{kd^n-1},
  \end{equation*}
  such that the following holds (indices are taken $\bmod
  \,kd^n$). Each $n$-edge appears exactly once, 
  and the terminal point of $E_j$ is the initial point of
  $E_{j+1}$. In particular, the terminal point of $E_{kd^n-1}$ is the
  initial point of $E_0$. 
  If $v$ is the terminal point of $E_{j}$/the
  initial point of $E_{j+1}$, we say that $E_{j+1}$ \defn{succeeds}
  $E_j$ in $\gamma^n$ at $v$. 
 
  Cyclical permutations of indices are not considered 
  to change $\gamma^n$, but orientation reversing does.  
\end{definition}

The approximations $\gamma^n$ of the invariant Peano curve are defined  
as follows. 
\begin{definition}[Eulerian circuits $\gamma^n$]
  \label{def:gamma_n}
  Recall that the Jordan curve $\CC=\bigcup \E^0$ is positively
  oriented as boundary of the white $0$-tile $X^0_w$. Let 
  \begin{equation*}
    \gamma^0=S^1 \to \CC
  \end{equation*}
  be an orientation-preserving homeomorphism. We define
  inductively 
  \begin{align*}
    \gamma^{n+1}\colon S^1 \to \bigcup \E^{n+1} \text{ by }
    \\
    \gamma^{n+1}(t):= H^n_1(\gamma^n(t)),
  \end{align*}
  for all $n\geq 0$. 
  Let us note the following properties.
  \begin{enumerate}[\upshape (i)]
  \item    
    \label{item:gamma_n_1}
    The map is surjective by ($H^n$ \ref{item:Hn_2}). 
 \item
   \label{item:gamma_n_2}
   The set $\W^n:=(\gamma^n)^{-1}(\V^n)\subset S^1$ is finite by ($H^n$
   \ref{item:Hn_4}). 
  \item
    \label{item:gamma_n_3}
    For each $n$-edge $E$ there is exactly one closed arc
    $[w_j,w_{j+1}]\subset \R/\Z=S^1$,
    formed by consecutive points $w_j,w_{j+1}\in \W^n$, such that
    \begin{equation*}
      \gamma^n\colon [w_j,w_{j+1}] \to E \text{ is a
        homeomorphism}. 
    \end{equation*}
    This follows directly from Lemma \ref{lem:Hn_maps_edges}.
  \item
    \label{item:gamma_n_4}
    The map $\gamma^n$ induces an Eulerian circuit (still denoted by
    $\gamma^n$) on $\bigcup
    \E^n$ in the obvious way, namely the $n$-edges are given the
    orientation and ordering induced by $\gamma^n$. 
  \end{enumerate}
\end{definition}

We record how the Eulerian circuit $\gamma^n$ is related to the Eulerian
circuit $\gamma^{n+1}$. Consider an $n$-edge $E$, which is subdivided
into arcs $A_0,\dots,A_m$ as in Lemma \ref{lem:Hn_maps_edges}. An
orientation of $E$ induces an orientation of the arcs $A_j$. As
before we say that $A_j$ succeeds $A_i$ in $E$ if the terminal point
of $A_i$ is the initial point of $A_j$.      

\begin{lemma}
  \label{lem:gamma_n_rel}
  Let $D',E'$ be two $(n+1)$-edges. Let $A',B'\subset \bigcup \E^n$ be
  the two arcs that are mapped (homeomorphically) to $D',E'$ by
  $H^n_1$. % Let $A\subset E\in \E^n, B\subset F\in \E^n$.
  Then $E'$ succeeds $D'$ in $\gamma^{n+1}$ if and only if
  \begin{align*}
    &A',B' \text{ are contained in the same } n\text{-edge } E, 
    \\
    &\text{ and } B'
    \text{ succeeds } A' \text{ in } E \text{ (oriented by } \gamma^n).
    \intertext{ or}
    &A',B' \text{ are contained in different } n\text{-edges }
    E(A'),E(B') \text{ and}
    \\
    &\text{ the terminal point of } A' \text{ is the terminal point of }
    E(A'),
    \\ 
    &\text{ the initial point of } B' \text{ is the initial point of } E(B'),
    \\
    &\text{ and }
    E(B') \text{ succeeds } E(A') \text{ (in } \gamma^n).
  \end{align*}
\end{lemma}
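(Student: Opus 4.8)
The plan is to unwind the definition $\gamma^{n+1}=H^n_1\circ\gamma^n$ and track what ``$E'$ succeeds $D'$ in $\gamma^{n+1}$'' means in terms of the arcs $A',B'\subset\bigcup\E^n$ that get collapsed onto $D',E'$. By Lemma~\ref{lem:Hn_maps_edges}, the points $\{x_j\}=(H^n_1)^{-1}(\V^{n+1})\cap\bigcup\E^n$ subdivide $\bigcup\E^n$ into the arcs $A_j$, each mapped homeomorphically onto an $(n+1)$-edge by $H^n_1$, and this correspondence $A_j\leftrightarrow E^1_j$ is a bijection. So $A',B'$ are precisely the (unique) arcs in this subdivision with $H^n_1(A')=D'$, $H^n_1(B')=E'$. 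Let $[w_i,w_{i+1}]\subset S^1$ be the arc of $\W^n$ (consecutive zeros of $\gamma^n$ on $\V^n$, together with the new preimages of $\V^{n+1}$) that $\gamma^n$ maps homeomorphically onto $A'$, and similarly $[w_{i'},w_{i'+1}]$ for $B'$; since $\gamma^{n+1}=H^n_1\circ\gamma^n$ and $H^n_1$ is a homeomorphism on each such arc, $\gamma^{n+1}$ maps $[w_i,w_{i+1}]$ homeomorphically onto $D'$ and $[w_{i'},w_{i'+1}]$ homeomorphically onto $E'$, respecting orientations. Therefore $E'$ succeeds $D'$ in the Eulerian circuit $\gamma^{n+1}$ \emph{if and only if} $w_{i'}=w_{i+1}$, i.e.\ the two arcs on $S^1$ are consecutive with $[w_{i'},w_{i'+1}]$ immediately following $[w_i,w_{i+1}]$.

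Next I would translate the condition $w_{i'}=w_{i+1}$ back to the $n$-level. On $S^1$ the point $w_{i+1}$ is the terminal point of $\gamma^n$'s parametrization of $A'$ and the initial point of its parametrization of $B'$. There are two cases according to whether the set $\{x_j\}$ refining $\bigcup\E^n$ put $w_{i+1}$ strictly inside an $n$-edge or at an $n$-vertex — equivalently, whether $\gamma^n(w_{i+1})\in\V^{n+1}\setminus\V^n$ or $\gamma^n(w_{i+1})\in\V^n$. In the first case, the endpoint $\gamma^n(w_{i+1})$ lies in the interior of a single $n$-edge $E$; since $\gamma^n$ crosses $E$ via one homeomorphic arc $[w_j,w_{j+1}]$ (Definition~\ref{def:gamma_n}~(\ref{item:gamma_n_3})) which is then subdivided by the $x_j$'s into $A_0,\dots,A_m$ consistently with the orientation of $E$, the arcs $A'$ and $B'$ are both contained in this $E$ and $B'$ is the immediate successor of $A'$ inside $E$ (oriented by $\gamma^n$). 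This is exactly the first alternative in the statement. In the second case $\gamma^n(w_{i+1})=:v\in\V^n$ is an $n$-vertex; then $A'$ is the last sub-arc of the $n$-edge $E(A')$ that $\gamma^n$ traverses just before reaching $v$, so the terminal point of $A'$ is the terminal point of $E(A')$, and $B'$ is the first sub-arc of the $n$-edge $E(B')$ that $\gamma^n$ traverses right after leaving $v$, so the initial point of $B'$ is the initial point of $E(B')$; moreover ``$\gamma^n$ passes from $E(A')$ to $E(B')$ at $v$'' is precisely ``$E(B')$ succeeds $E(A')$ in $\gamma^n$.'' Conversely, each of the two listed configurations forces $w_{i'}=w_{i+1}$, giving the ``if'' direction. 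Combining the cases yields the stated equivalence.

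The only mildly delicate point is the bookkeeping of the subdivision: one must be sure that the refinement of $\bigcup\E^n$ by $\{x_j\}$ is compatible with the $\gamma^n$-induced orientation on each $n$-edge, so that ``$B'$ succeeds $A'$ in $E$'' read off from the arcs matches ``$\gamma^{n+1}$ traverses $D'$ then $E'$.'' This is immediate from the fact that $\gamma^n\colon[w_j,w_{j+1}]\to E$ is an orientation-preserving homeomorphism (which orients $E$) and that the $x_j$'s, hence the arcs $A_0,\dots,A_m$, inherit their order from this parametrization. I expect this compatibility, rather than any substantive topology, to be the main thing to state carefully; everything else is a direct chase through the definitions of $\gamma^n$, $\gamma^{n+1}=H^n_1\circ\gamma^n$, and Lemma~\ref{lem:Hn_maps_edges}.
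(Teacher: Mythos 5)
Your argument is correct and is exactly the definition-chase the paper has in mind: the published proof of Lemma~\ref{lem:gamma_n_rel} consists of the single line ``This is again obvious from the construction,'' and your case split on whether the common endpoint $\gamma^n(w_{i+1})$ lies in $\V^{n+1}\setminus\V^n$ or in $\V^n$ is the intended unwinding of Definition~\ref{def:gamma_n} and Lemma~\ref{lem:Hn_maps_edges}. No gap; you have simply supplied the details the paper omits.
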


\begin{proof}
  This is again obvious from the construction.
\end{proof}

\subsection{$\gamma^{n+1}$ is a $d$-fold cover of $\gamma^n$}
\label{sec:gamman+1-d-fold}

We are now ready to give the definition of properties ($H^0$
\ref{item:H0_5}) and ($H^n$ \ref{item:Hn_5}).
\begin{definition}[Cover of Eulerian circuits]
  \label{def:dfoldcover}
  Let $\gamma^{n+1},\gamma^n$ be the Eulerian circuits constructed in 
  Definition \ref{def:gamma_n} (\ref{item:gamma_n_4}).
  We call
  \begin{equation*}
    F\colon \gamma^{n+1}\to \gamma^n \text{ a $d$\defn{-fold cover}},       
  \end{equation*}
  if $F$ maps succeeding
  $(n+1)$-edges 
  (in $\gamma^{n+1}$) to succeeding $n$-edges (in $\gamma^n$). 
  An equivalent definition is as follows.
  Let 
  \begin{align*}
    &\gamma^n=E_0, \dots, E_{d^n-1},
    \\
    &\gamma^{n+1}=E'_0,\dots, E'_{d^{n+1}-1}
  \end{align*}
  be two Eulerian circuits. Here each $E_j$ is an (oriented) $n$-edge,
  each $E'_j$ an (oriented) $(n+1)$-edge. 
  Let $m$ be the index such that $F(E'_0)=E_m$. 
  Then $\gamma^{n+1}$ is
  a $d$-fold cover of $\gamma^n$ by $F$ if 
  \begin{equation*}
    F(E'_j)=E_{m+ j},
  \end{equation*}
for all $j=0,\dots, d^{n+1}-1$. 
\end{definition}

\begin{convention}
   Indices of $n$-edges (and
  $n$-vertices) are taken $\bmod \,kd^n$ in here and the following.
\end{convention}

Property ($H^0$ \ref{item:H0_5}) is equivalent to the following
(seemingly weaker) condition.
Recall that each $0$-edge $E_j\subset \CC$ is \defn{positively
  oriented} if its orientation agrees with the one induced by
$\CC$. Similarly each $n$-edge $E^n$ is positively oriented if
$F^n\colon E^n\to E_j$ preserves orientation.  
% Let $\CC=\bigcup
% \E^0$ be positively oriented as boundary of (the white $0$-tile)
% $X^0_w$. 
% Each oriented $n$-edge $E^n$ with $F^n(E^n)=E_j\in \E^0$ is
% called 
% \defn{positively oriented} if 
% This means that $F^n$ maps initial and terminal points
% onto each other. 
Recall furthermore that $n$-tiles are colored white/black if
they are preimages of the $0$-tiles $X^0_w,X^0_b$ by $F^n$. 
Each $n$-edge $E^n$ is contained in the boundary of exactly one white
and one 
black $n$-tile.
Then $E^n$ is \defn{positively oriented} if it is positively oriented as
boundary arc of the white $n$-tile in $X^n\supset E^n$.

\begin{lemma}
  \label{lem:orientation-d-fold-cover}
  Let $\gamma^1$ be a Eulerian circuit in $\bigcup \E^1$. Then the
  following conditions are equivalent:
 \begin{align*}
   &\text{{\upshape ($H^0$ \ref{item:H0_5})} }\quad 
  F\colon \gamma^1\to \gamma^0 
  \text{ is a $d$-fold cover;} 
  \\
   & \text{{\upshape ($H^0$ \ref{item:H0_5}')} \quad Each $1$-edge in $\gamma^1$ is
     positively oriented.} 
 \end{align*}
\end{lemma}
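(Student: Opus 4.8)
The plan is to establish the two implications separately, working directly from the definition of a $d$-fold cover (Definition~\ref{def:dfoldcover}) and from the relationship between $\gamma^0$ and $\gamma^1$ recorded in Lemma~\ref{lem:gamma_n_rel} (in the case $n=0$). First I would unwind what the $d$-fold cover condition says here: writing $\gamma^0 = E_0,\dots,E_{k-1}$ for the $0$-edges in cyclic order along $\CC$ (all positively oriented, by definition of the orientation of $\CC$ as boundary of $X^0_w$), the circuit $\gamma^1 = E'_0,\dots,E'_{kd-1}$ is a $d$-fold cover iff there is an index $m$ with $F(E'_j) = E_{m+j}$ for all $j$ (indices mod $kd$, resp.\ mod $k$). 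Now each $1$-edge $E'$ is contained in the boundary of exactly one white and one black $1$-tile, and $F$ restricted to the white $1$-tile containing $E'$ is an orientation-preserving homeomorphism onto $X^0_w$ (see \eqref{eq:fnXntoXhomeo} and the discussion of orientations in Section~\ref{sec:thurston-maps-as}); thus $E'$ is positively oriented iff $F\colon E'\to F(E')$ maps the white-tile side to the white-tile side, i.e.\ iff $F(E')$ is traversed in its positive direction by the orientation $E'$ carries inside $\gamma^1$.

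For the direction ($H^0$~\ref{item:H0_5}) $\Rightarrow$ ($H^0$~\ref{item:H0_5}'): if $F(E'_j) = E_{m+j}$ for all $j$, then $F$ maps every oriented $1$-edge of $\gamma^1$ to an oriented $0$-edge of $\gamma^0$, and the $0$-edges of $\gamma^0$ are all positively oriented (they are the $0$-edges with the orientation inherited from $\CC$). Since $F$ from the white $1$-tile is orientation-preserving, the preimage edge must be positively oriented as boundary of that white $1$-tile. Hence every $1$-edge in $\gamma^1$ is positively oriented.

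For the converse ($H^0$~\ref{item:H0_5}') $\Rightarrow$ ($H^0$~\ref{item:H0_5}), I would argue that if every $1$-edge in $\gamma^1$ is positively oriented, then $F$ sends succeeding $1$-edges of $\gamma^1$ to succeeding $0$-edges of $\gamma^0$. Fix $E'_j$ and its successor $E'_{j+1}$ in $\gamma^1$, meeting at a $1$-vertex $v$. Applying Lemma~\ref{lem:gamma_n_rel} with $n=0$: the arcs $A',B'\subset\bigcup\E^0$ mapped homeomorphically to $E'_j,E'_{j+1}$ by $H^0_1$ are either in the same $0$-edge $E$ (with $B'$ succeeding $A'$ there), or in distinct $0$-edges $E(A'),E(B')$ with the terminal point of $A'$ the terminal point of $E(A')$, the initial point of $B'$ the initial point of $E(B')$, and $E(B')$ succeeding $E(A')$ in $\gamma^0$; in the first subcase $F(E'_j)$ and $F(E'_{j+1})$ are the same positively oriented $0$-edge, hence trivially succeeding; in the second subcase $F(E'_j)$ lies on $E(A')$ and $F(E'_{j+1})$ on $E(B')$, and positive orientation of $E'_j,E'_{j+1}$ forces $F(E'_j)$ to be traversed so as to end at the terminal vertex of $E(A')$ and $F(E'_{j+1})$ to start at the initial vertex of $E(B')$; since consecutive $0$-edges along the positively oriented $\CC$ meet head-to-tail, $F(E'_j)$ and $F(E'_{j+1})$ are successive in $\gamma^0$. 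Thus $F$ preserves the succession relation on all of $\gamma^1$, which is exactly the definition of a $d$-fold cover; a short count ($\#\E^1 = kd = d\cdot\#\E^0$, with each $0$-edge having exactly $d$ preimage $1$-edges by \eqref{eq:fnXntoXhomeo}) confirms the covering is honestly $d$-to-$1$.

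The main obstacle I anticipate is the second subcase of the converse: one must carefully track, using the orientation conventions of Section~\ref{sec:thurston-maps-as} and Lemma~\ref{lem:gamma_n_rel}, that ``positively oriented'' for the two consecutive $1$-edges translates into the correct head-to-tail matching of their $F$-images along $\CC$, rather than a head-to-head or tail-to-tail clash that would break the covering property. Once the orientation bookkeeping at a single $1$-vertex is nailed down, the global statement follows since the succession relation determines the cyclic structure of an Eulerian circuit.
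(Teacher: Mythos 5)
Your converse direction contains a genuine error. You identify $F(E'_j)$ with (an edge lying on) $E(A')$, the $0$-edge containing the arc $A'$ that $H^0_1$ deforms onto $E'_j$. These are unrelated objects: $F(E'_j)$ records where the $1$-edge $E'_j$ \emph{maps to} under $F$, while $E(A')$ records where it \emph{comes from} under the deformation $H^0$, and there is no reason for the two to coincide --- the matrix $M=(m_{ij})$ of Section~\ref{sec:length-edges-s1} exists precisely because $H^0_1(E_i)$ contains $1$-edges of types $j\neq i$. Your first subcase is also false on its face: succeeding, positively oriented $1$-edges never satisfy $F(E'_j)=F(E'_{j+1})$ (their common vertex would have to map to both endpoints of that $0$-edge), and in $\gamma^0$ a $0$-edge does not succeed itself when $k=\#\post\geq 3$, so ``the same $0$-edge, hence trivially succeeding'' would contradict the $d$-fold cover condition rather than verify it. A secondary problem is scope: Lemma~\ref{lem:gamma_n_rel} presupposes $\gamma^1=H^0_1(\gamma^0)$, whereas the present lemma is stated for an arbitrary Eulerian circuit in $\bigcup\E^1$, so the detour through $H^0$ is not available here.

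The statement is really about the shared vertex, which is how the paper argues. For a $1$-edge $E'$ with initial point $v$ and terminal point $v'$ and $F(E')=E_i$, positive orientation is equivalent to $F(v)=p_i$, $F(v')=p_{i+1}$. If $E''$ succeeds $E'$ at $v'$ with $F(E'')=E_{i'}$, then $F(v')$ is a common endpoint of $E_i$ and $E_{i'}$; using $k\geq 3$, the requirement that $E_{i'}$ succeed $E_i$ pins down $F(v')=p_{i+1}$ and hence forces positive orientation of $E'$, and conversely positive orientation of all $1$-edges makes the images of the vertex sequence of $\gamma^1$ advance by exactly one postcritical point at each step, which is the $d$-fold cover condition. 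Note that your forward direction also needs this vertex argument: the cover condition is defined via succession of edges of $\gamma^0$ (each of which carries only its fixed positive orientation), so the assertion that ``$F$ maps every oriented $1$-edge to an oriented $0$-edge'' respecting orientation is exactly what must be extracted from the succession data, not an input.
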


\begin{proof}
  Let
  $p_0,\dots, p_{k-1}\subset \CC$ be the postcritical points, labeled
  mathematically positively on $\CC$.
  Consider an oriented $1$-edge $E^1$ with initial point $v\in \V^1$
  and terminal point $v'\in \V^1$. It is positively
  oriented if and only if $F(v')$ succeeds $F(v)$, i.e., if
  $F(v)=p_{j}$, $F(v')=p_{j+1}$ for some $j$ (indices are taken $\bmod
  k$).    

  Let $\gamma^1$ go through $1$-vertices $v_0,\dots,v_{kd^n-1}$ in
  this order. Then $F\colon \gamma^1\to \gamma^0$ is a $d$-fold cover
  if and only if $F(v_{i+1})$ succeeds $F(v_i)$ (for all $i$, indices
  are taken $\bmod kd^n$), if and
  only if each 
  edge in $\gamma^1$ is positively oriented.
\end{proof}

\begin{remark}
  It is not very hard to show that if $\gamma^1$ is obtained as in
  Definition \ref{def:gamma_n} (without assuming ($H^0$
  \ref{item:H0_5})), then either all $1$-edges are
  positively oriented, or all $1$-edges are negatively oriented
  in $\gamma^1$ (see \cite[Lemma~6.7]{unmating}). In the latter case
  our construction would result in a 
  semi-conjugacy of $F$ to $z^{-d}$. Indeed a Peano curve
  $\gamma\colon S^1\to S^2$ that semi-conjugates $F=f^n$ to $z^{-d}$
  exists by a slight variation of the construction presented here. 
  Namely in Section
  \ref{sec:construction-h0} the role of the white and black $1$-tiles
  has to be reversed.  
\end{remark}

We now show how property ($H^0$ \ref{item:H0_5}) implies ($H^n$
\ref{item:Hn_5}), i.e., finish the proof of Lemma~\ref{lem:Hn}.

\begin{lemma}
  \label{lem:covergn}
  Let $H^0$ be a pseudo-isotopy as in Definition
  \ref{def:degen-isot}, $H^n$ the lifts of
  $H^0$ by $F^n$. The Eulerian circuits
  $\gamma^n$ are the ones from Definition \ref{def:gamma_n}. Then

  \begin{equation*}
    \text{{\upshape($H^n$ \ref{item:Hn_5})}\quad $F\colon \gamma^{n+1}\to \gamma^n$
      is a $d$-fold cover. }
  \end{equation*}
\end{lemma}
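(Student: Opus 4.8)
The plan is to prove the lemma by induction on $n$, using the fact that $H^n$ is the lift of $H^{n-1}$ by $F$ and that $\gamma^{n+1} = H^n_1 \circ \gamma^n$ by construction. The base case $n=0$ is precisely property ($H^0$ \ref{item:H0_5}), which is an \emph{assumption} on $H^0$; so there is nothing to prove there. For the inductive step I would assume that $F \colon \gamma^n \to \gamma^{n-1}$ is a $d$-fold cover and deduce that $F \colon \gamma^{n+1} \to \gamma^n$ is a $d$-fold cover.

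The key observation is that the combinatorics of how $\gamma^{n+1}$ succeeds edges is governed entirely by Lemma~\ref{lem:gamma_n_rel}, and this combinatorial description is \emph{compatible with applying $F$}. More precisely, suppose $D', E'$ are $(n+1)$-edges with $E'$ succeeding $D'$ in $\gamma^{n+1}$ at an $(n+1)$-vertex $v'$. Let $A', B' \subset \bigcup \E^n$ be the arcs mapped homeomorphically to $D', E'$ by $H^n_1$ (as in Lemma~\ref{lem:Hn_maps_edges}). By Lemma~\ref{lem:gamma_n_rel} there are two cases: either $A', B'$ lie in the same $n$-edge $E$ with $B'$ succeeding $A'$ in $E$; or they lie in different $n$-edges $E(A'), E(B')$, the terminal point of $A'$ is the terminal point of $E(A')$, the initial point of $B'$ is the initial point of $E(B')$, and $E(B')$ succeeds $E(A')$ in $\gamma^n$. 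I would then apply $F$ and track what happens: $F(D')$ and $F(E')$ are $n$-edges, and since $H^{n-1}$ is the lift-one-level-down of $H^n$ (equivalently $F \circ H^n = H^{n-1} \circ F$), the arc $F(A')$ is the arc of $\bigcup \E^{n-1}$ mapped homeomorphically to $F(D')$ by $H^{n-1}_1$, and similarly for $F(B')$. In the first case, $F(A'), F(B')$ lie in the single $n-1$-edge $F(E)$, and $F(B')$ succeeds $F(A')$ there because $F$ restricted to $E$ is an orientation-respecting homeomorphism onto $F(E)$. In the second case, $F(A'), F(B')$ lie in $F(E(A')), F(E(B'))$; the terminal point of $F(A')$ is the terminal point of $F(E(A'))$ and the initial point of $F(B')$ is the initial point of $F(E(B'))$ (again because $F$ maps edges homeomorphically respecting initial/terminal points of positively oriented edges), and $F(E(B'))$ succeeds $F(E(A'))$ in $\gamma^{n-1}$ by the inductive hypothesis that $F \colon \gamma^n \to \gamma^{n-1}$ is a $d$-fold cover. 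In either case Lemma~\ref{lem:gamma_n_rel} applied at level $n-1$ gives that $F(E')$ succeeds $F(D')$ in $\gamma^n$, which is exactly the $d$-fold cover property for $F \colon \gamma^{n+1} \to \gamma^n$.

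There is one point I would want to be careful about: orientations. The statement ``$F$ maps succeeding edges to succeeding edges'' presupposes that $F$ sends an oriented $(n+1)$-edge to an oriented $n$-edge in an orientation-compatible way, and one needs to know that the orientations on $\gamma^{n+1}$ induced by the construction (Definition~\ref{def:gamma_n}, via $\gamma^{n+1} = H^n_1 \circ \gamma^n$ and the parametrization) are the ones for which all $(n+1)$-edges are positively oriented — this is the content of Lemma~\ref{lem:orientation-d-fold-cover} at the bottom level, and the lift preserves positive orientation of edges because $F^{n+1}$ preserves orientation and $F$ maps white $(n+1)$-tiles to white $n$-tiles. So I would first record (or cite the already-established Lemma~\ref{lem:orientation-d-fold-cover} together with the fact that lifting preserves the color/orientation of tiles and edges) that every $(n+1)$-edge in $\gamma^{n+1}$ is positively oriented, hence $F$ on each such edge respects the orientation. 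With that in hand, the "initial point to initial point, terminal point to terminal point" bookkeeping in the argument above is legitimate.

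The main obstacle, I expect, is not any deep idea but rather keeping the two-level bookkeeping straight: one is comparing the succession structure of $\gamma^{n+1}$ (governed by $H^n$ over $\bigcup \E^n$) with that of $\gamma^n$ (governed by $H^{n-1}$ over $\bigcup \E^{n-1}$), and the bridge between them is the commuting square $F \circ H^n = H^{n-1} \circ F$ together with the fact that $F$ maps the arc-subdivision of an $n$-edge (from Lemma~\ref{lem:Hn_maps_edges}) onto the arc-subdivision of the corresponding $(n-1)$-edge. Once one checks that $F$ sends the arc $A' \subset \bigcup\E^n$ (the $H^n_1$-preimage of $(n+1)$-edge $D'$) to the arc $F(A') \subset \bigcup\E^{n-1}$ (the $H^{n-1}_1$-preimage of $n$-edge $F(D')$), and that $F$ respects the "is the terminal point of its containing edge" relation, the whole thing reduces mechanically to the inductive hypothesis via Lemma~\ref{lem:gamma_n_rel}. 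Everything else is, as the author puts it elsewhere, "obvious from the construction."
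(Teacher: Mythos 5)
Your proposal is correct and follows essentially the same route as the paper's own proof: induction on $n$, the commuting relation $F\circ H^n = H^{n-1}\circ F$ to identify $F(A'),F(B')$ as the arcs deformed to $F(D'),F(E')$, and the two-case analysis of Lemma~\ref{lem:gamma_n_rel} combined with the inductive hypothesis. Your extra care about orientations is fine but already subsumed in the inductive hypothesis, since being a $d$-fold cover at level $n$ means $F$ carries $n$-edges with their $\gamma^n$-orientation to $(n-1)$-edges with their $\gamma^{n-1}$-orientation, which is exactly the initial/terminal-point bookkeeping needed in the second case.
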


\begin{figure}
  \centering
  \includegraphics[scale=0.4]{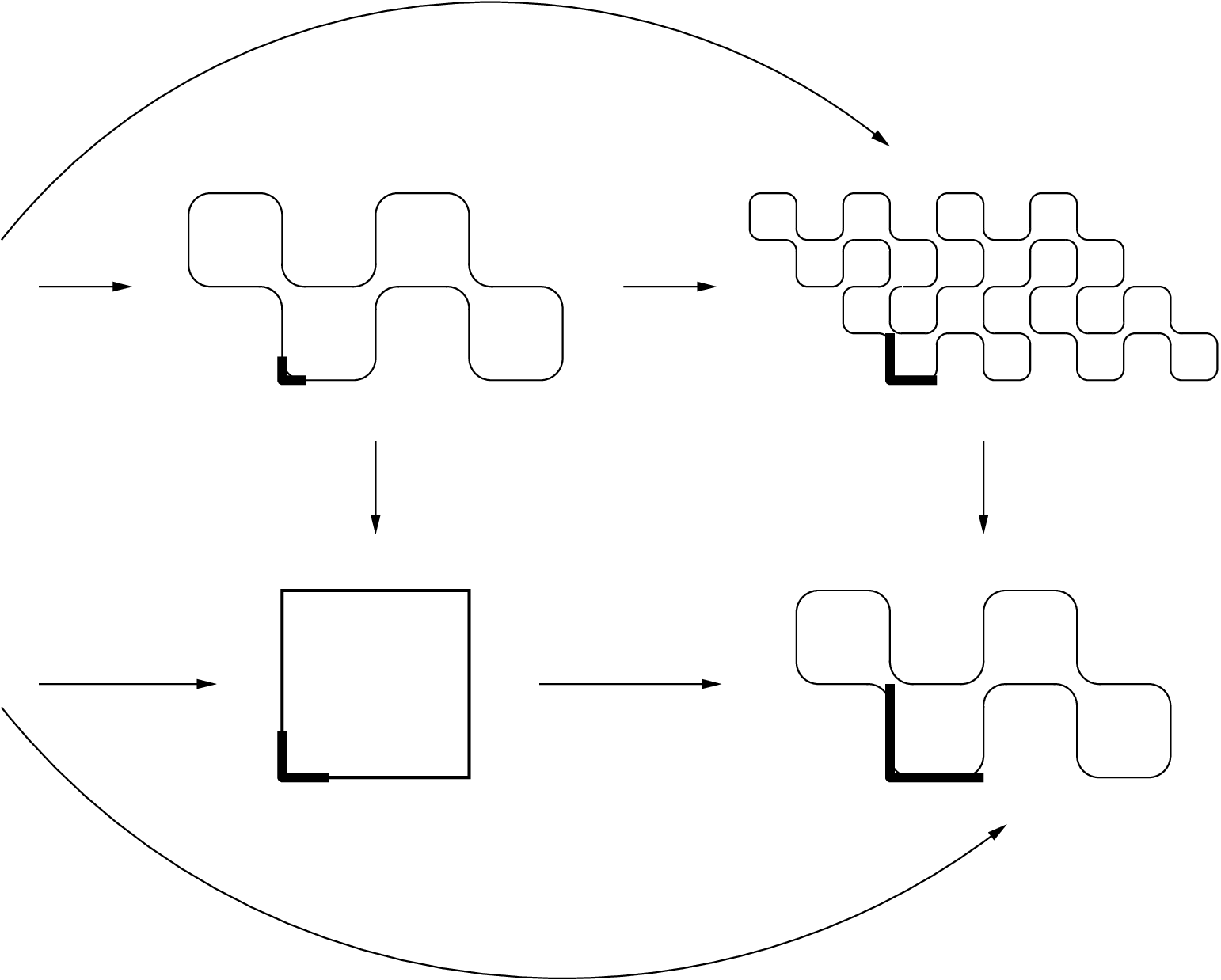}
  \begin{picture}(10,10)
    \put(-310,165){$S^1$}
    \put(-310,70){$S^1$}
    \put(-285,176){$\scriptstyle{\gamma^1}$}
    \put(-280,80){$\scriptstyle{\gamma^0}$}
    \put(-145,176){$\scriptstyle{H^1_1}$}
    \put(-153,80){$\scriptstyle{H^0_1}$}
    \put(-205,117){$\scriptstyle{F}$}
    \put(-56,117){$\scriptstyle{F}$}
    \put(-275,220){$\scriptstyle{\gamma^2}$}
    \put(-275,20){$\scriptstyle{\gamma^1}$}
    \put(-220,200){${\bigcup \E^1}$}
    \put(-40,200){${\bigcup \E^2}$}
    \put(-197,30){${\bigcup \E^0}$}
    \put(-30,30){$\bigcup \E^1$}
    \put(-247,147){$\scriptstyle{A'}$}
    \put(-233,133){$\scriptstyle{B'}$}
    \put(-100,146){$\scriptstyle{D'}$}
    \put(-83,133){$\scriptstyle{E'}$}
    \put(-244,50){$\scriptstyle{A}$}
    \put(-231,37){$\scriptstyle{B}$}
    \put(-98,55){$\scriptstyle{D}$}
    \put(-78,37){$\scriptstyle{E}$}
  \end{picture}
  \caption{Commutative diagram for Lemma \ref{lem:covergn}.}
  \label{fig:H0H1}
\end{figure}

\begin{proof}
  The reader is advised to consult Figure \ref{fig:H0H1} for
  reference. Roughly speaking by deforming $\bigcup \E^0$ via $H^0$ and
  $\bigcup \E^1$ via $H^1$, one can push the $d$-fold cover $F\colon
  \gamma^1\to \gamma^0$ to a $d$-fold cover $F\colon \gamma^2\to
  \gamma^1$. We give however a more pedestrian (combinatorial) proof.

  \medskip
  The proof is by induction. Thus assume that $F\colon \gamma^n\to
  \gamma^{n-1}$ is a $d$-fold cover.

  Assume the $(n+1)$-edge $E'$ succeeds the $(n+1)$-edge $D'$ in
  $\gamma^{n+1}$. We need to show that the $n$-edge $E:=F(E')$ succeeds
  the $n$-edge $D:=F(D')$ in $\gamma^n$.  

  Let $A',B'\subset \bigcup \E^n$ be the two arcs that are mapped by
  $H^n_1$ to $D',E'$, see Lemma~\ref{lem:Hn_maps_edges}. Let
  $A:=F(A'), B:= F(B')\subset\bigcup 
  \E^{n-1}$. Since $H^{n}$ is the 
  lift of $H^{n-1}$ by $F$ (the diagram commutes)
  \begin{equation*}
    H^{n-1}_1(A)=D, \quad H^{n-1}_1(B)=E.
  \end{equation*}

  There are two cases to consider by Lemma \ref{lem:gamma_n_rel}.
  \begin{case}[1]
    $A',B'$ are contained in the same $n$-edge $E^n$, and $B'$
    succeeds $A'$ (given the orientation of $E^n$ by $\gamma^n$). 

    Note that since $F\colon \gamma^n\to \gamma^{n-1}$ is a $d$-fold
    cover, $F$ maps $n$-edges oriented by $\gamma^n$ to $(n-1)$-edges
    oriented by $\gamma^{n-1}$. 
 
    Therefore
    $A,B$ are contained in the same $(n-1)$-edge $E^{n-1}=F(E^n)$, and
    $B$ succeeds $A$ (given the orientation of $E^{n-1}$ by
    $\gamma^{n-1}$). Thus $E$ succeeds $D$ in $\gamma^n$.  
  \end{case}

  \begin{case}[2]
    $A',B'$ are contained in different $n$-edges $E(A'),E(B')$, such
    that $A', E(A')$ have the same terminal points, $B',E(B')$ have
    the same initial points, and $E(A')$, $E(B')$ are succeeding in
    $\gamma^n$.  

    Thus the $(n-1)$-edge $F(E(B'))\supset B$ succeeds
    $F(E(A'))\supset A$ in $\gamma^{n-1}$, 
    since $F\colon \gamma^n\to \gamma^{n-1}$ is a $d$-fold
    cover. Furthermore the terminal point of $A$ is the terminal point
    of $F(E(A'))$, which is the initial point of both $B, F(E(B'))$. 
    Thus $E$ succeeds $D$ in $\gamma^n$ by Lemma
    \ref{lem:gamma_n_rel}.   
  \end{case}
\end{proof}

By repeating the argument in Lemma \ref{lem:orientation-d-fold-cover}
we obtain inductively the following.

\begin{cor}
  \label{cor:gamman_po}
  All $n$-edges in the Eulerian circuit
  $\gamma^n$ are positively oriented (for each $n$).  
\end{cor}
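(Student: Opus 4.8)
The plan is to proceed by induction on $n$, mirroring the structure of Lemma~\ref{lem:orientation-d-fold-cover} but now combined with the cover property ($H^n$ \ref{item:Hn_5}) that was just established in Lemma~\ref{lem:covergn}. The base case $n=1$ is exactly the hypothesis ($H^0$ \ref{item:H0_5}), which by Lemma~\ref{lem:orientation-d-fold-cover} is equivalent to ($H^0$ \ref{item:H0_5}'): every $1$-edge in $\gamma^1$ is positively oriented. For the inductive step, assume all $n$-edges in $\gamma^n$ are positively oriented, and consider the Eulerian circuit $\gamma^{n+1}$.

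For the inductive step I would argue as follows. Let $\gamma^{n+1} = E'_0, \dots, E'_{kd^{n+1}-1}$ be the Eulerian circuit, each $E'_j$ an $(n+1)$-edge. By Lemma~\ref{lem:covergn}, $F\colon \gamma^{n+1}\to\gamma^n$ is a $d$-fold cover, so $F$ maps succeeding $(n+1)$-edges in $\gamma^{n+1}$ to succeeding $n$-edges in $\gamma^n$; concretely, if $F(E'_0) = E_m$ then $F(E'_j) = E_{m+j}$ for all $j$. Now pick any $(n+1)$-edge $E'$ in $\gamma^{n+1}$, with initial $(n+1)$-vertex $v$ and terminal $(n+1)$-vertex $v'$. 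Since the preceding edge $D'$ (so that $E'$ succeeds $D'$) maps to the $n$-edge $F(D')$, whose terminal vertex is $F(v)$, and $E' = F^{-1}$-image of the succeeding $n$-edge $F(E')$, the $d$-fold cover property tells us that $F(v')$ succeeds $F(v)$ along $\gamma^n$ in the sense that they are the terminal and initial vertices, respectively, of succeeding $n$-edges in $\gamma^n$. By the inductive hypothesis those $n$-edges are positively oriented, which (tracking the definition of positive orientation on $n$-edges as boundary of the white $n$-tile, and using that $F$ maps white tiles to white tiles and preserves orientation) forces $E'$ to be positively oriented as well. Essentially this is "running the proof of Lemma~\ref{lem:orientation-d-fold-cover} one level down."

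The cleanest way to phrase this is probably to observe that positive orientation of an $n$-edge $E^n$ is equivalent to the condition that $F(v')$ succeeds $F(v)$ in the Eulerian circuit $\gamma^{n-1}$, where $v, v'$ are the initial and terminal vertices of $E^n$ — this is exactly the local characterization used in the proof of Lemma~\ref{lem:orientation-d-fold-cover}, but now with $F\colon\gamma^n\to\gamma^{n-1}$ in place of $F\colon\gamma^1\to\gamma^0$. Then the $d$-fold cover property $F\colon\gamma^n\to\gamma^{n-1}$, together with "all $(n-1)$-edges in $\gamma^{n-1}$ are positively oriented" (the inductive hypothesis), immediately gives that for consecutive $n$-vertices $v_i, v_{i+1}$ along $\gamma^n$, the images $F(v_i), F(v_{i+1})$ are consecutive $(n-1)$-vertices along $\gamma^{n-1}$, hence each $n$-edge of $\gamma^n$ is positively oriented. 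One needs the compatibility fact that $F$, being a branched cover with white tiles mapping to white tiles, sends a positively oriented $(n+1)$-edge exactly to a positively oriented $n$-edge — but this is already implicit in how orientations of $n$-edges were defined via $F^n$ earlier in Section~\ref{sec:thurston-maps-as}.

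The main obstacle — and it is a mild one — is bookkeeping: carefully keeping straight that "positively oriented" for an $n$-edge is defined via the map $F^n$ to a $0$-edge, and matching this with the recursive "$F$ maps positive to positive, one level at a time" statement, so that the induction actually closes. There is no deep content here beyond Lemma~\ref{lem:covergn}; the corollary is genuinely just iterating Lemma~\ref{lem:orientation-d-fold-cover}, which is why the paper states it with the one-line justification "By repeating the argument in Lemma~\ref{lem:orientation-d-fold-cover} we obtain inductively the following." I would write the proof in three or four sentences along exactly these lines.
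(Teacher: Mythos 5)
Your proof is correct and is essentially the paper's own argument: the paper disposes of this corollary with the single sentence ``By repeating the argument in Lemma~\ref{lem:orientation-d-fold-cover} we obtain inductively the following,'' and your induction (base case from ($H^0$~\ref{item:H0_5}) via Lemma~\ref{lem:orientation-d-fold-cover}, inductive step from the $d$-fold cover property of Lemma~\ref{lem:covergn} together with the local characterization of positive orientation through one application of $F$) is exactly that iteration, just written out.
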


\section{Construction of $\gamma$}
\label{sec:constr-g}

% In this section we complete the construction of $\gamma$ (i.e., the
% ``only if'' part of Theorem \ref{thm:main}). As before the existence
% of a pseudo-isotopy $H^0$ as in Definition \ref{def:pseudo-isotopy-h0}
% is assumed. 

In this section we complete the construction of $\gamma$, i.e., the
proof of Theorem~\ref{thm:main}, under the assumption of
the existence 
of a pseudo-isotopy $H^0$ as in Definition~\ref{def:pseudo-isotopy-h0}.

\begin{lemma}
  \label{lem:theorem1reduction}
  To construct $\gamma\colon S^1\to S^2$ as in Theorem \ref{thm:main}
  it is enough to show the following. There is a Peano curve
  $\tilde{\gamma}\colon S^1\to S^2$ such that the diagram
  \begin{equation*}
    \xymatrix{
      S^1 \ar[r]^{\widetilde{\varphi}} \ar[d]_{\tilde{\gamma}}
      &
      S^1 \ar[d]^{\tilde{\gamma}}
      \\
      S^2 \ar[r]_F & S^2      
    }
  \end{equation*}
  commutes, where $\widetilde{\varphi}(z)= e^{2\pi i \theta_0} z^d$.
\end{lemma}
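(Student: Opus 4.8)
The plan is to turn a semi-conjugacy of $F$ to the ``rotated'' power map $\widetilde{\varphi}\colon z\mapsto e^{2\pi i\theta_0}z^d$ into a semi-conjugacy to the honest power map $z\mapsto z^d$ by a rotation of $S^1$. First I would observe that $\widetilde{\varphi}$ is topologically conjugate, \emph{as a self-map of $S^1$}, to $z\mapsto z^d$ via a rotation $R_\alpha\colon z\mapsto e^{2\pi i\alpha}z$. Indeed, the conjugacy relation $R_\alpha(z^d)=\widetilde{\varphi}(R_\alpha(z))$ unwinds, using $\widetilde{\varphi}(e^{2\pi i\alpha}z)=e^{2\pi i(\theta_0+d\alpha)}z^d$, to the congruence
\begin{equation*}
  \alpha\equiv\theta_0+d\alpha\pmod 1,\qquad\text{i.e.,}\qquad (d-1)\alpha\equiv-\theta_0\pmod 1 .
\end{equation*}
Since $F$ is an expanding Thurston map, $d=\deg F\geq 2$ (a postcritically finite branched cover with $\#\post\geq 3$ must have critical points), so $d-1\geq 1$ and the congruence is solvable, e.g.\ by $\alpha=-\theta_0/(d-1)$. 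Fix such an $\alpha$ and the corresponding rotation $R_\alpha$, and set $\gamma:=\tilde{\gamma}\circ R_\alpha\colon S^1\to S^2$.

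Then I would check the requirements of Theorem~\ref{thm:main} for $\gamma$. The map $\gamma$ is continuous and onto, being the composition of the continuous surjection $\tilde{\gamma}$ with a homeomorphism, hence a Peano curve. For the semi-conjugacy, combining $F\circ\tilde{\gamma}=\tilde{\gamma}\circ\widetilde{\varphi}$ with $\widetilde{\varphi}\circ R_\alpha=R_\alpha\circ(z\mapsto z^d)$ gives
\begin{equation*}
  F(\gamma(z))=F\bigl(\tilde{\gamma}(R_\alpha(z))\bigr)=\tilde{\gamma}\bigl(\widetilde{\varphi}(R_\alpha(z))\bigr)=\tilde{\gamma}\bigl(R_\alpha(z^d)\bigr)=\gamma(z^d)
\end{equation*}
for all $z\in S^1$, which is exactly the commuting square of Theorem~\ref{thm:main}.

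For the approximating homotopy $\Gamma$ of Theorem~\ref{thm:main} I would carry the same rotation along. The construction of $\tilde{\gamma}$ in the following sections will produce, as the limit of the lifted pseudo-isotopies $H^n$, a pseudo-isotopy $\widetilde{\Gamma}\colon S^2\times[0,1]\to S^2$ with $\widetilde{\Gamma}(\cdot,0)=\id$ and $\widetilde{\Gamma}(z,1)=\tilde{\gamma}(z)$ on the equator $S^1\subset S^2$. Extend $R_\alpha$ to the rotation isotopy $\rho_s$ of $S^2$ about the axis through the two poles ($\rho_0=\id$, $\rho_1|_{S^1}=R_\alpha$, each $\rho_s$ a homeomorphism of $S^2$) and concatenate:
\begin{equation*}
  \Gamma(x,t):=\begin{cases}\rho_{2t}(x),& t\in[0,1/2],\\ \widetilde{\Gamma}\bigl(\rho_1(x),\,2t-1\bigr),& t\in[1/2,1].\end{cases}
\end{equation*}
The two branches agree at $t=1/2$ (both equal $\rho_1(x)$), $\Gamma(\cdot,0)=\id$, and $\Gamma$ is again a pseudo-isotopy: it is an isotopy on $[0,1/2]$, and on $[1/2,1)$ it is $\widetilde{\Gamma}$ precomposed with the fixed homeomorphism $\rho_1$, hence an isotopy; at $t=1$ it ends in a surjective, closed map. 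On the equator $\Gamma(z,1)=\widetilde{\Gamma}(R_\alpha(z),1)=\tilde{\gamma}(R_\alpha(z))=\gamma(z)$, as required.

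I do not expect any genuine obstacle here: the only point that needs any thought is the solvability of $(d-1)\alpha\equiv-\theta_0$, which is immediate from $d\geq 2$. The entire content of the lemma is this reduction, and its purpose is to free the remainder of the paper to aim only for a semi-conjugacy to the more convenient target $e^{2\pi i\theta_0}z^d$ rather than to $z^d$ on the nose.
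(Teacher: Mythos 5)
Your proposal is correct and is essentially the paper's own argument: the paper sets $\mu=e^{2\pi i\theta_0/(1-d)}$ (which is exactly your $e^{2\pi i\alpha}$ with $\alpha=\theta_0/(1-d)$ solving $(d-1)\alpha\equiv-\theta_0$) and defines $\gamma(z)=\tilde{\gamma}(\mu z)$, then verifies the same computation. Your extra paragraph carrying the rotation through the homotopy $\Gamma$ is a sensible addition but not part of the paper's proof of this lemma.
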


\begin{proof}
  Let $\mu:= e^{\frac{2\pi i \theta_0}{1-d}}$, this means that
    \begin{equation*}
      e^{2\pi i \theta_0}\mu^d=e^{2\pi i\theta_0} \mu^{d-1}\mu=\mu.
    \end{equation*}
    Consider $\gamma(z):= \tilde{\gamma}(\mu z)$. Then
    \begin{align*}
      F({\gamma}(z)) & =F(\tilde{\gamma}(\mu z))= \tilde{\gamma}(e^{2\pi i
        \theta_0}\mu^dz^d) = \tilde{\gamma}(\mu z^d)
      \\
      & = \gamma(z^d).
    \end{align*}
\end{proof}

In this section however we will drop the
``$\widetilde{\phantom{\varphi}}$'' from the notation. This means we
will write $\gamma,\gamma^n$, and so on; when in fact we mean
$\tilde{\gamma},\tilde{\gamma}^n$, which become our desired objects
by composing with a rotation as above.   

\subsection{The length of $n$-arcs}
\label{sec:length-edges-s1}

The circle $S^1$ will be divided into \defn{$n$-arcs}, each of which
will be
mapped by $\gamma^n$ to an $n$-edge. 
We first need to find the right ``length'' of such $n$-arcs. 
It will be convenient to parametrize those lengths by the
corresponding $n$-edges. Thus 
 $l(E)$ will be the length of the $n$-arc  (in $S^1$)
that is mapped by 
$\gamma^n$ to the $n$-edge $E$. We require
the following properties.
\begin{enumerate}[($l$ 1)]
\item
  \label{item:propl_1}
  $l(E)>0$ for every $n$-edge $E$.
\item 
  \label{item:propl_2}
  For all $n$,
  \begin{equation*}
    \sum_{E\in \E^n} l(E)=1.
  \end{equation*}
\item
  \label{item:propl_3}
  Given an $(n+1)$-edge $E'$ let $E=F(E')\in \E^n$. Then
  \begin{equation*}
    l(E)= d\, l(E').
  \end{equation*}
\item
  \label{item:propl_4}
  Let $E$ be an $n$-edge. Then $H^n_1(E)$ is a chain $E'_1,\dots,
  E'_N$ of $(n+1)$-edges. We require that
  \begin{equation*}
    l(E)=\sum_{m=1}^N l(E'_i).
  \end{equation*}
\end{enumerate}

To this end consider (all) $0$-edges $E_0,\dots, E_{k-1}$ ordered by the first
approximation $\gamma^0$ (mathematically positively on $\CC$). We say
an $n$-edge $E^n$ is of \defn{type} 
$j$ if $F^n (E^n)=E_j$. Recall that $H^0$ deforms each $0$-edge to
several $1$-edges. We define a matrix $M=(m_{ij})$, which keeps track of those
deformations, by
\begin{align*}
  m_{ij} \text{ is the number of } 1\text{-edges in } H^0_1(E_i)
  \text{ that are of type } j. 
\end{align*}

\begin{lemma}
  \label{lem:liftM}
  Consider an $n$-edge $E^n_i$ of type $i$. Let $\widetilde{m}_{ij}$
  be 
  the number of $(n+1)$-edges of type $j$ in $H^n_1(E^n_i)$. Then
  \begin{equation*}
    \widetilde{m}_{ij}=m_{ij}.
  \end{equation*}
  Furthermore, let $m^n_{ij}$ be the number of $n$-edges of type $j$
  contained in 
  $H^{n-1}_1\circ H^{n-2}_1\circ \dots \circ H^0_1(E_i)$. Then
  \begin{equation*}
    (m^n_{ij})=M^n.
  \end{equation*}
\end{lemma}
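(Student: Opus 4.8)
The plan is to prove both assertions by induction on $n$, using the fact that the lifts $H^n$ are compatible with $F$ in the sense that $F\circ H^n_1 = H^{n-1}_1\circ F$ (Lemma~\ref{lem:lift_degenerate_isotopies}), together with the edge-mapping properties established in Lemma~\ref{lem:Hn_maps_edges} and Corollary~\ref{cor:gamman_po}. First I would prove $\widetilde m_{ij}=m_{ij}$. Fix an $n$-edge $E^n_i$ of type $i$, i.e.\ $F^n(E^n_i)=E_i$. Since $F^n\colon E^n_i\to E_i$ is a homeomorphism (Proposition~6.1 of \cite{expThurMarkov}, as recalled after \eqref{eq:fnXntoXhomeo}), and since $H^n$ is the lift of $H^0$ by $F^n$, the chain of $(n+1)$-edges $H^n_1(E^n_i)$ is mapped homeomorphically by $F^n$ onto the chain of $1$-edges $H^0_1(E_i)$: indeed by ($H^n$~\ref{item:Hn_2}) and Lemma~\ref{lem:Hn_maps_edges} each $(n+1)$-edge $E'$ in $H^n_1(E^n_i)$ satisfies $F^n(E')$ is a $1$-edge, and distinct $(n+1)$-edges in the chain map to distinct $1$-edges (again by the lifting-uniqueness argument used in the proof of Lemma~\ref{lem:Hn} for ($H^n$~\ref{item:Hn_2})). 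Hence the assignment $E'\mapsto F^n(E')$ is a bijection between the $(n+1)$-edges of $H^n_1(E^n_i)$ and the $1$-edges of $H^0_1(E_i)$. It remains to check this bijection preserves type: if $E'$ has type $j$ then $F^{n+1}(E')=E_j$, and $F^{n+1}(E')=F(F^n(E'))$, so $F^n(E')$ has type $j$ as well. Therefore the number of type-$j$ edges is the same on both sides, i.e.\ $\widetilde m_{ij}=m_{ij}$.

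Next I would prove $(m^n_{ij})=M^n$ by induction on $n$. For $n=1$ this is the definition of $M=(m_{ij})$. For the inductive step, suppose $(m^n_{ij})=M^n$, and consider the chain
\[
  H^n_1\circ H^{n-1}_1\circ\cdots\circ H^0_1(E_i)
  = H^n_1\bigl(H^{n-1}_1\circ\cdots\circ H^0_1(E_i)\bigr).
\]
By the inductive hypothesis the inner chain $H^{n-1}_1\circ\cdots\circ H^0_1(E_i)$ contains exactly $m^n_{i\ell}=(M^n)_{i\ell}$ $n$-edges of type $\ell$, for each $\ell$. Applying $H^n_1$ to an $n$-edge of type $\ell$ produces, by the first part of the lemma, exactly $m_{\ell j}$ $(n+1)$-edges of type $j$. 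Moreover $H^n_1$ applied to the whole chain just concatenates the images of the individual $n$-edges (the terminal point of one $n$-edge is the initial point of the next, and $H^n_1$ respects this since each $n$-edge is mapped to a chain of $(n+1)$-edges as in Lemma~\ref{lem:Hn_maps_edges}, consistently with the orientation recorded by $\gamma^n$). Hence the total count of type-$j$ $(n+1)$-edges is
\[
  m^{n+1}_{ij} = \sum_{\ell} m^n_{i\ell}\, m_{\ell j}
  = \sum_{\ell} (M^n)_{i\ell}\, m_{\ell j} = (M^{n+1})_{ij},
\]
which completes the induction.

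The main thing to be careful about — and what I expect to be the only real obstacle — is the bookkeeping that $H^n_1$ applied to a \emph{chain} of $n$-edges yields exactly the concatenation of the chains $H^n_1(E)$ over the individual $n$-edges $E$, with no cancellation, overlap, or reordering. This requires knowing that consecutive $n$-edges in the chain share exactly their common endpoint, that $H^n_1$ maps each to a chain of $(n+1)$-edges sharing endpoints appropriately (Lemma~\ref{lem:Hn_maps_edges} and Lemma~\ref{lem:gamma_n_rel}), and that every $(n+1)$-edge arising lies over the correct $n$-edge under $F$ so that type is tracked correctly — all of which follow from the lifting relation $F\circ H^n_1 = H^{n-1}_1\circ F$ and the fact that $F^n$ is a homeomorphism on each $n$-edge. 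Once these compatibilities are in hand, both statements are a direct matrix computation as above.
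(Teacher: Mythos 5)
Your proof is correct and follows essentially the same route as the paper: the first identity comes from the lifting relation $F^n\circ H^n_1=H^0_1\circ F^n$ together with the fact that $F^n$ maps the $(n+1)$-edges of $H^n_1(E^n_i)$ bijectively onto the $1$-edges of $H^0_1(E_i)$ preserving type, and the second is the resulting matrix-product induction (which the paper leaves implicit with ``follows immediately from the first''). No gaps.
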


\begin{proof}
  Let $E^{n+1}_1,\dots,E^{n+1}_m$ be the $(n+1)$-edges in
  $H^n_1(E^n_i)$. Since $H^n$ is the lift of $H^0$ by $F^n$ it follows
  that $H^0$ deforms (the $0$-edge) $E_i=F^n(E^n_i)$ to the $1$-edges
  $E^1_1=F^n(E^{n+1}_1),\dots, E^1_m=F^n(E^{n+1}_m)$. The first
  statement follows, since $F^n$ preserves the type of edges.

  \medskip
  The second statement follows immediately from the first. 
\end{proof}

\begin{lemma}
  \label{lem:Mprimitive}
  The matrix $M$ is \emph{primitive}, i.e., $M^n>0$ for some $n$. 
\end{lemma}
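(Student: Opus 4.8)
The plan is to show that the matrix $M$ is primitive by combining two facts: first, that the powers $M^n$ count $n$-edges of each type appearing in the iterated deformation of a single $0$-edge (Lemma~\ref{lem:liftM}); and second, that by expansion of $F$, for $n$ large enough this iterated deformation sweeps across \emph{every} $n$-edge of the sphere. Concretely, fix a $0$-edge $E_i$. After applying $H^0_1, H^1_1, \dots, H^{n-1}_1$ in succession, $E_i$ is deformed to an Eulerian-type chain of $n$-edges, and I claim this chain contains $n$-edges of every type $j$. If I can establish this for all $i$ and all sufficiently large $n$, then $m^n_{ij} \geq 1$ for all $i,j$, i.e., $M^n > 0$, which is precisely primitivity.

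The key step is therefore to prove that $H^{n-1}_1 \circ \cdots \circ H^0_1(E_i)$ meets every type of $n$-edge. Here is how I would argue this. Recall $\gamma^n = H^{n-1}_1 \circ \cdots \circ H^0_1 \circ \gamma^0$ as a parametrized Eulerian circuit, and $\gamma^n$ traverses \emph{all} of $\bigcup\E^n$. The $0$-edge $E_i$ is a subarc of $\gamma^0 = \CC$; its image under the composed deformations is a subchain of $\gamma^n$. Now use property ($H^0$~\ref{item:H0_3}), which says the pseudo-isotopy ``freezes'' outside a small neighborhood $V^1_\epsilon$ of $\V^1$: so $H^0$ moves the edge $E_i$ only near its endpoints and near $1$-vertices — in particular $E_i$ and its deformed image both contain points close to, or equal to, $1$-vertices that lie on $E_i$. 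Iterating, the deformed image of $E_i$ in $\gamma^n$ contains a point within distance $\lesssim \Lambda^{-n}$ (by Lemma~\ref{lem:lift_degenerate_isotopies}(\ref{item:lift2})) of each $1$-vertex $v$ that was originally on $E_i$. More usefully: $E_i$ contains at least one $1$-vertex $v_0$ (in fact $E_i$'s endpoints are $0$-vertices $= 1$-vertices), and from $v_0$ the $n$-edges emanating at $v_0$ all appear in the deformed chain, because the deformation near $v_0$ is a local homeomorphism on the relevant arcs (Lemma~\ref{lem:Hn_maps_edges}). Since $F$ is expanding, the $n$-edges incident to any fixed vertex have diameter $\to 0$ but the set of all $n$-edges near $v_0$ in the graph becomes, after finitely many steps, ``spread out'' enough to include edges of all $k$ types: indeed any $n$-edge incident to $v_0$ maps under $F^n$ to some $0$-edge, and as $n$ grows the combinatorial neighborhood of $v_0$ in $\bigcup\E^n$ covers more and more of the tile structure. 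A cleaner route: since $\bigcup\E^{n}$ is connected (Lemma~\ref{lem:whiteXconn} via the remark that connectivity of tiles $\Leftrightarrow$ connectivity of $\bigcup\E^n$), and $\gamma^n$ is an Eulerian circuit through all of it, the subchain coming from $E_i$ is a connected subpath. For $n$ large this subpath has $\Lambda^{n}$-order many edges distributed over a region of definite size (since the total edge-chain of $E_i$ has combinatorial length $\sum_j m^n_{ij}$, and the geometric image $H^{n-1}_1\circ\cdots\circ H^0_1(E_i)$ is a connected set containing the original $0$-edge up to a $\Lambda^{-1}$-neighborhood perturbation, hence has diameter bounded below). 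A connected union of $n$-edges of definite diameter must, by expansion (mesh $\to 0$), contain $n$-tiles — and hence $n$-edges — of both colors and, pushing the same argument, of all types, once $n$ exceeds some threshold independent of $i$.

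The main obstacle I anticipate is making the phrase ``the deformed image of $E_i$ has diameter bounded below'' precise, and then leveraging it to get \emph{all} types rather than merely both colors or nonemptiness. The first half is reasonable: $H^0$ is rel.\ $\post$ and by ($H^0$~\ref{item:H0_2}) it only rearranges edges within $\CC$'s homotopy class, so the image of $E_i$ still runs between the two endpoints $p_i, p_{i+1}$ of $E_i$ (which are fixed postcritical points), giving a definite lower bound $\varrho(p_i,p_{i+1}) > 0$ on the diameter of $H^0_1(E_i)$; and since later lifts $H^m$ only perturb by $\lesssim \Lambda^{-m}$, the diameter of $H^{n-1}_1\circ\cdots\circ H^0_1(E_i)$ stays bounded below by some $c_0 > 0$ independent of $n$. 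The second half — upgrading ``diameter $\geq c_0$'' to ``contains an $n$-edge of each of the $k$ types'' — I would handle by the following observation: a connected union $Y$ of $n$-edges with $\diam Y \geq c_0$ must contain, for $n$ large, an entire $(n-N_0)$-tile's worth of boundary for suitable fixed $N_0$ (because $\mesh F^{-(n-N_0)}(\CC) < c_0$ forces $Y$ to cross some $(n-N_0)$-edge and, by connectivity and the diameter bound, to fully traverse the boundary of some $(n-N_0)$-tile $T$); that boundary $\partial T$ contains $(n-N_0)$-edges of all $k$ types (each tile's boundary contains exactly one edge of each type by the structure recalled in Section~\ref{sec:thurston-maps-as}), and each of those subdivides into $n$-edges of all types under $F^{-N_0}$. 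Hence $Y$ contains $n$-edges of every type, so $m^n_{ij} \geq 1$ for all $i,j$ once $n$ is large, which gives $M^n > 0$ and completes the proof.
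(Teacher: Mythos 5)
There is a genuine gap. Your overall skeleton matches the paper's: reduce via Lemma~\ref{lem:liftM} to showing that the chain $H^{n-1}_1\circ\cdots\circ H^0_1(E_i)$ contains an $n$-edge of every type, note that this chain connects the two postcritical endpoints $p,q$ of $E_i$ (all the $H^m$ are rel.\ $\V^m\supset\post$), so its diameter is bounded below independently of $n$, and conclude by expansion that for $n$ large it contains at least $k$ edges. All of that is fine and is exactly what the paper does. The problem is the step you yourself flag as the obstacle: upgrading ``at least $k$ edges / definite diameter'' to ``contains an edge of each of the $k$ types.'' Your geometric argument for this does not work. First, a connected union of $n$-edges of diameter $\geq c_0$ need \emph{not} fully traverse the boundary of any $(n-N_0)$-tile: as a path in the graph $\bigcup\E^n$ it can thread through the interiors of the $(n-N_0)$-tiles, meeting their boundaries only in vertices or in proper subarcs of boundary edges (think of a roughly geodesic chain crossing many tiles). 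Second, even if it did contain $\partial T$ for some $(n-N_0)$-tile $T$, your final claim that each $(n-N_0)$-edge ``subdivides into $n$-edges of all types under $F^{-N_0}$'' is itself an unproved primitivity assertion about the subdivision matrix of the invariant curve; it does not follow from the fact that each tile boundary carries one edge of each type, and in general $F^{N_0}(\CC)$ need not cover all of $\CC$.

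The paper closes this gap with a purely combinatorial observation that you never invoke and that makes the whole geometric detour unnecessary: by ($H^n$~\ref{item:Hn_5}) and Lemma~\ref{lem:covergn}, $F\colon\gamma^{m+1}\to\gamma^m$ is a $d$-fold cover in the sense of Definition~\ref{def:dfoldcover}, hence $F^n\colon\gamma^n\to\gamma^0$ is a $d^n$-fold cover, which means that along the Eulerian circuit $\gamma^n$ the type of successive $n$-edges increases by exactly one $\bmod\,k$. Consequently \emph{any} $k$ consecutive $n$-edges of $\gamma^n$ contain exactly one edge of each type; since the image of $E_i$ is a consecutive subchain of $\gamma^n$ with at least $k$ edges for $n$ large, it contains all types, and $M^n>0$. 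You should replace your covering argument by this one line.
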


\begin{proof}
  Recall from Section \ref{sec:gamman+1-d-fold} that $F\colon
  \gamma^{n+1}\to \gamma^{n}$ is a $d$-fold cover. Thus by induction
  $F^n\colon \gamma^n\to \gamma^0$ is a $d^n$-fold cover. Therefore
  along $\gamma^n$ the type of $n$-edges varies cyclically, in
  $\gamma^n$ an $n$-edge of type $j$ is succeeded by one of type
  $j+1$. This means that every chain of $k$ $n$-edges in $\gamma^n$
  contains exactly one $n$-edge of each type. 

  \smallskip
  Fix a $0$-edge $E_i$ connecting two postcritical points $p,q$.
  Consider $H^{n-1}_1\circ H^{n-2}_1\circ \dots \circ
  H^0_1(E_i)$. This is a chain of $n$-edges in $\gamma^n$ that
  connects the points $p,q$. 
  Since $F$ is expanding (see Definition \ref{def:f}~\eqref{def:fexpanding}), 
  the diameter of $n$-edges goes to $0$ (uniformly) with $n$. 
  Thus by
  choosing $n$ large enough, our chain contains at least $k$
  $n$-edges, therefore at least one $n$-edge of each type.

  \smallskip
  With this choice of $n$ the claim follows from Lemma \ref{lem:liftM}.
\end{proof}

Note that there are $d$ $1$-edges of each type, thus $\sum_i
m_{ij}=d$. 
The Perron-Frobenius theorem (see for example 
\cite[Theorem 8.2.11 and
Theorem 8.1.21]{0704.15002}) implies that $d$ is a simple eigenvalue of $M$ (in
fact its spectral radius). Furthermore there is unique eigenvector
$l=(l_j)$ to $d$, such that $l_j>0$ (for all $j=0,\dots, k-1$) and
$\sum_j l_j=1$. We note that $l_j\subset \Q$ for all $j=0,\dots, k-1$. 
The \defn{length} of (an $n$-arc in $S^1$ corresponding to)
an $n$-edge $E^n_j$ of 
type $j$ is now defined as
\begin{equation}
  \label{eq:de_lEn}
  l(E^n_j) := d^{-n}l_j. 
\end{equation}

\begin{lemma}
  \label{lem:l_prop}
  The \defn{length} defined above satisfies Properties
  \upshape{($l$ \ref{item:propl_1})--($l$ \ref{item:propl_4})}. 
\end{lemma}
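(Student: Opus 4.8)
The plan is to verify the four properties ($l$ 1)--($l$ 4) directly from the definition \eqref{eq:de_lEn}, namely $l(E^n_j) = d^{-n} l_j$ where $(l_j)$ is the normalized Perron--Frobenius eigenvector of $M$ for the eigenvalue $d$, using Lemma~\ref{lem:liftM} to track how $H^n_1$ subdivides edges.

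First, ($l$ \ref{item:propl_1}) is immediate: the Perron--Frobenius theorem (invoked just before the statement) gives $l_j > 0$ for all $j$, and $d^{-n} > 0$, so $l(E^n_j) > 0$. Next, ($l$ \ref{item:propl_3}): if $E'$ is an $(n+1)$-edge and $E = F(E')$, then $F$ preserves the type of an edge, so $E'$ and $E$ have the same type $j$; hence $l(E) = d^{-n} l_j = d \cdot d^{-(n+1)} l_j = d\, l(E')$. For ($l$ \ref{item:propl_2}), I would count $n$-edges by type: the number of $n$-edges of type $j$ is the number of ways to reach type $j$, i.e.\ $(\mathbf{1}^T M^n)_j$ where $\mathbf{1}$ is the all-ones vector (this uses that along $\gamma^0$ the $0$-edges are enumerated once each, one per type, combined with Lemma~\ref{lem:liftM}); actually more simply, $F^n$ maps $n$-edges of type $j$ bijectively onto... no --- one should instead note there are exactly $d^n$ $n$-edges of each type $j$ (since $F^n\colon \E^n \to \E^0$ is $d^n$-to-one and type-preserving, and there is one $0$-edge of each type). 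Therefore
\begin{equation*}
  \sum_{E \in \E^n} l(E) = \sum_{j=0}^{k-1} d^n \cdot d^{-n} l_j = \sum_{j=0}^{k-1} l_j = 1,
\end{equation*}
using the normalization $\sum_j l_j = 1$.

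The main obstacle --- and the only property genuinely using the eigenvector equation --- is ($l$ \ref{item:propl_4}). Here I would fix an $n$-edge $E$ of type $i$. By Lemma~\ref{lem:liftM} (with $n$ replaced appropriately, since $\widetilde m_{ij} = m_{ij}$), the chain $H^n_1(E)$ of $(n+1)$-edges contains exactly $m_{ij}$ edges of type $j$. Each such $(n+1)$-edge of type $j$ has length $d^{-(n+1)} l_j$, so
\begin{equation*}
  \sum_{E' \text{ in } H^n_1(E)} l(E') = \sum_{j=0}^{k-1} m_{ij} \, d^{-(n+1)} l_j = d^{-(n+1)} \sum_{j=0}^{k-1} m_{ij} l_j = d^{-(n+1)} (Ml)_i = d^{-(n+1)} \cdot d\, l_i = d^{-n} l_i = l(E),
\end{equation*}
where the key step $(Ml)_i = d\, l_i$ is exactly the statement that $l$ is an eigenvector of $M$ with eigenvalue $d$. (Here I am reading $M = (m_{ij})$ acting on the vector $l = (l_j)$; if the paper's convention has $M$ acting on the other side, one replaces $M$ by $M^T$ throughout, but $d$ is still the relevant eigenvalue and the eigenvector is the one normalized to have positive entries summing to $1$.) This completes the verification of all four properties. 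The consistency of ($l$ \ref{item:propl_3}) and ($l$ \ref{item:propl_4}) together --- which is what makes the inductive construction of the parametrization in the next subsection work --- is precisely encoded in the fact that $M$ has row sums... no, rather that the single vector $l$ simultaneously satisfies $Ml = dl$ and $\sum l_j = 1$; I expect no surprises beyond bookkeeping once the eigenvector equation is in hand.
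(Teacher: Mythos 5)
Your proposal is correct and follows essentially the same route as the paper: ($l$~1)--($l$~3) from positivity of the eigenvector, the count of $d^n$ $n$-edges per type, and type-preservation under $F$, and ($l$~4) from Lemma~\ref{lem:liftM} together with the eigenvector equation $Ml=dl$. The convention you worried about is the paper's: $\sum_j m_{ij}l_j = d\,l_i$, exactly as in your computation.
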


\begin{proof}
  ($l$ \ref{item:propl_1}) follows immediately, since $l_j>0$ for all
  $j$.  

  There are $d^n$ $n$-edges of each type. Thus
  \begin{equation*}
    \sum_{E\in \E^n} l(E)=\sum_j l_j=1,
  \end{equation*}
  which is property ($l$ \ref{item:propl_2}).

  ($l$ \ref{item:propl_3}) is again clear, since $F$ maps
  $(n+1)$-edges to $n$-edges of the same type. 

  Property ($l$ \ref{item:propl_4}) follows from $Ml=dl$. Let $E^n_i$
  be an $n$-edge of type $i$, and $E^{n+1}_1,\dots, E^{n+1}_N$ be the
  $(n+1)$-edges contained in $H^n_1(E^n_i)$. Then by Lemma
  \ref{lem:liftM}
  \begin{equation*}
    \sum_m l(E^{n+1}_m)= d^{-n-1}\sum_j m_{ij} l_j=d^{-n} l_i=l(E^n_i).  
  \end{equation*}
\end{proof}

Note that the lengths depend on the particular pseudo-isotopy
$H^0$ chosen, it is not a property of the edges alone. 

\subsection{Parametrizing $\gamma^n$}
\label{sec:parametrizing-gn}

Fix a postcritical point $p_0$. Consider the Eulerian circuit
$\gamma^0=\CC=\bigcup \E^0$ 
\begin{equation*}
  \gamma^0=E_0, \dots ,E_{k-1}, \quad (E_j\in \E^0).   
\end{equation*}
It is labeled such that the
initial point of $E_0$ is $p_0$. Recall that 
we want to parametrize $\gamma$ such that $\varphi=e^{2\pi i
  \theta_0}z^d$ is semi-conjugate to $F$ (see Lemma
\ref{lem:theorem1reduction}).  
We now define $\theta_0$. If $p_0$ is a fixed point of $F$ set
$\theta_0:= 0$. Otherwise 
let $E_0,\dots ,E_{m^0-1}$ be the
(unique) positively oriented chain in $\gamma^0$ from $p_0$ to
$F(p_0)$. Then  
\begin{equation}
  \label{eq:deftheta0}
  \theta_0:= l(E_0)+ \dots + l(E_{m^0-1}).  
\end{equation}

Label $\gamma^1=E^1_0,\dots, E^1_{kd-1}$ such that $E^1_0$ is the
initial $1$-edge of the chain $H^0_1(E_0)$ in $\gamma^1$.  
In the same fashion label (the Eulerian circuit)
\begin{equation*}
  \gamma^{n}=E^{n}_0,\dots, E^{n}_{kd^{n}-1}, \quad (E^n_j\in \E^n)  
\end{equation*}
such that $E^n_0$ is the initial $n$-edge in
$H^{n-1}_1(E^{n-1}_0)$ (for each $n$). Thus the initial point of each
$E^n_0$ is 
$p_0$. Note however, that $\gamma^n$ may go through $p_0$ several
times. 

\smallskip
It will be convenient to identify $S^1$ with $\R/\Z$. Divide the
circle $\R/\Z$ into $k$ arcs $a_j$ as follows. Let
\begin{align}
  \label{eq:defalphaj}
  &\alpha_0:=0
  \\
  \notag
  &\alpha_j:= l(E_0)+ \dots + l(E_{j-1}), 
\end{align}
for $j=1,\dots,k-1$. Then $a_j:=[\alpha_j,\alpha_{j+1}]$ (where
indices are taken $\bmod k$). 

\begin{convention}
  When writing $[\alpha,\beta]\subset \R/\Z$ for an arc on the circle,
  we always mean the \emph{positively oriented} arc from $\alpha$ to
  $\beta$. In particular $a_{k-1}=[\alpha_{k-1},
  0]=[\alpha_{k-1},1]$. 
\end{convention}

In the same fashion we divide the circle $\R/\Z$ into $kd^n$
\defn{$n$-arcs} $a^n_j$ (for each
$n$) by
\begin{align*}
  &\alpha^n_0:=0
  \\
  &\alpha^n_j:= l(E^n_0)+ \dots + l(E^n_{j-1}), 
\end{align*}
for $j=1,\dots,kd^n-1$. Then $a^n_j:=[\alpha^ n_j,\alpha^n_{j+1}]$.
% (and $a^n_{k-1}=[\alpha^n_{kd^n-1},1]\subset \R/\Z$). 
% Note that Property ($l$
% \ref{item:propl_4}) implies that each point $\alpha^n_i$ is one of the
% points $\alpha^{n+1}_j$. 

\begin{convention}
  The (lower) indices of points $\alpha^n_j$, $n$-arcs $a^n_j$, and $n$-edges
  $E^n_j$ are
  always taken $\bmod \,kd^n$. In particular $\alpha^n_{kd^n}=\alpha^n_0$,
  and $a^n_{kd^n-1}= [\alpha^n_{kd^n-1},0]=[\alpha^n_{kd^n-1},1]$.  
\end{convention}

We now define  
the approximations $\gamma^n$ on each
$n$-arc $a^n_j\subset \R/\Z$ by
\begin{align*}
  \gamma^n\colon  a^n_j \to E^n_j \text{ is (any) orientation-preserving
    homeomorphism,} 
\end{align*}
as \emph{parametrized curves}.
Thus initial/terminal points are mapped onto each other by
$\gamma^n$. 
Note that $\gamma^n(0)=p_0$ for all $n$. 

\smallskip
In $\R/\Z$ the map $\varphi(z)=e^{2\pi i \theta_0}z^d$ is
given by
\begin{equation*}
  \phi\colon \R/\Z \to \R/\Z, \quad \phi(t)= d t + \theta_0 \bmod 1.
\end{equation*}

\begin{lemma}
  \label{lem:paragn}
  The parametrized curves $\gamma^n$ satisfy the following.
  \begin{enumerate}
  \item
    \label{item:paragn_1}
    Let $m\geq n$, then each point $\alpha^n_j$ is a point
    $\alpha^m_i$. Furthermore
    \begin{equation*}
      \gamma^m(\alpha^n_j)=\gamma^n(\alpha^n_j),
    \end{equation*}
    for all $j=0,\dots ,kd^n-1$. Note that
    $\{\alpha^n_j\}=(\gamma^n)^{-1} (\V^n)$. So the $n$-th approximation
    determines the preimages (on the circle) of the 
    $n$-vertices.
  \item
    \label{item:paragn_2}
    The map $\phi$ maps each point $\alpha^{n+1}_j$ to a point
    $\alpha^n_i$. 
    For any point $\alpha^{n+1}_j\in \R/\Z$
    \begin{equation*}
      F(\gamma^{n+1}(\alpha^{n+1}_j)) = \gamma^n(\phi(\alpha^{n+1}_j)).
    \end{equation*}
    Thus we have the following commutative diagram,
    \begin{equation*}
      \xymatrix{
        \{\alpha^{n+1}_j\}\subset\R/\Z \ar[r]^\phi \ar[d]^{\gamma^{n+1}}
        & \{\alpha^n_j\}\subset\R/\Z \ar[d]^{\gamma^n}
        \\
        \V^{n+1}\subset S^2 \ar[r]_F
        & \V^n\subset S^2. 
      }
    \end{equation*}
    This will imply the desired semi-conjugacy.
  \item
    \label{item:paragn_3}
    The supremum norm is given in terms of the visual metric
    (\ref{eq:def_visuald}). Then
    \begin{equation*}
      \norm{\gamma^{n+1}-\gamma^n}_{\infty}\lesssim \Lambda^{-n},
    \end{equation*}
    for all $n$. Here $C(\lesssim)$ does not depend on $n$. 
  \end{enumerate}
\end{lemma}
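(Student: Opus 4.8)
The plan is to verify the three statements in order, since each builds on the previous ones and on the combinatorial structure already established.

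\medskip
\textbf{Part (\ref{item:paragn_1}).} The key observation is the refinement property ($l$ \ref{item:propl_4}): for an $n$-edge $E^n_j$, the isotopy $H^n_1$ deforms it to a chain of $(n+1)$-edges $E'_1,\dots,E'_N$, and the corresponding $n$-arc $a^n_j$ subdivides into consecutive $(n+1)$-arcs whose lengths sum to $l(E^n_j)$ by ($l$ \ref{item:propl_4}). Because the labelling of the Eulerian circuits $\gamma^{n+1}$ is defined precisely by following $H^n_1$ (the initial $(n+1)$-edge of $\gamma^{n+1}$ is the first $(n+1)$-edge in $H^n_1(E^n_0)$, and likewise locally along the circuit), the partition points $\alpha^{n+1}_i$ refine the partition points $\alpha^n_j$: each $\alpha^n_j$ equals some $\alpha^{n+1}_i$. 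Here I would do a short induction on $m\ge n$, the step $m\to m+1$ being exactly the above. For the equality $\gamma^m(\alpha^n_j)=\gamma^n(\alpha^n_j)$: the point $\alpha^n_j=\alpha^{n+1}_i$ is an endpoint of the $n$-arc $a^n_{j-1}$, and $\gamma^n$ maps it to the terminal vertex of $E^n_{j-1}$, an $n$-vertex $v\in\V^n\subset\V^{n+1}$. Since $H^n$ is a pseudo-isotopy rel.\ $\V^n$ by ($H^n$ \ref{item:Hn_1}), we have $\gamma^{n+1}(\alpha^n_j)=H^n_1(\gamma^n(\alpha^n_j))=\gamma^n(\alpha^n_j)$, and iterating (each later $H^m$ fixes $\V^m\supset\V^n$) gives $\gamma^m(\alpha^n_j)=\gamma^n(\alpha^n_j)$. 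The identity $\{\alpha^n_j\}=(\gamma^n)^{-1}(\V^n)$ is then Definition \ref{def:gamma_n}(\ref{item:gamma_n_2})--(\ref{item:gamma_n_3}): the $n$-arcs are exactly the maximal arcs on which $\gamma^n$ is a homeomorphism onto an $n$-edge.

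\medskip
\textbf{Part (\ref{item:paragn_2}).} This is where the $d$-fold cover property does the work. By Lemma \ref{lem:covergn} / Corollary \ref{cor:gamman_po}, $F\colon\gamma^{n+1}\to\gamma^n$ is a $d$-fold cover: writing $\gamma^{n+1}=E'_0,\dots,E'_{kd^{n+1}-1}$ and $\gamma^n=E_0,\dots,E_{kd^n-1}$, we have $F(E'_j)=E_{m+j}$ for the index $m$ with $F(E'_0)=E_m$. Combined with the length law ($l$ \ref{item:propl_3}), $l(E_{m+j})=d\,l(E'_j)$, a telescoping sum gives $\phi(\alpha^{n+1}_j)=d\,\alpha^{n+1}_j+\theta_0=\alpha^n_{m+j}$, once I check that the additive constant is $\theta_0$. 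That check is exactly the content of the definition (\ref{eq:deftheta0}): $\theta_0$ was chosen so that $\gamma^0$'s initial vertex $p_0$, which is $\gamma^{n+1}(\alpha^{n+1}_0)=\gamma^n(\alpha^n_0)$, is sent by $\phi$ to $\alpha^n_{m}$ with $\gamma^n(\alpha^n_m)=F(p_0)$ (and $\theta_0=0$ when $p_0$ is fixed). So $\phi$ carries the partition point $\alpha^{n+1}_j$ to $\alpha^n_{m+j}$. Finally $F(\gamma^{n+1}(\alpha^{n+1}_j))$ is the $F$-image of the terminal $(n+1)$-vertex of $E'_{j-1}$, which — since $F$ maps $E'_{j-1}$ homeomorphically onto $E_{m+j-1}$ preserving orientation (positively oriented edges, Corollary \ref{cor:gamman_po}) — is the terminal $n$-vertex of $E_{m+j-1}$, i.e.\ $\gamma^n(\alpha^n_{m+j})=\gamma^n(\phi(\alpha^{n+1}_j))$. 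This gives the commuting square.

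\medskip
\textbf{Part (\ref{item:paragn_3}).} This is the estimate that will require the geometric input. The point is that $\gamma^{n+1}=H^n_1\circ\gamma^n$, so $\gamma^{n+1}(t)$ and $\gamma^n(t)$ lie in a common track $\{H^n(\gamma^n(t),s):s\in[0,1]\}$ of the pseudo-isotopy $H^n$, whence $\varrho(\gamma^{n+1}(t),\gamma^n(t))\le\diam H^n$. By Lemma \ref{lem:lift_degenerate_isotopies}(\ref{item:lift2}), $\diam H^n\lesssim\Lambda^{-n}$ with constant independent of $n$. Taking the supremum over $t\in S^1$ yields $\norm{\gamma^{n+1}-\gamma^n}_\infty\lesssim\Lambda^{-n}$. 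I expect this to be essentially immediate given the cited lemma; the only care needed is to note the supremum norm here is measured in the visual metric, which is exactly the metric in which Lemma \ref{lem:lift_degenerate_isotopies}(\ref{item:lift2}) is stated.

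\medskip
\textbf{Main obstacle.} None of the three parts is deep individually; the subtle bookkeeping is in Part (\ref{item:paragn_2}), specifically confirming that the additive constant produced by telescoping the $d$-fold cover relation is precisely the $\theta_0$ defined in (\ref{eq:deftheta0}) rather than some other residue mod $1$ — this is where one must track how the basepoint $p_0$ and its image $F(p_0)$ sit relative to the chosen labelling of $\gamma^0$ and $\gamma^1$, and handle the fixed-point case separately. The rest is routine once Parts (\ref{item:paragn_1}) and the $d$-fold cover structure (Lemma \ref{lem:covergn}, Corollary \ref{cor:gamman_po}) are in hand.
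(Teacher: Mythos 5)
Your overall architecture matches the paper's, but there are two genuine problems. The serious one is in Part~(\ref{item:paragn_2}): you need $\alpha^n_m=\theta_0$, where $m$ is the index with $F(E^{n+1}_0)=E^n_m$, and you justify this by saying it is ``exactly the content of the definition'' of $\theta_0$, essentially because $\gamma^n(\alpha^n_m)=F(p_0)=\gamma^n(\theta_0)$. But $\gamma^n$ is far from injective: for $n\geq 1$ the circuit $\gamma^n$ in general passes through $F(p_0)$ several times, so there are several $n$-edges in $\gamma^n$ with initial point $F(p_0)$, and knowing that $E^n_m$ is one of them does not locate its position in the circuit. The paper devotes its Claim~1 to exactly this point and proves it by induction on $n$: one tracks the initial arc $A^n\subset E^n_0$ that $H^n_1$ deforms to $E^{n+1}_0$, pushes it down by $F$ using the lift relation $F\circ H^n_1=H^{n-1}_1\circ F$, and then uses the length additivity ($l$~\ref{item:propl_4}) to conclude that the resulting $n$-edge sits at cumulative length exactly $\theta_0$ in $\gamma^n$. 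Without this induction your telescoping only determines $\phi(\alpha^{n+1}_j)$ up to the choice of which visit to $F(p_0)$ one starts the count from.

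The second problem is in Part~(\ref{item:paragn_3}): you write $\gamma^{n+1}=H^n_1\circ\gamma^n$ and bound the distance by the track of the pseudo-isotopy. That identity holds for the curves of Definition~\ref{def:gamma_n}, but the curves in this lemma have been reparametrized in Section~\ref{sec:parametrizing-gn}: on each $n$-arc $a^n_j$ the map $\gamma^n$ is an \emph{arbitrary} orientation-preserving homeomorphism onto $E^n_j$, so for $t$ in the interior of an arc $\gamma^{n+1}(t)$ need not equal $H^n_1(\gamma^n(t))$. (You invoke the same identity in Part~(\ref{item:paragn_1}), but only at the points $\alpha^n_j$, where both sides are the same $n$-vertex, so no harm is done there.) The estimate is still true, and the correct argument is the paper's: $a^n_j$ is the union of the $(n+1)$-arcs $a^{n+1}_i,\dots,a^{n+1}_{i+m-1}$, the sets $E^n_j$ and $E^{n+1}_i\cup\dots\cup E^{n+1}_{i+m-1}$ share their endpoints, $m$ is uniformly bounded by Lemma~\ref{lem:liftM}, and each edge has diameter comparable to $\Lambda^{-(\text{its order})}$, which gives $\norm{\gamma^n-\gamma^{n+1}}_{\infty}\lesssim \Lambda^{-n}$ on $a^n_j$.
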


\begin{proof}
  (\ref{item:paragn_1}) Consider $E_0$, the first $0$-edge in
  $\gamma^0$. Then $H^0_1(E_0)$ is the chain $E^1_0,\dots, E^1_{m-1}$ of
  $1$-edges in $\gamma^1$. Note that the terminal point of $E_0$ is
  the terminal point of $E^1_{m-1}$. By Property ($l$ \ref{item:propl_4}) 
  \begin{equation*}
    \alpha_1=l(E_0)=l(E^1_0)+ \dots + l(E^1_{m-1})=\alpha^1_m.
  \end{equation*}
  Thus 
  \begin{align*}
    \gamma^1(\alpha_1)
    &=\gamma^1(\alpha^1_m)
    = \text{terminal point of } E^1_m
    \\
    &= \text{terminal point of } E_0
    =\gamma^0(\alpha_1). 
  \end{align*}
  In the
  same fashion one shows that each $\alpha_j$ is a point $\alpha^1_i$, 
  and $\gamma^1(\alpha_j)=\gamma^0(\alpha_j)$
  for all $j=0,\dots, k-1$. The general statement follows by
  induction (see Lemma \ref{lem:liftM}). 

%   We
%   record that each point $\alpha_i$ is one of the points $\alpha^1_j$
%   and so on,
%   \begin{equation*}
%     \{\alpha_j\}\subset \{\alpha^1_j\}\subset \{\alpha^2_j\} \dots .
%   \end{equation*}

  \medskip
  (\ref{item:paragn_2}) 
  Recall from the definitions of $\theta_0$ (\ref{eq:deftheta0}) and
  the $\{\alpha_j\}$ (\ref{eq:defalphaj}) that  
  $\alpha_{m^0}=\theta_0$. Then 
  by (\ref{item:paragn_1}) and the definition of
  $\theta_0$ we have
  \begin{equation*}
    \gamma^n(\theta_0)=\gamma^0(\theta_0)= F(p_0). 
  \end{equation*}
  Let $m^n=m^n(\theta_0)$ be the index such that
  $\alpha^n_{m^n}=\theta_0$. 
  
  \smallskip
  Consider $E^{n+1}_0$, the initial $(n+1)$-edge in $\gamma^{n+1}$. It
  is clear that $F(E^{n+1}_0)$ is an $n$-edge with initial point
  $F(p_0)$ (by Corollary \ref{cor:gamman_po}). There may be several
  such $n$-edges in general however. We 
  next show that $F(E^{n+1}_0)$ is in fact the ``right'' $n$-edge,
  namely the image (by $\gamma^n$) of the $n$-arc (on $\R/\Z$) with initial
  point $\theta_0$.  
  \begin{claim1}
    $F(E^{n+1}_0) = \gamma^n(a^n_{m^n})=E^n_{m^n}$.
  \end{claim1}
  This is clear for $n=0$, since there is only one $0$-edge with
  initial point
  $F(p_0)$. To prove the claim by induction, we assume it is true for
  $n-1$. 

  Consider $E^n_0$, 
  by assumption
  $F(E^n_0)=\gamma^{n-1}\left(a^{n-1}_{m^{n-1}}\right)=E^{n-1}_{m^{n-1}}$.   
  Let $A^n\subset E^n_0$ be the (initial) $n$-arc
  that is deformed by $H^{n}$ to $E^{n+1}_0$. 
  Let $A^{n-1}:=F(A^n)\subset E^{n-1}_{m^{n-1}}$, it is an $n$-arc that is
  deformed by $H^{n-1}$ 
  to an $n$-edge $E^n_j$ (since $H^n$ is the lift
  of $H^{n-1}$ by $F$). 
  \begin{equation*}
    \xymatrix{
      A^n\subset E^n_0 \ar[rr]^{H^n_1} \ar[d]_F
      & & E^{n+1}_0 \ar[d]^F
      \\
      A^{n-1}\subset E^{n-1}_{m^{n-1}} \ar[rr]_>>>>>>>>>>{H^{n-1}_1} 
      & & E^n_j
    }
  \end{equation*}

  The crucial property is that by construction
  $j=m^n$. This is seen as follows. By ($l$ \ref{item:propl_4}) the
  total length of the $(n-1)$-edges preceding $E^{n-1}_{m^{n-1}}$
  (which is $\theta_0$) is
  the same as the total length of all $n$-edges preceding
  $E^n_j$,
  \begin{align*}
    \theta_0 & =l(E^{n-1}_0)+\dots +l(E^{n-1}_{m^{n-1}-1})
    \\
    &=l(E^n_0)+\dots+l(E^n_{j-1}),
    \\
    & \text{thus }  j=m^n.
  \end{align*}
  Hence $F(E^{n+1}_0)=E^n_{m^n}$, since the diagram above
  commutes. This proves Claim 1. 
%   \begin{align*}
%      F(H^n_t(A^n))&=H^{n-1}_t(F(A^n))=H^{n-1}_t(A^{n-1}) \quad
%     \text{yields for } t=1
%     \\
%     F(E^{n+1}_0)&= E^n_{m^n}=\gamma^n(a^n_{m^n}),
%   \end{align*}
%   proving Claim 1.

  \begin{claim2}
    $F(E^{n+1}_j) = E^n_{m^n + j}$, for $j=0,\dots,
    kd^{n+1}-1$.  
  \end{claim2}
  This follows from Claim 1, and the fact that $F\colon
  \gamma^{n+1}\to \gamma^n$ is a $d$-fold covering in the sense of     
  Definition \ref{def:dfoldcover}. The reader is reminded (for the
  last time) that the index $m^n+j$ is taken $\bmod \, kd^n$. 

  \begin{claim3}
    The map $\phi$ maps points $\alpha^{n+1}_j$ to points $\alpha^{n}_i$,
    in fact
    \begin{equation*}
      \phi(\alpha^{n+1}_j) = \alpha^n_{m^n+j}.
    \end{equation*}
  \end{claim3}
  To prove this claim note first that
  \begin{equation*}
    \phi(\alpha^{n+1}_0) = \phi(0) = \theta_0 = \alpha^n_{m^n}
  \end{equation*}
  by definition. 
  In the following we write $\alpha\equiv\beta$ if $\alpha, \beta$
  represent the same point on the circle $\R/\Z$, i.e., if
  $\alpha-\beta\in \Z$. 

  By the previous claim
  $F(E^{n+1}_j)= E^n_{m^n+j}$, thus 
  \begin{equation*}
    l(E^n_{m^n+j})=d\,l(E^{n+1}_j)
  \end{equation*}
  by Property ($l$
  \ref{item:propl_3}). Therefore
  \begin{align*}
    \alpha^n_{m^n+j} 
    & \equiv \alpha^n_{m^n} + l(E^n_{m^n }) + l(E^n_{m^n+1})
     + \dots + l(E^n_{m^n+j-1})
    \\
    & = \theta_0 + d\,(l(E^{n+1}_0) + \dots + l(E^{n+1}_{j-1}))
    \\
    & = \theta_0 + d \alpha^{n+1}_j \equiv \phi(\alpha^{n+1}_j),
  \end{align*}
  for $j=0,\dots, kd^{n+1}-1$. Thus Claim 3 is proved.

  \medskip
  It remains to show the semi-conjugacy.
  % After these preparations we are ready to prove Property
%   (\ref{item:paragn_2}). 
  Note that by construction $\gamma^n$ maps
  $\alpha^n_j$ to the initial point of $E^n_j$. Thus
  \begin{align*}
    F(\gamma^{n+1}&(\alpha^{n+1}_j))
    = F(\text{initial point of } E^{n+1}_j)
    \\
    &= \text{ initial point of } E^{n}_{m^n+j} 
    &&\text{ by Claim 2}
    \\
    &= \gamma^n(\alpha^{n}_{m^n+j}) = \gamma^n(\phi(\alpha^{n+1}_j)) 
    &&\text{ by Claim 3.}
  \end{align*}
  This finishes the proof of property (\ref{item:paragn_2}).

  \medskip
  (\ref{item:paragn_3})
  The diameter of each $n$-edge $E^n$ in the visual metric
  (\ref{eq:def_visuald}) is given by
  \begin{equation*}
    \diam E^n\asymp \Lambda^{-n},
  \end{equation*}
  see \cite[Lemma~8.4]{expThurMarkov}. 
  
  Consider one $n$-arc $a^n_j=[\alpha^n_j,
  \alpha^n_{j+1}]$. Then $\gamma^n(a^n_j)=
  E^n_j$. The pseudo-isotopy $H^n$ deforms $E^n_j$ to a
  $(n+1)$-chain $E^{n+1}_{i},\dots, E^{n+1}_{i+m-1}$.  
  The number $m$ (of $(n+1)$-edges in this chain) is uniformly
  bounded by Lemma \ref{lem:liftM}.
  By (the proof of) property (\ref{item:paragn_1}) it holds
  $\alpha^n_j=\alpha^{n+1}_{i}$ and 
  $\alpha^n_{j+1}=\alpha^{n+1}_{i+m}$, and so
  \begin{align*}
    &a^n_j
    =
    a^{n+1}_i
    \cup \dots \cup
    a^{n+1}_{i+m-1}, \text{ where}
    \\ 
    &\gamma^{n+1}(a^{n+1}_i)=E^{n+1}_i,\dots,
    \gamma^{n+1}(a^{n+1}_{i+m-1})=E^{n+1}_{i+m-1}. 
  \end{align*}
  Furthermore the $(n+1)$-chain $E^{n+1}_i,\dots, E^{n+1}_{i+m-1}$ and
  the $n$-edge $E^n_j$ intersect in (the endpoints of $E^n_j$)
  $\gamma^n(\alpha^n_j)=\gamma^{n+1}(\alpha^{n+1}_{i})$ and 
  $\gamma^n(\alpha^n_{j+1})=\gamma^{n+1}(\alpha^{n+1}_{i+m})$, again by
  property (\ref{item:paragn_1}). Thus on $a^n_j$
  \begin{align*}
    \norm{\gamma^n-\gamma^{n+1}}_{\infty}
    &\leq 
    \diam E^n_j + \diam E^{n+1}_i + \dots + \diam E^{n+1}_{i+m-1}
    \\
    &\lesssim
    \Lambda^{-n} + m \Lambda^{-n-1} \lesssim \Lambda^{-n},
  \end{align*}
  as desired.

\end{proof}

\subsection{Construction of the invariant Peano curve $\gamma$}
%{Proof of Theorem \ref{thm:main} (assuming existence of $H^0$)}
\label{sec:proof-theorem1}
We now come to the proof of the main result, assuming the existence of  
a pseudo-isotopy $H^0$ as in Definition \ref{def:pseudo-isotopy-h0}. 

\smallskip
Define
\begin{equation*}
  \gamma\colon \R/\Z \to S^2, 
  \quad
  \gamma(t):= \lim_n \gamma^n(t).
\end{equation*}
Since the sequence $(\gamma^n)$ converges uniformly by Lemma
\ref{lem:paragn} 
(\ref{item:paragn_3}) this is a parametrized curve.

\begin{claim1}
  $\gamma$ is a Peano curve (onto).
\end{claim1}

This is clear since the curve $\gamma$ contains by construction
$\bigcup_n \V^n$ (all $n$-vertices). This set is dense in $S^2$.

\begin{claim2}
  $F(\gamma(t))=\gamma(\phi(t))$, for all $t\in \R/\Z$.
\end{claim2}

Note that by properties (\ref{item:paragn_1}),(\ref{item:paragn_2}) of
Lemma \ref{lem:paragn} 
this is true for all $t=\alpha^n_j$. The claim follows, since the
set of all such points 
$\alpha^n_j$ is
dense in the circle $\R/\Z$. 

\medskip
Thus we ``just'' need to construct the pseudo-isotopy $H^0$ (with
Properties ($H^0$~\ref{item:H0_1})--($H^0$~\ref{item:H0_5})) to finish
the proof of Theorem~\ref{thm:main}.  

\subsection{$\gamma$ is the end of a pseudo-isotopy}
\label{sec:glimit_jordan}

The homotopy $\Gamma\colon S^2\times [0,1]\to S^2$ from
Theorem~\ref{thm:main} is constructed as follows. Roughly speaking we 
concatenate the homotopies $H^n$. 
The precise
definition is as follows.
Break up the unit interval into intervals
\begin{equation*}
  I=[0,1]= \left[0,\frac{1}{2}\right]\cup 
  \left[\frac{1}{2},\frac{3}{4}\right] \cup \dots \cup 
  \left[1-2^{-n},1 -2^{-n-1}\right]
  \cup \dots \cup \{1\}. 
\end{equation*}
The $n$-th interval in this union is denoted by $I^n=[1-2^{-n},1
-2^{-n-1}]$. Let $s_n\colon I^n \to I$, $s_n(t)= 2^{n+1}(t- (1
-2^{-n}))$, for $n\in \N_0$. We define  
$\Gamma\: S^2\times I\ra S^2$ by $\Gamma(x,t)= H^0(x,s_0(t))$
for $t\in I^0$, $\Gamma(x,t)= H^1(H^0_1(x), s_1(t))$ for $t\in
I^1$. In general
\begin{align*}
  %\label{eq:def_cat_K}
  \Gamma(x,t) := H^{n}(H^{n-1}_1\circ \dots \circ H^0_1(x),s_n(t))
\end{align*}
if $t\in I^n$ (for some  $n\in \N_0,$) and all $x\in S^2$. Since the
diameters of $H^n$ tend to $0$ exponentially (see
Lemma~\ref{lem:lift_degenerate_isotopies}~\eqref{item:lift2}), it
follows that $\Gamma$ extends to $t=1$ by $\Gamma(x,1):= \lim_{t\to 1}
\Gamma(x,t)$ continuously. This is the desired homotopy.   

\smallskip
It is possible to choose $\Gamma$ to be a pseudo-isotopy. This can be
done explicitly by slightly altering the above construction. 
We do not work out the details here. It is however a direct
consequence of the general theory of decomposition spaces. 
Namely it follows from the fact that
every \emph{cell-like upper semicontinuous decomposition} of a
$2$-manifold is \emph{shrinkable} \cite[Theorem 25.1]{MR872468}.    
%We will explain this in more detail in \cite{exp_quotients}. 

\section{Some topological Lemmas}
\label{sec:some-topol-lemm}

Here we collect some topological theorems/lemmas for future reference.
We first note the following form of the Jordan-Sch\"{o}nflies theorem.
\begin{theorem}[Isotopic Sch\"{o}nflies theorem]
  \label{thm:Schoenflies}
  Let $\gamma,\sigma\subset \D$ be two Jordan arcs with common
  endpoints $p,q\in \Dbar$. Then there is an isotopy of $\Dbar$ rel. $\partial
  \D\cup \{p,q\}$ that deforms $\gamma$ to $\sigma$.
\end{theorem}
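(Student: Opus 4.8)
\emph{Proof sketch.} The plan is to reduce the isotopic statement to the classical (non-isotopic) Jordan--Schönflies theorem together with Alexander's trick. I will describe the argument for the case $p,q\in\partial\D$ in detail, since this is the situation occurring in the applications; the case of interior endpoints needs only the same scheme carried out inside a regular neighbourhood of $\gamma\cup\sigma$, on which I would merely comment.

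First I would construct a single homeomorphism $h\colon\Dbar\to\Dbar$ with $h|_{\partial\D}=\id$ and $h(\gamma)=\sigma$. The points $p,q$ divide $\partial\D$ into two closed subarcs $\alpha,\beta$. The crosscut $\gamma$ divides $\Dbar$ into two closed Jordan domains $\overline U\supset\alpha$ and $\overline V\supset\beta$ with $\overline U\cap\overline V=\gamma$, and similarly $\sigma$ divides $\Dbar$ into $\overline U'\supset\alpha$ and $\overline V'\supset\beta$. Fix any homeomorphism $g\colon\gamma\to\sigma$ with $g(p)=p$ and $g(q)=q$. I would then define a homeomorphism $\partial\overline U\to\partial\overline U'$ equal to $\id$ on $\alpha$ and to $g$ on $\gamma$ (these agree at $p,q$), extend it over the disks by the Schönflies theorem to $h_U\colon\overline U\to\overline U'$, and likewise obtain $h_V\colon\overline V\to\overline V'$ restricting to $\id$ on $\beta$ and to the \emph{same} $g$ on $\gamma$. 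Since $h_U$ and $h_V$ agree on the overlap $\gamma$, the pasting lemma gives a continuous bijection $h\colon\Dbar\to\Dbar$, hence (compact source, Hausdorff target) a homeomorphism; by construction $h|_{\partial\D}=\id$, so $h$ fixes $p$ and $q$, and $h(\gamma)=\sigma$.

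Next I would upgrade $h$ to an ambient isotopy via Alexander's trick: as $h$ fixes $\partial\D$ pointwise, setting $h_t(x)=t\,h(x/t)$ for $|x|\le t$ and $h_t(x)=x$ for $|x|\ge t$ (for $t\in(0,1]$, with $h_0=\id$) yields an isotopy of $\Dbar$ from $\id$ to $h$ with $h_t|_{\partial\D}=\id$ throughout. Since $p,q\in\partial\D$, every $h_t$ fixes $p$ and $q$, so $(h_t)_{t\in[0,1]}$ is an isotopy of $\Dbar$ rel $\partial\D\cup\{p,q\}$, and it carries $\gamma=h_0(\gamma)$ to $\sigma=h_1(\gamma)$, as required.

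The points needing care are: (i) prescribing the boundary values of the two Schönflies extensions so that they match on $\gamma$ — here I would invoke the extension form of Schönflies (equivalently Carathéodory's theorem together with radial coning) that realises an \emph{arbitrary} boundary homeomorphism of a Jordan domain; and (ii) the case $p,q\in\inte\D$, where $\gamma$ no longer separates $\Dbar$. For (ii) I would instead pick a closed disk $N$ that is a regular neighbourhood of $\gamma$ with $\sigma\subset\inte N$ and with $\gamma,\sigma$ spanning arcs of $N$ from $p$ to $q$, run the preceding argument inside $N$ rel $\partial N\cup\{p,q\}$ to deform $\gamma$ to $\sigma$, and extend the isotopy by the identity on $\Dbar\setminus N$. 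The genuine obstacle in this general case is that one must know the mapping class group of the twice-marked disk $(N;p,q)$ acts trivially on isotopy classes of arcs from $p$ to $q$; this holds because its generator is a Dehn twist about a curve enclosing both marked points, which may be chosen disjoint from any prescribed such arc.
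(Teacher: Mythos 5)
Your treatment of the case $p,q\in\partial\D$ is correct and is essentially the paper's own argument: cut $\Dbar$ along the crosscut $\gamma$, extend a boundary homeomorphism over each of the two resulting Jordan domains by the planar Sch\"{o}nflies theorem, paste, and apply Alexander's trick. You are in fact more explicit than the paper about the one point that makes the pasting work (prescribing the \emph{same} $g$ on $\gamma$ for both extensions), and your observation that the Alexander isotopy is automatically rel $\{p,q\}$ because these points lie on $\partial\D$ is exactly right.

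The gap is in the case of interior endpoints, which the statement allows and which the paper does use (for instance, in deducing Theorem~\ref{thm:isotopy2} from Theorem~\ref{thm:Isotopy_rel_post} the arcs meet at postcritical points interior to the relevant disk). Your reduction to a regular neighbourhood $N$ of $\gamma$ is circular: if $p,q\in\inte\D$ then they are interior points of $N$ as well, so $\gamma$ and $\sigma$ are not spanning arcs of $N$ and do not separate it, and ``the preceding argument'' cannot be run inside $N$ --- you have only reproduced the original problem in a smaller disk. The subsequent appeal to the mapping class group of the twice-marked disk does identify the right potential obstruction (the full twist about a curve enclosing both marked points), but you never construct, in this case, a homeomorphism rel $\partial\D\cup\{p,q\}$ carrying $\gamma$ to $\sigma$, nor the isotopy from it to the identity; so the interior case remains unproved. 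The paper's route is different and avoids the issue: extend $\gamma$ and $\sigma$ to arcs with common endpoints on $S^1$, apply the boundary case, and then post-compose with an explicit radial stretch and an angular correction so that the interior endpoints stay fixed throughout; the one subtlety (the angular correction could wind by a nonzero multiple of $2\pi$) is handled by choosing the two extensions so that the change of argument along them is equal. Either adopt that correction scheme or carry out the mapping-class-group argument in full; as written, the interior case is not established.
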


We give a quick outline how this form can be obtained from the
standard Sch\"{o}nflies theorem.

\begin{theorem}[Sch\"{o}nflies theorem, see {\cite[Theorem 10.4]{MoiseTop23}}]
  \label{thm:Schoenflies_standard}
  Let $h\colon J\subset \R^2 \to \widetilde{J}\subset \R^2$ be a
  homeomorphism, where $J$ is a Jordan curve. Then $h$ may be extended
  to a homeomorphism $h\colon\R^2\to \R^2$. 
\end{theorem}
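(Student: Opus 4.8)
The plan is to derive Theorem~\ref{thm:Schoenflies} (the isotopic form) from Theorem~\ref{thm:Schoenflies_standard} (the standard form, which is quoted and may be assumed). Let $\gamma,\sigma\subset\Dbar$ be two Jordan arcs with common endpoints $p,q\in\partial\D$ and otherwise contained in the open disk $\D$. (I take it from the intended use that the arcs meet $\partial\D$ only in $\{p,q\}$; if the endpoints lie in $\D$ the argument is an easy modification.) Each of $\gamma$ and $\sigma$ together with one of the two boundary sub-arcs of $\partial\D$ from $p$ to $q$ is a Jordan curve; so $\gamma\cup\partial\D$ divides $\Dbar$ into two closed Jordan regions, and similarly for $\sigma$. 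First I would construct a homeomorphism $h\colon\Dbar\to\Dbar$ fixing $\partial\D$ pointwise and mapping $\gamma$ onto $\sigma$, by mapping each of the two complementary regions of $\gamma$ onto the corresponding region of $\sigma$ via the Schönflies theorem (applied to the boundary circles, matched so that $p\mapsto p$, $q\mapsto q$, and the two boundary arcs of $\partial\D$ are fixed), then gluing the two extensions along $\partial\D\cup\{p,q\}$; the glued map is a homeomorphism because it is continuous and bijective on the compact Hausdorff space $\Dbar$.

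Next I would upgrade this homeomorphism $h$ to an isotopy. The point is that a homeomorphism of $\Dbar$ that is the identity on $\partial\D$ is isotopic to the identity rel $\partial\D$; this is the Alexander trick. Explicitly, write points of $\Dbar$ in the form $rx$ with $x\in\partial\D$ and $r\in[0,1]$, and set
\begin{equation*}
  G(rx,t)=
  \begin{cases}
    t\,h\!\left(\tfrac{r}{t}x\right), & 0\le r\le t,\\[1mm]
    rx, & t\le r\le 1,
  \end{cases}
\end{equation*}
for $t\in(0,1]$, and $G(rx,0)=rx$. One checks $G$ is continuous (including at $t=0$, using that $h$ is bounded and $h|_{\partial\D}=\id$), that each $G(\cdot,t)$ is a homeomorphism of $\Dbar$ fixing $\partial\D$ pointwise, that $G(\cdot,0)=\id$ and $G(\cdot,1)=h$. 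Composing $G$ with the inclusion of $\gamma$ gives an isotopy of $\Dbar$ rel $\partial\D$ carrying $\gamma$ to $\sigma$. To get the isotopy rel $\partial\D\cup\{p,q\}$ as stated (rather than just rel $\partial\D$), I would arrange the Alexander trick to fix $p,q$ as well: since $h$ already fixes $p,q$ and the formula above fixes every boundary point, no extra work is needed — $p,q\in\partial\D$ are fixed throughout.

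The main obstacle — and the step that needs care rather than cleverness — is the gluing in the first paragraph: ensuring that the two Schönflies homeomorphisms, one on each side of $\gamma$, can be chosen to agree on the overlap $\partial\D\cup\{p,q\}$ and on $\gamma$ itself (where the two closed regions meet), so that the pasted map is a well-defined homeomorphism. The clean way to handle this is: parametrize $\gamma$ and $\sigma$ by homeomorphisms $g,s\colon[0,1]\to\Dbar$ with $g(0)=s(0)=p$, $g(1)=s(1)=q$; parametrize each of the four relevant boundary arcs of $\Dbar$ compatibly; then the boundary of each complementary region of $\gamma$ is presented as a concrete Jordan curve, mapped to the boundary of the corresponding region of $\sigma$ by a homeomorphism built from $s\circ g^{-1}$ on the $\gamma$-part and the identity on the $\partial\D$-part (these agree at $p,q$). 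Theorem~\ref{thm:Schoenflies_standard}, applied inside $\R^2$ and then restricted, extends each such boundary homeomorphism to the closed region; the two extensions agree on $\gamma\cup\partial\D$ by construction, so they paste. Everything else (continuity of $G$, bijectivity of pasted maps on compacta) is routine point-set topology that I would state but not belabor.
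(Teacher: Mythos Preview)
The statement you were given, Theorem~\ref{thm:Schoenflies_standard}, carries no proof in the paper: it is quoted from Moise as a known result and used as a black box. There is therefore nothing to compare your proposal against for \emph{that} theorem, and you yourself acknowledge this by saying the standard form ``is quoted and may be assumed.'' What you have actually written is a proof sketch of Theorem~\ref{thm:Schoenflies}, the isotopic form --- a different statement, for which the paper \emph{does} give an outline.

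Comparing your sketch to the paper's outline of Theorem~\ref{thm:Schoenflies}: your argument for the case $p,q\in\partial\D$ is essentially the same as the paper's --- build a homeomorphism of $\Dbar$ fixing $\partial\D$ by applying the standard Sch\"onflies theorem on each of the two complementary regions and gluing, then invoke the Alexander trick to get the isotopy. Your treatment of the gluing is more explicit than the paper's, which is fine.

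The gap is that you restrict to $p,q\in\partial\D$ and wave off the interior-endpoint cases as ``an easy modification.'' The statement of Theorem~\ref{thm:Schoenflies} allows $p,q\in\Dbar$, and the paper's outline shows the interior cases require genuine additional work: one extends the arcs to hit $\partial\D$, runs the boundary case, and then must post-compose with radial stretches and rotations to pin $p$ and $q$ back down --- including a subtle issue that the total rotation $\theta_1$ could a priori be a nonzero multiple of $2\pi$, which the paper handles by choosing the arc extensions carefully. Your Alexander-trick isotopy fixes $\partial\D$ pointwise but does \emph{not} fix interior points, so when $p$ or $q$ lies in $\D$ your isotopy is not rel $\{p,q\}$ without these corrections.
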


We remind the reader of the \emph{Alexander trick}. 
\begin{theorem}[Alexander, see {\cite[Theorem 11.1]{MoiseTop23}}]
  \label{thm:Alexander}
  Let $h\colon \Dbar\to \Dbar$ be a homeo\-morphism, such that
  $h|_{S^1}=\id_{S^1}$. Then the map $\phi \colon \Dbar\times [0,1]$
  defined by
  \begin{align*}
    \phi(x,t):=
    \begin{cases}
      t h(x/t),  & 0\leq \abs{x}\leq t,
      \\  
      x, & t\leq \abs{x}\leq 1;
    \end{cases}
  \end{align*}
  is an isotopy with $\phi(\cdot, 0)=\id_{\Dbar}$, $\phi(\cdot,
  1)=h$. 
\end{theorem}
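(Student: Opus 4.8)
The plan is to verify, directly from the given formula, the three conditions that make $\phi$ an isotopy: joint continuity on $\Dbar\times[0,1]$, that each slice $\phi_t:=\phi(\cdot,t)$ is a homeomorphism of $\Dbar$, and that the endpoints are $\phi_0=\id_{\Dbar}$ and $\phi_1=h$.

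First I would check that the two-case formula is unambiguous, i.e., that the branches agree on the overlap $\abs{x}=t$ for $t>0$. On that circle $x/t\in S^1$, so the hypothesis $h|_{S^1}=\id_{S^1}$ forces $h(x/t)=x/t$, whence $t\,h(x/t)=x$, which is exactly the value prescribed by the second branch. The endpoint values then fall out at once: for $t=0$ only the second branch applies, giving $\phi(x,0)=x$; for $t=1$ the first branch covers all of $\Dbar$, giving $\phi(x,1)=h(x)$.

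Next I would argue that each $\phi_t$ with $t>0$ is a homeomorphism. The assignment $x\mapsto t\,h(x/t)$ is $h$ conjugated by the dilation $x\mapsto x/t$, which carries the disk $\{\abs{x}\le t\}$ onto $\Dbar$; since $h$ maps $\Dbar$ onto $\Dbar$ and $S^1$ onto $S^1$ (the latter because $h|_{S^1}=\id$), this is a homeomorphism of $\{\abs{x}\le t\}$ onto itself that restricts to the identity on $\{\abs{x}=t\}$ by the overlap computation above. Gluing it to the identity map on the annulus $\{t\le\abs{x}\le1\}$ yields a homeomorphism $\phi_t$ of $\Dbar$ (and $\phi_0=\id$). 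An explicit fiberwise inverse is obtained by coning $h^{-1}$ in the same way, namely $\psi(y,t):=t\,h^{-1}(y/t)$ for $\abs{y}\le t$ and $\psi(y,t):=y$ for $t\le\abs{y}\le1$; this will serve to upgrade the conclusion to the statement that $(x,t)\mapsto(\phi(x,t),t)$ is a homeomorphism of $\Dbar\times[0,1]$, should one want the isotopy in that stronger sense.

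The last and only genuinely delicate step is joint continuity, and it is the step I would expect to be the main obstacle. On $\Dbar\times(0,1]$ it is routine: each branch is a composition of continuous maps and the two branches match along their common boundary, so the pasting lemma applies. The difficulty is continuity at the cone points $(x_0,0)$, where the dilation factor $1/t$ blows up. If $x_0\ne0$, then for $(x,t)$ near $(x_0,0)$ one has $\abs{x}>t$, so $\phi(x,t)=x\to x_0=\phi(x_0,0)$. If $x_0=0$, I would invoke the uniform bound $\abs{h}\le1$ on $\Dbar$: when $\abs{x}\le t$ we get $\abs{\phi(x,t)}=t\,\abs{h(x/t)}\le t$, and when $\abs{x}>t$ we get $\phi(x,t)=x$; since $(x,t)\to(0,0)$ forces both $t\to0$ and $\abs{x}\to0$, either way $\phi(x,t)\to0=\phi(0,0)$. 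The same estimate applied to $h^{-1}$ gives continuity of $\psi$, completing the verification that $\phi$ is an isotopy with $\phi(\cdot,0)=\id_{\Dbar}$ and $\phi(\cdot,1)=h$.
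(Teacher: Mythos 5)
Your verification is correct and complete: the well-definedness on the overlap $\abs{x}=t$, the endpoint identities, the homeomorphism of each slice via gluing the rescaled copy of $h$ to the identity on the annulus, and in particular the only delicate point — joint continuity at the cone points $(x_0,0)$, handled by the bound $\abs{\phi(x,t)}\leq\max(t,\abs{x})$ — are all exactly what is needed. The paper itself gives no proof, only the citation to Moise, and since the homotopy is specified by an explicit formula in the statement, the direct verification you carry out is the standard (and essentially the only) argument.
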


\begin{proof}[Proof of Theorem \ref{thm:Schoenflies}, outline]
  Consider first $p,q\in S^1=\partial \Dbar$. Let $C_1,C_2\subset S^1$
  be the two arcs bounded by $p,q$. Let $h_i\colon \gamma\cup C_i\to
  \sigma\cup C_i$ be homeomorphisms constant on $S^1$ ($i=1,2$). Using
  Theorem \ref{thm:Schoenflies_standard} they can be extended to a
  homeomorphism of $\Dbar$. Theorem \ref{thm:Alexander} gives the
  desired isotopy. 

  If $p=0,q\in S^1$ extend $\gamma,\sigma$ to arcs with common
  endpoints $\tilde{p},q\in S^1$. The previous procedure yields the
  isotopy. 

  If $p\in \D, q\in S^1$ we use the same construction as before. Then
  we post-compose with the isotopy that maps the rays between $\phi(p,t)$
  and $\zeta\in S^1$ to the rays between $p$ and $\zeta\in S^1$. 

  Finally let $p,q\in \D$. By the above we can assume that
  $p=0$. Extend $\gamma,\sigma$ to curves
  $\tilde{\gamma},\tilde{\sigma}$ with common endpoints
  $\tilde{p},\tilde{q}$. As above we obtain an isotopy $\phi(x,t)$
  rel.\ $S^1\cup\{p\}$ deforming $\tilde{\gamma}$ to
  $\tilde{\sigma}$. We can assume that $\phi(q,1)=q$ (choose the
  homeomorphisms $h_i$ such that $h_i(q)=q$). This means that $\phi$
  deforms $\gamma$ to $\sigma$. 
  %we get an isotopy $\phi(x,t)$ rel.\ $S^1\cup\{p\}$ deforming $\gamma$
  %to $\sigma$. 
  Let $r_t:= \abs{\phi(q,t)}$ and $\alpha_t:= \log
  r_0/\log r_t$. Then post-composition with the \emph{radial stretch}
  \begin{equation*}
    \psi(x,t):=\abs{x}^{\alpha_t}\frac{x}{\abs{x}}
  \end{equation*}
  yields an isotopy $\widetilde{\phi}$ rel.\ $S^1\cup\{p\}$ which
  keeps $\abs{q}$ constant. Let $\theta_t:= \arg
  \widetilde{\phi}(q,t)-\arg q$. Post-composing with
  \begin{equation*}
    \varphi\colon re^{i\theta}\mapsto re^{i(\theta - \frac{1-r}{1-\abs{q}}\theta_t)}
  \end{equation*}
  yields the desired isotopy. There is a tricky point hidden here:
  $\theta_1$ could be a multiple of $2\pi$. We can however always
  arrange that $\theta_1=0$ in the following way. Let
  $\tilde{\gamma}|[\tilde{q},q]$, $\tilde{\sigma}|[\tilde{q},q]$ be
  the paths of the extensions from $\tilde{q}$ to $q$. By choosing the
  extensions $\tilde{\gamma},\tilde{\sigma}$ in such a way that the
  change of argument along $\tilde{\gamma}|[\tilde{q},q]$ and
  $\tilde{\sigma}|[\tilde{q},q]$ is equal, it follows that
  $\theta_1=0$.
%  change by extending the the curves
%  $\gamma,\sigma$ from $\tilde{q}$ to $q$ in such a way that the
%  change of argument along them is equal.  
\end{proof}

The following is due to Epstein-Zieschang, see
\cite[Theorem A.5]{MR1183224}. 

\begin{theorem}[Isotopy rel.\ $\post$]
  \label{thm:Isotopy_rel_post}
  Let $\CC,\gamma\subset S^2$ be two Jordan curves going through the
  postcritical points $p_0,\dots, p_{k-1}$ in the same cyclical
  order. 
  Let $\CC_j$ and $\gamma_j$ be the arcs on $\CC$ and
  $\gamma$ between $p_j$ and $p_{j+1}$ (indices are taken $\bmod\, k$ here). 
  Then the following conditions are equivalent:
  \begin{align*}
    &(1)\quad  \CC_j \text{ and } \gamma_j \text{ are isotopic
      rel.\ } \post \text{ for all } j=0,\dots, k-1;
    \\
    &(2)\quad \CC,\gamma \text{ are isotopic rel.\ }\post. 
  \end{align*}
\end{theorem}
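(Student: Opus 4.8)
The plan is to prove the nontrivial direction ``$\Leftarrow$'' (the implication $\Rightarrow$ is a straightforward converse: an isotopy rel.\ $\post$ deforming $\CC$ to $\gamma$ fixes the $p_j$, so it restricts to an ambient isotopy carrying each $\CC_j$ onto some arc with the same endpoints, and one then appeals to the isotopic Sch\"onflies theorem, Theorem~\ref{thm:Schoenflies}, in each complementary disk to correct it to an isotopy carrying $\CC_j$ onto $\gamma_j$). So assume each $\CC_j$ is isotopic rel.\ $\post$ to $\gamma_j$. The goal is to build, from these $k$ arc-isotopies, a single ambient isotopy of $S^2$ rel.\ $\post$ deforming $\CC$ to $\gamma$. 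Since an isotopy rel.\ $\post$ carrying $\CC$ to $\gamma$ must in particular carry each $\CC_j$ to \emph{an} arc between $p_j$ and $p_{j+1}$, and since Theorem~\ref{thm:Schoenflies} lets us post-compose to fix up each arc afterwards, it suffices to produce an isotopy rel.\ $\post$ deforming $\CC$ to a Jordan curve $\gamma'$ with $\gamma'_j$ isotopic to $\gamma_j$ rel.\ $\post$ for every $j$ --- equivalently, to make $\CC$ and $\gamma$ ``isotopic one arc at a time.''

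The key steps, in order. First, realize each of the $k$ hypothesized arc-isotopies as an \emph{ambient} isotopy of $S^2$ rel.\ $\post$: an isotopy of the arc $\CC_j$ to $\gamma_j$ rel.\ endpoints is supported (after Theorem~\ref{thm:Schoenflies_standard} and the Alexander trick, Theorem~\ref{thm:Alexander}) in a disk neighborhood of $\CC_j \cup \gamma_j$, which can be taken disjoint from $\post \setminus \{p_j, p_{j+1}\}$ and constant near those two endpoints. Second, perform these ambient isotopies \emph{one at a time}, $j = 0, 1, \dots, k-1$: after step $j$ we have an isotopy rel.\ $\post$ taking $\CC$ to a curve agreeing with $\gamma$ on the arcs $\gamma_0, \dots, \gamma_j$ and still agreeing with the original $\CC$ on $\CC_{j+1}, \dots, \CC_{k-1}$. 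The delicate point is that moving arc $\CC_{j+1}$ onto $\gamma_{j+1}$ must not disturb the arcs already placed on $\gamma_0, \dots, \gamma_j$. Third, once $\CC$ has been carried to a Jordan curve that coincides with $\gamma$ (possibly up to a further isotopy rel.\ endpoints on each arc, handled by one last application of Theorem~\ref{thm:Schoenflies}), concatenate all these isotopies to get the desired isotopy rel.\ $\post$.

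The main obstacle is exactly the ``one arc at a time'' bookkeeping in the second step: when we isotope $\CC_{j+1}$ to $\gamma_{j+1}$, the supporting disk may cross the arcs $\gamma_0, \dots, \gamma_j$ that are already in final position, and an isotopy rel.\ $\post$ need not keep those arcs fixed. The way around this is to work in the complement of the union of the finitely many points $\post$ together with the already-placed arcs --- but that changes the topology of the ambient surface. The clean fix is to prove it by a cut-and-paste / induction on $k$: cut $S^2$ along the arc $\gamma_0$ (after first isotoping $\CC_0$ onto $\gamma_0$), obtaining a disk $\Dbar$ with $p_0, p_1$ on its boundary and $p_2, \dots, p_{k-1}$ inside, in which $\CC$ and $\gamma$ become arcs from $p_0$ through $p_2, \dots, p_{k-1}$ to $p_1$; then repeatedly apply Theorem~\ref{thm:Schoenflies} inside this disk to deform one sub-arc onto the next, using that each $\CC_j$ is isotopic rel.\ $\post$ to $\gamma_j$ (an isotopy rel.\ $\post$ in $S^2$ that fixes a neighborhood of $\gamma_0$ descends to an isotopy of $\Dbar$ rel.\ $\partial\Dbar \cup \{p_2,\dots,p_{k-1}\}$). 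Because $\Dbar$ is simply connected, the argument-winding subtlety noted in the outline proof of Theorem~\ref{thm:Schoenflies} --- that a $\theta_1$ equal to a multiple of $2\pi$ can in principle be forced --- is exactly what must be checked does not cause an obstruction here; it is ruled out by the hypothesis that $\CC$ and $\gamma$ pass through the $p_j$ in the \emph{same cyclic order}, which pins down the relevant homotopy class of each arc. I would flag this as the one place where the hypothesis on cyclic order is genuinely used, and where a careless argument breaks down.
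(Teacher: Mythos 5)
First, a point of comparison: the paper does not prove this statement at all --- it is quoted from Epstein--Zieschang via \cite[Theorem~A.5]{MR1183224} --- so there is no internal proof to measure yours against. Your outline does have the general shape of the arguments one finds for this result (realize each arc-isotopy ambiently, install the arcs one at a time, cut along the arcs already in place and induct inside the resulting disk), and you correctly identify the interaction between successive arcs as the main difficulty. The ``$\Rightarrow$'' direction is fine, and in fact easier than you make it: since $H_1$ fixes $\post$ pointwise and $\CC_j$ contains no marked point in its interior, $H_1(\CC_j)$ is forced to be $\gamma_j$, so no Sch\"onflies correction is needed.

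The gap is in the key step of ``$\Leftarrow$''. After isotoping $\CC_0$ onto $\gamma_0$ and cutting along $\gamma_0$, you need: if the arcs $\CC'_1$ and $\gamma_1$ are isotopic rel.\ $\post$ in $S^2$, then they are isotopic in the cut disk $\Dbar$ rel.\ $\partial\Dbar\cup\{p_2,\dots,p_{k-1}\}$. Your justification --- that an isotopy rel.\ $\post$ of $S^2$ \emph{which fixes a neighborhood of $\gamma_0$} descends to the disk --- begs the question: the hypothesis only supplies an isotopy rel.\ the finite set $\post$, and upgrading it to one fixing the arc $\gamma_0$ pointwise is the same kind of statement you are trying to prove, one level down. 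Nor is the same-cyclic-order hypothesis the right tool here; it is what lets you match $\CC_j$ with $\gamma_j$ in the first place, but the obstruction at this step is that $\CC'_1$ could a priori lie in a different disk-isotopy class than $\gamma_1$ (differing, say, by twisting about $\partial\Dbar$, i.e.\ about the pair $\{p_0,p_1\}$) while lying in the same sphere-isotopy class. What is actually needed is an injectivity statement: the natural map from isotopy classes of arcs from $p_1$ to $p_2$ in $(\Dbar,\{p_2,\dots,p_{k-1}\})$ rel.\ $\partial\Dbar$ to isotopy classes in $(S^2,\post)$ rel.\ $\post$ is injective. This is true --- it follows from the $\pi_1$-injectivity of $\Dbar\setminus\{p_3,\dots,p_{k-1}\}\hookrightarrow S^2\setminus\{p_0,p_3,\dots,p_{k-1}\}$ (both fundamental groups are free of rank $k-3$ and standard generators map to standard generators) combined with the theorem that homotopic embedded arcs are isotopic, or alternatively from a minimal-position/innermost-bigon argument --- but this is precisely the mathematical content of the theorem, it must be verified at every stage of the induction, and it is absent from your sketch. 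Until it is supplied, the induction does not close.
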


Combining the previous with Theorem \ref{thm:Schoenflies} we obtain
the following. 
\begin{theorem}
  \label{thm:isotopy2}
  With notation as in the previous theorem assume that
  \begin{equation*}
    \CC_i \cap \gamma_j \neq \emptyset \quad
    \text{only for } j=i-1,i,i+1.
  \end{equation*}
  Then $\CC,\gamma$ are isotopic rel.\ $\post$. 
\end{theorem}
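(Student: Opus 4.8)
The plan is to reduce Theorem~\ref{thm:isotopy2} to Theorem~\ref{thm:Isotopy_rel_post} by checking, arc by arc, that $\CC_i$ and $\gamma_i$ are isotopic rel.\ $\post$. Both arcs run between the consecutive postcritical points $p_i$ and $p_{i+1}$, and by hypothesis $\CC_i$ only meets the three arcs $\gamma_{i-1},\gamma_i,\gamma_{i+1}$ of $\gamma$; equivalently $\CC_i$ is disjoint from $\gamma\setminus(\gamma_{i-1}\cup\gamma_i\cup\gamma_{i+1})$, and in particular $\CC_i$ avoids all postcritical points $p_j$ with $j\ne i,i+1$ (these lie in the interiors of the excluded arcs, or are endpoints $p_{i-1}, p_{i+2}$ of included arcs — one must be slightly careful that $p_{i-1}$ is the common endpoint of $\gamma_{i-2}$ and $\gamma_{i-1}$, and $\CC_i$ touching $p_{i-1}$ would mean $\CC_i\cap\gamma_{i-2}\ne\emptyset$, which is excluded unless $k\le 4$; for $k\ge 5$ this is clean, and small $k$ can be handled separately or absorbed by a general-position perturbation).

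First I would fix $i$ and consider the closed set $\gamma_{i-1}\cup\gamma_i\cup\gamma_{i+1}$, which is an arc $\beta$ (a sub-arc of the Jordan curve $\gamma$) from $p_{i-1}$ to $p_{i+2}$ containing $p_i,p_{i+1}$ in its interior, together with the complementary sub-arc $\gamma'$ of $\gamma$ from $p_{i+2}$ back to $p_{i-1}$. Then $\gamma=\beta\cup\gamma'$ is a Jordan curve, so it separates $S^2$ into two open disks $D_1,D_2$. The hypothesis says $\CC_i\cap\gamma\subset\beta$, so the open arc $\inte\CC_i=\CC_i\setminus\{p_i,p_{i+1}\}$ is disjoint from $\gamma'$; since $\gamma'$ is connected and misses $\inte\CC_i$, it lies entirely in the closure of one of the two components, say $\clos D_2$. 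The key point I would establish is that one can now work inside the closed disk $\Dbar:=S^2\setminus D_2$ (with boundary $\gamma$, or rather with $\partial\Dbar$ an arc reparametrization), inside which both $\CC_i$ and $\gamma_i$ are arcs with common endpoints $p_i,p_{i+1}$, and the only postcritical points in $\Dbar$ relevant as obstructions are $p_{i-1},p_{i+2}$ on the boundary (again, for $k\ge 5$ these do not lie on $\CC_i$). Applying the Isotopic Sch\"onflies theorem (Theorem~\ref{thm:Schoenflies}) to $\Dbar$ with the two arcs $\CC_i,\gamma_i$ and endpoints $p_i,p_{i+1}$ produces an isotopy of $\Dbar$ rel.\ $\partial\Dbar\cup\{p_i,p_{i+1}\}$ deforming $\CC_i$ to $\gamma_i$.

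The remaining issue is upgrading this to an isotopy of $S^2$ rel.\ $\post$: the isotopy just produced is the identity on $\partial\Dbar$ and on $\{p_i,p_{i+1}\}$, so it extends by the identity across $D_2$ to an ambient isotopy of $S^2$ fixing $\partial\Dbar\supset\{p_{i-1},p_{i+2}\}$ and fixing $\gamma'\supset\{p_j : j\ne i-1,i,i+1,i+2\}$ and fixing $p_i,p_{i+1}$; hence it fixes all of $\post$ and carries $\CC_i$ to $\gamma_i$. This gives $\CC_i\sim\gamma_i$ rel.\ $\post$ for every $i$, and then Theorem~\ref{thm:Isotopy_rel_post} yields that $\CC$ and $\gamma$ are isotopic rel.\ $\post$, as desired.

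The step I expect to be the main obstacle is the low-degree bookkeeping: when $k=3$ or $k=4$ the arcs $\gamma_{i-1},\gamma_i,\gamma_{i+1}$ may exhaust all of $\gamma$ (for $k=3$, $\gamma'$ is empty) or the endpoints $p_{i-1},p_{i+2}$ may coincide or may fail to be distinct from $p_i,p_{i+1}$, so the clean ``$\CC_i$ avoids all irrelevant postcritical points'' argument needs adjustment; one handles this by noting $\#\post\ge 3$ and, in the degenerate cases, applying Theorem~\ref{thm:Schoenflies} to the whole sphere cut along $\gamma$ directly, or by first perturbing $\CC$ by a small isotopy rel.\ $\post$ into general position with $\gamma$ so that intersections with the relevant arcs are transverse and finite. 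I would also double-check that the hypothesis as stated (a one-directional containment of intersections) indeed forces $\gamma'\subset\clos D_2$ and not some more pathological configuration; this is where a careful use of connectedness of $\gamma'$ and of $\inte\CC_i$ as a connected set avoiding $\gamma'$ is needed.
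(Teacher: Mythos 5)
Your overall strategy --- prove $\CC_i\simeq\gamma_i$ rel.\ $\post$ arc by arc via the isotopic Sch\"onflies theorem and then invoke Theorem~\ref{thm:Isotopy_rel_post} --- is exactly what the paper intends (the paper gives no further details beyond ``combine the previous theorem with Theorem~\ref{thm:Schoenflies}''). However, there is a genuine gap in your middle step. You take $D_1,D_2$ to be the two complementary disks of the Jordan curve $\gamma$ and then assert that both $\CC_i$ and $\gamma_i$ are arcs in the closed disk $\Dbar=S^2\setminus D_2=\clos D_1$. This is false in general: the hypothesis places no restriction on how $\CC_i$ meets $\gamma_{i-1}\cup\gamma_i\cup\gamma_{i+1}$, so $\CC_i$ may cross $\gamma$ many times and visit both components $D_1$ and $D_2$; it need not lie on one side of $\gamma$. (Relatedly, the intermediate assertion that ``$\gamma'$ lies in the closure of one of the two components'' is vacuous, since $\gamma'\subset\gamma=\partial D_1=\partial D_2$ lies in both closures, so it cannot be used to locate $\CC_i$.) As written, the application of Theorem~\ref{thm:Schoenflies} inside $\clos D_1$ is therefore not justified.

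The correct disk is the complement of the arc you call $\gamma'$, not one side of $\gamma$. For $k\geq 4$ the set $\Omega:=S^2\setminus\gamma'$ is an open disk (complement of a closed arc in the sphere); it contains the compact set $\CC_i\cup\gamma_i$, since $\CC_i\cap\gamma'=\emptyset$ by hypothesis and $\gamma_i\cap\gamma_j=\emptyset$ for every non-adjacent arc $\gamma_j\subset\gamma'$; and $\Omega\cap\post=\{p_i,p_{i+1}\}$ because every other postcritical point lies on $\gamma'$. Choosing a closed disk $B$ with $\CC_i\cup\gamma_i\subset\inte B\subset B\subset\Omega$ and applying Theorem~\ref{thm:Schoenflies} in $B$ yields an isotopy rel.\ $\partial B\cup\{p_i,p_{i+1}\}$, which extends by the identity to an ambient isotopy of $S^2$ rel.\ $\post$ carrying $\CC_i$ to $\gamma_i$; the remainder of your argument, including the final appeal to Theorem~\ref{thm:Isotopy_rel_post} and a separate treatment of $k=3$ (where one works in $S^2\setminus\{p_{i-1}\}$ instead), then goes through. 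Note also that $\CC_i\cap\post=\{p_i,p_{i+1}\}$ holds automatically because the $\CC_j$ are by definition the arcs of $\CC$ between consecutive postcritical points, so the case distinctions you make about whether $p_{i-1}$ or $p_{i+2}$ can lie on $\CC_i$ are unnecessary.
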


\section{Connections}
\label{sec:connections}

In this and the following section the initial pseudo-isotopy $H^0$ is
constructed. This was used to define the first approximation
$\gamma^1$ of the Peano curve. Recall that $\gamma^1$ is an Eulerian
circuit of $1$-edges. Thus $\gamma^1$ is given by the following. For
each $1$-edge $E$ ending at a
$1$-vertex $v$ we have to define a \defn{succeeding} $1$-edge $E'\ni
v$. Since $\gamma^1$ will be non-crossing, there will be an even number
of $1$-edges in the sector between $E,E'$ (as well as in the sector
between $E',E$). Let $E$ be contained in the 
white $1$-tile $X$, and $E'$ be contained in the white $1$-tile $X'$. 
From the above it follows that if $\gamma^1$ traverses $E$ positively
(as boundary of $X$) it traverses $X'$ positively (as boundary of
$X'$). 

Since $\gamma^1$ is non-crossing it is possible to ``distort the
picture'' in a neighborhood of $v$ slightly, so that the resulting
curves are simple. In this distorted picture the $1$-tiles $X,X'$ are
\defn{connected at $v$}. See Figure \ref{fig:connection} for an
illustration.   
  
\smallskip
Formally we will do the reverse of the description above. Namely
at each $1$-vertex we will define a \defn{connection}, which is an
assignment which $1$-tiles are connected. This will be done in a
\defn{non-crossing} manner. The approximation $\gamma^1$ and
the pseudo-isotopy $H^0$ are constructed from the connection of (all)
$1$-tiles. 

\subsection{Non-crossing partitions}
\label{sec:non-cross-part}

Recall that
a \defn{partition} of the set $[n]:=\{0,\dots, n-1\}$ is a set
$\pi=\{b_1,\dots ,b_N\}$ of 
pairwise disjoint subsets (called \defn{blocks}) of $[n]$, whose union
is $[n]$. It  
is \defn{crossing} if and only if 
it contains distinct blocks $b_i,b_j$
with $a,c\in b_i$, $b,d\in b_j$
such that 
\begin{equation*}
  0\leq a<b<c<d\leq n-1;
\end{equation*}
otherwise non-crossing.

 It is easy to see that the partition 
$\pi=\{b_1,\dots,b_N\}$ of $[n]$ is non-crossing if and only if the
sets $B_i:=\{e_m \mid m\in b_i\}$, where $e_m:=e^{2\pi i
  \frac{m}{n}}$, have the property that each $B_i$ lies in one
component of $S^1\setminus B_j$ (for $i\ne j$). 

With this description in mind let (for $i,j\in[n]$)
\begin{align}
  \label{eq:notation_pi1}
  [i,j]&:= 
  \begin{cases}
    \{i,\dots j\},                        & \text{if $i\leq j$,}
    \\
    \{i,\dots, n-1\} \cup \{0,\dots ,j\}, & \text{if $i> j$;} 
  \end{cases}  
  \\
  \notag
  (i,j)&:=[i,j]\setminus\{i,j\}.
\end{align}
Let $b=\{j_0,\dots, j_m\}\subset[n]$, where $j_0< \dots < j_m$, then     
a \defn{component} of $[n]\setminus b$ is defined to be one of the
sets 
\begin{equation*}
  (j_0,j_1),\dots, (j_{m-1},j_m), (j_m, j_0).
\end{equation*}
The partition $\pi=\{b_1,\dots,b_N\}$ is non-crossing if and only if
each $b_i$ lies in one component of $[n]\setminus b_j$ for all $i\ne
j$.  

%  of $E_n$ is
% non-crossing. Abusing terminology slightly a non-crossing or nc-partition 
% will mean one of $E_n$ or $[n]$, whatever is more convenient. Blocks
% of $E_n$ will always be denoted in upper case letters, blocks of $[n]$
% always in lower case letters. 

% Consider the $n$ roots of unity $E_n:=\{e_l\mid l=0,\dots, n-1\}$,
% where $e_m:=e^{2\pi i \frac{m}{n}}$. A \defn{partition} of $E_n$
% is a family of pairwise disjoint subsets $B_1,\dots, B_N$ of $E_n$
% (called \defn{blocks}), whose union is $E_n$. Two distinct blocks $B_i,B_j$
% are called 
% \defn{non-crossing} if $B_i$ lies in one component of $S^1\setminus B_j$
% ($\Leftrightarrow B_j$ lies in one component of $B_i$), otherwise
% \defn{crossing}. 
% A partition $\pi$ is called non-crossing if any
% two distinct blocks are non-crossing.

\medskip
The set of non-crossing partitions (or \defn{nc-partitions}) of $[n]$
is partially ordered by 
refinement. Namely for two partitions $\pi,\sigma$ one defines 
$\sigma\leq \pi$ if and only if every block in $\pi$ is the union of
blocks in $\sigma$. Equipped with this partial ordering the
nc-partitions (of $[n]$) form a 
\defn{lattice}, i.e., \defn{meet} and \defn{join} are well
defined. 
The meet of
(non-crossing) partitions $\pi_1,\dots, \pi_m$ is 
\begin{equation}
  \label{eq:def_meet}
  \bigwedge_{i=1}^m \pi_i:=\{b_1\cap \dots \cap b_m\mid b_i\in
  \pi_i\}.    
\end{equation}
It is the biggest (non-crossing) partition smaller than any
$\pi_i$. The join is the smallest nc-partition bigger than any $\pi_i$
(the description is slightly more difficult). 

Non-crossing partitions were introduced in \cite{kreweras},
see \cite{SimionNC} for a recent survey. The number of 
nc-partitions of $[n]$ is equal to the \defn{$n$-th Catalan number}
$C_n:=\frac{1}{n+1}\binom{2n}{n}$.

% Consider the set $E_n:=\{0,\dots,n-1\}$. A \defn{partition} of $E_n$
% is a family of pairwise disjoint subsets $B_1,\dots, B_k$ of $E_n$
% (called blocks), whose union is $E_n$. 

% A partition is $\defn{noncrossing}$
% if for all $a,b,c,d\in E_n$ the following holds. Let $a < b < c < d$,
% such that $a,c$ are in the same block and $b,d$ are in the same
% block. Then $a,b,c,d$ are in the same block.   

% Alternatively we can identify $E_n$ with the set $\{e^{2\pi
%   i\frac{k}{n}} \mid k=0,\dots, n-1\}$. Two blocks $B_i,B_j$ are non-crossing if
% and only if $B_i$ lies in one component of $S^1\setminus B_j$
% ($\Leftrightarrow B_j$ lies in one component of $B_i$).  

% The set of non-crossing partitions is partially ordered by
% refinement. Namely for two partitions $\pi,\sigma$ one defines 
% $\sigma\leq \pi$ if and only if every block in $\pi$ is the union of
% blocks in $\sigma$. This make the non-crossing partitions into a
% \defn{lattice} (i.e., \defn{meet} and \defn{join} are well
% defined). Non-crossing partitions were introduced in \cite{kreweras},
% see \cite{SimionNC} for a recent survey. The number of non-crossing
% partitions of $E_n$ is equal to the \defn{$n$-th Catalan number}
% $C_n:=\frac{1}{n+1}\binom{2n}{n}$.    

\medskip
Consider now $\even=\even_n=\{2m\mid m=0,\dots, n-1\},
\odd=\odd_n=\{{2m+1}\mid m=0,\dots, n-1\}$, so that $[{2n}]=\even\cup
\odd$. 
% The set of unit roots corresponding to those indices are
% $\Even=\Even_n=\{e_{j}\mid j\in \even_n\},
% \Odd=\Odd_n=\{e_{j}\mid j\in \odd_n\}$

Non-crossing partitions of $\even$/$\odd$ 
are defined as
before. 
% They will be used to describe how white/black tiles are connected at
% a vertex $v$. 
We denote by 
$\pi_w$ a nc-partition
of $\even$, 
by $\pi_b$ a nc-partition of $\odd$. They will describe how white
(black) tiles are connected at a vertex $v$; see again Figure
\ref{fig:connection} for an illustration, Figure \ref{fig:ind_conn}
for a more complicated example.

\begin{lemma}
  \label{lem:comppart}
  Let $\pi_w$ be a partition of $\even_n$. 
  Then there is a unique maximal non-crossing partition
  $\pi_b=\pi_b(\pi_w)$ of $\odd_n$ such that $\pi_w\cup \pi_b$ is a
  non-crossing partition of $[2n]$. 
\end{lemma}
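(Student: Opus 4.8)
Given a (possibly crossing) partition $\pi_w$ of $\even_n$, I would produce $\pi_b(\pi_w)$ explicitly and then verify it has the two required properties: (i) $\pi_w \cup \pi_b$ is non-crossing on $[2n]$, and (ii) it is maximal among such. The key geometric picture is the one already set up in the text: identify $[2n]$ with the $2n$-th roots of unity $e_m = e^{2\pi i m/2n}$, with even-indexed points carrying $\pi_w$ and odd-indexed points carrying $\pi_b$. A partition is non-crossing precisely when its blocks can be realized by pairwise disjoint ``polygons'' (convex hulls of the point sets) inside the disk. So I want to bundle the odd points into as many groups as possible, subject to each group's polygon being disjoint from every $\pi_w$-polygon and from every other $\pi_b$-polygon.

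\textbf{Construction of $\pi_b$.} Consider the closed regions cut out of the disk by the $\pi_w$-polygons: the complement in $\Dbar$ of the union of all closed convex hulls of the blocks of $\pi_w$ (including, for singleton blocks, just the point itself) has several connected components, each of which is a ``face'' whose boundary meets $S^1$ in one or more arcs. Two odd points $e_i, e_j$ should be declared $\pi_b$-equivalent if and only if they lie on the boundary of the same face. I would first check this is an equivalence relation (transitivity is the only point, and it is immediate since ``same face'' is). Then I claim $\pi_b$ so defined is non-crossing on $\odd_n$ and that $\pi_w \cup \pi_b$ is non-crossing on $[2n]$: indeed, for a single face $R$, the cyclic order of the odd points on $\partial R$ around $S^1$ is respected, and the convex hull of those points lies inside $R$ (by convexity of the face? — here one must be slightly careful, as faces need not be convex, but one can instead take the polygon that ``follows $\partial R$'' rather than the convex hull; the text's criterion only requires that each block lie in one component of $[2n]\setminus(\text{another block})$, which is exactly the ``same face'' condition). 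So properties needed reduce to the combinatorial statement: two numbers lie in the same component of $[2n]\setminus b$, for every block $b$ of $\pi_w$ and every other block, exactly when they are cofacial.

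\textbf{Maximality and uniqueness.} For maximality: suppose $\pi_b'$ is another non-crossing partition of $\odd_n$ with $\pi_w \cup \pi_b'$ non-crossing, and suppose $\pi_b' \not\le \pi_b$. Then some block of $\pi_b'$ contains two odd points $e_i, e_j$ lying on boundaries of \emph{different} faces; but then the segment (or the block's polygon) joining them must cross some $\pi_w$-polygon separating those faces — contradicting that $\pi_w \cup \pi_b'$ is non-crossing. Hence every $\pi_b'$ refines $\pi_b$, so $\pi_b$ is the unique maximal one. Uniqueness of the maximal element is then automatic (two maximal elements would each refine the other, hence coincide). The main obstacle I anticipate is the first bullet of the verification: making the passage between the purely combinatorial definition of ``non-crossing'' in \eqref{eq:notation_pi1} and the geometric ``faces of the $\pi_w$-polygons'' fully rigorous, in particular handling singleton blocks of $\pi_w$ (which contribute no polygon but still block nothing) and making sure the face decomposition is well-defined when blocks of $\pi_w$ share no vertices — which they don't, since $\pi_w$ is a partition. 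Once that dictionary is in place, both the non-crossing check and the maximality argument are short. Alternatively, one could give a completely combinatorial proof by defining $\pi_b$ via: $e_i \sim e_j$ iff the arc $(i,j)$ (or $(j,i)$) contains a full union of $\pi_w$-blocks on one side — i.e. iff $\{i,j\}$ is not ``separated'' by any $\pi_w$-block — and then checking this is transitive using the non-crossing lattice structure \eqref{eq:def_meet}; I would likely present whichever of these turns out to need fewer case distinctions.
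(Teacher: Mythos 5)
Your proposal is correct, and the ``completely combinatorial'' alternative you sketch at the end is precisely the paper's proof: $\pi_b$ is defined as the meet $\bigwedge_{b\in\pi_w}\pi_b(b)$ of the partitions of $\odd_n$ induced by the components of $[2n]\setminus b$ as $b$ ranges over the blocks of $\pi_w$, which makes the transitivity you worry about automatic (blocks of the meet are intersections of blocks, see (\ref{eq:def_meet})), and your maximality argument (any admissible $\sigma_b$ satisfies $\sigma_b\le\pi_b(b)$ for every block $b$, hence $\sigma_b\le\pi_b$) is identical to the paper's. The geometric ``faces'' picture you lead with is a faithful repackaging of the same idea (it reappears in the paper as Lemma \ref{lem:geom_real_comp_part}), so nothing essential differs.
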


\begin{proof}
  Fix a block $b_i\in \pi_w$. Let $c_1,\dots, c_M$ be the components
  of $[2n]\setminus b_i$. Let 
  \begin{equation*}
    a_j:= \odd \cap c_j, \quad j=1,\dots M. 
  \end{equation*}
  %be the set of odd elements in the component $c_j$. 
  Then
  $\pi_b(b_i):=\{a_1,\dots , a_M\}$. This is a nc-partition
  of $\odd$. 
  We now define (see (\ref{eq:def_meet}))
  \begin{equation*}
    \pi_b:=\bigwedge_i \pi_b(b_i),
  \end{equation*}
  this is a non-crossing partition of $\odd$. Also $\pi_w\cup\pi_b$ is
  a non-crossing partition of $[2n]$.

  \medskip
  Let $\sigma_b$ be any non-crossing partition of $\odd$ such that
  $\pi_w\cup 
  \sigma_b$ is a nc-partition of $[2n]$. Then $\sigma_b\leq
  \pi_b(b_i)$ for all $i$. Thus $\sigma_b\leq \pi_b$.   
\end{proof}

The partition $\pi_b=\pi_b(\pi_w)$ is called the \defn{partition
  complementary} to $\pi_w$. We mention some more facts which can be
found in \cite[Section 3]{kreweras}. 
% We have no need for these, they are
% mentioned to give a more complete picture.

\begin{lemma}[Properties of complementary partitions]
  \label{lem:prop_comp}
  Complementary partitions have the following properties.

  \begin{itemize}
  \item 
    Two blocks $a,b$ are called
    \defn{adjacent} if there are $i\in a$, $j\in b$ such that
    $i+1\in b$, $j+1\in a$. The partition $\pi_w\cup\pi_b$ has the
    property that the two blocks containing $i$ and $i+1$ are
    adjacent for all $i$. This characterizes $\pi_b$, meaning it is the
    unique nc-partition of $\odd$, such that $\pi_w\cup\pi_b$ is
    non-crossing, with this property.
  \item 
    One may define $\pi_w=\pi_w(\pi_b)$, the partition (of $\even$)
    complementary to the partition $\pi_b$ (of $\odd$) as before. Then the
    previous characterization shows that $\pi_w(\pi_b(\pi_w))=\pi_w$. Thus
    we simply say that 
    the partitions $\pi_w,\pi_b$ are \defn{complementary}.
  \item 
    It is possible to define a \defn{graph}, where the vertices are the blocks of
    $\pi_w\cup\pi_b$, connected by edges if and only if they are
    adjacent. It is not very hard to show that this is a \defn{tree} with $n$
    edges. Thus $\pi_w\cup\pi_b$ contains exactly $n+1$ blocks.   
  \end{itemize}
\end{lemma}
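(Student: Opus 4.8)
The plan is to pass to a planar picture. Place $2n$ points labelled $0,1,\dots,2n-1$ in this cyclic order on $\partial\D$, the point $i$ coloured \emph{white} if $i$ is even and \emph{black} if $i$ is odd; write $\pi:=\pi_w\cup\pi_b$ and let $B(i)$ denote the block of $\pi$ containing $i$. Since $\pi$ is non-crossing it has a \emph{realization}: pairwise disjoint closed topological disks $D_b\subset\Dbar$, one for each block $b$ of $\pi$, with $D_b$ meeting $\partial\D$ exactly in the points of $b$. Such a realization exists by induction on the number of blocks --- a non-crossing partition always has a block consisting of consecutive points, whose disk one draws first, then recurses on the complementary disk. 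Removing the $D_b$ from $\Dbar$ one at a time --- each step deletes a disk from a disk, meeting its boundary in finitely many points, thereby splitting it into that many closed sub-disks --- shows that $\Dbar\setminus\bigcup_bD_b$ is a disjoint union of open disks, the \emph{gaps}, and yields two bookkeeping facts: (i) each gap meets each $\partial D_b$ in at most one arc; (ii) for any two blocks $a,b$, at most one gap has both an arc of $\partial D_a$ and an arc of $\partial D_b$ on its boundary.

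First I would describe the boundary of a single gap $R$. Going once around $\partial R$ one alternately traverses arcs of $\partial\D$ (\emph{circle arcs}, each joining some $i$ to $i+1$) and arcs of the $\partial D_b$ (\emph{block arcs}). By (i) the block arcs on $\partial R$ come from pairwise distinct blocks; and as the block arc just before the circle arc joining $i$ to $i+1$ lies in $\partial D_{B(i)}$ while the one just after lies in $\partial D_{B(i+1)}$, and $i,i+1$ have opposite parity, the block arcs alternate in colour around $\partial R$. In particular $B(i)\neq B(i+1)$ for every $i$, so $\partial R$ carries at least two block arcs, equally many white and black. The maximality of $\pi_b$ (Lemma~\ref{lem:comppart}) now enters: if a gap had two distinct black blocks $\beta,\beta'$ on its boundary, an arc through that (connected) gap would join $D_\beta$ to $D_{\beta'}$ while avoiding every other $D_b$, so merging $\beta$ and $\beta'$ in $\pi_b$ would again give a non-crossing partition together with $\pi_w$, contradicting maximality. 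Hence each gap has exactly one white and one black block arc: it is a ``quadrilateral'', bounded by two block arcs and two circle arcs.

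The three assertions follow from this. \emph{Adjacency.} The circle arc joining $i$ to $i+1$ lies on exactly one gap $R$; the white and black blocks of $R$ are then $B(i)$ and $B(i+1)$, and the other circle arc of $R$ witnesses the reversed incidence, which is precisely the statement that $B(i)$ and $B(i+1)$ are adjacent. \emph{Uniqueness.} Let $\sigma$ be a non-crossing partition of $\odd$ with $\pi_w\cup\sigma$ non-crossing and enjoying the adjacency property; I claim every gap of $\pi_w\cup\sigma$ is a quadrilateral. If not, some gap $R$ has two consecutive block arcs from blocks $c,c'$, separated by a circle arc joining some $i$ to $i+1$ with $B(i)=c$, $B(i+1)=c'$; the adjacency property gives a $j$ with $B(j)=c'$, $B(j+1)=c$, and the circle arc joining $j$ to $j+1$ is \emph{not} one of those on $\partial R$ (the circle arcs of $\partial R$ realize the steps of a single cyclic sequence through the pairwise distinct blocks on $\partial R$, whereas the step from $c'$ to $c$ reverses one of these), so it lies on a gap $R'\neq R$; but then $R$ and $R'$ each carry an arc of $\partial D_c$ and an arc of $\partial D_{c'}$, contradicting (ii). Since all gaps of $\pi_w\cup\sigma$ are quadrilaterals, the count in the next paragraph gives $|\pi_w\cup\sigma|=|\pi|=n+1$; as $\sigma\le\pi_b$, equality of block-numbers forces $\sigma=\pi_b$. \emph{Involutivity.} The adjacency property of $\pi$ is symmetric in the two colours, so the uniqueness just proved, with $\even$ and $\odd$ interchanged, shows that $\pi_w$ is the unique non-crossing $\even$-partition whose union with $\pi_b$ is non-crossing and has the adjacency property; as $\pi_w(\pi_b)$ is another such (being maximal, it has the adjacency property by the first assertion), $\pi_w(\pi_b(\pi_w))=\pi_w(\pi_b)=\pi_w$.

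Finally the counting. Each of the $2n$ arcs of $\partial\D$ between consecutive points lies on exactly one gap, and each gap carries exactly two of them, so there are exactly $n$ gaps; Euler's formula for the planar graph of the realization --- vertices the $2n$ points, edges the $2n$ circle arcs together with the arcs of the $\partial D_b$, faces the outer face, the $|\pi|$ disk interiors $\inte D_b$, and the $n$ gaps --- then gives $|\pi|=n+1$. The graph $G$ of adjacent blocks thus has $n+1$ vertices; sending a gap to the unordered pair formed by its white and black blocks is a bijection onto the edge set of $G$ (injective by (ii); surjective because if $a,b$ are adjacent there is an $i$ with $\{B(i),B(i+1)\}=\{a,b\}$ and the arc joining $i$ to $i+1$ lies on a gap mapping to $\{a,b\}$), so $G$ has $n$ edges; and $B(0),B(1),\dots,B(2n-1)$ is a closed walk in $G$ hitting every vertex, so $G$ is connected; a connected graph on $n+1$ vertices with $n$ edges is a tree. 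The delicate part, I expect, is making the planar picture rigorous: the existence of the realization, the ``remove one disk at a time'' argument behind (i) and (ii), and --- in the uniqueness step --- the incompatibility of a non-quadrilateral gap with the adjacency property.
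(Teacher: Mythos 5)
Your proof is correct. Note that the paper itself offers no proof of this lemma: it defers entirely to \cite[Section~3]{kreweras} (and the tree statement is dismissed with ``it is not very hard to show''), so any argument you give is necessarily a different route from the paper's. Kreweras's treatment is lattice-theoretic; yours is planar-topological, and it is worth pointing out that your picture is essentially the one the paper builds anyway in Lemma~\ref{lem:geom_real_comp_part}, just in dual form: your disjoint disks $D_b$ with ``gaps'' between them are a deflation of the paper's tiling of $\Dbar$ by domains $D_l$ separated by arcs $g_m$, and your key structural fact --- every gap is a quadrilateral with exactly one white and one black block arc --- is exactly the paper's assertion that each $g_m$ is the common boundary of one white and one black domain. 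What your argument buys is a self-contained derivation of all three bullets from that single geometric fact: adjacency and the gap-to-edge bijection give the tree structure, Euler's formula gives the count $n+1$, and the count combined with $\sigma\le\pi_b$ gives uniqueness and hence involutivity, with no appeal to Kreweras. The individual steps check out: facts (i) and (ii) do follow from the one-disk-at-a-time removal; the maximality of $\pi_b$ (Lemma~\ref{lem:comppart}) correctly forces each gap to carry only one black block, since merging two black blocks across a gap would produce a strictly coarser admissible partition; the orientation argument showing that the two chords of a quadrilateral gap must join $i$ to $j+1$ and $i+1$ to $j$ (rather than $i$ to $j$ and $i+1$ to $j+1$, which would cross) is what delivers adjacency in the two-sided form required by the definition; and in the uniqueness step, the observation that a cyclic sequence of at least four pairwise distinct blocks never contains a step together with its reverse is what sends the reversed adjacency witness to a second gap, violating (ii). The only soft spots are the ones you flag yourself --- existence of the realization and the bookkeeping behind (i) and (ii) --- and these are at the same level of rigor as the paper's own proof of Lemma~\ref{lem:geom_real_comp_part}, so nothing is missing in substance.
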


From now on we write \defn{cnc-partition} for complementary
non-crossing partitions $\pi_w\cup\pi_b$ as above.   

\smallskip
We next proceed to construct a \defn{geometric realization} of a given
cnc-partition; see again Figure \ref{fig:connection}. 

% Consider the $2n$ circular arcs bounded by the $2n$ unit roots,
% $a_k:=[e_k,e_{k+1}] \subset S^1$, where $e_k=e^{2\pi i\frac{k}{2n}}$,
% $k=0,\dots, 2n-1$. We color arcs white (if $k$ is even) and
% black (if $k$ is odd). 

% Let $b_i$ be a block from the partition of
% $\pi_w\cup \pi_b$. Let
% $A_i=A_i(b_i):=\bigcup_{k\in b_i} a_k\subset S^1$, this is the
% geometric realization of $b_i$. 
% Next a 
% \defn{geometric realization} of complementary partitions is defined.
% We view $S^1=\partial \D$ as the boundary of the disk $\D$.  

\begin{figure}
  \centering
  
  \includegraphics[width=11cm]{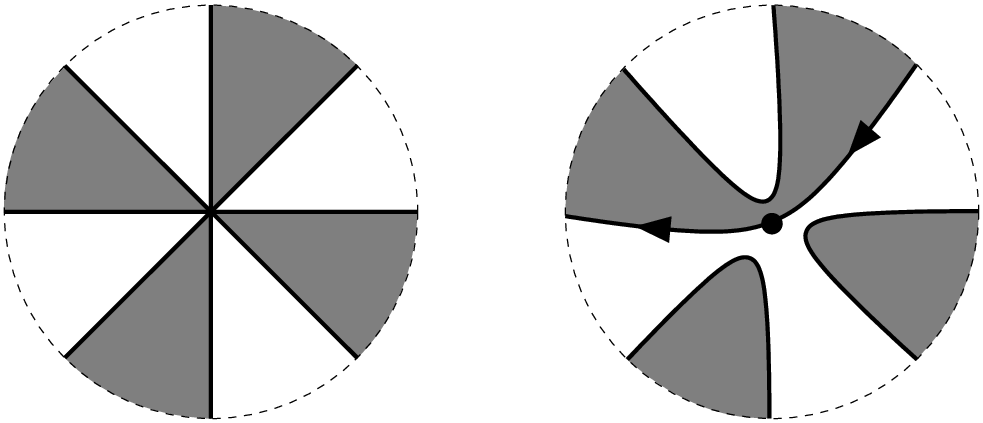}
  \begin{picture}(10,10)
    %
    % left figure
    \put(-220,0){$\scriptstyle{X_0}$}
    \put(-185,40){$\scriptstyle{X_1}$}
    \put(-184,90){$\scriptstyle{X_2}$}
    \put(-215,127){$\scriptstyle{X_3}$}
    \put(-288,130){$\scriptstyle{X_4}$}
    \put(-323,90){$\scriptstyle{X_5}$}
    \put(-321,37){$\scriptstyle{X_6}$}
    \put(-280,-2){$\scriptstyle{X_7}$}
    %
    % right figure
    \put(-47,-2){$\scriptstyle{X_0}$}
    \put(-8,40){$\scriptstyle{X_1}$}
    \put(-6,90){$\scriptstyle{X_2}$}
    \put(-42,129){$\scriptstyle{X_3}$}
    \put(-106,131){$\scriptstyle{X_4}$}
    \put(-144,90){$\scriptstyle{X_5}$}
    \put(-144,37){$\scriptstyle{X_6}$}
    \put(-107,0){$\scriptstyle{X_7}$}    
    \put(-23,115){$\scriptstyle{E}$}
    \put(-147,63){$\scriptstyle{E'}$}        
  \end{picture}
  \caption{Connection at a vertex.}
  \label{fig:connection}
\end{figure}

\medskip
Divide the unit disk into $n+1$ (simply connected) domains $D_1,\dots,
D_{n+1}$ by $g_1,\dots,g_n\subset \Dbar$
disjoint Jordan arcs. More precisely, the (distinct) endpoints of each
$g_j$ lie in $S^1=\partial \D$, the interior of $g_j$ in
$\D$. The arcs $g_m$ divide $S^1$ into $2n$ circular arcs $a_0,\dots
a_{2n-1}\subset S^1$ (labeled mathematically positively on $S^1$). A
partition $\pi(\{g_m\})$ of
$[2n]$ is obtained as follows.
\begin{align}
  \label{eq:defpart_geo}
  &i,j\in [2n] \text{ are in the same block of } \pi(\{g_m\})
  \intertext{if and only if }
  \notag
  &a_i,a_j
  \text{ are in the boundary of the same component } D_l.
\end{align}
So for each component $D_l$ of $\D\setminus \bigcup g_j$ there is
exactly one block $b_l\in \pi(\{g_m\})$. 
\begin{lemma}
  \label{lem:geom_real_comp_part}
  The partition $\pi(\{g_m\})$ is a \emph{cnc-partition}. Conversely
  each cnc-partition of $[2n]$ is obtained in this way. 
  
  \smallskip
  Furthermore
  $\overline{D}_k, \overline{D}_l$ are \emph{not disjoint} if
    and only if the 
    (corresponding) blocks $b_k,b_l$ are \emph{adjacent}. In this case
    the intersection of $\overline{D}_k,\overline{D}_l$ is one arc
    $g_m$. Conversely each $g_m$ is the intersection of the closure of
    two components $\overline{D}_k,\overline{D}_l$.    
\end{lemma}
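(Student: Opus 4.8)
The plan is to establish the four assertions in order; the content lies in the first two (that $\pi(\{g_m\})$ is a cnc-partition, and conversely that every cnc-partition is realized this way). First note that the $n$ arcs $g_m$ have $2n$ pairwise distinct endpoints on $S^1$, and these are exactly the endpoints shared by consecutive circular arcs $a_0,\dots,a_{2n-1}$. To see that $\pi(\{g_m\})$ is \emph{non-crossing}, suppose for contradiction that $a_i,a_j$ lie on $\partial D_k$ while $a_p,a_q$ lie on $\partial D_l$ with $i<p<j<q$ in cyclic order. Pick interior points $x_i\in a_i$, etc.; since $D_k$ is open and connected, join $x_i$ to $x_j$ by a Jordan arc $\alpha$ with interior in $D_k\subset\D$, and similarly join $x_p$ to $x_q$ by $\beta$ with interior in $D_l$. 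As $D_k,D_l$ are distinct components of $\D\setminus\bigcup_m g_m$, the interiors of $\alpha,\beta$ are disjoint, so $\alpha$ and $\beta$ are disjoint Jordan arcs in $\Dbar$ with distinct, interlinked endpoints on $\partial\D$; this contradicts the Jordan curve theorem (the arc $\alpha$, completed by a boundary arc, separates $x_p$ from $x_q$ in $\Dbar$).

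Next I would extract the parity structure. Let $T$ be the face-adjacency graph: its vertices are the components $D_1,\dots,D_{n+1}$, with one edge for each $g_m$, joining the two faces $g_m$ separates. Since $\D$ is connected $T$ is connected, and a Jordan arc with both endpoints on $\partial\D$ borders exactly two of the simply connected faces; hence $T$ has $n+1$ vertices and $n$ edges, so it is a tree, in particular bipartite. Fix a $2$-coloring of the faces. Two consecutive arcs $a_i,a_{i+1}$ meet only at an endpoint of some $g_m$, and near that endpoint lie in the two faces bordering $g_m$, hence in faces of opposite colors; running around $S^1$, the color of the face containing $a_i$ depends only on the parity of $i$. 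Therefore every block of $\pi(\{g_m\})$ consists of indices of a single parity, so $\pi(\{g_m\})=\pi_w\cup\pi_b$ with $\pi_w$ a (automatically non-crossing) partition of $\even$ and $\pi_b$ one of $\odd$. To identify $\pi_b$ with the complementary partition $\pi_b(\pi_w)$, I invoke the characterization in Lemma \ref{lem:prop_comp}: it suffices that the blocks containing $i$ and $i+1$ be adjacent for every $i$. But the endpoint of the relevant $g_m$ between $a_i$ and $a_{i+1}$ realizes the transition $i\mapsto i+1$ between the bordering faces, and the second endpoint of that same $g_m$ realizes the matching transition $j\mapsto j+1$ with the faces reversed --- exactly the definition of adjacent blocks. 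This gives the first assertion.

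For the converse I would induct on $n$, with $n=1$ the single chord realizing $\{\{0\},\{1\}\}$. Given a cnc-partition $\pi_w\cup\pi_b$ of $[2n]$, $n\ge 2$, Lemma \ref{lem:prop_comp} furnishes a tree on its blocks, hence a leaf block $b_l$, which must be a singleton $\{j\}$: if $|b_l|\ge 2$, then every index neighbouring an element of $b_l$ is forced to lie in the unique block $b_k$ adjacent to $b_l$, and picking two elements of $b_l$ together with their neighbours in $b_k$ exhibits a crossing. So $b_l=\{j\}$ with $j-1,j+1\in b_k$; deleting $j$ and relabelling gives a cnc-partition $\pi'$ of $[2n-2]$, which by induction is realized by arcs $g'_1,\dots,g'_{n-1}$. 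Reinstating the circular arc $a_j$ by drawing one more small chord near the common endpoint of the (adjacent) arcs corresponding to $j-1$ and $j+1$ --- cutting off a thin face bordered only by $a_j$ --- realizes $\pi_w\cup\pi_b$.

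Finally, $\overline{D}_k\cap\overline{D}_l\subset\bigcup_m g_m$ (a point off $\bigcup_m g_m$ in $\D$ lies in a single face, and $\overline{D}_k\cap\overline{D}_l\cap S^1$ consists of shared arc-endpoints, again on $\bigcup_m g_m$), so if it is nonempty it contains an interior point of some $g_m$, forcing $D_k,D_l$ to be the two faces bordering $g_m$ and hence $\overline{D}_k\cap\overline{D}_l=g_m$ --- and this $g_m$ is unique since $T$ is a tree. The equivalence with adjacency of $b_k,b_l$ is then the same bookkeeping as in the complementarity step, and ``each $g_m$ is $\overline{D}_k\cap\overline{D}_l$ for two faces'' just restates that every $g_m$ borders exactly two faces. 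The fiddliest point throughout is matching the topological relation ``$D_k,D_l$ share the arc $g_m$'' with the purely combinatorial notion of adjacent blocks from Lemma \ref{lem:prop_comp}, which forces one to track both endpoints of $g_m$ simultaneously; the ``a leaf block is a singleton'' step in the converse is the other place that needs an honest, if short, verification (including a check of the small cases $n\le 2$ where the crossing argument degenerates).
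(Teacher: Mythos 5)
Your forward direction (that $\pi(\{g_m\})$ is a cnc-partition) and the statements about intersections of closures are correct, and in places more explicit than the paper's own treatment: the paper proves non-crossing by exhibiting, for two distinct faces $D_k,D_l$, a separating arc $g_m\subset\partial D_k$ and reading off that $b_l$ sits in one component of $[2n]\setminus b_k$, and it gets adjacency of the blocks containing $i$ and $i+1$ from the two endpoints of that same $g_m$; your crosscut/Jordan-curve contradiction and your tree-plus-two-coloring argument reach the same conclusions and, usefully, make explicit the parity step (each block is purely even or purely odd) that the paper leaves tacit. For the converse the two routes genuinely diverge: the paper gives a direct, one-shot construction --- identify $j$ with the arc $a_j=[e_j,e_{j+1}]$ and, for each pair of adjacent blocks $b\ni i,j+1$, $b'\ni i+1,j$, join $e_{i+1}$ to $e_{j+1}$ by a hyperbolic geodesic, disjointness being immediate from non-crossing --- whereas you induct by peeling off a leaf of the block tree. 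The direct construction is shorter; your route avoids the hyperbolic model but is exactly where your write-up has a real problem.

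The induction step as written does not type-check. Deleting the single index $j$ from $[2n]$ leaves $2n-1$ indices, so ``relabelling'' cannot produce a partition of $[2n-2]$; moreover, after shifting, the indices $j-1\mapsto j-1$ and $j+1\mapsto j$ lie in the same block $b_k$ but now have opposite parities, destroying the $\even/\odd$ structure required of a cnc-partition. What you need instead is to \emph{contract} the three indices $j-1,j,j+1$ to a single index (geometrically: erase the unique chord bounding the lens $D_l$, so that $a_{j-1},a_j,a_{j+1}$ merge into one boundary arc of $D_k$); this removes two indices, preserves parities, leaves $n$ blocks on $2n-2$ indices, and one checks that non-crossing and the adjacency property survive the contraction, so Lemma~\ref{lem:prop_comp} again certifies a cnc-partition of $[2n-2]$. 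The reinstating step then subdivides the merged arc into three pieces and cuts off the middle one by a small chord, rather than splitting ``the common endpoint of the arcs corresponding to $j-1$ and $j+1$'' --- after the contraction these are the same arc and there is no such common endpoint. Your observation that a leaf block must be a singleton is correct (and your crossing argument for it does work, including for small $n$); with the contraction in place of the bare deletion, the induction goes through.
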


\begin{proof}
  We first show that $\pi(\{g_m\})$ is non-crossing. 
  Consider distinct components ${D}_k,{D}_l$. Then there is a Jordan
  arc $g_m\subset \partial D_k$ that separates $D_k$ from $D_l$. Let
  $\alpha, \beta\in S^1$ be the endpoints of $g_m$. Let
  $a_i,a_{i+1}\subset S^1$ and $a_j,a_{j+1}\subset S^1$ be the
  circular arcs containing $\alpha,\beta$. We can assume that
  $a_i\subset \partial D_k$, then $a_{j+1}\subset \partial D_k$. Then
  all arcs in the boundary of $D_l$ are contained in $a_{i+1}, \dots,
  a_j$. This means that $b_l\subset [i+1,j]$, which is one component
  of $[2n]\setminus b_k$ (recall that $b_k$ is the block corresponding
  to $D_k$, $b_l$ the block corresponding to $D_l$, see
  (\ref{eq:notation_pi1}) for notation). This shows that
  $\pi(\{g_m\})$ is non-crossing. 

  If $\partial D_l\supset a_{i+1}$ ($\Leftrightarrow i+1\in b_l$) it
  follows that $g_m\subset \partial D_l$. Thus $a_j\subset \partial D_l$
  ($\Leftrightarrow j\in b_l$). Thus $i, j+1\in b_k$ and $i+1,j\in
  b_l$, meaning that $b_k,b_l$ are adjacent. This shows that the
  partition $\pi(\{g_m\})$  is a cnc-partition. 

  Furthermore it is clear that $b_k,b_l$ are adjacent if and only if
  $\overline{D}_k,\overline{D}_l$ intersect.

%   Clearly two distinct non-disjoint (closures of) domains
%   $\overline{D}_k,\overline{D}_l$ intersect in an arc $g_m$. 
%   Consider one Jordan arc $g_m$ with endpoints $\alpha,\beta\in S^1$. 
%  Then $a_i,a_{j+1}$ are in the boundary
%   of one component $D_k$, and $a_{i+1},a_j$ are in the boundary of
%   one component $D_l$. Thus the corresponding blocks $b_k,b_l$ are
%   adjacent. 

%   Conversely if $b_k\ni i$, $b_l\ni i+1$ the corresponding domains
%   $D_k,D_l$ both have $g_m$ in their boundary. Thus $b_k,b_l$ are
%   adjacent. This shows that $\pi(\{g_m\})$ is a cnc-partition (by
%   Lemma \ref{lem:prop_comp}), as well as the equivalence of adjacent
%   blocks and intersecting domains. 

  \smallskip
  It remains to show that each cnc-partition is obtained in this
  geometric fashion.
  Identify each $j\in[2n]$ with the circular arc
  $a_j=[e_j,e_{j+1}]\subset S^1$ ($e_j=e^{2\pi i \frac{j}{2n}}$). For each block
  $b_l\in \pi_w\cup \pi_b$ the domain $D_l$ is the hyperbolic polygon
  whose boundary intersects $S^1$ in $\bigcup_{i\in b} a_i$. 

  To be more precise, for each two adjacent blocks $b\ni i,j+1$, $b'\ni
  i+1,j$ we connect $e_{i+1},e_{j+1}$ by a hyperbolic geodesic. Since
  every block distinct from $b$ is contained in one component of
  $[2n]\setminus \{i,j+1\}$ the Jordan arcs $g_m$ thus obtained are
  disjoint. 
\end{proof}

How $1$-tiles are
connected at a $1$-vertex $v$ will be described by complementary
non-crossing partitions. Additional data is needed however, to make the
construction well defined. 
Namely if $v=p$ is a postcritical point we need to
declare where $p$ lies in the ``distorted picture'' (in the \emph{geometric
representation} of the complementary connections, see below). 

\begin{definition}[Marking]
  \label{def:mark_partition}
  A cnc-partition $\pi_w\cup \pi_b$ is \defn{marked} by singling out
  a pair of \defn{adjacent blocks} $b,c\in \pi_w\cup
  \pi_b$. Equivalently this 
  means that if the cnc-partition $\pi_w\cup \pi_b$ is given
  geometrically as above in Lemma~\ref{lem:geom_real_comp_part}, we
  mark one of the arcs $g_m$.   
%   Single out one arc $g_m$ from Lemma
%   \ref{lem:geom_real_comp_part}. This \defn{marks} the (geometric
%   realization of) the cnc-partition $\pi_w\cup \pi_b$. 
%   Equivalently this means we mark a pair of adjacent blocks in
%   $\pi_w\cup \pi_b$. 
  In Figure \ref{fig:connection} the marked arc
  $g_m$ is indicated by the big dot.
  
  Given a marked cnc-partition we always assume that the geometric
  realization from Lemma \ref{lem:geom_real_comp_part} was chosen such
  that \defn{the marked arc $g_m$ contains the origin}.
  
  A third equivalent way to mark a connection is given in
  Corollary~\ref{cor:marking}. 
\end{definition}

Assume now that the circular arcs from Lemma
\ref{lem:geom_real_comp_part} are 
of the form $a_j=[e_j,e_{j+1}]\subset S^1$ ($e_j=e^{2\pi i
  \frac{j}{2n}}$).  
Color the set $D_l$ white if the corresponding block $b_l\in \pi_w$,
otherwise 
black. Thus we obtain a ``checkerboard tiling'' of the unit disk,
where sets which share a side $g_m$
have different color. 
\begin{definition}[Geometric representation of cnc-partition]
  \label{def:geom_representation}
  The decomposition of the closed unit disk 
into black and white sets as above is called a \defn{geometric
  representation} of 
the cnc-partition $\pi_w\cup\pi_b$, it is denoted by
$\Dbar(\pi_w\cup\pi_b)$. The union of white sets $\overline{D}_l$ is
denoted 
by $\Dbar_w=\Dbar_w(\pi_w\cup \pi_b)$, the union of black sets
$\overline{D}_l$ by $\Dbar_b=\Dbar_b(\pi_w\cup \pi_b)$.
\end{definition}
Denote by $S_j$ a sector in $\Dbar$ ($j=0,\dots, 2n-1$), 
\begin{equation}
  \label{eq:def_sector}
  S_j:=\left\{ r e^{2\pi i\theta} \bigm|
  \frac{j}{2n}\leq\theta\leq \frac{j+1}{2n}, 0\leq r\leq 1\right\}.      
\end{equation}

\begin{lemma}[Deforming $\Dbar(\pi_w\cup \pi_b)$]
  \label{lem:deform_geom_real}
  Let the geometric representation $\Dbar(\pi_w\cup \pi_b)$ be as
  above. Then there is a pseudo-isotopy $H$ of $\Dbar$ rel.\ $\partial
  \Dbar \cup \{0\}$ satisfying the following.
  \begin{itemize}
  \item $H$ deforms $\Dbar(\pi_w\cup\pi_b)$ to sectors. More precisely
    \begin{align*}
      H_1(\Dbar_w)&=\bigcup_{j \text{ even}} S_j, 
      &&
      H_1(\Dbar_b)= \bigcup_{j \text{ odd}} S_j.
    \end{align*}
  \item The pseudo-isotopy $H$ ``freezes'' outside of a neighborhood
    of $0$.
    By this we mean that for $\epsilon< 1/2$ 
    \begin{equation*}
      H \colon \Dbar \times [1-\epsilon,1]\to \Dbar \text{ is a
        pseudo-isotopy 
        rel.\ } \Dbar\setminus B_{\epsilon},
    \end{equation*}
    where $B_{\epsilon}=\{\abs{z}< \epsilon\}$.
  \item Only one point on each arc $g_m$ is deformed to $0$ by $H$.
  \end{itemize}
\end{lemma}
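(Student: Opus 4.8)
The statement to prove is Lemma~\ref{lem:deform_geom_real}: the geometric representation $\Dbar(\pi_w\cup\pi_b)$ can be deformed to a "standard" decomposition into $2n$ sectors $S_j$ by a pseudo-isotopy rel.\ $\partial\Dbar\cup\{0\}$ that freezes outside a small ball around the origin and sends only one point of each arc $g_m$ to $0$. The core point is that both $\Dbar(\pi_w\cup\pi_b)$ and the sector decomposition are cell decompositions of $\Dbar$ with the same combinatorial type: by Lemma~\ref{lem:geom_real_comp_part} the dual graph of the former is a tree with $n$ edges and $n+1$ vertices, and the sector decomposition corresponds to the "fan" cnc-partition whose dual tree is a star $K_{1,n}$. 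A pseudo-isotopy is exactly the tool that collapses the origin-containing arc of the tree so one tree becomes the other. So the plan is: first realize the collapse at the level of the tree, then promote it to a homotopy of $\Dbar$ using the Alexander trick / isotopic Schönflies theorem.

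\textbf{Step 1: normalize the marked arc.} By Definition~\ref{def:geom_representation} and the marking convention, the marked arc $g_{m_0}$ contains the origin, and it is the common boundary of one white set $\overline D_w$ and one black set $\overline D_b$. Using Theorem~\ref{thm:Schoenflies} (isotopic Schönflies) rel.\ $\partial\Dbar$, I would first adjust the geometric representation by an ambient isotopy of $\Dbar$ rel.\ $\partial\Dbar$ so that $g_{m_0}$ is the straight diameter through $0$ and so that the whole arrangement looks "radial" near the boundary --- i.e.\ near $S^1$ each $g_m$ meets $S^1$ at the points $e_j=e^{2\pi i j/(2n)}$ exactly as in the target. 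This is a cosmetic normalization so that the final isotopy is supported near $0$.

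\textbf{Step 2: the combinatorial collapse (main step).} Now I build the pseudo-isotopy by recursion on the tree $T$ dual to $\pi_w\cup\pi_b$, "peeling" leaves toward the marked edge. The idea: pick a leaf block $b$ of $T$, i.e.\ a set $\overline D_\ell$ that touches the rest of the picture along a single arc $g_m$. The domain $\overline D_\ell$ is a Jordan domain whose boundary is $g_m$ together with a union of consecutive circular arcs $a_i,\dots,a_{i'}\subset S^1$. I deform $\overline D_\ell$, pushing it radially so that it becomes (isotopic rel.\ its circular boundary arcs to) the union of the corresponding sectors $S_i\cup\dots\cup S_{i'}$, with $g_m$ becoming a radial segment; simultaneously this shrinks the complementary parent domain across $g_m$. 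Iterating toward the marked arc, the two domains $\overline D_w,\overline D_b$ adjacent to $g_{m_0}$ become the two sectors adjacent to the diameter, and the deformation degenerates only on $g_{m_0}$, collapsing it to a point as $t\to1$ --- this is where the map stops being an isotopy, exactly at $t=1$. The hard part of the whole proof is making this peeling argument clean: one must check that at each stage the pseudo-isotopies can be chained (finitely many, so no convergence issue), that they can be kept supported inside $B_\epsilon$ for the stated range $t\in[1-\epsilon,1]$, and that only \emph{one} point of each $g_m$ is ever sent to $0$ --- this last condition forces the degeneration to collapse each $g_m$ to a single point only "at the center," which is arranged by doing the actual collapse only across the marked arc and routing all other $g_m$ as honest radial segments that pass through $0$ in exactly one point in the limit. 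Concretely I would first prove a one-step lemma: if $D\subset\Dbar$ is a Jordan domain with $\partial D=g\cup(\text{arcs of }S^1)$ and $g$ passes through a prescribed interior point, there is an isotopy of $\Dbar$ rel.\ $\partial\Dbar$ (and rel.\ that interior point) straightening $g$ to a radial chord --- this is immediate from Theorem~\ref{thm:Schoenflies}.

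\textbf{Step 3: assembling the three claimed properties.} Once $\Dbar_w$ has been carried onto $\bigcup_{j\text{ even}}S_j$ and $\Dbar_b$ onto $\bigcup_{j\text{ odd}}S_j$, the first bullet holds by construction. The second bullet (freezing outside $B_\epsilon$): because Step~1 made the picture already standard outside $B_\epsilon$, I can take all the peeling deformations in Step~2 to be supported in $B_\epsilon$ and compressed into the time interval $[1-\epsilon,1]$ (reparametrize time), while on $[0,1-\epsilon]$ the pseudo-isotopy is the identity --- or rather it is the normalizing isotopy of Step~1 run on $[0,1-\epsilon]$; one then re-scales so the whole thing is the identity on $[0,1-\epsilon]$ outside $B_\epsilon$. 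The third bullet (one point of each $g_m$ to $0$) is ensured by the construction in Step~2: each non-marked $g_m$ is a radial segment from $S^1$ through $0$, meeting $0$ in exactly one point and not collapsing, and the marked arc $g_{m_0}$ collapses to $0$ but as a single point in the limit because the collapse is along the "width" of $g_{m_0}$, not its length --- i.e.\ the limit map pinches the diameter to its midpoint, so $(H_1)^{-1}(0)\cap g_{m_0}$ is a single point too. Finally, that $H$ is genuinely a pseudo-isotopy (isotopy on $[0,1)$, surjective closed map at $t=1$) follows because each peeling step is an isotopy on $[1-\epsilon_i,1)$ with a single arc collapsed at the endpoint, and a finite concatenation of such is again of this form; the remark after Definition~\ref{def:degen-isot} guarantees $H_1$ is automatically closed and surjective.

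I expect the genuine obstacle to be bookkeeping in Step~2 --- ordering the leaf-peeling so that it converges on the marked edge, and verifying the "one point per $g_m$" condition survives the concatenation --- rather than any deep topology; every geometric input needed (Schönflies, Alexander trick) is already available in Section~\ref{sec:some-topol-lemm}.
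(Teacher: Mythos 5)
Your overall strategy is the same as the paper's, which disposes of this lemma in one sentence by invoking the isotopic Sch\"onflies theorem (Theorem \ref{thm:Schoenflies}); your write-up supplies the geometric content that the paper leaves implicit (straighten the arcs $g_m$ into standard position by honest isotopies rel.\ $\partial\Dbar\cup\{0\}$, then perform a single degenerate push at the end). Two points in your Steps 2--3, however, are wrong as written. First, the claim that ``each peeling step is an isotopy on $[1-\epsilon_i,1)$ with a single arc collapsed at the endpoint, and a finite concatenation of such is again of this form'' fails: once one stage has degenerated (become non-injective) at its terminal time, the concatenated family is no longer an isotopy on $[0,1)$, so it is not a pseudo-isotopy. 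All the peeling must be done by genuine isotopies that merely position each $g_m$ (say, as a ``wide V'' whose vertex sits at small positive distance from $0$), and the degeneration must be concentrated in a single final stage on $[1-\epsilon,1]$ that pushes all these vertices into the origin simultaneously, becoming non-injective only at $t=1$.

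Second, you have the degeneration locus inverted. The marked arc $g_{m_0}$ already contains $0$, which is fixed throughout (the pseudo-isotopy is rel.\ $\{0\}$), and its endpoints on $S^1$ are also fixed (rel.\ $\partial\Dbar$); so $g_{m_0}$ cannot ``collapse to its midpoint'' --- it is merely straightened into the two radii joining its endpoints to $0$, and exactly one of its points (namely $0$ itself) lies over $0$, with no degeneration occurring on it at all. The degeneration happens on the \emph{non-marked} arcs: each such $g_m$ must end up as the union of the two radii from its endpoints to $0$ (so that each block $b_l$ is carried onto the bouquet of sectors $\bigcup_{i\in b_l}S_i$, which are joined only at the origin), hence must acquire a point at the origin; and since for $t<1$ the images $H_t(g_m)$ are disjoint from $H_t(g_{m_0})\ni 0$, that point can reach $0$ only at $t=1$. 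This is precisely the content of the third bullet. Once these two points are corrected, your argument goes through and is a legitimate filling-in of the paper's one-line proof.
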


\begin{proof}
  This follows from the Sch\"{o}nflies Theorem \ref{thm:Schoenflies}.  
\end{proof}

\subsection{Connections}
\label{sec:connections-1}

Let $v$ be a $1$-vertex. A \defn{connection} at $v$
consists of an assignment which black/white $1$-tiles are connected at
$v$. The objective is to ``cut'' tiles at vertices, so that the boundary
of the ``white (or black) component'' is a Jordan curve. 

\medskip
Let $n=\deg_F v$ be the degree of $F$ at $v$,
let $X_0,\dots ,X_{2n-1}$ be the $1$-tiles containing $v$, labeled
mathematically positively around $v$, such that white $1$-tiles have even
index and black $1$-tiles have odd index.   

\begin{definition}[Connection at a vertex]
  \label{def:connection}
  A \defn{connection} at a $1$-vertex $v$ consists of a labeling of
  $1$-tiles 
  containing $v$ as above and cnc-partitions
  $\pi_w=\pi_w(v),\pi_b=\pi_b(v)$ of $\even_n$ (representing white
  $1$-tiles) and $\odd_n$ (representing black $1$-tiles). The
  $1$-tiles $X_i,X_j$ 
  (of the same color) are 
  said to be \defn{connected} at $v$ if $i,j$ are contained in the same block
  of $\pi_w\cup\pi_b$, $1$-tiles of different
  color are never connected. The $1$-tile $X_i$ is \defn{incident} (at
  $v$) to
  the block $b\in \pi_w\cup \pi_b$ containing $i$. 
  By Lemma \ref{lem:comppart} it is enough to define $\pi_w(v)$, then
  $\pi_b(v)$ will always be the complementary partition.

  If $v=p$ is a postcritical point the connection at $p$ is
  \emph{marked} in addition (see Definition
  \ref{def:mark_partition}). 
  Recall that the \emph{marked arc} of a geometric representation
  $\Dbar(\pi_w\cup \pi_b)$ (of the 
  connection at the postcritical point $p$, Definition
  \ref{def:geom_representation}) is assumed to \emph{contain the
    origin}.  
\end{definition}
The connection illustrated in Figure \ref{fig:connection} is given by
$\pi_w=\{\{0,2,6\}, \{4\}\}$, $\pi_b=\{\{1\}, \{3,5\}, \{7\}\}$. The
marked arc is indicated by the dot.

\smallskip
When talking about $1$-tiles $X_j$ and cnc-partitions at the same
time, it is always assumed without mention that the indices of the
$X_j$ are as above. 

\medskip
Let $v$ be a $1$-vertex, and $n=\deg_v F$. Let $X_0,\dots
,X_{2n-1}$ be the $1$-tiles containing $v$, labeled positively around
$v$ (white tiles have even index, black ones odd index as before). 
Every such $1$-vertex $v$ has arbitrarily small neighborhoods $U=U(v)$, that
are 
closed and homeomorphic to the closed disk $\Dbar$, such that
there is a homeomorphism  
\begin{equation}
  \label{eq:defhv}
  h=h_v\colon U\to \Dbar,
\end{equation}
that maps tiles to sectors (see (\ref{eq:def_sector})), 
\begin{equation*}
  h(X_j\cap U)= S_j,%=\left\{ r e^{2\pi i\theta} \mid
  %\frac{j}{2n}\leq\theta\leq \frac{j+1}{2n}, 0\leq r\leq 1\right\},     
\end{equation*}
for $j=0,\dots, 2n-1$. In particular $h(v)=0$. 
We require that the neighborhoods $U(v), U(v')$ have disjoint closures 
for distinct $1$-vertices $v,v'$. The reader should think of the
neighborhood $U$ as a ``blowup'' of the point $v$. 

\begin{definition}[Geometric representation of a connection]
  \label{def:conn_geom_repres}
  Let a connection at $v$ be given, with cnc-partition
  $\pi_w\cup\pi_b$, geometrically represented by 
  $\Dbar(\pi_w\cup\pi_b)$ as in Definition
  \ref{def:geom_representation};
  and $h$, $U=U(v)$ be as above. 
  A \defn{geometric representation of
    the connection at $v$} is given by replacing $U$ by
  $h^{-1}\left(\Dbar(\pi_w\cup\pi_b)\right)$.   

  More precisely, the white $1$-tiles in $U$, $\left(X_0\cup X_2\cup
    \dots X_{2n-2}\right) \cap U$ are replaced by $h^{-1}(\Dbar_w)$
  (see Definition \ref{def:geom_representation}). Note that this set
  is colored white.
  % $\bigcup_{j} h^{-1} (\clos
  % P(A_j))$; where $A_j=A_j(b_j)$, and $b_j\in \pi_w$ (see Lemma
  % \ref{lem:geom_real_comp_part}). 
  Similarly we replace the black
  $1$-tiles in $U$, $\left(X_1\cup X_3\cup
    \dots X_{2n-1}\right) \cap U$ by $h^{-1}(\Dbar_b)$. This set is 
  colored black.
  % $\bigcup_{j} h^{-1} (\clos
  % P(A_j))$; where $A_j=A_j(b_j)$, and $b_j\in \pi_b$.   

  Let $v=p$ be a postcritical point and the connection at $p$ be
  marked by the arc $g_m$. More precisely, in the geometric
  representation $\overline{\D}(\pi_w\cup \pi_b)$ of the connection
  $\pi_w\cup \pi_b$ at $p$, the marking corresponds to the arc $g_m
  \subset \overline{\D}(\pi_w\cup \pi_b)$. Since the marked arc was
  chosen to contain 
  $0$, it follows that in this case $p\in h^{-1}(g_m)$, thus
  \emph{the geometric representation of the marked arc contains
    $p$}. This is the purpose of the marking, namely to keep track of
  where in the geometric representation of the connection the
  postcritical point is located. 
\end{definition}

\begin{definition}[Connection]
  \label{def:connection_tiles}
  A \defn{connection of $1$-tiles} is an assignment of a connection
  at every $1$-vertex.  
  Representing the connection at
  each $1$-vertex geometrically as above gives a \defn{geometric
    representation} of this connection of $1$-tiles. Objects arising from
  a geometric representation will be denoted with an $\epsilon$-subscript. 
\end{definition}

Assume a geometric representation of a connection of $1$-tiles is
given. From the construction it follows that each boundary component
of some black/white component is a Jordan curve. 
Let $X$ be a $1$-tile with $1$-vertices $v_0, \dots,
v_{k-1}$. Then the geometric representation of $X$ is $X_\epsilon:=
X\setminus \bigcup_j U(v_j)$, where the neighborhood $U(v_j)$ of $v_j$
is as in (\ref{eq:defhv}).  
Note that by construction two $1$-tiles $X,Y$ (of the same color) are
connected at a $1$-vertex $v$ if and only if their geometric
representations $X_\epsilon,Y_\epsilon$ are connected in $U(v)$. This
means $X_\epsilon,Y_\epsilon$ can be joined by a path in $U(v)$ that
does not intersect any boundary of some black or white
component. 
% the interior
% of one component of $h^{-1}_v(\overline{\D}_w)$ or
% $h^{-1}_v(\overline{\D}_b)$.  

\subsection{The connection graph}
\label{sec:tile-graphs}

Given a connection of $1$-tiles we construct the
\defn{white (black) connection graph}. 
% For each white $1$-tile there
% is a vertex, these vertices are connected according to the connection.
%that 
%consists of white (black) $1$-tiles that are connected at each
%$1$-vertex according to the connection. 

%The white connection graph (associated to a connection of $1$-tiles)
% Formally it is constructed as follows. 
% To distinguish
% the vertices/edges of this graph from the ones of the tiles graph, we
% speak of white vertices/edges, or \defn{w-vertices, w-edges}.

% The reader is reminded that the
% $(k+1)$-\defn{star} is a tree with $k+1$ vertices, having one
% vertex (the \defn{center}) that is connected to the other $k$ vertices
% (the \defn{leaves})  
% (recall that $k=\#\post=$ number of $1$-vertices in each $1$-tile). 

\begin{definition}[Connection graph]
  \label{def:connection_graph}
  The \defn{white connection graph} is constructed as follows. 
  For each white $1$-tile $X$ there is a vertex $c(X)$ (thought of as
  the \emph{center} of the $1$-tile $X$). For each $1$-vertex $v$ and
  block $b\in \pi_w(v)$ there is a vertex $c(v,b)$. The vertex $c(X)$
  is connected to $c(v,b)$ by an edge if and only if $X$ is incident
  to $b$ at $v$.    
%   For each white
%   $1$-tile $X$, let $S(X)$ be a $(k+1)$-star. 
%   There is a bijection (for each $1$-tile $X$)
%   that assigns to each $1$-vertex $v\in \partial X$ a leaf $l=l(v, X)$ of
%   $S(X)$
%   (one should think of $S(X)$ as being obtained by
%   placing a vertex in the interior of $X$ and connecting this vertex by
%   $k$ arcs to the vertices of $X$).   

%   obtained by identifying
%   different leaves in different stars. Two leaves $l(v,X),l(v,Y)$ are
%   identified if and only if the 
%   $1$-tiles $X,Y$ are connected at (the $1$-vertex) $v$. Let $\pi_w$ be
%   the connection of white $1$-tiles at the $1$-vertex $v$, then the white
%   connection graph has 
%   $\#\pi_w$ points representing $v$. Each block $b\in \pi_w$ is
%   represented by a vertex in the white connection graph incident to
%   $\#b$ edges.
  
  \smallskip
  The \defn{black connection graph} is constructed in the same manner
  from black $1$-tiles and their connections.  
\end{definition}

We will identify a $1$-tile $X$ with (the vertex of the white
connection graph) $c(X)$. 
For example we will say that two white $1$-tiles $X,Y$ are
connected (given a connection of $1$-tiles) if $c(X)$ and $c(Y)$ lie in
the same component of the white connection graph. 

\begin{definition}[Cluster]
  \label{def:cluster}
  A white/black
  \defn{cluster} $K$ is one component of the white/black connection
  graph. Using the previous identification we say that $K$ contains a
  $1$-tile $X$ (and write $X\subset K$), if $c(X)\in K$. This means we
  identify 
  $K$ with the union of $1$-tiles ``contained'' in it. Similarly a
  $1$-edge $E$, $1$-vertex $v$ is said to be contained in $K$ if $E\subset
  X\subset K$, $v\in X\subset K$ (for some $1$-tile $X$)
  respectively. Each $1$-tile is contained in exactly one cluster (of
  the same color), each $1$-edge is contained in exactly two clusters
  (one black and one white). A $1$-vertex $v$ may be contained in several
  clusters (in fact at most $n+1$, where $n=\deg_F v$).   

  \smallskip
  Assume a geometric representation of the connection has been
  given. Let $X$ be a $1$-tile contained in the cluster $K$. Then
  there is a unique component $K_\epsilon$ (of the same color as $X$)
  containing (the geometric representation) $X_\epsilon$. Recall
  that some $1$-tile $Y$ is connected to $X$ at a $1$-vertex $v$ if and
  only if they are connected at $v$ in a geometric representation of
  the connection. Thus one obtains inductively that any $1$-tile $Z$ is
  contained in $K$ if and only if $Z_\epsilon\subset K_\epsilon$.  
  Thus each white/black cluster $K$ corresponds to one white/black
  component $K_\epsilon$ (of a geometric representation of the
  connection) and vice versa. We call $K_\epsilon$ a
  \defn{geometric representation} of the cluster $K$. 

  \smallskip
  A cluster $K$ is a \defn{tree}
  if the underlying component of the connection
  graph is a tree, i.e., contains no cycles. The white cluster $K$ is
  a \defn{spanning tree}, if it is a tree and contains all white
  $1$-tiles.    
\end{definition}

\medskip
In the next section 
the connection of $1$-tiles will be constructed such that the white
$1$-tiles form a spanning tree in ``the right homotopy class''. 

% \begin{definition}
%   \label{def:spanning_tree}
%   A \defn{(spanning) tree} of (possibly a subset of) white $1$-tiles is a
%   choice of connections such that the white connection graph is a
%   (spanning) tree, i.e., a (connected)
%   graph containing no cycles. 
% \end{definition}

\begin{remark}  Assume all white $1$-tiles are connected at each
  $1$-vertex. Of course we can extract a spanning tree (in the standard
  sense) from the resulting
  white connection graph. This spanning tree however will have only
  one vertex for each $1$-vertex $v$. Thus not all spanning trees in the
  sense of the previous definition can be obtained in this way. See
  Corollary \ref{cor:spanning_tree_ind} for an inductive way to
  construct trees in the connection graph. 
  %For
  %example, assume that at some vertex $v$ the white $1$-tiles $X_0,X_1$
  %as well as 
  %$Y_0,Y_1$ are connected, but $X_i$ is not connected to $Y_j$.  
\end{remark}
 
The first approximation of the Peano curve $\gamma^1$ will be
constructed as ``the outline'' of the spanning tree. One should think
of the construction as follows. 
A geometric representation of this (white) spanning tree will be a Jordan
domain. 
The positively oriented boundary
of this domain ``is'' the first approximation $\gamma^1$. 

\subsection{Succeeding edges}
\label{sec:succeeding-edges}

Let a connection of $1$-tiles be given. Let $E$ be a $1$-edge
contained in the white $1$-tile $X_i$, positively oriented (as
boundary of $X_i$) with terminal point $v$. 
  
Since $1$-tiles are cyclically
ordered around $v$, the $1$-tiles that are connected at $v$
with 
$X_i$ are cyclically ordered as well. 

Let $X_j$ be the cyclical successor (in mathematically
positive order around $v$) of $X_i$ among $1$-tiles connected to
$X_i$ at $v$. If no other $1$-tile is connected to $X_i$ at $v$,
we let $X_j=X_i$. 

Formally $i,j$ are contained in the same block of $\pi_w$, and none of the
numbers in $[i+1, j-1]$ are contained in this block.

Note that $X_j$ is a \emph{white} $1$-tile. Thus an
oriented $1$-edge $E'\subset X_j$ is positively oriented if and only if it
is positively oriented as boundary of $X_j$.

\begin{definition}[Successor]
  \label{def:successor}
  Let $v,E$ as well as $X_i,X_j$ be as above.
  The \defn{successor} to $E$ (at $v$) is the positively oriented
  $1$-edge 
  $E'\subset X_j$ with initial point $v$. Note that each $1$-edge
  $E'$ is the successor to exactly one $1$-edge $E$. 
\end{definition}

%\smallskip
See Figure \ref{fig:connection} for an illustration.
For each $1$-edge $E$ with initial/terminal point $v,w$, let
$E_\epsilon:= E\setminus (U(v)\cup U(w))$. Here $U(v),U(,w)$ are the
neighborhoods of $v,w$ from (\ref{eq:defhv}).  
Recall from Lemma \ref{lem:geom_real_comp_part} how a cnc-partition
was geometrically represented by dividing the disk by arcs $g_m$. We
call 
such an arc $g_m$ \defn{positively oriented} if it is positively
oriented 
as boundary arc of a \emph{white} set $D_l$.  

\begin{lemma}[Equivalent formulations for succeeding edges]
  \label{lem:arcs_successors}
  Consider white $1$-tiles $X_i\supset E$, $X_j\supset E'$, where $E,E'$
  are positively oriented $1$-edges containing a $1$-vertex $v$.
  The following are equivalent.
  \begin{itemize}
  \item $E'$ is the successor to $E$ at $v$.
  \item $E'_\epsilon$ is succeeding $E_\epsilon$ on $\partial
    K_\epsilon$, where $K_\epsilon$ is a geometric representation of
    the white cluster $K$ containing $E$. This means that when
    $\partial K_\epsilon$ is 
    positively oriented (as boundary of $K_\epsilon$) there is no
    (geometric representation of a $1$-edge $\widetilde{E}$) 
    $\widetilde{E}_\epsilon\subset \partial K_\epsilon$ on the
    positively oriented arc from 
    $E_\epsilon$ to $E'_\epsilon$. 
  \item Represent the connection at $v$ geometrically as in
    Lemma~\ref{lem:geom_real_comp_part}. Using the notation from this
    lemma, there is a (positively oriented) arc $g_m$ that
    connects the right endpoint of the arc $a_i\subset S^1$ to the
    left endpoint of the arc $a_j\subset S^1$.
  \item There are adjacent blocks $b\in \pi_w(v), c\in \pi_b(v)$ such
    that
    \begin{equation*}
      i,j\in b,\quad i+1,j-1\in c. 
    \end{equation*}
  \end{itemize}
\end{lemma}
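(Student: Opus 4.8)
The plan is to prove the four conditions in Lemma~\ref{lem:arcs_successors} are equivalent by establishing a circle of implications, unwinding the definitions in the order: fourth $\Leftrightarrow$ third, third $\Leftrightarrow$ first, first $\Leftrightarrow$ second. All of this is essentially bookkeeping about the geometric realization constructed in Lemma~\ref{lem:geom_real_comp_part}, together with the definitions of \emph{successor} (Definition~\ref{def:successor}), \emph{adjacent blocks} (Lemma~\ref{lem:prop_comp}), and \emph{geometric representation of a cluster} (Definition~\ref{def:cluster}). The only genuine content is keeping track of orientations consistently; there is no real analysis here.

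First I would dispose of the equivalence of the third and fourth bullets. In the geometric representation of the connection at $v$ from Lemma~\ref{lem:geom_real_comp_part}, the circular arcs $a_0,\dots,a_{2n-1}\subset S^1$ are identified with elements of $[2n]$, the even ones bounding white domains $D_l$ (blocks of $\pi_w(v)$), the odd ones bounding black domains. An arc $g_m$ is, by the last part of Lemma~\ref{lem:geom_real_comp_part}, exactly the intersection $\overline{D}_k\cap\overline{D}_l$ of the closures of two domains whose blocks $b_k,b_l$ are adjacent; and the construction in the proof of that lemma connects $e_{i+1}$ to $e_{j+1}$ precisely when $b\ni i,j+1$ and $b'\ni i+1,j$ are the adjacent blocks. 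Matching the indexing conventions (the right endpoint of $a_i=[e_i,e_{i+1}]$ is $e_{i+1}$, the left endpoint of $a_j=[e_j,e_{j+1}]$ is $e_j$), a positively oriented arc $g_m$ from the right endpoint of $a_i$ to the left endpoint of $a_j$ — i.e. from $e_{i+1}$ to $e_j$ — corresponds to adjacent blocks $b\ni i,j$ (white, since $g_m$ is oriented as boundary of a white domain, which forces the white arcs $a_i,a_j$ to lie on its positively oriented side) and $c\ni i+1,j-1$ (black). Checking that ``positively oriented as boundary of a white set'' translates to exactly this index pattern, rather than the reversed one, is the step that needs the most care.

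Next I would connect the third bullet to the first. The successor $E'$ to $E$ at $v$ is defined (Definition~\ref{def:successor}) via: $X_i\supset E$ with $E$ positively oriented and terminal point $v$; $X_j$ is the cyclical successor of $X_i$ among white $1$-tiles connected to $X_i$ at $v$, in positive order around $v$; and $E'\subset X_j$ is the positively oriented $1$-edge with initial point $v$. Under the homeomorphism $h=h_v\colon U(v)\to\Dbar$ of \eqref{eq:defhv} mapping $X_l\cap U$ to the sector $S_l$, the tile $X_i$ maps to the sector bounded by $a_{i-1}$ and $a_i$ wait — I would instead just fix once and for all that $h$ carries the $1$-edge of $X_i$ ending at $v$ to (a curve ending at) the arc endpoint between sectors $S_{i-1}$ and $S_i$, i.e. to $e_i$, and the $1$-edge of $X_j$ starting at $v$ to $e_{j+1}$ hmm — the cleanest route is: the positively oriented boundary $1$-edge of the white tile $X_i$ ending at $v$ corresponds, in $h_v$-coordinates, to an arc ending at the common endpoint of $a_{i-1}$ and $a_i$; reconcile this with the statement's indexing by noting the statement has tagged $E\subset X_i$ with $i\in b$, $i+1\in c$, so the relevant arc is $a_i$, whose right endpoint $e_{i+1}$ is where $E$ meets the boundary circle in the blown-up picture. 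Then ``$X_j$ is the cyclical successor among connected white tiles'' is precisely ``$i$ and $j$ lie in the same white block $b$ with no element of $b$ strictly between'', and the geometric realization joins $e_{i+1}$ to $e_j$ by the corresponding $g_m$, which by the previous paragraph is positively oriented as boundary of the white domain $D_b$. That $E'\subset X_j$ is positively oriented and starts at $v$ pins down that its endpoint on $S^1$ is the left endpoint $e_j$ of $a_j$. This gives the third bullet, and the argument is reversible.

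Finally I would relate the first (equivalently third) bullet to the second. Let $K$ be the white cluster containing $E$ and $K_\epsilon$ its geometric representation (Definition~\ref{def:cluster}): $K_\epsilon$ is the union over all white tiles $X\subset K$ of the geometric representations $X_\epsilon = X\setminus\bigcup_j U(v_j)$, glued across the blown-up vertices exactly where the connection says the tiles are connected. By Lemma~\ref{lem:geom_real_comp_part} the white domains $D_l$ fit together without crossing, so $K_\epsilon$ is a closed topological disk and $\partial K_\epsilon$ is a Jordan curve; each geometric representation $E_\epsilon$ of a boundary $1$-edge $E$ of a white tile of $K$ appears as a sub-arc of $\partial K_\epsilon$, joined to neighboring edge-representations either along an ordinary $1$-vertex $v_j$ where only $X$ is present (trivial connection) or across one of the arcs $g_m$ inside $U(v_j)$. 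Traversing $\partial K_\epsilon$ positively, the arc-representation $E_\epsilon$ is followed immediately (with no other $\widetilde E_\epsilon$ in between) by $E'_\epsilon$ if and only if, at their common $1$-vertex $v$, $E$ and $E'$ lie in white tiles joined by the positively oriented $g_m$ running from the right endpoint of $a_i$ to the left endpoint of $a_j$ — which is the third bullet. (At a trivial vertex, $E'$ is just the next boundary edge of the same tile $X$, which is also the $i=j$ case of Definition~\ref{def:successor}, so the correspondence is uniform.) The orientation check — that the positive orientation of $\partial K_\epsilon$, induced from $K_\epsilon$, matches the positive orientation of the $g_m$ as boundary of a white domain — is again the one subtle point, and I would verify it locally inside a single $U(v)$, where it reduces to the already-established index pattern of the fourth bullet. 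I expect the main obstacle to be precisely this consistent orientation bookkeeping across the three different pictures (abstract blocks, the disk $\Dbar(\pi_w\cup\pi_b)$, and the blown-up neighborhood $U(v)$); everything else is immediate from the cited lemmas.
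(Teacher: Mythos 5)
Your proposal is correct and follows the same route the paper intends: the paper's entire proof is the one-line remark that the lemma ``is clear from the proof of Lemma~\ref{lem:geom_real_comp_part}'', and your chain fourth${}\Leftrightarrow{}$third${}\Leftrightarrow{}$first${}\Leftrightarrow{}$second, with the final index bookkeeping ($E_\epsilon$ ending at $e_{i+1}$, $E'_\epsilon$ starting at $e_j$, and the positively oriented $g_m$ running from $e_{i+1}$ to $e_j$), is exactly the unwinding that remark is pointing to. The only slip is your assertion that $K_\epsilon$ is always a closed disk with Jordan-curve boundary --- by Lemma~\ref{lem:cluster_sc} that holds only when $K$ is a tree --- but this is immaterial here, since succession on $\partial K_\epsilon$ is decided entirely inside the single neighborhood $U(v)$.
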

The proof is clear from the proof of Lemma
\ref{lem:geom_real_comp_part}.  

\begin{cor}[Marked connection]
  \label{cor:marking}
  A marking of a connection at a postcritical point $p$ may be given 
  \begin{itemize}
  \item by \emph{marking an arc} $g_m$ from a geometric representation
    of the connection at $p$. 
%    In this case if $g_m$ is positively oriented it 
  \item or equivalently by \emph{marking} a pair of \emph{succeeding
      $1$-edges} $E,E'$ at $p$;
  \item or equivalently by \emph{marking} a pair of \emph{adjacent
      blocks} $b\in \pi_w(p)$, $c\in \pi_b(p)$. 
  \end{itemize}
\end{cor}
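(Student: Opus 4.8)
The plan is that this corollary is a bookkeeping statement: the three descriptions of a marking all carry the same data by Lemma~\ref{lem:geom_real_comp_part}, Definition~\ref{def:mark_partition}, and Lemma~\ref{lem:arcs_successors}, so the proof consists in chaining the bijections furnished there — nothing new needs to be constructed.

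First I would recall that, by Definition~\ref{def:mark_partition}, a marking of the connection at $p$ is literally the datum of a pair of adjacent blocks of $\pi_w(p)\cup\pi_b(p)$. Since any two adjacent blocks contain elements $i$ and $i+1$ of opposite parity, one lies in $\pi_w(p)$ and the other in $\pi_b(p)$, so the pair may always be written $b\in\pi_w(p)$, $c\in\pi_b(p)$, which is the third item. The equivalence with the first item (marking an arc $g_m$) is the content of the last paragraph of Lemma~\ref{lem:geom_real_comp_part}: in a geometric representation $\Dbar(\pi_w\cup\pi_b)$ the closures $\overline D_k,\overline D_l$ meet precisely when $b_k,b_l$ are adjacent, and then $\overline D_k\cap\overline D_l$ is a single arc $g_m$; conversely every $g_m$ is such an intersection. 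Thus $g_m\mapsto\{b_k,b_l\}$ is a bijection between the arcs of a geometric representation and the adjacent pairs of blocks, establishing (first item) $\Leftrightarrow$ (third item). The convention that the marked $g_m$ passes through the origin, from Definitions~\ref{def:mark_partition} and \ref{def:conn_geom_repres}, concerns only how one draws the picture, not which marking is chosen, so it plays no role in the equivalence.

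It remains to fold in the second item, and here I would invoke the last two bullets of Lemma~\ref{lem:arcs_successors} with $v=p$: a positively oriented $1$-edge $E'\subset X_j$ succeeds a positively oriented $1$-edge $E\subset X_i$ at $p$ exactly when there are adjacent blocks $b\in\pi_w(p)$, $c\in\pi_b(p)$ with $i,j\in b$ and $i+1,j-1\in c$, and this in turn (third bullet) is exactly when there is a positively oriented arc $g_m$ from the right endpoint of $a_i$ to the left endpoint of $a_j$. Because two adjacent blocks share exactly one arc $g_m$ (Lemma~\ref{lem:geom_real_comp_part} again), the transition $(i,j)$, hence the pair $\{E,E'\}$, is uniquely determined by $(b,c)$; conversely $(b,c)$ is read off from $\{E,E'\}$ via the indices of the white tiles $X_i\supset E$, $X_j\supset E'$. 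One should note in passing that the unordered pair $\{E,E'\}$ already determines which of the two succeeds the other, since one of them has $p$ as its terminal point and the other has $p$ as its initial point while a $1$-edge is not a loop. This yields a bijection between pairs of succeeding positively oriented $1$-edges at $p$ and adjacent pairs $b\in\pi_w(p)$, $c\in\pi_b(p)$, i.e. (second item) $\Leftrightarrow$ (third item); combined with the previous paragraph this completes the proof. There is no real obstacle — the only point requiring care is the orientation/ordering bookkeeping just noted — so I would write the proof simply as ``this is immediate from Definition~\ref{def:mark_partition}, Lemma~\ref{lem:geom_real_comp_part}, and Lemma~\ref{lem:arcs_successors}.''
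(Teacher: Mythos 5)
Your proposal is correct and follows the same route as the paper, which simply records that the correspondences are given by Lemma~\ref{lem:arcs_successors} (together with Definition~\ref{def:mark_partition} and Lemma~\ref{lem:geom_real_comp_part}); your write-up just makes the bijections explicit. The extra care about uniqueness of the transition $(i,j)$ and the ordering of $\{E,E'\}$ is sound but not needed beyond what those lemmas already provide.
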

The precise correspondences (i.e., which marked arc corresponds to
which marked pair of succeeding edges, corresponds to which marked
pair of adjacent blocks) is given by Lemma~\ref{lem:arcs_successors}. 

% We also record the following corollary to Lemma
% \ref{lem:geom_real_comp_part}. 
% \begin{cor}
%   \label{cor:cnc_successors}
%   Let $v$ be a $1$-vertex with $n=\deg_F(v)$ and $h\colon U(v)\to
%   \Dbar$ be a map as in 
%   (\ref{eq:defhv}). Consider $n$ pairs $e_{i_m},e_{j_m}$ of unit roots
%   ($e_j=e^{2\pi i\, j/2n}$, $m=1,\dots, n$). Then
%   \begin{align*}
%     &e_{i_m},e_{j_m} \text{ can be connected by $n$ disjoint Jordan arcs
%       $g_m$ in
%       $\Dbar$ }
%     \intertext{if and only if }
%     &\text{there is a connection at $v$ such that $h(E_m\cup E'_m)$ connect
%       $e_{i_m},e_{j_m}$} 
%     \\
%     &\text{for all the succeeding $1$-edges  
%       $E_m,E'_m$ (at $v$)}.   
%   \end{align*}
% \end{cor}

% Recall how a cluster $K'$ was added to a cluster $K$ at a $1$-vertex
% $v$ as in Section \ref{sec:adding-clusters}, to form the new cluster
% $\widetilde{K}$. Let $b,b'\in \pi_w(v)$ 
% (containing indices of $1$-tiles from $K,K'$) be adjacent to
% $c\in\pi_b(v)$ as in  
% Lemma \ref{lem:add_cluster}. Let 
% be the succeeding
% $1$-edges corresponding to the adjacent blocks $b,c$ and $b',c$,
% according to Lemma \ref{lem:arcs_successors}.  

The $1$-tiles containing successors $E,E'$ are connected at $v$. If on
the other 
hand $1$-tiles $X,Y$ are connected at $v$, we can find a chain of
succeeding $1$-edges.

\begin{lemma}
  \label{lem:prop_successor}
  Two $1$-tiles $X,Y$ (of the same color) are connected at the
  $1$-vertex $v$ if and only if there is a chain
  \begin{equation*}
    X=X_1,E_1,E'_2,X_2, \dots, X_{m-1},E_{m-1}, E'_m,X_m=Y.
  \end{equation*}
  Here $X_j\ni v$ are $1$-tiles of the same color as $X,Y$; $E_j,E'_j\subset
  X_j$ are $1$-edges, and $E'_{j+1}$ succeeds $E_j$ at $v$. 
%   there are white $1$-tiles $X=X_1,\dots, X_m=Y$ containing $v$,
%   successors 
%   \begin{equation*}
%     E_j, F_j, \quad j=1,\dots m-1,
%   \end{equation*}
%   where each $E_j$ has terminal point $v$ (and each $F_j$ has initial
%   point $v$),
%   such that the following holds:
%   \begin{align*}
%     &E_j \subset X_j,\quad  F_j\subset X_{j+1};
%     \\
%     &E_j \text{ is contained in the same $1$-tile as } F_{j+1}.
%   \end{align*}
\end{lemma}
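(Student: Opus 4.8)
The plan is to obtain both implications directly from Definition~\ref{def:successor}, together with the observation that, for $1$-tiles of a fixed colour, being \emph{connected at $v$} is an equivalence relation: by Definition~\ref{def:connection} two white $1$-tiles $X_i,X_j$ at $v$ are connected precisely when $i$ and $j$ lie in a common block of $\pi_w(v)$, and likewise for black $1$-tiles and $\pi_b(v)$. Reflexivity and symmetry are trivial, and transitivity is the statement that the blocks of $\pi_w(v)$ (resp.\ $\pi_b(v)$) partition the set of like-coloured $1$-tiles meeting $v$.

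For the \textbf{``if''} direction I would argue as follows. Suppose a chain $X=X_1,E_1,E'_2,X_2,\dots,E_{m-1},E'_m,X_m=Y$ as in the statement is given. By Definition~\ref{def:successor}, the relation ``$E'_{j+1}$ succeeds $E_j$ at $v$'' means that $E_j\subset X_j$ is a positively oriented $1$-edge with terminal point $v$ and that the $1$-tile $X_{j+1}\supset E'_{j+1}$ is the cyclic successor of $X_j$ among the $1$-tiles of the same colour connected to $X_j$ at $v$ (with $E'_{j+1}$ the positively oriented $1$-edge of $X_{j+1}$ having initial point $v$). In particular $X_j$ and $X_{j+1}$ are connected at $v$ for every $j$, and by transitivity $X=X_1$ and $Y=X_m$ are connected at $v$.

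For the \textbf{``only if''} direction, assume $X,Y$ are (say) white and connected at $v$; if $X=Y$ the statement is trivial, so assume $X\ne Y$. Put $n=\deg_F v$ and label the $1$-tiles at $v$ by $X_0,\dots,X_{2n-1}$ as in Definition~\ref{def:connection}, so $X=X_i$ and $Y=X_j$ with $i,j$ in a common block $b\in\pi_w(v)$, $i\ne j$. List the elements of $b$ in cyclic order around $v$ starting at $i$, say $i=l_1,l_2,\dots,l_r$, and let $t$ be the index with $l_t=j$ (so $2\le t\le r$). Then the white $1$-tiles connected to $X$ at $v$ are exactly $X_{l_1},\dots,X_{l_r}$ — this set is independent of which of them one starts from — and $X_{l_{s+1}}$ is the cyclic successor of $X_{l_s}$ among them for each $s$. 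For each $s$ let $E_s\subset X_{l_s}$ be the positively oriented $1$-edge with terminal point $v$ and $E'_s\subset X_{l_s}$ the positively oriented $1$-edge with initial point $v$ (the two $1$-edges of $X_{l_s}$ meeting $v$; they are distinct since $k=\#\post\ge 3$). Definition~\ref{def:successor} then says exactly that $E'_{s+1}$ is the successor of $E_s$ at $v$, so
\[
  X=X_{l_1},\;E_1,\;E'_2,\;X_{l_2},\;\dots,\;E_{t-1},\;E'_t,\;X_{l_t}=Y
\]
is a chain of the required form (with $m=t$). The black case is identical, replacing $\pi_w(v)$ by $\pi_b(v)$.

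The whole argument is bookkeeping, and I do not foresee a genuine obstacle. The one point that deserves care is the cyclic enumeration of a single block of $\pi_w(v)$ (resp.\ $\pi_b(v)$), together with the remark that the set of like-coloured $1$-tiles connected to a given one at $v$ does not depend on the chosen starting tile — but this is immediate from the block structure of the partition, and one could alternatively phrase the successor steps via the adjacent-block characterisation in Lemma~\ref{lem:arcs_successors} if that turns out to be more convenient in the write-up.
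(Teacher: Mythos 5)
Your argument follows the same route as the paper's (very terse) proof: the successor relation steps cyclically through the $1$-tiles of a single block at $v$, so iterating it produces the required chain, and conversely each successor step stays within one block. The ``if'' direction and the white case of the ``only if'' direction are fine.

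There is, however, one concrete error in the black case, which the paper is careful about: Definition~\ref{def:successor} defines the successor of an edge $E$ via the \emph{white} tile containing $E$ (every $1$-edge lies in one white and one black tile), not via ``the $1$-tiles of the same colour connected to $X_j$'' as you paraphrase it. The effect (see Lemma~\ref{lem:arcs_successors}: $E\subset X_i$, $E'\subset X_j$ with $i,j\in b\in\pi_w(v)$ and $i+1,j-1\in c\in\pi_b(v)$) is that the induced step on black tiles goes from $X_{i+1}$ to $X_{j-1}$, i.e.\ it traverses the block $c$ in \emph{anti-cyclic} order. (Example: $\pi_w=\{\{0\},\{2\},\{4\}\}$, $\pi_b=\{\{1,3,5\}\}$; the successor chain visits the black tiles in the order $1\to 5\to 3\to 1$.) So your claim that for black tiles ``$X_{l_{s+1}}$ is the cyclic successor of $X_{l_s}$ among them'' and that ``Definition~\ref{def:successor} then says exactly that $E'_{s+1}$ is the successor of $E_s$'' is false as written; the chain you exhibit is not a chain of successors. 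The lemma itself is unaffected — enumerating the block anti-cyclically (equivalently, iterating the successor step, which still exhausts the block) repairs the argument — but the black case is not ``identical'' to the white one and needs this adjustment.
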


Note that in the above, the labelling of the white $1$-tiles is
\emph{not} the one used in the definition of the connection at $v$
(there are some white $1$-tiles with odd index). 

\begin{proof}
  If the $1$-tiles in the lemma are white, the cyclical order of
  $1$-tiles connected to $X$ at $v$ from $X=X_1$ to $Y=X_m$ is given
  by $X_1,\dots, X_m$. If the $1$-tiles are black this gives the
  anti-cyclical order. Clearly going (anti-)cyclically around $v$
  among $1$-tiles connected to $X$ gives all such $1$-tiles. 
\end{proof}

\subsection{Adding clusters}
\label{sec:adding-clusters}

The spanning tree will be built successively by adding more ``secondary
clusters'' 
to a ``main cluster''. 

\medskip
Let the connection at a $1$-vertex $v$ be given by the
cnc-partition $\pi_w\cup \pi_b$ (of $[2n]$, where $n=\deg_F(v)$) and  
$K,K'$ be two white clusters containing $v$. Let $b\in \pi_w$ be a
block with indices of $1$-tiles in $K$ ($j\in b \Rightarrow X_j\subset
K$), $b'\in \pi_w$ a block with indices of $1$-tiles in $K'$. We
\defn{add} the cluster $K'$ to $K$ at $v$ by replacing $b,b'$ in
$\pi_w$ by $\tilde{b}:=b\cup b'$. 
 % Let $K$ be a cluster of the white connection
% graph, $v\in K$ a $1$-vertex. 
% Let $\pi_w$ be the nc-partition representing the connection of white
% $1$-tiles at $v$.
% Consider a $1$-tile $X_i\ni v$ not contained in $K$. Let $b\in
% \pi_w$ be a block containing indices corresponding to white $1$-tiles
% in $K$ ($j\in b \Rightarrow X_j\subset K$). We say we \defn{add $X_i$
%   to $K$ at $v$} by replacing $b\in \pi_w$ by  $b\cup \{i\}$. There is
% a problem however: 
The resulting partition $\widetilde{\pi}_w$ however may not be
non-crossing anymore. 

\begin{lemma}[Adding clusters]
  \label{lem:add_cluster}
  The partition $\widetilde{\pi}_w$ is non-crossing if and only if
  there is a block $c\in \pi_b$ that is adjacent to both $b$ and $b'$
  (see Lemma \ref{lem:prop_comp}).  

  In this case,
  let $\widetilde{K}$ be the cluster in the new connection graph that
  contains $K,K'$. 
  If $K,K'$ are trees then $\widetilde{K}$ is a tree as well. 
\end{lemma}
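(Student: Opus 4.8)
The plan is to reduce both assertions to the tree structure on the blocks of $\pi_w\cup\pi_b$ furnished by Lemma~\ref{lem:prop_comp}. Write $T$ for that adjacency tree: its vertices are the blocks of $\pi_w\cup\pi_b$, two blocks being joined by an edge exactly when they are adjacent. Two preliminary remarks will be used throughout. First, adjacent blocks of a cnc-partition have opposite colours — this is built into the checkerboard colouring of a geometric realization (Definition~\ref{def:geom_representation}) — so $T$ is bipartite with respect to colour. Second, since $K\ne K'$ (adding $K'$ to $K$ presupposes distinct clusters) the chosen blocks satisfy $b\ne b'$: a block $\beta$ with $j\in\beta\Rightarrow X_j\subset K$ cannot also have all its $1$-tiles in $K'$, because each white $1$-tile lies in exactly one white cluster.

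The heart of the argument is the auxiliary claim that a block $a\in\pi_w\cup\pi_b$ \emph{separates} $b$ from $b'$ — meaning $b$ and $b'$ lie in distinct components of $[2n]\setminus a$ in the sense of \eqref{eq:notation_pi1} — if and only if $a$ is an interior vertex of the unique path from $b$ to $b'$ in $T$. I would prove this by passing to a geometric realization $\Dbar(\pi_w\cup\pi_b)$ as in Lemma~\ref{lem:geom_real_comp_part}: removing the closed region $\overline{D_a}$ disconnects $\Dbar$ into one piece for each maximal run of circular arcs on $\partial D_a$, i.e.\ into one piece per component of $[2n]\setminus a$; by planarity of the arc system $\{g_m\}$ these pieces are precisely the unions of regions $D_e$ over the subtrees of $T-a$ hanging off $a$, and $D_b,D_{b'}$ fall into distinct pieces exactly when $a$ lies strictly between $b$ and $b'$ on the $b$–$b'$ path. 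I expect this identification ``components of $\Dbar\setminus\overline{D_a}$ $\leftrightarrow$ components of $[2n]\setminus a$ $\leftrightarrow$ subtrees of $T-a$'' to be the main obstacle; only the ``if'' direction of the claim is actually needed below, but both come out of this picture.

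Granting the claim, the first assertion is short. Since $\pi_w$ is already non-crossing, any crossing pair of blocks in $\tilde\pi_w=(\pi_w\setminus\{b,b'\})\cup\{\tilde b\}$ must involve $\tilde b$; hence $\tilde\pi_w$ is non-crossing if and only if no $a\in\pi_w\setminus\{b,b'\}$ crosses $\tilde b=b\cup b'$, and an elementary interval argument (using that $a$ crosses neither $b$ nor $b'$, so that the two witnessing elements of $\tilde b$ lie one in $b$ and one in $b'$) shows that $a$ crosses $\tilde b$ precisely when $a$ separates $b$ from $b'$. By the claim together with bipartiteness, such an $a$ — necessarily white, hence $\ne b,b'$ — exists iff the $b$–$b'$ path in $T$ contains an interior white vertex, i.e.\ iff that path has length $\ge 4$. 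Its length is even and $\ge 2$ (as $b\ne b'$ have the same colour), so the negation is exactly that the path has length $2$, i.e.\ that $b$ and $b'$ have a common neighbour $c$, which is then black. This proves that $\tilde\pi_w$ is non-crossing if and only if some $c\in\pi_b$ is adjacent to both $b$ and $b'$.

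For the second assertion I would work directly in the white connection graph (Definition~\ref{def:connection_graph}). Replacing $\pi_w$ by $\tilde\pi_w$ at $v$ alters this graph in exactly one way: the two vertices $c(v,b)$ and $c(v,b')$ are identified to a single vertex $c(v,\tilde b)$ whose incident edges are the union of those of $c(v,b)$ and $c(v,b')$ (a white $1$-tile is incident to $\tilde b$ at $v$ iff it was incident to $b$ or to $b'$); every other vertex $c(X)$ and $c(w,\cdot)$ with $w\ne v$, and every other edge, is unchanged, and no $1$-tile outside $K\cup K'$ becomes incident to $c(v,\tilde b)$. Since $c(v,b)\in K$ and $c(v,b')\in K'$ with $K\ne K'$, the new component $\widetilde K$ is exactly $K$ and $K'$ glued along the single point $c(v,b)=c(v,b')$. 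Such a wedge of two trees is connected and has $\abs{E(K)}+\abs{E(K')}=(\abs{V(K)}-1)+(\abs{V(K')}-1)=\abs{V(\widetilde K)}-1$ edges, hence is again a tree, completing the proof.
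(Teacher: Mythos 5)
Your argument is correct, but for the main equivalence it takes a genuinely different route from the paper's. You reduce everything to the adjacency tree $T$ of Lemma~\ref{lem:prop_comp}: once the separation claim is in place (a block $a$ separates $b$ from $b'$ in $[2n]$ exactly when $a$ is an interior vertex of the $b$--$b'$ path in $T$), the non-crossing condition becomes ``no interior \emph{white} vertex on that path'', which by bipartiteness and $b\neq b'$ is precisely ``path length $2$'', i.e.\ a common black neighbour. The paper instead works directly on the index sets: for one direction it observes that a block $\hat b$ crossing $b\cup b'$ produces two elements $a,a'\in\hat b$ separating $b$ from $b'$, so no block of $\pi_b$ can be adjacent to both; for the other it explicitly manufactures a crossing white block $\hat{b}$ (containing $c_1'-1$ and $c_2'+1$) from the two distinct black blocks $c\ni b_j+1,\,b_{j+1}-1$ and $c'\ni b_1'-1,\,b_M'+1$ supplied by complementarity. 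Your approach is more conceptual --- it identifies the lemma's condition as ``distance $2$ in the adjacency tree'' --- but it shifts the real work into the separation claim, which you only sketch; to close it one should verify that $\deg_T(a)=\# a$ equals the number of components of $[2n]\setminus a$, so that the natural surjection from components of $T-a$ onto components of $[2n]\setminus a$ is in fact a bijection (this follows by summing $\deg_T(a)\geq \# a$ over all blocks and comparing with $2\cdot\#\text{edges}(T)=2n$). Two small remarks: both directions of your separation claim are used, not only the ``if'' direction --- the ``only if'' direction is what rules out a separating white block when a common neighbour exists; and your treatment of the tree statement (gluing $K$ and $K'$ at the identified vertex $c(v,b)=c(v,b')$ and counting edges) is the same as the paper's, which simply declares the conclusion clear at that point.
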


The situation is illustrated in Figure~\ref{fig:add_cluster}. 

\begin{proof}
%  We show first that $\pi'_w$ is non-crossing.

%   ($\Leftarrow$)
%   Let $c=\{c_0,\dots, c_{N-1}\}$, where $1\leq c_0\leq \dots
%   c_{N-1}\leq 2n-1$. Since $\pi_w\cup \pi_b$ is non-crossing, and
%   $b,c$ as well as $b',c$ are adjacent we have for distinct $i,j\in
%   [N]$ 
%   \begin{align*}
%     & b\subset [c_i+1,c_{i+1}-1], && b'\subset[c_j+1,c_{j+1}-1]
%     \\
%     \text{where } 
%     & c_i+1, c_{i+1}-1\in b, && c_j+1, c_{j+1}-1\in b'.
%   \end{align*}
%   Here (and below) the \emph{indices} of $c_j$ are taken $\bmod N$,
%   while the \emph{numbers} $c_j$ are taken $\bmod 2n$. See also
%   (\ref{eq:notation_pi1}) for notation. 

%   \smallskip
%   Consider a block $d\in \pi_w$, distinct from $b,b'$. Then $d\subset
%   [c_k+1,c_{k+1}-1]$, where $k\in [N]$. If $k\ne i,j$ it is clear that  
%   $\pi'_w$ is non-crossing. 

%   If $k=i$ then $d$ is contained in one
%   component of $[c_i+1,c_{i+1}-1]\setminus b_i$, so $d$ is contained
%   in one component of $[2n]\setminus (b\cup b')$. Thus $\pi'_w$ is
%   non-crossing; similarly if $k=j$.  
  We show the equivalence first.

  \medskip
  ($\Leftarrow$)
  Assume $\widetilde{\pi}_w$ is crossing. Then there is a block
  $\hat{b}\in \pi_w$, such that there are 
  \begin{align*}
    a,a'&\in \hat{b}, \,
    d\in b,\, d'\in b' 
    \quad  \text{satisfying}
    \\
    &a<d<a'< d'.
  \end{align*}
  This means that $b,b'$ have to be contained in different components
  of 
  $[2n]\setminus \{a,a'\}$. Thus every block $c\in \pi_b$ adjacent to
  $b$ has to be in a different component of $[2n]\setminus\{a,a'\}$
  than every block $c'\in\pi_b$ adjacent to $b'$. Thus there is no
  block $c\in \pi_b$ adjacent to both $b,b'$. 

  \medskip
  ($\Rightarrow$)
%   Clearly the implication proved above holds when interchanging 
%   colors. Namely let $\pi'_w\cup \pi'_b$ be a cnc-partition, such that
%   there is 
%   a block $\tilde{b}\in \pi'_w$ that is adjacent to two blocks
%   $c',\tilde{c}'\in \pi'_b$. Then the partition $\pi_b$ obtained by replacing
%   $c',\tilde{c}'$ by $c=c'\cup \tilde{c}'$ is non-crossing.  
  Assume now that there is no $c\in \pi_b$ adjacent to both $b,b'$. 
  %Assume now that $\pi'_w$ is non-crossing. 
  Let $b=\{b_1,\dots,b_N\},
  b'=\{b'_1,\dots, b'_M\}$, where $b_1<\dots < b_N$, and $b'_1 <
  \dots < b'_M$. Since $\pi_w$ is non-crossing $b,b'$ are in
  disjoint intervals, meaning we can assume that for some $j$
  \begin{equation*}
    b_j< b'_1< b'_M< b_{j+1}. 
  \end{equation*}
  Since $\pi_b$ is complementary to $\pi_w$ there are blocks $c,c'\in
  \pi_b$ such that
  \begin{equation*}
    b_j+1,b_{j+1}-1 \in c,  \quad  b'_1-1 , b'_M+1\in c',
  \end{equation*}
  by Lemma \ref{lem:prop_comp}. The blocks $c,c'$ are distinct by
  assumption. Let 
  $c'_1:=\min\{c'_j\in c'\}$, $c'_2:= \max\{c'_j\in c'\}$. The numbers
  $c'_1-1,c'_2+1$ are in the same block $\hat{b}\in\pi_w$ (since
  $\pi_w,\pi_b$ are complementary). Thus we have the following
  ordering
  \begin{equation*}
    \underbrace{b_j}_{\in b}
    < \underbrace{b_j+1}_{\in c}
    < \underbrace{c'_1-1}_{\in \hat{b}}
    < \underbrace{c'_1}_{\in c'}
    < \underbrace{b'_1<b'_M}_{\in b'} 
    < \underbrace{c'_2}_{\in c'}
    <
    \underbrace{c'_2+1}_{\in \hat{b}} < \underbrace{b_{j+1}-1}_{\in
      c}< \underbrace{b_{j+1}}_{\in b}.   
  \end{equation*}
  Clearly $b\cup b'$ and $\hat{b}$ are crossing, which finishes this
  implication.

%   Let $\pi'_b$ be the complementary non-crossing partition. 
%   By Lemma \ref{lem:prop_comp} there are distinct blocks
%   $c',\tilde{c}'\in\pi'_b$ (both adjacent to $b\cup b'$) such that
%   \begin{equation*}
%     b_j+1, b'_1-1 \in c' \quad \text{ and } b'_M+1, b_{j+1}-1\in
%     \tilde{c}'. 
%   \end{equation*}

%   \begin{claim}
%     $c=c'\cup \tilde{c}'$ is a block of $\pi_w$.
%   \end{claim}
%   \begin{proof}[Proof of claim]
%     Note first that $c$ is non-crossing with any $a\in \pi_w$. This is
%     clear if $a=b,b'$. Any other such block has to
%     be contained in one component of $[2n]\setminus (b\cup b')$.

%     Furthermore $c$ is non-crossing with any block $d\in\pi_b'$, where
%     $d\ne c',\tilde{c}'$. This follows as in the proof of part
%     $(\Leftarrow)$, where the colors white/black are interchanged. 

%     Every block $a\in \pi_w'$ is adjacent to $c',\tilde{c}'$, or such
%     a $d\in \pi'_b$. 
%   \end{proof}

  \medskip
  We now show the second statement. Recall that in the white
  connection graph the block $b\in \pi_w$ is represented by a vertex
  $c(v,b)$
  %incident to $\#b$ edges, 
  and $b'\in \pi_w$ is represented by a
  (different) vertex $c(v,b')$. % incident to $\#b'$ edges. 
  The new white
  connection graph (where the connection at $v$ is given by
  $\widetilde{\pi}_w$) 
  is obtained by identifying $c(v,b)$ and $c(v,b')$; this yields
  the vertex $c(v,\tilde{b})$.  
  %We call the
  %vertex obtained 
  %from this identification $\widetilde{w}$. 
  Then $\widetilde{K}$ is
  the component (of the new white connection graph) containing
  $c(v,\tilde{b})$. If $K,K'$ are trees, then clearly $\widetilde{K}$
  is a tree as well. 
\end{proof}

\begin{figure}
  \centering
  \includegraphics[width=11cm]{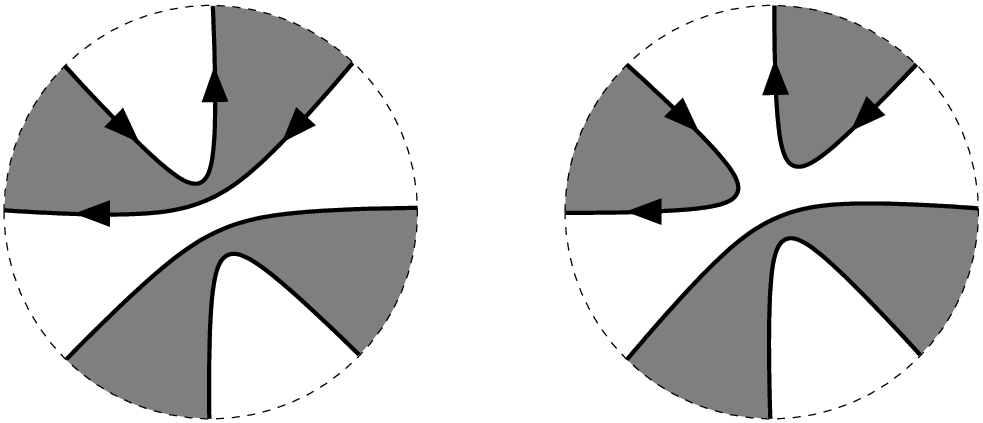}
  \begin{picture}(10,10)
    %
    % left pic
    \put(-250,63){$\scriptstyle{b}$}
    \put(-263,93){$\scriptstyle{b'}$}
    \put(-318,93){$\scriptstyle{c}$}
    \put(-322,43){$\scriptstyle{K}$}
    \put(-280,131){$\scriptstyle{K'}$}
    \put(-202,115){$\scriptstyle{E}$}
    \put(-326,65){$\scriptstyle{E'}$}
    \put(-303,115){$\scriptstyle{D}$}
    \put(-252,135){$\scriptstyle{D'}$}
    %
    % right pic
    \put(-74,73){$\scriptstyle{\tilde{b}}$}
    \put(-144,43){$\scriptstyle{\widetilde{K}}$}
    \put(-137,93){$\scriptstyle{\tilde{c}'}$}
    \put(-23,115){$\scriptstyle{E}$}
    \put(-147,65){$\scriptstyle{E'}$}
    \put(-125,115){$\scriptstyle{D}$}
    \put(-74,135){$\scriptstyle{D'}$} 
    \put(-47,130){$\scriptstyle{\tilde{c}}$} 
  \end{picture}
  \caption{Adding clusters.}
  \label{fig:add_cluster}
\end{figure}

Assume that $c$ is adjacent to both $b,b'$, i.e., that we can add $K'$
to $K$ at $v$ in this fashion.
Let the notation be as in the previous proof, i.e.,   $b=\{b_1,\dots,b_N\},\,
  b'=\{b'_1,\dots, b'_M\}$, where
\begin{equation}
  \label{eq:defbbprime}
  b_1< \dots < b_N, \; b_1'< \dots < b'_M\; \text{ and } \;  b_j< b'_1< b'_M< b_{j+1}. 
\end{equation}
Then the complementary
partition $\widetilde{\pi}_b$ to $\widetilde{\pi}_w$ is given by
replacing $c\in \pi_b$ by 
the two blocks 
\begin{equation}
  \label{eq:def_tildec}
  \tilde{c}=c\cap [b_j,b_1'], \quad
  \tilde{c}'=c\cap [b'_M,b_{j+1}].   
\end{equation}
These
two blocks are both adjacent to $\tilde{b}=b\cup b'\in
\widetilde{\pi}_b$.   

\medskip
If we add a cluster $K'$ to a cluster $K$ as above at a postcritical
point $p$, we need to specify the marking (see Definition
\ref{def:mark_partition})  of the new connection at $p$. 

\begin{definition}[Marking of new connection]
  \label{def:add_marking}
  Let $\pi_w\cup\pi_b$ be a marked cnc-partition, i.e., a connection
  at a postcritical point $p$. Then the marking of the cnc-partition
  $\widetilde{\pi}_w\cup\widetilde{\pi}_b$ from the previous lemma is
  given as follows (notation is as before). 
  Let the marked adjacent blocks in $\pi_w\cup \pi_b$ be 
  \begin{itemize}
  \item $b,c$, or $b',c$; 
    \newline
    then (in both cases) we can pick $\tilde{b},\tilde{c}$ or
    $\tilde{b},\tilde{c}'$ as the marked adjacent blocks in
    $\widetilde{\pi}_w\cup\widetilde{\pi}_b$.
  \item $d,c$, where $d\in \pi_w\setminus\{b,b'\}$;
    \newline
    then $d$ is adjacent to either $\tilde{c}$ or $\tilde{c}'$, which  
    are the  
    marked adjacent blocks in
    $\widetilde{\pi}_w\cup\widetilde{\pi}_b$. 
  \item $b,e$ or $b',e$, where $e\in \pi_b\setminus\{c\}$;
    \newline
    then $\tilde{b},e$ are the marked adjacent blocks in
    $\widetilde{\pi}_w\cup\widetilde{\pi}_b$. 
  \item $d,e$, where $d\in \pi_c\setminus\{b,b'\}$, $e\in
    \pi_b\setminus \{c\}$; 
    then $d,e$ are the marked adjacent blocks in
    $\widetilde{\pi}_w\cup\widetilde{\pi}_b$. 
  \end{itemize}
\end{definition}

\begin{lemma}
  \label{lem:add_successor}
  Assume a white cluster $K'$ can be added to a white cluster $K$ at a
  $1$-vertex 
  $v$ as in Lemma \ref{lem:add_cluster} 
  to form a cluster $\widetilde{K}$. Then
  there exist (uniquely) succeeding $1$-edges at $v$
  \begin{equation*}
    E,E'\subset K \quad \text{as well as }\;\;  D,D'\subset K',
  \end{equation*}
  such that
%   With notation as above we have that in $\widetilde{K}$ (with respect
%   to the new cnc-partition $\pi'_w\cup \pi'_b$)
  \begin{equation*}
    E,D' \quad \text{as well as }\;\; D,E'
  \end{equation*}
  are succeeding in $\widetilde{K}$.
\end{lemma}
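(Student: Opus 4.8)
The plan is to pass everything through the correspondence, established in Lemma~\ref{lem:arcs_successors}, between succeeding $1$-edges at $v$ and adjacencies of the cnc-partition $\pi_w(v)\cup\pi_b(v)$. Recall that $E'$ succeeds $E$ at $v$ precisely when there are adjacent blocks $\beta\in\pi_w(v)$, $\gamma\in\pi_b(v)$ with $E\subset X_i$, $E'\subset X_j$ and $i,j\in\beta$, $i+1,j-1\in\gamma$. Since $\pi_w(v)\cup\pi_b(v)$ is non-crossing, for a fixed adjacent pair $(\beta,\gamma)$ the block $\gamma$ lies in exactly one component $(i,j)$ of $[2n]\setminus\beta$ (so $i,j$ are cyclically consecutive in $\beta$), and then necessarily $i+1,j-1\in\gamma$; hence the witnessing pair $(i,j)$, and therefore the succeeding pair $(E,E')$, is determined by the adjacency alone. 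Thus succeeding pairs at $v$ are in bijection with the edges of the tree of Lemma~\ref{lem:prop_comp}, and the proof reduces to tracking which tree edges are relevant before and after the cluster is added; Figure~\ref{fig:add_cluster} is the picture to keep in mind.

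First I would locate $c$. Being adjacent to $b$, and with $b'$ lying on the arc $(b_j,b_{j+1})$ of $b$ by \eqref{eq:defbbprime}, the block $c$ sits in the component $(b_j,b_{j+1})$ of $[2n]\setminus b$, so $b_j+1,\,b_{j+1}-1\in c$; being adjacent to $b'$ it sits in the (wrap-around) component $(b'_M,b'_1)$ of $[2n]\setminus b'$, so $b'_M+1,\,b'_1-1\in c$. Therefore the adjacency $(b,c)$ has witnessing pair $(b_j,b_{j+1})$ and the adjacency $(b',c)$ has witnessing pair $(b'_M,b'_1)$. Accordingly, let $E$ be the positively oriented $1$-edge of $X_{b_j}$ with terminal point $v$, $E'$ the positively oriented $1$-edge of $X_{b_{j+1}}$ with initial point $v$; and let $D$ be the positively oriented $1$-edge of $X_{b'_M}$ with terminal point $v$, $D'$ the positively oriented $1$-edge of $X_{b'_1}$ with initial point $v$. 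By the above these are exactly the succeeding pairs of the original connection attached to $(b,c)$ and $(b',c)$, and since $b_j,b_{j+1}\in b$ and $b'_M,b'_1\in b'$ we have $E,E'\subset K$ and $D,D'\subset K'$.

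Next I would use the explicit description of the new partition from Lemma~\ref{lem:add_cluster} and \eqref{eq:def_tildec}: $\widetilde\pi_w$ replaces $b,b'$ by $\tilde b=b\cup b'$, while $\widetilde\pi_b$ replaces $c$ by $\tilde c=c\cap[b_j,b'_1]$ and $\tilde c'=c\cap[b'_M,b_{j+1}]$, both adjacent to $\tilde b$. Since $b_j,b'_1$ are now cyclically consecutive in $\tilde b$ and $\tilde c$ lies in the component between them, the adjacency $(\tilde b,\tilde c)$ has witnessing pair $(b_j,b'_1)$; likewise $(\tilde b,\tilde c')$ has witnessing pair $(b'_M,b_{j+1})$. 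Feeding these back into the dictionary, the succeeding pair of $\widetilde K$ attached to $(\tilde b,\tilde c)$ is (positively oriented edge of $X_{b_j}$ ending at $v$, positively oriented edge of $X_{b'_1}$ starting at $v$), i.e.\ $(E,D')$; and the pair attached to $(\tilde b,\tilde c')$ is $(D,E')$. This proves existence.

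Finally, for uniqueness, a succeeding pair of $\widetilde K$ at $v$ joining a tile of $K$ to a tile of $K'$ (in either order) must be attached to an adjacency of $\tilde b$ with a black block whose witnessing pair has one index in $b$ and the other in $b'$. Any block $e\in\pi_b(v)\setminus\{c\}$ adjacent to $\tilde b$ was already adjacent to $b$ or to $b'$; since $b_j,b_{j+1}$ are no longer consecutive in $\tilde b$ (they are separated by $b'$), the witnessing pair of $e$ is unchanged and has both indices in $b$ or both in $b'$. Hence only $\tilde c,\tilde c'$ yield the two ``crossing'' pairs, forcing $E,E',D,D'$ to be exactly the edges named above. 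The case $v=p$ postcritical requires no change; the marking of $\widetilde\pi_w\cup\widetilde\pi_b$ is recorded separately (Definition~\ref{def:add_marking}) and is irrelevant here. The one slightly delicate step is this last bookkeeping---checking that no block of $\pi_b(v)\setminus\{c\}$ survives the merge as a ``bridge'' between $b$ and $b'$; everything else is a direct translation through Lemma~\ref{lem:arcs_successors}.
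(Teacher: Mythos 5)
Your proposal is correct and follows essentially the same route as the paper's proof: identify the succeeding pairs $E,E'$ and $D,D'$ via the adjacencies $(b,c)$ and $(b',c)$ through Lemma~\ref{lem:arcs_successors}, then read off the new succeeding pairs from the adjacencies of $\tilde b$ with $\tilde c,\tilde c'$ as given in \eqref{eq:def_tildec}. The only difference is that you spell out the uniqueness bookkeeping (that no block of $\pi_b(v)\setminus\{c\}$ can bridge $b$ and $b'$ after the merge), which the paper leaves implicit.
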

The situation is again illustrated in Figure~\ref{fig:add_cluster}.

\begin{proof}
  Consider the blocks $b,b'\in \pi_w(v)$ which are both adjacent 
  to the block $c\in \pi_b(v)$ as in Lemma \ref{lem:add_cluster} (here
  $b$ contains indices of $1$-tiles in $K$, $b'$ contains indices of
  $1$-tiles in $K'$). The succeeding
  $1$-edges $E,E'\subset K$, and $D,D'\subset K'$, are the ones
  corresponding to these adjacencies according to Lemma
  \ref{lem:arcs_successors}.    
  Using the notation from (\ref{eq:defbbprime}), we
  obtain that these $1$-edges are
  contained in the following (white) $1$-tiles. In $K,K'$
  \begin{align*}
    & E\subset X_{b_j}
    && E' \subset X_{b_{j+1}}
    \\
    & D\subset X_{b'_M}
    && D'\subset X_{b'_1}.
  \end{align*}
  Recall the description of the blocks $\tilde{c},\tilde{c}'\in
  \widetilde{\pi}_b$ from (\ref{eq:def_tildec}). They  
  are both adjacent to $\tilde{b}=b\cup b'\in \widetilde{\pi}_w$.
  Then $b_j+1,b'_1-1\in \tilde{c}$, $b'_M+1,
  b_{j+1}-1\in \tilde{c}'$. Thus (using Lemma
  \ref{lem:arcs_successors} again) we obtain that $E,D'$ and $D,E'$ are
  succeeding in $\widetilde{K}$.
\end{proof}

We will often be in the following specific situation. Consider a  
white cluster $K$. Assume that the only white $1$-tiles that are
possibly connected at a $1$-vertex $v$ are in $K$. Put differently, this means
that all distinct white $1$-tiles $Y,Y'\ni v$ not in $K$ are
disconnected at $v$. Let $X_{i}\ni v$ be a white $1$-tile not contained in
$K$. The following lemma means that we can \defn{add} $X_i$, or the
cluster containing $X_i$, to $K$ at $v$.    

\begin{lemma}
  \label{lem:change_conn}
  In the situation as above, 
  there is a block $b\in\pi_w$ containing indices of white $1$-tiles
  in $K$ ($j\in b\Rightarrow X_j\subset K$), such that the partition
  $\widetilde{\pi}_w$ obtained by replacing $b,\{i\}\in \pi_w$ by
  $\tilde{b}=b\cup \{i\}$ is non-crossing. 

  Furthermore if $K$ and the cluster containing $X_i$ are trees, the
  resulting cluster $\widetilde{K}$ ($\supset K\cup X$) is a tree as well.
\end{lemma}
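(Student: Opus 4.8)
The plan is to reduce everything to Lemma~\ref{lem:add_cluster}, whose criterion for $\widetilde\pi_w$ being non-crossing is the existence of a block $c\in\pi_b$ adjacent to \emph{both} of the two $\pi_w$-blocks being merged. One of those blocks is forced to be $\{i\}$, so the first thing I would record is that $\{i\}$ really is a singleton block of $\pi_w$, and more: since $X_i\notin K$ and, by hypothesis, any two distinct white $1$-tiles at $v$ outside $K$ are disconnected at $v$, every block of $\pi_w$ is of exactly one of two types — a \emph{$K$-block} (meaning $j\in b\Rightarrow X_j\subset K$) or a singleton $\{i'\}$ with $X_{i'}\notin K$. Indeed a non-singleton block containing a non-$K$ index would connect that tile, via $v$, to another tile of the same block and hence force it into $K$. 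Thus the whole task becomes: \emph{find a $K$-block $b\in\pi_w$ adjacent to the unique block $c\in\pi_b$ adjacent to $\{i\}$}.

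To locate such a $b$ I would pass to the geometric realization of $\pi_w\cup\pi_b$ from Lemma~\ref{lem:geom_real_comp_part}: domains $D_l\subset\D$ for the blocks, separated by disjoint arcs $g_m$, adjacency of blocks corresponding to sharing a $g_m$, and colors alternating across each $g_m$. The domain of the singleton $\{i\}$ is a bigon bounded by one circular arc and a single arc $g_m$, and the domain $D_c$ on the other side of that arc is black; this is the unique block $c\in\pi_b$ adjacent to $\{i\}$ (equivalently $i-1,i+1\in c$, via Lemma~\ref{lem:prop_comp}). Since the cluster $K$ contains the $1$-vertex $v$, there is a white $1$-tile $X_\ell$ with $v\in X_\ell\subset K$, and $\ell\ne i$; its block $b_0\in\pi_w$ is then a $K$-block different from $\{i\}$. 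Now $D_{b_0}$ lies in one component $V$ of $\D\setminus\overline{D_c}$, and $V$ is attached to $D_c$ along exactly one of the arcs $g_m$ on $\partial D_c$, say $\tilde g$; let $D_w$ (block $w$) be the (white) domain in $V$ adjacent to $D_c$ across $\tilde g$, so $w$ is adjacent to $c$. If $b_0=w$ I take $b:=b_0$. Otherwise $D_{b_0}$ lies strictly beyond $D_w$ inside $V$, so $D_w$ cannot be a bigon — a bigon on the far side of $\tilde g$ from $D_c$ would fill up all of $V$, contradicting $D_{b_0}\subset V$, $D_{b_0}\ne D_w$ — hence $w$ is not a singleton block, so by the dichotomy above $w$ is a $K$-block, and I take $b:=w$. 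In both cases $c$ is adjacent to both $b$ and $\{i\}$.

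With $b$ and $b':=\{i\}$ in hand, Lemma~\ref{lem:add_cluster}, applied with $K':=$ the white cluster containing $X_i$ (which contains $v$ because $X_i\ni v$), immediately yields that the partition $\widetilde\pi_w$ obtained from $\pi_w$ by replacing $b$ and $\{i\}$ with $\tilde b:=b\cup\{i\}$ is non-crossing, which is the first assertion; and the same lemma gives that if $K$ and $K'$ are trees, then the merged cluster $\widetilde K$ of the new connection graph, which contains $K\cup X_i$ (in fact $K\cup K'$), is again a tree.

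The step I expect to be the real obstacle is the middle one: verifying that $c$ is adjacent to an honest $K$-block rather than only to "foreign" singleton blocks. The bigon observation above seems to be the clean way to settle it — the only white neighbours of $c$ that are singletons-outside-$K$ are lunes with nothing beyond them, so the neighbour of $c$ pointing toward the $K$-tile $X_\ell$ is a genuine block (hence a $K$-block) unless it already is $X_\ell$'s block. I would want to state "strictly beyond $D_w$ in $V$" precisely in terms of the components of $\D\setminus\overline{D_c}$, and to double-check the exact correspondence between singleton blocks of $\pi_w$ and bigon domains; but these are routine consequences of Lemma~\ref{lem:geom_real_comp_part} and its proof.
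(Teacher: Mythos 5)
Your proof is correct and follows essentially the same route as the paper: both reduce to Lemma~\ref{lem:add_cluster} by observing that every white block not consisting of $K$-indices is a singleton, and then locating a black block $c$ adjacent to both $\{i\}$ and some genuine $K$-block. The only difference is cosmetic: where you argue geometrically via Lemma~\ref{lem:geom_real_comp_part} (singleton domains are bigons that fill up their complementary component), the paper argues combinatorially via the tree $\Gamma$ of Lemma~\ref{lem:prop_comp} (non-$K$ singletons are leaves, so connectivity of $\Gamma$ forces the block $c$ adjacent to $\{i\}$ to also be adjacent to a $K$-block).
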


\begin{proof}
  Consider the graph $\Gamma$ representing
  $\pi_w\cup\pi_b$ 
  from Lemma \ref{lem:prop_comp} (this is \emph{neither} the white
  connection graph \emph{nor} the graph $\bigcup \E^1$).

  Let $X_j\ni v$ be a white $1$-tile not contained
  in $K$. Since $X_j$ is not connected to any other $1$-tile at $v$
  the singleton $\{j\}$ is a block of $\pi_w$. This block is adjacent
  to a single block (in $\pi_b$), thus $\{j\}$ is a leaf of $\Gamma$
  (incident to a single edge). 

  \smallskip
  Consider the block $c\in \pi_b$ adjacent to $\{i\}\in \pi_w$. 
  Since $\Gamma$ is connected, $c$ has to be connected to a
  block $b\in\pi_w$ containing indices corresponding to $1$-tiles in
  $K$. This means that $b,c$ are adjacent blocks. 
  The result now follows from Lemma \ref{lem:add_cluster}. 
\end{proof}

% \begin{lemma}

%   Let the white connection graph be such that all cluster except one 
%   cluster $K$ contain a single $1$-tile. Consider a $1$-vertex $v\in
%   K$, and a $1$-tile $X_j\ni v$ not contained in $K$. Then we can add
%   $X_j$ to $K$ at $v$ such that the resulting partition $\pi_w'$ is
%   non-crossing.  

%   Let $K'$ be the cluster containing $K$ in the new white connection graph
%   (the one obtained after $X_j$ was added to $K$). If $K$ is a tree,
%   then $K'$ is a tree as well. 
% \end{lemma}
 
% \begin{proof}
%   Let $\pi_w\cup\pi_b$ be the cnc-partition representing the
%   connection at $v$. Let $X_j\ni v$ be a white $1$-tile not contained
%   in $K$. Since $X_j$ is not connected to any other $1$-tile at $v$
%   the singleton $\{j\}$ is a block of $\pi_w$. One checks that by
%   construction of the complementary partition $\pi_b$ the numbers
%   $j-1,j+1$ are in the same block of $\pi_b$. 

%   \smallskip
%   Consider the graph $\Gamma$ representing
%   $\pi_w\cup\pi_b$ 
%   from Lemma \ref{lem:prop_comp} (this is \emph{neither} the white
%   connection graph \emph{nor} the graph $\bigcup \E^1$). 
%   By the above the block $\{j\}$ is a leaf (incident to a single
%   edge). Consider the block $b\in \pi_b$ connected to the $1$-tile
%   $X_j$. Since $\Gamma$ is connected $b$ has to be connected to a
%   block $c\in\pi_w$ containing indices corresponding to $1$-tiles in
%   $K$. Replacing $c$ by $c\cup\{j\}$ results in a nc-partition $\pi'_w$
%   as desired.  

%   \smallskip
%   That $K'$ is a tree is clear.
% \end{proof}

We record the following corollary (see also Lemma
\ref{lem:whiteXconn}). 
\begin{cor}[Trees in connection graphs]
  \label{cor:spanning_tree_ind}
  A (cluster that is a) tree in the white (black) connection graph may
  be constructed 
  inductively by adding one $1$-tile to a cluster at a time. Every
  tree in the white (black) connection graph (in a cluster) is
  obtained in such a way. 
\end{cor}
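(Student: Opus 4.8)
The plan is to prove both assertions by induction on the number $m$ of white $1$-tiles contained in the tree cluster $K$, the black case following by interchanging the roles of $\even_n$ and $\odd_n$. The forward direction is short: a single white $1$-tile is a trivial tree cluster, and Lemma~\ref{lem:change_conn} says that adjoining one further white $1$-tile to a tree cluster at a $1$-vertex again produces a tree cluster, so iterating builds a tree; one can keep going until all of $\bigcup\X^1_w$ has been absorbed, using that this set is connected (Lemma~\ref{lem:whiteXconn}). The real content is the converse, and this is where I would spend the effort.

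The observation that makes the converse work is that in the white connection graph (Definition~\ref{def:connection_graph}) every tile-vertex $c(X)$ has degree exactly $k=\#\post\ge 3$: the white $1$-tile $X$ has exactly $k$ $1$-vertices in its boundary, contributing one incident block (hence one edge) each. So a tile-vertex is never a leaf of $K$. Given $m\ge 2$, I would first locate a block-vertex $c(v_0,b_0)$ with $\#b_0\ge 2$ --- one exists because a path in $K$ between two distinct tile-vertices is alternating (the graph is bipartite between tile-vertices and block-vertices) and hence contains an interior block-vertex of degree $\ge 2$. Rooting $K$ at $c(v_0,b_0)$ and taking a leaf $\ell$ of maximal depth, the observation forces $\ell=c(v,\{i\})$ to be a block-vertex, its parent to be a tile-vertex $c(X)$ with $X=X_i$ (the white $1$-tile carrying index $i$ at $v$), and --- by maximality of depth --- every neighbour of $c(X)$ except its parent $c(v',b')$ to be a leaf. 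Translating back: at $v'$ the tile $X$ lies in a block $b'\in\pi_w(v')$ with $\#b'\ge 2$, while at each of its other $k-1$ $1$-vertices $X$ is disconnected from every other white $1$-tile. Thus $X$ is \emph{pendant}, attached to the rest of $K$ only at $v'$.

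I would then \emph{detach} $X$: replace the block $b'$ of $\pi_w(v')$ by $\{i'\}$ and $b'\setminus\{i'\}$, where $i'$ is the index of $X$ at $v'$. Splitting off a singleton keeps $\pi_w(v')$ non-crossing, and the complementary partition $\pi_b(v')$ changes only by merging the two blocks flanking $i'$ (distinct, since $i'\in b'$ separates $i'-1$ from $i'+1$) into a single block adjacent to both $\{i'\}$ and $b'\setminus\{i'\}$. Deleting $c(X)$ and the $k-1$ leaves hanging off it then leaves a tree cluster $K'$ with $m-1$ white $1$-tiles, and --- reading the step backwards --- $K$ is precisely the cluster obtained by adding the single white $1$-tile $X$ to $K'$ at $v'$ in the sense of Lemma~\ref{lem:add_cluster} (equivalently Lemma~\ref{lem:change_conn}); the complementary-partition computation just sketched is exactly what verifies the adjacency hypothesis and the non-crossing/tree conclusions of that lemma. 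By the inductive hypothesis $K'$ is built one white $1$-tile at a time, and appending this last step builds $K$. I expect the main obstacle to be this last bookkeeping --- making the informal ``detach a pendant tile'' rigorous as a reverse application of Lemma~\ref{lem:add_cluster}, and checking via Lemma~\ref{lem:prop_comp} that the complementary partition behaves as claimed; the tree-theoretic leaf-peeling is routine once one notes tile-vertices have degree $k\ge 3$ and so cannot be leaves.
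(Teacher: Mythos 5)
Your argument is correct, and it actually supplies more than the paper does: the paper records this corollary with no proof at all, treating both directions as immediate consequences of Lemma~\ref{lem:add_cluster} and Lemma~\ref{lem:change_conn}. The forward direction is indeed just iteration of those lemmas together with Lemma~\ref{lem:whiteXconn}. For the converse -- the part the paper leaves entirely implicit -- your leaf-peeling induction is a genuine and well-constructed argument: the key observation that every tile-vertex $c(X)$ has degree exactly $k=\#\post\ge 3$ in the bipartite connection graph, so that all leaves are block-vertices and a deepest leaf forces a pendant tile $X$ attached to the rest of $K$ at a single $1$-vertex $v'$, is exactly what is needed, and your verification that splitting $b'$ into $\{i'\}$ and $b'\setminus\{i'\}$ merges two \emph{distinct} flanking blocks of $\pi_b(v')$ (distinct because otherwise $b'$ would be confined to the component $\{i'\}$ of the complement, contradicting $\#b'\ge 2$) is precisely the reverse of the computation around \eqref{eq:def_tildec}, so the detaching step is a legitimate inverse of Lemma~\ref{lem:add_cluster}. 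One small caution: your parenthetical ``equivalently Lemma~\ref{lem:change_conn}'' is slightly loose, since that lemma carries the extra hypothesis that all white $1$-tiles at $v'$ outside the cluster are mutually disconnected, which need not hold here; but your primary citation of Lemma~\ref{lem:add_cluster} with the explicit adjacency check is the right one, so nothing breaks.
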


\subsection{Boundary circuits}
\label{sec:boundary-circuits}
The first approximation of the Peano curve $\gamma^1$ will be given as
the \defn{boundary circuit} of a (cluster that is a) spanning tree (in
the white connection graph). 

\begin{definition}[Boundary circuit of a cluster]
  \label{def:gamma1_connection}
  Consider a cluster $K$. A \defn{boundary circuit} $\EC$ of $K$ is a 
  circuit of positively oriented $1$-edges in $K$ 
  \begin{equation*}
    E_0,\dots, E_{M-1},
  \end{equation*}
  such that $E_{j+1}$ is the successor of $E_j$ for each $j$ (indices
  are taken $\bmod M$, in particular $E_0$ succeeds $E_{M-1}$);
  furthermore no $1$-edge appears twice in $\EC$.  

%   let the connection of $1$-tiles be given such that the resulting 
%   white connection graph is a \emph{spanning tree}. The Eulerian circuit
%   $E_0,\dots, E_{kd-1}$ (of $1$-edges in the graph $\bigcup \E^1$)
%   induced by these connections is given as follows.

%   \begin{itemize}
%   \item Start with any positively oriented $1$-edge $E_0\subset K$.
%   \item Let the path of $1$-edges $E_0,\dots,E_m$ be already
%     constructed. Let $v$ be the terminal point of $E_m$, and
%     $X_i\subset K$ be
%     the white $1$-tile containing $E_m$. 
%     Since $1$-tiles are cyclically
%     ordered around $v$, the $1$-tiles (in $K$) that are connected at $v$ with
%     $X_i$ are cyclically ordered as well. 

%     Let $X_j$ be the the cyclical successor (in mathematically
%     positive order around $v$) of $X_i$ among $1$-tiles connected to
%     $X_i$ at $v$. If no other $1$-tile is connected to $X_i$ at $v$,
%     we let $X_j=X_i$.  

%     \smallskip
%     Formally $i,j$ are in the same block of $\pi_w$, and none of the
%     numbers $i+1 \mod 2n,\dots, j-1 \mod 2n$ are in this block.
    
%     \smallskip
%     The $1$-edge $E_{m+1}$ is now defined as the positively oriented
%     $1$-edge in $\partial X_j$ with initial point $v$.
%   \item The construction ends when $E_{m+1}=E_{M+1}=E_0$.    
%   \end{itemize}
\end{definition}

Recall that every $1$-edge has exactly one successor and one
predecessor. Thus it is clear that starting from any $1$-edge
$E_0\subset K$ and following succeeding $1$-edges will yield a
boundary circuit.

\medskip
We note the following, which is an immediate consequence of Lemma
\ref{lem:arcs_successors} and Corollary \ref{cor:marking}, see also
the discussion after Definition \ref{def:conn_geom_repres}. 

\begin{lemma}[$K_\epsilon$ contains $p$]
  \label{lem:p_on_boundary}
  Let $K$ be a cluster, $p$ a postcritical point. A boundary
  circuit of $K$ contains the marked succeeding $1$-edges at $p$ if
  and only if $p \in K_\epsilon$ for any geometric representation
  $K_\epsilon$ of $K$.  
\end{lemma}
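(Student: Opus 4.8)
The plan is to unravel the definitions and reduce the claim to the correspondence between marked succeeding $1$-edges and marked arcs that was set up in Corollary~\ref{cor:marking}, together with the geometric-representation correspondence for clusters recorded in Definition~\ref{def:cluster} and Lemma~\ref{lem:arcs_successors}. First I would recall that by Definition~\ref{def:cluster} the cluster $K$ has a geometric representation $K_\epsilon$, which is the union of the geometric representations $X_\epsilon$ of the $1$-tiles $X\subset K$, glued according to the geometric representations of the connections at the $1$-vertices of $K$; and that, by the last paragraph of Definition~\ref{def:cluster}, a $1$-tile $Z$ lies in $K$ if and only if $Z_\epsilon\subset K_\epsilon$. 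In particular $p\in K_\epsilon$ means precisely that $p$ lies in (the geometric representation of) one of the white $1$-tiles $X\subset K$, at the position dictated by the marking at $p$.

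Next I would analyse what it means for a boundary circuit of $K$ to contain the marked succeeding $1$-edges at $p$. By Definition~\ref{def:gamma1_connection} a boundary circuit is obtained by starting at any $1$-edge of $K$ and following successors; since every $1$-edge of $K$ has a unique successor and predecessor, the set of $1$-edges of $K$ splits into disjoint boundary circuits, one for each boundary component of $K_\epsilon$. Thus a boundary circuit $\EC$ contains the marked pair $E,E'$ at $p$ exactly when the geometric representations $E_\epsilon,E'_\epsilon$ lie on the same boundary component of $K_\epsilon$ that $\EC$ traces, and $E'_\epsilon$ succeeds $E_\epsilon$ there. By the marked version of Lemma~\ref{lem:arcs_successors} (i.e.\ Corollary~\ref{cor:marking} and the discussion after Definition~\ref{def:conn_geom_repres}), the marked succeeding $1$-edges at $p$ are exactly the pair $E,E'$ such that in the geometric representation of the connection at $p$ the marked arc $g_m$ — which, by Definition~\ref{def:mark_partition}, is the one containing the origin, hence by Definition~\ref{def:conn_geom_repres} the one whose geometric realization contains $p$ — runs from the terminal side of $E_\epsilon$ to the initial side of $E'_\epsilon$. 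So $\EC$ contains the marked pair at $p$ iff the arc $g_m$ carrying $p$ is part of the boundary curve traced by $\EC$.

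The final step is to identify ``$p$ lies on the boundary curve traced by $\EC$'' with ``$p\in K_\epsilon$''. If $p\in K_\epsilon$, then in the geometric representation $U(p)\cap K_\epsilon$ of the connection at $p$, the point $p$ lies on the marked arc $g_m$, which by Lemma~\ref{lem:geom_real_comp_part} is the common boundary arc of the two adjacent (white and black) regions realizing the marked pair of blocks; this arc is precisely the portion of $\partial K_\epsilon$ running between $E_\epsilon$ and $E'_\epsilon$, so $p$ lies on that boundary component and every boundary circuit tracing it contains the marked pair — and conversely if $p\notin K_\epsilon$ then no boundary component of $K_\epsilon$ passes through $p$, so no boundary circuit of $K$ can contain the marked succeeding edges at $p$. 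Putting these together with the uniqueness of successors gives the equivalence. The one point that needs a little care — the main (small) obstacle — is keeping the bookkeeping straight between the three equivalent descriptions of the marking (marked arc $g_m$, marked pair of succeeding $1$-edges, marked pair of adjacent blocks) and checking that ``the marked arc contains the origin'' in the abstract disk model translates, via the homeomorphism $h_v$ of \eqref{eq:defhv} and Definition~\ref{def:conn_geom_repres}, into ``its geometric realization contains $p$'' — but this is exactly the content already assembled in Corollary~\ref{cor:marking} and the paragraph following Definition~\ref{def:conn_geom_repres}, so the proof is essentially a citation of those facts.
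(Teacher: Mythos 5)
Your argument is correct and follows exactly the route the paper intends: the paper offers no written proof, stating only that the lemma is an immediate consequence of Lemma~\ref{lem:arcs_successors}, Corollary~\ref{cor:marking}, and the discussion after Definition~\ref{def:conn_geom_repres}, which are precisely the facts you assemble. One small imprecision: $p\in K_\epsilon$ does not mean $p$ lies in some $X_\epsilon$ (those exclude the vertex neighborhoods $U(v)$) but rather in the piece $h^{-1}(\Dbar_w)\cap U(p)$, i.e.\ on the preimage of the marked arc $g_m$ — which is exactly where the rest of your argument correctly places it.
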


\begin{lemma}
  \label{lem:cluster_sc}
  Consider a cluster $K$.
  The following are equivalent.
  \begin{enumerate}
  \item 
    \label{item:cluster_sc1}
    The cluster $K$ is a tree. 
  \item 
    \label{item:cluster_sc2}
    $K$ has only a single boundary circuit.
  \item 
    \label{item:cluster_sc3}
    Each geometric representation $K{_\epsilon}$ of $K$ is a
    Jordan domain. 
  \end{enumerate}
  In this case the single boundary circuit $\EC$ of $K$ is an Eulerian
  circuit in $K$. This means each of the $km$ $1$-edges in $K$ appears 
  exactly once in $\EC$. Here $m$ is the number of $1$-tiles in $K$
  ($k=\#\post=\# 0$-edges).  
\end{lemma}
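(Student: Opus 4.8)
The plan is to establish the cycle of equivalences $(\ref{item:cluster_sc1}) \Leftrightarrow (\ref{item:cluster_sc3}) \Leftrightarrow (\ref{item:cluster_sc2})$, exploiting the correspondence (Definition \ref{def:cluster}) between a cluster $K$ and a geometric representation $K_\epsilon$, which is a subsurface of $S^2$ obtained by gluing the geometric representations $X_\epsilon$ of the $1$-tiles in $K$ along arcs in the neighborhoods $U(v)$. First I would record the topological skeleton: $K_\epsilon$ deformation-retracts onto the white connection graph of $K$ (the vertices $c(X)$ sit at centers of tiles, the vertices $c(v,b)$ sit inside $U(v)$, edges run through the arc where $X_\epsilon$ meets $U(v)$), so $K_\epsilon$ is connected and its first Betli number equals the first Betti number of the connection graph. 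Hence $K$ is a tree $\Leftrightarrow$ the connection graph has no cycles $\Leftrightarrow$ $H_1(K_\epsilon)=0$ $\Leftrightarrow$ $K_\epsilon$ is a planar, connected, simply connected surface with boundary, i.e.\ (since it is a compact surface with nonempty boundary embedded in $S^2$) a closed Jordan domain. This gives $(\ref{item:cluster_sc1}) \Leftrightarrow (\ref{item:cluster_sc3})$; one has to note that the equivalence is independent of the chosen geometric representation because any two are related by a homeomorphism of $S^2$ (they differ only by the choice of the homeomorphisms $h_v$ and of the arcs $g_m$ realizing the cnc-partitions, cf.\ Lemma \ref{lem:geom_real_comp_part} and Lemma \ref{lem:deform_geom_real}).

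Next, for $(\ref{item:cluster_sc3}) \Leftrightarrow (\ref{item:cluster_sc2})$ I would use Definition \ref{def:successor} and Lemma \ref{lem:arcs_successors}: the successor relation on the positively oriented $1$-edges of $K$ is exactly the "next edge along $\partial K_\epsilon$" relation, in the sense that $E'_\epsilon$ succeeds $E_\epsilon$ on $\partial K_\epsilon$. Since every $1$-edge of $K$ has a unique successor and a unique predecessor, following successors partitions the $1$-edges of $K$ into finitely many disjoint cyclic chains, and each such chain is a boundary circuit; under the geometric correspondence these are precisely the boundary components of the surface $K_\epsilon$ (each $E_\epsilon$ lying on exactly one boundary component of $K_\epsilon$, traversed positively). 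A compact connected surface-with-boundary embedded in $S^2$ is a Jordan domain if and only if it has exactly one boundary component, so $K_\epsilon$ is a Jordan domain $\Leftrightarrow$ $K$ has a single boundary circuit. Combining the two equivalences closes the cycle.

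Finally, for the "in this case" clause: if $K$ is a tree with $m$ tiles, then by Lemma \ref{lem:whiteXconn}-type counting each $1$-tile contributes $k$ $1$-edges and the $1$-edges of distinct tiles of $K$ are distinct (they are $1$-edges of $S^2$, and a $1$-edge lies in exactly one white $1$-tile, cf.\ the discussion after \eqref{eq:fnXntoXhomeo}), so $K$ contains exactly $km$ $1$-edges. Since the single boundary circuit is a chain of successors in which, by Definition \ref{def:gamma1_connection}, no $1$-edge appears twice, and since every $1$-edge of $K$ has a successor and predecessor and hence lies on some boundary circuit, the unique boundary circuit must contain \emph{all} $km$ $1$-edges of $K$, each exactly once; that is, it is an Eulerian circuit in $K$.

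The main obstacle I expect is the careful verification that $K_\epsilon$ really is a surface (with boundary) whose topology is faithfully captured by the connection graph and whose boundary components correspond bijectively and orientation-compatibly to boundary circuits. This is the step where one must be precise about how the pieces $X_\epsilon$ and the disks $h_v^{-1}(\Dbar(\pi_w\cup\pi_b))$ fit together inside each $U(v)$, using Lemma \ref{lem:geom_real_comp_part} (which says $\overline D_k\cap \overline D_l$ is a single arc $g_m$ exactly when the blocks are adjacent) to rule out pathological identifications; once the local picture near each $1$-vertex is pinned down, the global statement is a standard gluing argument and the planarity / Jordan-domain dictionary (via Schoenflies, Theorem \ref{thm:Schoenflies_standard}) finishes it.
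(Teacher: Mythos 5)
Your proof is correct, but it takes a genuinely different route from the paper. The paper proves $(\ref{item:cluster_sc1})\Rightarrow(\ref{item:cluster_sc2})$ by induction along the construction of Corollary~\ref{cor:spanning_tree_ind}: it builds the tree one $1$-tile at a time and uses Lemma~\ref{lem:add_successor} to splice the $k$ new edges into the existing circuit, which produces the single Eulerian boundary circuit explicitly; it then gets $(\ref{item:cluster_sc2})\Rightarrow(\ref{item:cluster_sc3})$ by observing that $\partial K_\epsilon$ is assembled from the boundary circuits, and $(\ref{item:cluster_sc3})\Rightarrow(\ref{item:cluster_sc1})$ by the contrapositive, extracting from a cycle of tiles a Jordan curve in $\inte K_\epsilon$ that separates two pieces of $\partial K_\epsilon$. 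You instead treat $K_\epsilon$ as a regular neighborhood of the connection graph, get $(\ref{item:cluster_sc1})\Leftrightarrow(\ref{item:cluster_sc3})$ from the homotopy equivalence (tree $\Leftrightarrow$ simply connected $\Leftrightarrow$ planar disk), get $(\ref{item:cluster_sc3})\Leftrightarrow(\ref{item:cluster_sc2})$ by identifying boundary circuits with boundary components of the subsurface via Lemma~\ref{lem:arcs_successors} and counting boundary components of a planar surface, and get the Eulerian claim by noting the successor map is a permutation of the $km$ edges whose orbits are exactly the boundary circuits. Your version is more conceptual and makes the representation-independence of $(\ref{item:cluster_sc3})$ transparent; its cost is exactly the point you identify, namely a careful verification that the pieces $X_\epsilon$ and $h_v^{-1}(\overline D_l)$ glue along disjoint boundary arcs to form a surface with boundary (which Lemma~\ref{lem:geom_real_comp_part} does supply). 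The paper's inductive argument is more pedestrian but has the side benefit of producing the explicit splicing formula for boundary circuits that is reused later (Lemma~\ref{lem:add_tree_boundary}), which a purely homological argument does not give.
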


\begin{proof}
  Assume without loss of generality that the cluster $K$ is white.

  $(\ref{item:cluster_sc1}) \Rightarrow (\ref{item:cluster_sc2})$
  Recall from Corollary \ref{cor:spanning_tree_ind} that every
  tree can be obtained inductively by adding more $1$-tiles
  to one cluster in the connection graph. Start with a white tile
  graph that is totally disconnected, meaning no two white $1$-tiles
  are connected (at any $1$-vertex). Consider one white $1$-tile $X_0$ 
  and a $1$-edge $E_0\subset X_0$. Clearly $E_0$ is contained in an
  Eulerian circuit in $X_0$ of length $k$ (containing
  all $1$-edges in $\partial X_0$). 
  
  \smallskip
  Let the white connection graph be given such that all clusters except
  one cluster $K_{j-1}$ contain a single $1$-tile, i.e., as in Lemma
  \ref{lem:change_conn}. Assume $E_0\subset K_{j-1}$. Furthermore we assume
  that $E_0, \dots E_{kj-1}$ is an Eulerian circuit in $K_{j-1}$,
  containing all $1$-edges in $K_{j-1}$, where $j$ is the number of
  $1$-tiles in $K_{j-1}$. 

  Add a $1$-tile $X$ to $K_{j-1}$ at a $1$-vertex $v\in K_{j-1}$ as in
  Lemma 
  \ref{lem:change_conn} to form a new component $K_j$. The above
  procedure then yields as a path 
  $$E_0,\dots, E_i, E^X_1,\dots, E^X_k,E_{i+1},\dots ,E_{kj-1},$$ 
  see Lemma \ref{lem:add_successor}.
  Here $E^X_1,\dots E^X_k$ are the $1$-edges in $X$, positively
  oriented, starting at $v$. 

  This is an Eulerian circuit in $K_j$. The
  construction ends 
  when $K=K_j$. Since the constructed circuit contains all $1$-edges
  in $K$ there is only a single boundary circuit.

  \medskip
  $(\ref{item:cluster_sc2})\Rightarrow (\ref{item:cluster_sc3})$ 
  Consider a neighborhood $U$ of a $1$-vertex $v\in K$ as in
  Definition \ref{def:conn_geom_repres}.
  The boundary of $K_{\epsilon}$ is constructed from boundary circuits
  by replacing $E_j,E_{j+1}\cap U$ by $h^{-1}(g_m)$. Thus $\partial
  K_\epsilon$ is a single Jordan curve.

%   \medskip
%   $(\ref{item:cluster_sc3}) \Rightarrow (\ref{item:cluster_sc2})$
%   The order of $1$-edges along $\partial K_\epsilon$ agrees with the
%   order of $1$-edges as boundary circuits. Thus there is only a single
%   boundary circuit.

  \medskip
  $(\ref{item:cluster_sc3}) \Rightarrow (\ref{item:cluster_sc1})$
  Assume $K$ is not a tree. Then there exists a circuit in $K$. This
  means there are $1$-tiles $X_0, \dots, X_{N-1}$
  in $K$ such that $X_j$ is connected to $X_{j+1}$ at a
  $1$-vertex $v_j$ (indices $\bmod N$), where all $1$-vertices $v_j$ are
  distinct. Then in the interior of any geometric representation
  $K_\epsilon$ we can 
  find a Jordan curve following this circuit (connecting
  $X_{0,\epsilon}$ to $X_{1,\epsilon}$ at $v_{0,\epsilon}$ and so
  on). This Jordan curve divides $K_\epsilon$ into two
  components. Note that both components contain boundary of
  $K_\epsilon$, namely the (geometric representations of the) two arcs
  on $\partial X_j$ between $v_{j-1},v_j$ lie in different components. Thus
  $K_\epsilon$ is not a Jordan domain.  

\end{proof}

% \begin{proof}
%   Recall from Corollary \ref{cor:spanning_tree_ind} that every
%   spanning tree can be obtained inductively by adding more $1$-tiles
%   to one component in the white connection graph. Start with a white tile
%   graph that is totally disconnected, meaning no two white $1$-tiles
%   are connected (at any $1$-vertex). Consider one white $1$-tile $X_0$,
%   and a $1$-edge $E_0\subset X_0$. Applying the above procedure (with
%   this $E_0$) yields a Eulerian circuit of length $k$ which contains
%   all $1$-edges in $\partial X_0$. 
  
%   \smallskip
%   Let the white connection graph be given such that all components except
%   one component $C$ contain a single $1$-tile, i.e., as in Lemma
%   \ref{lem:change_conn}. Assume $E_0\subset C$. Furthermore we assume
%   that $E_0, \dots E_j$ is an Eulerian circuit constructed as above;
%   containing all $1$-edges in $C$, where $j$ is the number of
%   $1$-tiles in $C$. 

%   Add a $1$-tile $X$ to $C$ at a $1$-vertex $v\in C$ as in Lemma
%   \ref{lem:change_conn} to form a new component $C'$. The above
%   procedure then yields as a path 
%   $$E_0,\dots, E_i, E^X_1,\dots, E^X_k,E_{i+1},\dots ,E_j.$$ 
  
%   Here $E^X_1,\dots E^X_k$ are the $1$-edges in $X$, positively
%   oriented. This is an Eulerian circuit in $C'$. The construction ends
%   when the component $C$ contains all ($d$) white $1$-tiles. 

% \end{proof}

We record the following, which is an easy corollary.
\begin{lemma}[Boundary circuit of added trees]
  \label{lem:add_tree_boundary}
  Consider trees $K,K'$ with boundary circuits
  $\EC = E_0,\dots,E_{N-1}$, $\EC'=D_0,\dots,D_{M-1}$. 
  Assume we
  can add them at a $1$-vertex $v$ as in Section
  \ref{sec:adding-clusters} to form a tree $\widetilde{K}$.
  Then the boundary
  circuit $\widetilde{\EC}$ of $\widetilde{K}$ is 
  \begin{equation*}
    E_0,\dots, E_i,
    D_{j+1},\dots,D_{M-1},D_0,\dots,D_j,E_{i+1},\dots,E_{N-1}.  
  \end{equation*}
\end{lemma}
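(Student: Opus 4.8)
The plan is to prove Lemma~\ref{lem:add_tree_boundary} as a direct consequence of Lemma~\ref{lem:add_successor} together with the characterization of a boundary circuit via succeeding $1$-edges (Definition~\ref{def:gamma1_connection}) and the uniqueness of the boundary circuit for a tree (Lemma~\ref{lem:cluster_sc}). First I would recall the setup: by Lemma~\ref{lem:add_cluster}, since we can add $K'$ to $K$ at the $1$-vertex $v$, there are blocks $b\in\pi_w(v)$ (with indices of $1$-tiles in $K$) and $b'\in\pi_w(v)$ (with indices of $1$-tiles in $K'$), both adjacent to a common block $c\in\pi_b(v)$, and the new connection at $v$ is obtained by merging $b,b'$ into $\tilde b=b\cup b'$ and splitting $c$ into $\tilde c,\tilde c'$ as in \eqref{eq:def_tildec}. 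The new cluster $\widetilde K$ is a tree by Lemma~\ref{lem:add_cluster}, so by Lemma~\ref{lem:cluster_sc} it has a unique boundary circuit; it therefore suffices to verify that the concatenated sequence displayed in the statement is a valid boundary circuit of $\widetilde K$, i.e., that it is a closed chain of positively oriented $1$-edges of $\widetilde K$ in which each $1$-edge appears exactly once and consecutive edges are successors of one another.

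Next I would pin down which indices $i,j$ appear in the statement. By Lemma~\ref{lem:add_successor} there are uniquely determined succeeding $1$-edges $E,E'\subset K$ and $D,D'\subset K'$ at $v$ such that, in the new connection, $E$ is succeeded by $D'$ and $D$ is succeeded by $E'$. Since $\EC=E_0,\dots,E_{N-1}$ is the (unique) boundary circuit of $K$, there is an index $i$ with $E=E_i$; because $E'$ was the successor of $E$ in the \emph{old} connection at $v$ (the only place the connection changed) we have $E'=E_{i+1}$. Likewise $D=D_j$ and $D'=D_{j+1}$ for a unique index $j$ in $\EC'=D_0,\dots,D_{M-1}$. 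All other successor relations among $1$-edges of $K$ (resp.\ $K'$) are unchanged, because the connection at every $1$-vertex other than $v$ is untouched, and at $v$ the successor relation among the $1$-edges of $K$ alone, and among those of $K'$ alone, is still governed by the same adjacencies (Lemma~\ref{lem:arcs_successors}) — only the two adjacencies $b\!-\!c$ and $b'\!-\!c$ have been replaced by $\tilde b\!-\!\tilde c$ and $\tilde b\!-\!\tilde c'$, which is exactly the swap $E\mapsto D'$, $D\mapsto E'$.

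Then I would check the concatenation
\begin{equation*}
  E_0,\dots,E_i,D_{j+1},\dots,D_{M-1},D_0,\dots,D_j,E_{i+1},\dots,E_{N-1}
\end{equation*}
directly. Each $1$-edge of $\widetilde K$ is a $1$-edge of $K$ or of $K'$, each appears exactly once in $\EC$ or $\EC'$, and each appears exactly once in the concatenation, so the ``no repetition / contains all'' condition holds (the count $km$ then follows from Lemma~\ref{lem:cluster_sc}, with $m$ the number of $1$-tiles in $\widetilde K$). For the successor condition: within the block $E_0,\dots,E_i$ consecutive edges are successors since $\EC$ is a circuit; at the junction $E_i,D_{j+1}$ we have $E_i=E$ succeeded by $D'=D_{j+1}$ by Lemma~\ref{lem:add_successor}; within $D_{j+1},\dots,D_{M-1},D_0,\dots,D_j$ consecutive edges are successors since $\EC'$ is a (cyclic) circuit; at the junction $D_j,E_{i+1}$ we have $D_j=D$ succeeded by $E'=E_{i+1}$ by Lemma~\ref{lem:add_successor}; within $E_{i+1},\dots,E_{N-1}$ and at the wrap-around $E_{N-1}\to E_0$ consecutive edges are again successors in $\EC$. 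Hence the concatenation is a boundary circuit of $\widetilde K$, and by uniqueness (Lemma~\ref{lem:cluster_sc}) it \emph{is} $\widetilde\EC$. The only mildly delicate point — the step I expect to need the most care — is the bookkeeping that the successor relations inside $K$ and inside $K'$ are genuinely unchanged after the merge at $v$, i.e., that splitting $c$ into $\tilde c,\tilde c'$ does not disturb any adjacency other than the two being rewired; this is immediate from the explicit description \eqref{eq:def_tildec} and Lemma~\ref{lem:arcs_successors}, so the whole argument is essentially a verification and can be stated briefly.
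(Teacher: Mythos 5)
Your proposal is correct and takes essentially the same route as the paper: the paper's proof consists of the single sentence that the claim is clear from Lemma~\ref{lem:add_successor}, with $E_i,E_{i+1}\subset K$ and $D_j,D_{j+1}\subset K'$ being the succeeding $1$-edges produced by that lemma. Your write-up merely makes explicit the bookkeeping (unchanged successor relations away from the rewired adjacencies, plus uniqueness of the boundary circuit of a tree from Lemma~\ref{lem:cluster_sc}) that the paper leaves implicit.
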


\begin{proof}
  This is clear from Lemma \ref{lem:add_successor}, where
  $E_i,E_i+1\subset K$ and $D_j,D_{j+1}\subset K'$ are the succeeding
  $1$-edges associated with adding $K$ to $K'$.   
\end{proof}

We next show that adding a tree $K'$ that ``does not contain a
postcritical point'' to another tree $K$ does not change the
``homotopy type'' of $\partial K_{\epsilon}$. 
\begin{definition}[Trivial tree]
  \label{def:trivial_tree}
  A cluster $K'$ that is a tree is called \defn{trivial} if a (and
  thus any) geometric representation $K'_\epsilon$ does not contain a
  postcritical point. Equivalently the boundary circuit of $K'$ does
  not contain the \defn{marked successors} $E=E(p),E'=E'(p)$ at $p$
  for any postcritical point $p$ (see Corollary \ref{cor:marking}).     
\end{definition}

\begin{lemma}[Adding a trivial tree does not change homotopy type] 
  \label{lem:add_trivial_tree}
  Consider a cluster $K$ that is a tree, and a trivial tree $K'$ as
  above. Assume it is possible to add $K'$ to $K$ at some $1$-vertex
  $v$ as in Lemma \ref{lem:add_cluster}, to obtain the tree
  $\widetilde{K}$. 

  Then if $\partial K_\epsilon$ is isotopic to a Jordan
  curve $\CC$ rel.\ $\post$,
  then $\partial\widetilde{K}_\epsilon$ is isotopic to $\CC$
  rel.\ $\post$ as well  (for any geometric representations $K_\epsilon,
  \widetilde{K}_\epsilon$ of $K, \widetilde{K}$). 
\end{lemma}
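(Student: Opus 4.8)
The plan is to show that adding a trivial tree $K'$ to $K$ can be realized as a deformation of $\partial K_\epsilon$ supported away from the postcritical points, so that the isotopy class rel.\ $\post$ is preserved. First I would recall from Lemma~\ref{lem:add_tree_boundary} the explicit description of the boundary circuit $\widetilde{\EC}$ of $\widetilde{K}$: it is obtained from the boundary circuit $\EC$ of $K$ by inserting, at one place, the entire boundary circuit of $K'$ (suitably cyclically rotated). Geometrically, $\widetilde{K}_\epsilon$ is obtained from $K_\epsilon$ by attaching $K'_\epsilon$ along a single arc $g_m$ in the neighborhood $U(v)$ of the $1$-vertex $v$ where the two clusters are glued. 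Since $K'$ is trivial, $K'_\epsilon$ contains no postcritical point (Definition~\ref{def:trivial_tree}), and by Lemma~\ref{lem:p_on_boundary} the marked succeeding $1$-edges at every postcritical point $p$ lie on $\partial K_\epsilon$ and not on $\partial K'_\epsilon$; in particular the gluing takes place in a region disjoint from $\post$.

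The key step is then a local isotopy argument. The symmetric difference between $\partial K_\epsilon$ and $\partial \widetilde{K}_\epsilon$ is contained in $K'_\epsilon \cup U(v)$, which is a closed topological disk (as $K'$ is a tree, $K'_\epsilon$ is a Jordan domain by Lemma~\ref{lem:cluster_sc}), and this disk contains no postcritical point. Both curves enter and leave this disk at the same two points (the endpoints of the arc along which the gluing occurs), so inside the disk they are two Jordan arcs with common endpoints. By the isotopic Schönflies theorem (Theorem~\ref{thm:Schoenflies}) there is an isotopy of this disk, fixing its boundary pointwise (hence extendable by the identity to all of $S^2$), that carries one arc to the other. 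Since the disk misses $\post$, this isotopy is rel.\ $\post$. Composing it with the given isotopy from $\partial K_\epsilon$ to $\CC$ (rel.\ $\post$) yields an isotopy from $\partial \widetilde{K}_\epsilon$ to $\CC$ rel.\ $\post$, as desired. For geometric representations other than the specified ones, one uses that any two geometric representations of the same cluster are isotopic rel.\ $\post$ by Lemma~\ref{lem:deform_geom_real} (applied vertex by vertex), so the conclusion does not depend on the choices.

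The main obstacle is verifying cleanly that the region where the two boundary curves differ is genuinely a \emph{disk disjoint from $\post$}, and that the two curves really do agree outside it and share endpoints on its boundary --- in other words, translating the combinatorial statement of Lemma~\ref{lem:add_tree_boundary} into the precise topological picture needed to invoke Theorem~\ref{thm:Schoenflies}. One must be slightly careful that the attaching arc lies in the ``blow-up'' neighborhood $U(v)$ and that the pieces of $\partial K_\epsilon$ and $\partial \widetilde{K}_\epsilon$ outside $K'_\epsilon \cup U(v)$ literally coincide (not merely up to isotopy); this follows from the construction of geometric representations in Definition~\ref{def:conn_geom_repres}, since changing the connection only at $v$ changes the picture only inside $U(v)$, and $K'_\epsilon$ is the part freshly attached there. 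Once this identification is in place, the Schönflies argument is routine.
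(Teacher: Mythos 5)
Your overall strategy is the paper's: localize the difference between $\partial K_\epsilon$ and $\partial\widetilde{K}_\epsilon$ inside a Jordan domain disjoint from $\post$, and apply the isotopic Sch\"onflies theorem (Theorem \ref{thm:Schoenflies}) rel.\ the boundary of that domain, so that the resulting isotopy is automatically rel.\ $\post$. The difference lies in the choice of domain, and your choice creates a gap.

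You take the domain to be $K'_\epsilon\cup U(v)$ and assert that it contains no postcritical point, on the grounds that $K'$ is trivial and ``the gluing takes place in a region disjoint from $\post$.'' That last claim is not justified: the lemma allows $v$ to be \emph{any} $1$-vertex, including a postcritical point (and in the final step of Section \ref{sec:connecting-trees} the remaining secondary trees are attached ``arbitrarily,'' so this case does occur). If $v=p\in\post$, then $U(v)\ni p$ and your disk contains a postcritical point, so an isotopy supported on it is not rel.\ $\post$; worse, $p$ actually lies \emph{on} both boundary curves (it is on the geometric representation of the marked arc at $p$), so one would additionally have to check that the two arcs pass through $p$ compatibly, which is exactly what the marking bookkeeping of Definition \ref{def:add_marking} is for. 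The paper's proof sidesteps all of this by taking $V$ to be a Jordan neighborhood of $K'_\epsilon\setminus U(v)$ --- a set that does not contain $v$ --- explicitly required to miss $\post$ and to meet $\partial\widetilde{K}_\epsilon$ in exactly two points $w_1,w_2\in U(v)$; the Sch\"onflies isotopy rel.\ $\partial V$ then pushes the detour through $K'_\epsilon$ back into $U(v)$. A secondary inaccuracy: $\partial K_\epsilon$ in general enters and leaves $U(v)$ several times (once for each pair of succeeding $1$-edges of $K$ at $v$), so ``both curves enter and leave this disk at the same two points'' is not literally true for your disk either; the two curves differ on exactly one pair of arcs with common endpoints, but isolating those arcs inside a disk avoiding $\post$ requires the more careful choice of $V$.
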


\begin{proof}
  Let $U=U(v)$ be as in Definition \ref{def:conn_geom_repres}. We
  consider a neighborhood $V$ of ``$K'_\epsilon\subset
  \widetilde{K}_\epsilon$''. More precisely, $V$ satisfies the
  following. 
  \begin{itemize}
  \item $V$ is a Jordan domain.
  \item $V$ contains no postcritical point.
  \item $V$ is a neighborhood of $K'_\epsilon\setminus U$.
  \item $\partial V$ intersects $\partial \widetilde{K}_\epsilon$
    exactly twice, where $\partial V\cap\partial
    \widetilde{K}_\epsilon =\{w_1,w_2\} \subset U$.  
  \end{itemize}
  The arc $\partial \widetilde{K}_\epsilon\setminus \{w_1,w_2\}$
  contained in $V$ 
  %(i.e., the component of $\partial
  %\widetilde{K}_\epsilon\setminus \{w_1,w_2\}$ that does not contain
  %$p$) 
  is now deformed to one contained in $U$ by an 
  isotopy rel.\ $\partial V$ as in Theorem \ref{thm:Schoenflies}. This
  isotopy deforms $\widetilde{K}_\epsilon$ to $K_\epsilon$.  

%   We give some more details, notation is the same as in the proof of
%   Lemma \ref{lem:add_cluster}, see also the proof of Lemma
%   \ref{lem:geom_real_comp_part}. One of the points $w_1,w_2$ is on the
%   arc $g$
%   from $e(c_j+1)$ to $e(c_{i+1})$ the other on the arc $g'$ from
%   $e(c_i+1)$ 
%   to $e(c_{j+1})$. 

%   After applying the above isotopy one obtains an arc
%   from $e(c_i+1)$ to $e(c_{i+1})$. This is the only boundary arc from
%   $K_\epsilon$ not present before.  
\end{proof}

\section{Construction of $H^0$}
\label{sec:construction-h0}

The $0$-th pseudo-isotopy $H^0$ as required in Section
\ref{sec:appr-gn} is
constructed here, thus the first approximation $\gamma^1$ of the Peano
curve. 

\smallskip
Consider two oriented Jordan curves $\CC,\CC'\subset S^2$. We say that $\CC,\CC'$ are
\defn{orientation preserving isotopic rel.\ $A$} if there is an
isotopy $H\colon S^2\times [0,1] \to S^2$
rel.\ $A$, with $H_0=\id_{S^2}$, such that $H_1$ maps
$\CC$ orientation preserving to $\CC'$. 

\smallskip
We construct a connection of $1$-tiles 
with the following properties. 
\begin{definition}(Properties of connections)
  \label{def:prop_conn}
  \phantom{xblabl}

  \begin{enumerate}[(C 1)]
  \item 
    \label{item:prop_conn_1}
    The associated white connection graph (Section
    \ref{sec:tile-graphs}) is a spanning tree $K$. 
  \item 
    \label{item:prop_conn_2}
    The Jordan curve $\partial K_\epsilon$ is orientation preserving
    isotopic to $\CC=\gamma^0$ rel.\ 
    $\post$. Here $K_\epsilon$ is a geometric representation of $K$,
    see Lemma \ref{lem:cluster_sc}. 
  \end{enumerate}
\end{definition}
Here $\partial K_\epsilon$ is positively oriented as boundary of
$K_\epsilon$, recall that $\CC$ is positively oriented as boundary of
the white $0$-tile $X^0_w$. 

\begin{lemma}
  \label{lem:H0epsH0}
  A connection of $1$-tiles satisfies properties
  {\upshape (C \ref{item:prop_conn_1})}, {\upshape (C
    \ref{item:prop_conn_2})} if and only if there exists a
  pseudo-isotopy $H^0$ as in Definition \ref{def:pseudo-isotopy-h0}. 
\end{lemma}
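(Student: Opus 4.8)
The statement is an equivalence between the existence of a combinatorial object (a connection of $1$-tiles satisfying (C~\ref{item:prop_conn_1}) and (C~\ref{item:prop_conn_2})) and the existence of a topological object (the pseudo-isotopy $H^0$ of Definition~\ref{def:pseudo-isotopy-h0}). I would prove the two implications separately. For the direction ``connection $\Rightarrow H^0$'', start from a connection satisfying (C~\ref{item:prop_conn_1}) and (C~\ref{item:prop_conn_2}). By Lemma~\ref{lem:cluster_sc}, the spanning tree $K$ has a single boundary circuit, which is an Eulerian circuit through all $kd$ $1$-edges, and its geometric representation $K_\epsilon$ is a Jordan domain whose boundary $\partial K_\epsilon$ is positively oriented isotopic to $\CC=\gamma^0$ rel.\ $\post$ by (C~\ref{item:prop_conn_2}). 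The plan is to take that isotopy $G_t$ rel.\ $\post$ carrying $\CC$ to $\partial K_\epsilon$ and then \emph{collapse} the geometric representation back down: in each blown-up neighborhood $U(v)=h_v^{-1}(\Dbar)$ apply (the $h_v$-conjugate of) the pseudo-isotopy of Lemma~\ref{lem:deform_geom_real}, which deforms $\Dbar(\pi_w\cup\pi_b)$ to sectors, freezes outside $B_\epsilon$, moves only one point of each arc $g_m$ to $0$, and is rel.\ $\partial\Dbar\cup\{0\}$. Concatenating the isotopy $G_t$ with this collapsing pseudo-isotopy (performed simultaneously and disjointly over all $1$-vertices $v$, using that the $U(v)$ have disjoint closures) yields $H^0$. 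One checks the five properties of Definition~\ref{def:pseudo-isotopy-h0}: ($H^0$~\ref{item:H0_1}) because $G_t$ is rel.\ $\post$ and the local pieces are rel.\ $\{0\}=\{v\}$ for non-postcritical $v$ and send the marked arc (hence $p$) to $0$ at postcritical $v=p$; ($H^0$~\ref{item:H0_2}) because the boundary circuit sweeps out exactly $\bigcup\E^1$; ($H^0$~\ref{item:H0_3}) from the ``freezes outside $B_\epsilon$'' clause of Lemma~\ref{lem:deform_geom_real} together with Lemma~\ref{lem:Vepspre} identifying $V^1_\epsilon$ with $\bigcup_v h_v^{-1}(B_\epsilon)$; ($H^0$~\ref{item:H0_4}) because only one point of each $g_m$, hence finitely many points of $\CC$, is sent to each $1$-vertex; and ($H^0$~\ref{item:H0_5}), equivalently ($H^0$~\ref{item:H0_5}') via Lemma~\ref{lem:orientation-d-fold-cover}, because $K$ is a \emph{white} cluster so every $1$-edge on $\partial K_\epsilon$ is positively oriented as boundary of the white tile containing it.

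For the converse ``$H^0\Rightarrow$ connection'', start from a pseudo-isotopy $H^0$ as in Definition~\ref{def:pseudo-isotopy-h0} and read off a connection. By Lemma~\ref{lem:H0maps_edges} the circuit $\gamma^1=H^0_1(\CC)$, with the ordering and orientation induced from $\gamma^0$, is an Eulerian circuit in $\bigcup\E^1$; by ($H^0$~\ref{item:H0_5}) / Lemma~\ref{lem:orientation-d-fold-cover} every $1$-edge in it is positively oriented. At each $1$-vertex $v$, the ``succeeds'' relation of $\gamma^1$ on the $1$-edges at $v$ determines, via Lemma~\ref{lem:arcs_successors} (read in reverse: a pair of succeeding $1$-edges $E\subset X_i$, $E'\subset X_j$ forces $i,j$ into a common block of $\pi_w(v)$ with $i+1,j-1$ into a common block of the complementary $\pi_b(v)$), a unique cnc-partition $\pi_w(v)\cup\pi_b(v)$; for $v=p$ postcritical the marked succeeding pair at $p$ — the pair of $1$-edges of $\gamma^1$ through the image point $H^0_1(p)=p$ — supplies the marking. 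The fact that these local data are mutually consistent and that ``$E'$ is the successor of $E$ in $\gamma^1$'' coincides with ``successor in the resulting connection'' is exactly the content of Lemma~\ref{lem:arcs_successors} plus Definition~\ref{def:successor}; hence the single boundary circuit of the resulting white cluster $K$ is $\gamma^1$. Since $\gamma^1$ is a \emph{single} circuit containing every $1$-edge (Eulerian), Lemma~\ref{lem:cluster_sc} gives that $K$ is a tree containing all white $1$-tiles, i.e.\ a spanning tree: (C~\ref{item:prop_conn_1}). For (C~\ref{item:prop_conn_2}), $H^0$ itself is an isotopy on $[0,1)$ rel.\ $\post$; restricting to $[0,1-\epsilon]$ and using ($H^0$~\ref{item:H0_3}) and Lemma~\ref{lem:p_on_boundary}, the image $H^0_{1-\epsilon}(\CC)$ is a Jordan curve isotopic to $\CC$ rel.\ $\post$, and it is a geometric representation $\partial K_\epsilon$ of $\partial K$ (the freezing property ensures that near each $v$, $H^0_{1-\epsilon}(\CC)$ sits inside the blown-up neighborhood and realizes the connection geometrically, with $p$ on the marked arc by Lemma~\ref{lem:p_on_boundary}); orientation is preserved because $G_t=H^0_t$ starts at the identity.

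The main obstacle I expect is making rigorous the passage between the ``topological'' boundary $H^0_{1-\epsilon}(\CC)$ and the formal combinatorial object $\partial K_\epsilon$: one must pin down the blown-up neighborhoods $U(v)$ and homeomorphisms $h_v$ in (\ref{eq:defhv}), show that the curve $H^0_{1-\epsilon}(\CC)$ — which a priori is just some Jordan curve isotopic to $\CC$ and passing through $\bigcup\E^1$ in the right order — can be chosen (or is automatically, after a small further isotopy rel.\ $\post$) of the normal form ``$h_v^{-1}$ of disjoint arcs $g_m$'' inside each $U(v)$, and that this normal form is consistent across vertices. This is where Theorem~\ref{thm:Schoenflies} (isotopic Schönflies rel.\ endpoints) and Theorem~\ref{thm:Isotopy_rel_post} / Theorem~\ref{thm:isotopy2} do the real work: after normalizing the curve edge by edge so that $H^0_{1-\epsilon}(\CC)$ meets each $U(v)$ in standard arcs, the arcs $g_m$ cut the disk into the pieces $D_l$, Lemma~\ref{lem:geom_real_comp_part} identifies the resulting partition as a cnc-partition, and Lemma~\ref{lem:arcs_successors} matches the cyclic succession along $H^0_{1-\epsilon}(\CC)$ with the abstract successor relation — so the connection is well-defined and its white cluster is exactly the Jordan domain bounded by $H^0_{1-\epsilon}(\CC)$. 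The rest, including the explicit concatenation of isotopies in the forward direction and the verification of ($H^0$~\ref{item:H0_1})–($H^0$~\ref{item:H0_5}), is then routine given Lemmas~\ref{lem:Vepspre}, \ref{lem:deform_geom_real}, \ref{lem:cluster_sc}, \ref{lem:p_on_boundary} and Corollary~\ref{cor:marking}.
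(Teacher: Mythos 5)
Your proposal is correct and follows essentially the same route as the paper: the forward direction concatenates the orientation-preserving isotopy from (C~\ref{item:prop_conn_2}) with the local collapsing pseudo-isotopies of Lemma~\ref{lem:deform_geom_real} in the blown-up neighborhoods $U(v)$, and the converse reads the cnc-partitions off the successor relation of the Eulerian circuit $\gamma^1$, uses the Jordan curve $H^0_{1-\epsilon}(\CC)$ together with Lemma~\ref{lem:cluster_sc} to get the spanning tree, and uses Lemma~\ref{lem:orientation-d-fold-cover} to pass between ($H^0$~\ref{item:H0_5}) and the orientation statement. The only cosmetic difference is that the paper invokes Lemma~\ref{lem:prop_successor} where you invoke Lemma~\ref{lem:arcs_successors} for reconstructing the connection; these serve the same purpose here.
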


\begin{proof}
  ($\Rightarrow$)
  Concatenate an isotopy $\widetilde{H}$ rel.\ $\post$
  that deforms $\CC$ to $\partial 
  K_\epsilon$ (orientation preserving) with a pseudo-isotopy rel.\
  $\post$ that deforms $\partial K_\epsilon$ in a
  neighborhood $U(v)$ (as in (\ref{eq:defhv})) of each $1$-vertex as in
  Lemma~\ref{lem:deform_geom_real}. This yields a pseudo-isotopy rel.\
  $\post$ that clearly satisfies ($H^0$~\ref{item:H0_1}),
  ($H^0$~\ref{item:H0_2}), ($H^0$~\ref{item:H0_3}), and
  ($H^0$~\ref{item:H0_4}). Since $\widetilde{H}_1$ maps $\CC$ orientation
  preserving to $\partial K_\epsilon$, it follows that every $1$-edge
  in the first 
  approximation $\gamma^1$ (constructed via $H^0$ as in Section
  \ref{sec:euler-circ-gamm}) is positively oriented. It follows from
  Lemma~\ref{lem:orientation-d-fold-cover} that
  ($H^0$~\ref{item:H0_5}) is satisfied. 

  \smallskip
  ($\Leftarrow$)
  Let $\gamma^1=H^0_1(\gamma^0)$ be the Eulerian circuit constructed
  from $H^0$ as in Section~\ref{sec:euler-circ-gamm}. By
  Lemma~\ref{lem:prop_successor} we can reconstruct the connection at
  each $1$-vertex from
  $\gamma^1$. It is a cnc-partition by Lemma
  \ref{lem:geom_real_comp_part}. Since $\gamma^1$ contains all
  $1$-edges, all white $1$-tiles are connected. Furthermore
  $\gamma^1_\epsilon:=H^0_{1-\epsilon}(\gamma^0)$ is a Jordan curve,
  thus it follows from Lemma \ref{lem:cluster_sc} that the white
  connection graph is a spanning tree, 
  i.e., {\upshape (C \ref{item:prop_conn_1})}. Finally
  $\gamma^1_\epsilon$ is clearly isotopic to $\gamma^0$ rel.\ $\post$,
  from ($H^0$~\ref{item:H0_5}) and
  Lemma~\ref{lem:orientation-d-fold-cover} it follows that the
  orientation on $\gamma^1_\epsilon$ induced by $\CC$ and
  $H^0_{1-\epsilon}$ agrees with the orientation of
  $\gamma^1_\epsilon$ as boundary of (a geometric representation of
  the white spanning tree) $K_\epsilon$. Thus {\upshape (C
    \ref{item:prop_conn_2})} holds.   
\end{proof}

Let us note the following immediate consequence.
\begin{theorem}
  \label{prop:sufficient_Peano}
  Let $F\colon S^2\to S^2$ be an expanding Thurston map.
  The following two equivalent conditions are \emph{sufficient} for
  the existence of an invariant Peano curve $\gamma\colon S^1\to S^2$
  (onto) as in Theorem~\ref{thm:main}. 
  \begin{enumerate}
  \item There is a Jordan curve $\CC\supset \post$ and a
    pseudo-isotopy $H^0$ in Definition~\ref{def:pseudo-isotopy-h0}.
  \item There is a Jordan curve $\CC\supset \post$ and a connection of
    $1$-tiles satisfying the properties from
    Definition~\ref{def:prop_conn}. 
  \end{enumerate}
\end{theorem}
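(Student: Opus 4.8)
The plan is to read this off from what has already been established, so the ``proof'' amounts to citing the right statements in the right order. First I would observe that the equivalence of conditions (1) and (2) is nothing other than Lemma~\ref{lem:H0epsH0}. Hence it suffices to show that condition (1)---the existence, for a Jordan curve $\CC\supset\post$ and an iterate $F=f^n$, of a pseudo-isotopy $H^0$ as in Definition~\ref{def:pseudo-isotopy-h0}---is sufficient for the conclusion of Theorem~\ref{thm:main}.

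For this I would invoke the construction carried out in Sections~\ref{sec:appr-gn}--\ref{sec:constr-g}, recalling its structure. Starting from $H^0$, lift it by the iterates $F^n$ to pseudo-isotopies $H^n$ (Lemma~\ref{lem:lift_degenerate_isotopies}); these inherit the analogues of properties ($H^0$~1)--($H^0$~5) (Lemma~\ref{lem:Hn}, with Lemma~\ref{lem:covergn} supplying the last one). Build the Eulerian circuits $\gamma^n$ inductively by $\gamma^{n+1}(t)=H^n_1(\gamma^n(t))$ (Definition~\ref{def:gamma_n}), parametrize them via the lengths $l(E^n_j)=d^{-n}l_j$ obtained from the Perron--Frobenius eigenvector of the edge-incidence matrix $M$ (Section~\ref{sec:length-edges-s1}, Lemma~\ref{lem:l_prop}), and set $\gamma:=\lim_n\gamma^n$, which exists and is continuous because the convergence is uniform by Lemma~\ref{lem:paragn}~(\ref{item:paragn_3}). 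Then $\gamma$ is onto, since its image contains $\bigcup_n\V^n=\bigcup_n F^{-n}(\post)$, which is dense in $S^2$ as $\mesh F^{-n}(\CC)\to0$; and $F\circ\gamma=\gamma\circ\phi$ for $\phi(t)=dt+\theta_0\bmod 1$ holds on the dense set $\{\alpha^n_j\}\subset\R/\Z$ by Lemma~\ref{lem:paragn}~(\ref{item:paragn_1}),(\ref{item:paragn_2}), hence on all of $\R/\Z$ by continuity. Composing $\gamma$ with a rotation as in Lemma~\ref{lem:theorem1reduction} turns this into a semi-conjugacy of $z\mapsto z^d$ to $F$; the accompanying homotopy $\Gamma$ deforming the equator $S^1\subset S^2$ to $\gamma$ is the one assembled in Section~\ref{sec:glimit_jordan} from the $H^n$ via the isotopic Sch\"{o}nflies theorem. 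This gives Theorem~\ref{thm:main}.

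There is in fact no obstacle remaining---the result is flagged as an ``immediate consequence'' precisely because every ingredient is already in hand. Were one proving the theorem from scratch rather than from the accumulated lemmas, the delicate point would be the compatibility of the parametrizations of the $\gamma^n$: one must verify that each subdivision point $\alpha^n_j$ of $\R/\Z$ is still a subdivision point at every later level and that $\gamma^m(\alpha^n_j)=\gamma^n(\alpha^n_j)$, so that $n$-vertices acquire well-defined preimages on $S^1$ and the semi-conjugacy passes to the limit. This is exactly Lemma~\ref{lem:paragn}, which may be used as stated, so no new work is needed.
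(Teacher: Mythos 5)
Your proposal is correct and is exactly the paper's argument: the equivalence of (1) and (2) is Lemma~\ref{lem:H0epsH0}, and sufficiency follows because Sections~\ref{sec:appr-gn}--\ref{sec:constr-g} construct $\gamma$ from $H^0$ alone, without using how $H^0$ was obtained. The paper states this as an immediate consequence for precisely the reasons you give.
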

In \cite{exp_quotients} it will be shown that the same conditions are
sufficient to ensure that $F$ arises as a mating. Furthermore the
poynomials $p_1, p_2$ into which $F$ unmates, may then be obtained by
an explcit algorithm. More precisely the \emph{critical portraits} of
$p_1, p_2$ may be obtained from the vector $l$ considered in
Section~\ref{sec:length-edges-s1}, see \cite{unmating}.

\medskip
The proof of Theorem~\ref{thm:main} will be finished by 
constructing the white connection as in Definition~\ref{def:prop_conn}.

\smallskip
Let us first note the following, which is
an immediate consequence of the proof of the previous lemma. Assume a
connection of $1$-tiles satisfying {\upshape (C
  \ref{item:prop_conn_1})}, {\upshape (C \ref{item:prop_conn_2})} is  
given. Let $H^0$ be a corresponding pseudo-isotopy from Lemma
\ref{lem:H0epsH0}. 

\begin{lemma}
  \label{lem:g1bdK}
  The first approximation $\gamma^1$ (viewed as an Eulerian circuit)
  constructed from $H^0$ as in Section \ref{sec:euler-circ-gamm} is
  equal to the boundary circuit of the (white) spanning tree $K$ 
  (see Lemma \ref{lem:cluster_sc}).   
\end{lemma}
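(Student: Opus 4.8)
\textbf{Proof plan for Lemma~\ref{lem:g1bdK}.}
The plan is to trace through the two constructions of $\gamma^1$ — on the one hand, $\gamma^1 = H^0_1(\gamma^0)$ as the Eulerian circuit produced in Section~\ref{sec:euler-circ-gamm}; on the other hand, the boundary circuit of the white spanning tree $K$ as in Definition~\ref{def:gamma1_connection} — and to check that they produce the same cyclically-ordered sequence of positively oriented $1$-edges. Both are circuits of all $kd$ $1$-edges (Lemma~\ref{lem:H0maps_edges} for the first, Lemma~\ref{lem:cluster_sc} for the second, since $K$ is a spanning tree hence has a single boundary circuit which is Eulerian), and both consist of positively oriented $1$-edges (Corollary~\ref{cor:gamman_po} for the first, Definition~\ref{def:gamma1_connection} for the second). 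So it suffices to show that the two circuits induce the same successor relation on $1$-edges: that is, for each $1$-edge $E$ terminating at a $1$-vertex $v$, the $1$-edge $E'$ succeeding $E$ in $\gamma^1 = H^0_1(\gamma^0)$ equals the $1$-edge $E'$ succeeding $E$ at $v$ in the boundary circuit of $K$ (Definition~\ref{def:successor}).

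The key step is the following identification. By the construction in the proof of Lemma~\ref{lem:H0epsH0} (direction $\Rightarrow$), the pseudo-isotopy $H^0$ is obtained by first applying an isotopy $\widetilde{H}$ rel.\ $\post$ that carries $\CC = \gamma^0$ orientation-preservingly onto $\partial K_\epsilon$, and then applying the pseudo-isotopy of Lemma~\ref{lem:deform_geom_real} in each blow-up neighborhood $U(v)$ that collapses $\partial K_\epsilon$ down to $\bigcup \E^1$ by sending (a single point of) each arc $g_m$ to the $1$-vertex $v$. First I would observe that, since $\widetilde{H}_1$ is an orientation-preserving homeomorphism $\CC \to \partial K_\epsilon$, the Eulerian circuit obtained by pushing $\gamma^0$ forward under $\widetilde{H}_1$ is precisely the positively oriented traversal of $\partial K_\epsilon$. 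Next I would note that the second stage of $H^0$ acts, near each $1$-vertex $v$, exactly as in Lemma~\ref{lem:deform_geom_real}: it collapses the geometric representation $\Dbar(\pi_w(v)\cup\pi_b(v))$ to sectors, which in terms of the boundary curve means that consecutive edge-arcs $E_\epsilon, E'_\epsilon$ on $\partial K_\epsilon$ that flank a white sector across the arc $g_m$ get glued at their common limit point, namely $v$. By Lemma~\ref{lem:arcs_successors}, $E'_\epsilon$ succeeds $E_\epsilon$ on $\partial K_\epsilon$ (across a positively oriented $g_m$) if and only if $E'$ is the successor of $E$ at $v$ in the sense of Definition~\ref{def:successor}. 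Hence the circuit order on $1$-edges induced by $H^0_1(\gamma^0)$ coincides with the successor order defining the boundary circuit of $K$.

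Putting these together: the positively oriented traversal of $\partial K_\epsilon$ visits the edge-arcs $E_{0,\epsilon}, E_{1,\epsilon}, \dots$ in exactly the order in which $E_0, E_1, \dots$ form the boundary circuit of $K$ (each $E_{j+1}$ succeeding $E_j$); the collapsing second stage of $H^0$ fixes this order and merely identifies the separating arcs $g_m$ with the appropriate $1$-vertices, producing $\gamma^1 = H^0_1(\gamma^0) = E_0, E_1, \dots, E_{kd-1}$ as an Eulerian circuit; and this is literally the boundary circuit of $K$. Since cyclic permutation does not change a circuit and the orientations agree by (C~\ref{item:prop_conn_2}), the two are equal. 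I expect the main obstacle to be purely bookkeeping: making precise the claim that the second-stage pseudo-isotopy of Lemma~\ref{lem:deform_geom_real} translates, on the boundary curve, into exactly the arc-gluing described by the successor relation of Lemma~\ref{lem:arcs_successors} — i.e.\ matching up the ``distorted picture'' near a $1$-vertex with the combinatorics of the cnc-partition $\pi_w(v)\cup\pi_b(v)$. But this matching is precisely the content of the discussion following Definition~\ref{def:conn_geom_repres} together with Lemma~\ref{lem:arcs_successors} and Corollary~\ref{cor:marking}, so it should be a routine (if slightly tedious) verification rather than a genuine difficulty.
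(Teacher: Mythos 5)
Your proposal is correct and follows essentially the same route the paper intends: the paper states the lemma as an immediate consequence of the proof of Lemma~\ref{lem:H0epsH0}, and your argument simply makes that explicit by tracking the two stages of $H^0$ (the orientation-preserving isotopy onto $\partial K_\epsilon$, then the local collapse of Lemma~\ref{lem:deform_geom_real}) and invoking Lemma~\ref{lem:arcs_successors} to identify the induced successor relation with the one defining the boundary circuit of $K$.
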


The main work in constructing the connection as desired lies in
ensuring property (C~\ref{item:prop_conn_2}). 
% Note however, that this is automatically
% satisfied (for any white spanning tree $K$) if $\#\post=3$ (by Theorem
% \ref{thm:isotopy2}). Thus we have constructed the desired
% pseudo-isotopy $H^0$ in the case $\#\post=3$. We assume from now on
% that 
% \begin{itemize}
% \item $\#\post \geq 4$. 
% \end{itemize}

The starting point is to take a sufficiently high iterate $F=f^n$ such
that there is an $F$-invariant Jordan curve $\CC\supset \post$ and $1$-tiles
defined in terms of $(F,\CC)$ (i.e., closures of components of
$S^2\setminus F^{-1}(\CC)$) are sufficiently small. We require two
separate conditions, since they are needed in distinct 
parts of the construction; they could be expressed as a
single one. In fact, the second condition is only given later,
when the suitable description becomes available.

\begin{lemma}
  \label{lem:exFC}
  For each sufficiently high $n\in \N$ there is a Jordan curve $\CC$
  with $\post \subset \CC$ satisfying the following.
  \begin{itemize}
  \item $\CC$ is \emph{invariant} for the iterate $F=f^n$. This
    means that $F(\CC)\subset \CC$.
  \end{itemize}
  The $1$-tiles for $(F,\CC)$ satisfy the following.
  \begin{itemize}
  \item There is no $1$-tile $X$ that \emph{joins opposite sides} of
    $\CC$. This means no $1$-tile $X$ meets disjoint $0$-edges in the
    case $\#\post\geq 4$, and no $1$-tile $X$ intersects all three
    $0$-edges in the case $\#\post =3$.  
  \item The $1$-tiles do not form a \emph{link} the sense of
    Definition~\ref{def:link}. 
  \end{itemize}
\end{lemma}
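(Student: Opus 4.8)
The plan is to combine the existence of an invariant Jordan curve for high iterates, which is already available, with the quantitative smallness of $1$-tiles forced by expansion. By \cite[Theorem~1.2]{expThurMarkov} (see also \cite{CFPsubdiv_rat}), for every sufficiently high $n$ the iterate $F=f^n$ admits a Jordan curve $\CC\supset\post$ with $F(\CC)\subset\CC$; this gives the first bullet outright. It is convenient to arrange matters so that the combinatorial and metric geometry of $\CC$ does not degenerate as $n$ grows — most cleanly, one fixes an $f^{n_0}$-invariant curve $\CC$ for a single base exponent $n_0$ and uses that it is then automatically $f^{kn_0}$-invariant, so that in particular the set $\E^0$ of $0$-edges, and hence the constant
\begin{equation*}
  \delta_0:=\min\{\dist(E,E') : E,E'\in\E^0,\ E\cap E'=\emptyset\}>0,
\end{equation*}
stay fixed. (If one insists on all sufficiently high $n$ rather than multiples of $n_0$, one invokes the corresponding uniformity built into the construction in \cite{expThurMarkov}; either way the point is that $\delta_0$ has a positive lower bound independent of $n$.)

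Next I would handle the statement that no $1$-tile joins opposite sides. Fix a visual metric $\varrho$ for $f$ with expansion factor $\lambda>1$, so that $\varrho$ is a visual metric for $F=f^n$ with expansion factor $\Lambda=\lambda^n$. By \cite[Lemma~8.10]{expThurMarkov}, every $1$-tile $X$ for $(F,\CC)$ satisfies $\diam X\asymp\Lambda^{-1}=\lambda^{-n}$, with implicit constant independent of $n$ and of $X$. Once $n$ is large enough that this common diameter bound is smaller than $\delta_0$, no $1$-tile can meet two disjoint $0$-edges, since such a tile would have diameter at least $\delta_0$; this is the case $\#\post\geq4$. In the case $\#\post=3$ one argues analogously: the three $0$-edges meet pairwise only at the three postcritical points, so a single $1$-tile intersecting all three of them must have diameter bounded below by a positive quantity depending only on $\CC$ — again excluded by $\diam X\lesssim\lambda^{-n}$ for $n$ large.

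Finally, the condition that the $1$-tiles do not form a \emph{link} in the sense of Definition~\ref{def:link} is, morally, more of the same: a link is a cyclic chain of $1$-tiles that encircles (a neighborhood of) a postcritical point at a definite combinatorial scale, and the union of the tiles in such a chain must then have diameter bounded below independently of $n$, because $\post$ is a fixed finite set of points at positive mutual distance. Applying $\diam X\lesssim\lambda^{-n}$ tile by tile along the chain contradicts this once $n$ is large. Since Definition~\ref{def:link} only becomes available later in the paper, I would defer the precise bookkeeping of this last point to that stage, where the ``small tiles cannot wrap around anything'' argument can be spelled out in the right language. The main obstacle I anticipate is not any of these distance estimates individually, but the uniformity in the first paragraph — ensuring that the invariant curve $\CC$ and the constant $\delta_0$ remain controlled as $n\to\infty$ — which is precisely why it is preferable to propagate a single invariant curve through its iterates rather than to produce a genuinely new curve for each $n$.
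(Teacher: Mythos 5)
Your route is genuinely different from the paper's. The paper does not fix one curve and shrink tiles metrically: for each $n$ it manufactures a new curve by splitting every $0$-edge into two \emph{$0$-arcs} with auxiliary marked points, invoking \cite[Lemma~14.5]{expThurMarkov} to get a curve $\CC'\subset f^{-n}(\CC)$ (isotopic to $\CC$ rel.\ the enlarged marked set) such that no $n$-tile joins disjoint $0$-arcs, and then transporting $\CC'$ to an honestly $f^n$-invariant curve via the topological conjugacy between $H_1\circ f^n$ and $f^n$. That single combinatorial property -- no tile meets two disjoint $0$-arcs -- then yields \emph{both} remaining bullets at once; in particular the link is excluded because the tile $Y$ would have to meet $E^0_2$ in the first $0$-arc while $X_2$ meets it in the second, forcing $v_j$ into both. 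Your alternative of propagating one $f^{n_0}$-invariant curve through its powers and using $\diam X\asymp\Lambda^{-1}$ is legitimate and arguably more elementary for the ``opposite sides'' bullet (it only yields $n\in n_0\N$, but a single admissible pair $(F,\CC)$ is all the paper ever uses), and it sidesteps the conjugacy step entirely.

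The gap is in the link. Definition~\ref{def:link} is not a cyclic chain of tiles encircling a postcritical point: it is a configuration of exactly three tiles $X_1,X_2,Y$ incident to three consecutive $0$-edges $E^0_1,E^0_2,E^0_3$ and to points $v_i\in E^0_1$, $v_j\in E^0_2$, with each individual tile meeting only \emph{adjacent} $0$-edges -- so the second bullet does not rule it out, and your ``apply $\diam X\lesssim\lambda^{-n}$ tile by tile along the chain'' argument is aimed at the wrong object (and would in any case fail for a chain whose length grows with $n$, since many small tiles can have a union of large diameter). A metric exclusion of the link is still possible, but it needs a different mechanism: smallness of $X_1$ forces $v_i$ to lie close to $E^0_2$, hence close to $p=E^0_1\cap E^0_2$, and then the uniform continuity of the parametrization of the \emph{fixed} Jordan curve $\CC$ and of its inverse forces the whole arc $[v_i,p]\subset E^0_1$ to have small diameter (this last step is not automatic from $|v_i-p|$ being small and must be argued); symmetrically $[v_j,q]$ is pinned near $q$. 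Only then does $Y$, which meets both arcs, acquire diameter at least $\varrho(p,q)-o(1)$, contradicting $\diam Y\lesssim\Lambda^{-1}$. None of this is in your sketch, and the heuristic you do give would not produce it, so as written the third bullet is unproved.
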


This is essentially \cite[Theorem 13.2]{expThurMarkov}, see also
\cite{CFPsubdiv_rat}. A proof of this lemma is given in Section
\ref{sec:connecting-trees}, here we show how the arguments in
\cite{expThurMarkov} are slightly adjusted to obtain the
statement in the above form. 

The iterate $F=f^n$ as well as the $F$-invariant Jordan curve $\CC$ as
above will be fixed from now on, tiles are defined in terms of
$(F,\CC)$.

\smallskip
Let us first give a slightly incomplete outline of the
construction. 
Recall that $X^0_w,X^0_b$ are the white, black $0$-tiles; they are
both bounded by the invariant curve $\CC$. 
We consider a spanning tree of white $1$-tiles in
$X^0_w$. Then we consider a spanning tree of black $1$-tiles in $X^0_b$, the
complementary white $1$-tiles in $X^0_b$ form (``homotopically'') trivial
trees in the sense of Definition \ref{def:trivial_tree}. These (white)
trivial trees (in $X^0_b$) are then attached to the 
white spanning tree in $X^0_w$. 

This construction has to be adjusted slightly for the following
reason: the white $1$-tiles in $X^0_w$ (as well as the black $1$-tiles
in $X^0_b$) need not be connected. So there are no \emph{spanning}
trees as described before.  

% \begin{remark}
%   It is possible to slightly change the construction of the
%   $F$-invariant curve $\CC$ from \cite{expThurMarkov} so that for each
%   sufficiently large $n$ there is a curve $\CC$ that is invariant with
%   respect to the iterate $F=f^n$ satisfying the above. 
% \end{remark}

\subsection{Decomposing $X^0_w$}
\label{sec:constr-conn}
Here we decompose the \emph{white} $0$-tile $X^0_w$ into white trees.

\medskip
Consider the white $1$-tiles in $X^0_w$. We assume in the next lemma
that they are all 
\emph{connected} at all $1$-vertices $v$ in the \emph{interior} of
$X^0_w$, and
\emph{disconnected} at all $1$-vertices on $\CC$. 
The resulting white connection graph may not be
connected. 
%By a (white) \defn{cluster} (in $X^0_w$) we mean the union
%of white $1$-tiles in $X^0_w$ that are connected in this sense.

\begin{lemma}
  \label{lem:1main_cluster}
  The white connection graph in $X^0_w$ as above has exactly one
  (white) cluster
  that intersects all sides ($0$-edges). 
\end{lemma}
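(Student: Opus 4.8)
The plan is to analyze the white connection graph in $X^0_w$ under the stated hypotheses (white $1$-tiles all connected at interior $1$-vertices, disconnected at $1$-vertices on $\CC$) and show exactly one cluster touches every $0$-edge. First I would recall that by Lemma~\ref{lem:whiteXconn} the union $\bigcup \X^1_w$ of all white $1$-tiles is connected; however, when we only connect at interior $1$-vertices, the resulting connection graph may split into several clusters, since two white $1$-tiles meeting only at a $1$-vertex on $\CC$ are declared disconnected. The key geometric input is the hypothesis from Lemma~\ref{lem:exFC}: no $1$-tile joins opposite sides of $\CC$, i.e.\ (for $\#\post \geq 4$) no white $1$-tile meets two disjoint $0$-edges, and (for $\#\post = 3$) no white $1$-tile meets all three $0$-edges.

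First I would establish \emph{existence}. Since $\CC = \bigcup \E^0$ has $k$ $0$-edges $E_0, \dots, E_{k-1}$, and each $0$-edge $E_i$ is the edge of a unique white $1$-tile $X$ along $\CC$... more carefully, each $0$-edge is covered by $1$-edges, each of which bounds a white $1$-tile; so every $0$-edge is met by at least one white $1$-tile. Now consider the white cluster $K$ containing the white $1$-tile $X$ that contains the initial $1$-edge of $\gamma^1$, or more robustly: I claim that following the boundary circuit idea, a white cluster meeting $E_0$ can be connected to one meeting $E_1$, etc. The cleanest route is: since $\bigcup \X^1_w$ is connected and $X^0_w$ is a closed Jordan domain, walk along $\CC$ from $E_i$ to $E_{i+1}$; the white $1$-tiles met consecutively along $\CC$ share $1$-vertices — those interior to a $0$-edge are interior $1$-vertices of $X^0_w$ (since $\CC$'s interior points that are $1$-vertices lie on $0$-edges but are not $0$-vertices, hence... wait, they \emph{are} on $\CC$). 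Here I need care: a $1$-vertex on $\CC$ but not equal to a postcritical point still counts as "on $\CC$", and at such a vertex the white tiles are disconnected by hypothesis. So I would instead argue via the black side: two consecutive white $1$-tiles along $E_i$ sharing a $1$-vertex $v\in \inte E_i \subset \CC$ are separated by a black $1$-tile $Y$ with $v$ on its boundary but $Y \cap \CC$ containing the arc of $E_i$ near $v$; one of the two white tiles can be reached from the other by going around through the interior. The honest approach: use connectedness of $\bigcup \E^1$ together with the "no joining opposite sides" condition to show that the white cluster $K_0$ containing a white $1$-tile at $E_0$ must also contain white $1$-tiles at every $E_i$. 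This is the main obstacle and where the non-link/non-opposite-sides hypothesis is essential.

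For \emph{uniqueness}, suppose two distinct white clusters $K, K'$ each meet all $0$-edges. Then $K_\epsilon$ and $K'_\epsilon$ are disjoint closed connected subsets of $X^0_w$ (disjoint because distinct clusters of the connection graph give disjoint geometric representations, as noted after Definition~\ref{def:cluster}), each touching all $k \geq 3$ $0$-edges of $\partial X^0_w = \CC$. I would derive a contradiction with the absence of a link (Definition~\ref{def:link}, referenced in Lemma~\ref{lem:exFC}): two disjoint connected sets in a disk each touching three prescribed boundary arcs in the cyclic boundary order is precisely a link-type obstruction, contradicting that the $1$-tiles do not form a link. Concretely, pick arcs in $K_\epsilon$ and in $K'_\epsilon$ realizing the connections between $0$-edges; these arcs, together with the boundary structure, force a crossing pattern that is impossible for disjoint connected sets in a disk.

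The main obstacle I anticipate is the existence half: carefully showing that the white cluster touching $E_0$ reaches $E_1$ (and hence, by induction around $\CC$, every $E_i$), using that no white $1$-tile joins opposite sides and the tiles do not form a link. I expect this to require tracking how white $1$-tiles meeting $\CC$ are chained together through \emph{interior} $1$-vertices: a white $1$-tile $X$ meeting $E_i$, being a Jordan domain with $k$ $1$-vertices on its boundary and not joining opposite sides, must have a $1$-vertex in $\inte X^0_w$ connecting it to a neighbor, and iterating this while staying "inside" the sector between consecutive $0$-vertices propagates connectivity all the way around. The non-link hypothesis is what rules out the connectivity getting "stuck" in a proper arc of $\CC$.
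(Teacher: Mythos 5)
There is a genuine gap: the existence half of the statement is not actually proved. You correctly identify it as ``the main obstacle'' and then offer only a heuristic (``iterating this while staying inside the sector \dots propagates connectivity all the way around''), but no argument. The idea you are missing is the paper's key structural claim: because \emph{all} white $1$-tiles are connected at every $1$-vertex in $\inte X^0_w$, every boundary arc of a white cluster $K$ (i.e.\ $\partial B\cap K$ for a component $B$ of $X^0_w\setminus K$) is contained in a \emph{single black $1$-tile} --- at an interior $1$-vertex two consecutive $1$-edges of such an arc must bound the same black tile, since the white tiles on either side are connected there. Consequently two white $1$-tiles lie in distinct clusters if and only if they are separated by one ``non-trivial'' black $1$-tile $Y$ (one meeting $\CC$ in at least two $1$-vertices). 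Since no $1$-tile joins opposite sides of $\CC$, each such $Y$ has a complementary component $K_Y$ meeting all $0$-edges, and since non-trivial black tiles do not cross, $\overline K:=\bigcap_Y K_Y$ still meets all $0$-edges; all white tiles in $\overline K$ then form a single cluster, and a short case analysis shows this cluster really meets every $0$-edge (not merely $\overline K$). None of this is recoverable from ``connectedness of $\bigcup\E^1$'' alone, which is where your sketch stalls.

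Your uniqueness paragraph is closer to workable but has two defects. First, distinct clusters are \emph{not} disjoint as point sets --- they can share $1$-vertices on $\CC$ --- and while their geometric representations $K_\epsilon,K'_\epsilon$ are disjoint, $K_\epsilon$ may fail to touch a $0$-edge that $K$ meets only in a $1$-vertex, so the reduction to ``two disjoint connected sets each touching all boundary arcs'' needs repair. Second, the non-link hypothesis is not the relevant input here: the link condition of Definition~\ref{def:link} is used only later, in Section~\ref{sec:connecting-trees}, to keep secondary trees attached along different $0$-edges from colliding; for the present lemma only the ``no $1$-tile joins opposite sides'' hypothesis is used, and in the paper uniqueness drops out of the same separation-by-a-single-black-tile claim (a black tile not joining opposite sides cannot have two complementary components each meeting all $k\geq 3$ $0$-edges).
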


\begin{proof}
  Let $K$ be a (white) cluster in $X^0_w$ as above. Consider one
  component $B$ (in the standard topological sense) of $X^0_w\setminus
  K$. We call the set $a:= \partial B\cap K$ a \emph{boundary arc} of
  $K$. 

  \smallskip
  {\it Claim 1.} Every boundary arc $a$ as above is contained in a
  single black $1$-tile. 

  Clearly $a$ is a union of $1$-edges. Either $a$ starts and ends at
  two distinct $1$-vertices $v,w\in \CC$, or $a$ is a closed curve. 
  Let $E,E'\ni v$ be two
  $1$-edges in $a$ consecutive in $a\subset \partial B$; where
  $v\notin \CC$ is a 
  $1$-vertex. Note that by construction all white $1$-tiles $X_j\ni v$
  are connected at $v$. Thus $E,E'$ are contained in the same black
  $1$-tile. The claim follows.

  \smallskip
  Assume now that $a$ is not an arc having as two distinct endpoints
  the $1$-vertices $v,w\in \CC$. Then $a$ is a Jordan curve in the
  boundary of a single black $1$-tile. Thus the corresponding component
  $B$ is the interior of a single black $1$-tile. Thus $a$ does not
  separate $K$ from any other distinct white cluster $K'$ in $X^0_w$. 

  \smallskip
  We call a black $1$-tile $Y\subset X^0_w$ \emph{non-trivial} if
  $Y\cap \CC$ contains at least two $1$-vertices. A
  \emph{complementary component} of $Y$ is the closure of a component
  $X^0_w\setminus Y$. 
 
  \smallskip
  {\it Claim 2.} Let $X,X'\subset X^0_w$ be two distinct white
  $1$-tiles. Then $X,X'$ are contained in distinct white clusters
  $K,K'\subset X^0_w$ if
  and only if there is a black $1$-tile $Y\subset X^0_w$ such that
  $X,X'$ are contained in complementary components of $Y$. 

  The implication $(\Leftarrow)$ is clear. To see the other
  implication we note that if $X'$ is contained in a cluster distinct
  from the cluster $K\supset X$, then $X'$ has to be contained in the
  closure of one component of $X^0_w\setminus K$. Such a component is
  separated from $K$ by a boundary arc $a$. However, if $a$ does not
  contain two $1$-vertices $v,w\in \CC$ this component is a single
  black $1$-tile, meaning it does not contain $X'$. Otherwise $X'$ is
  separated from $X$ by the black $1$-tile $Y$ containing $a$, proving
  the claim.

  \smallskip
  Recall that we assumed that no $1$-tile joins opposite sides of
  $\CC$ (see Lemma~\ref{lem:exFC}). Thus for every non-trivial black
  $1$-tile $Y$ there is a complementary component of $Y$, denoted by
  $K_Y$, that intersects all $0$-edges. 

  We now define $\overline{K}:= \bigcap K_Y$, where the intersection
  is taken over all non-trivial black $1$-tiles $Y\subset
  X^0_w$. Since two non-trivial black $1$-tiles $Y,Y'\subset X^0_w$ do
  not cross, it follows that $\overline{K}$ intersects all $0$-edges. 

  By Claim 2 it follows that all white $1$-tiles contained in
  $\overline{K}$ are connected, i.e., belong to the same cluster
  denoted by $K$. 

  \smallskip
  Assume
  $\overline{K}$ intersects a given $0$-edge $E^0$ in a $1$-edge $E$. This
  cannot happen if $E$ is contained in a black $1$-tile $Y\subset
  X^0_w$, since $Y$ would be non-trivial, and the corresponding set
  $K_Y$ does not contain $E$. Thus $E$ is contained in a white
  $1$-tile, which is in $\overline{K}$.

  If $\overline{K}$ intersects $E^0$ only in a $1$-vertex $v$, there
  is a boundary arc $a\subset \partial \overline{K}$ containing
  $v$. Let $Y\subset X^0_w$ be the corresponding non-trivial black
  $1$-tile containing $a$. Let $E\subset a$ be the $1$-edge containing
  $v$. Since $E$ is not in $\CC$ the white $1$-tile containing $E$ is
  in $\overline{K}$. 

  This means there is a white $1$-tile in $K$ that
  intersects $E^0$. 
\end{proof}

In each white cluster in $X^0_w$ define a spanning tree (see
Definition \ref{def:cluster}). The spanning tree in the cluster
from Lemma \ref{lem:1main_cluster} is called the \defn{main tree}
$K_M$, the spanning trees in the other clusters are called the
\defn{secondary trees} in $X^0_w$. The 
\emph{connections} at all $1$-vertices $v\in X^0_w\setminus \CC$ are thus
\emph{defined}, they will not be changed any more in the construction. 
% Let us record the following, which is an immediate consequence of the
% proof of the previous lemma.
% \begin{lemma}
%   \label{lem:secondary_trees_Xw}
%   The secondary trees in $X^0_w$ do not intersect disjoint $0$-edges. 
% \end{lemma}

\smallskip
Let $\mathcal{E}$ be the boundary circuit of the main
tree $K_M$ (see Definition \ref{def:gamma1_connection} and Lemma
\ref{lem:cluster_sc}).  
% Break up this Eulerian circuit into
% \defn{paths}
% \begin{equation*}
%   E_i,\dots,E_j
% \end{equation*}
% between vertices on $\CC$. This means the initial point of $E_i$ is on
% $\CC$, the terminal point of $E_j$ is on $\CC$, and $E_{i+1},\dots,
% E_{j-1}$ does not intersect $\CC$. 
Let $v_0,\dots ,v_{N-1}$ be the $1$-vertices on $\CC$ that $\mathcal{E}$
visits (in this order). Note that a $1$-vertex $v$ may appear
several times in this list. 

\begin{notation}
  Given points $v,w\in \CC$ denote by
  \begin{align}
    [v,w], (v,w),
  \end{align}
  the closed/open positively oriented arc on $\CC$ from $v$ to
  $w$. Note that $(v,v)=\emptyset$. 
\end{notation}

\begin{lemma}
  \label{lem:viKM}
  The points $\{v_i\}$ satisfy the following. Indices are taken $\bmod
  N$ here.

  \begin{enumerate}
  \item 
    \label{item:viKM_1}
    Each (open) arc $(v_i,v_{i+1})$ 
    contains no point $v_l$.
    
    This means the points $\{v_i\}$ are positively oriented on $\CC$. 
%     The cyclical order of the points $v_i$ agrees with the cyclical
%     order of the $v_i$ on the curve $\CC$ (positively oriented as
%     boundary of $X^0_w$).
  \item 
    \label{item:viKM_2}
    The points $v_i,v_{i+1}$ are not contained in disjoint $0$-edges,
    in particular each $0$-edge contains at least one point $v_i$.
  \item 
    \label{item:viKM_3}
    For all $v_i,v_{i+1}$ there is a black $1$-tile $Y\ni
    v_i,v_{i+1}$.
  \item 
    \label{item:viKM_4}
    Let $K$ be a secondary tree in $X^0_w$. Then there is an arc
    $[v_i,v_{i+1}]$ such that
    \begin{equation*}
      K\cap \CC\subset [v_i,v_{i+1}].
    \end{equation*}
  \end{enumerate}
\end{lemma}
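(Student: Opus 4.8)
The plan is to read off all four properties from the single Jordan curve $\mathcal{E}$, the boundary circuit of the main tree $K_M$, via its geometric representation $\partial K_{M,\epsilon}$ (Lemma~\ref{lem:cluster_sc}) together with the structure established in the proof of Lemma~\ref{lem:1main_cluster}. First I would set up the picture. Since $K_M$ is a tree, $K_{M,\epsilon}$ is a Jordan domain with $K_{M,\epsilon}\subset X^0_w$, and $\mathcal{E}=\partial K_{M,\epsilon}$ is a circuit of positively oriented $1$-edges (Definition~\ref{def:gamma1_connection}) visiting $v_0,\dots,v_{N-1}$ on $\CC$ in this cyclic order. Any $1$-edge $E\subset\CC$ appearing in $\mathcal{E}$ is positively oriented \emph{on} $\CC$: the $1$-tile on the $X^0_w$-side of $E$ is white, so being positively oriented as boundary of that white tile coincides with being positively oriented as boundary of $X^0_w$. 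Between consecutive visits $v_i,v_{i+1}$ the circuit $\mathcal{E}$ either runs along a single $1$-edge contained in $\CC$ or makes an \emph{excursion}: a chain of $1$-edges whose interior lies in $\inte X^0_w$ (an intermediate $1$-vertex cannot lie on $\CC$, or it would itself be one of the $v_l$). Such an excursion is precisely a boundary arc $\partial B\cap K_M$ of the main cluster, where $B$ is the component of $X^0_w\setminus K_M$ it faces, so by Claim~1 in the proof of Lemma~\ref{lem:1main_cluster} it is contained in a single black $1$-tile.

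For part~(\ref{item:viKM_1}) I would use the Jordan curve theorem. Since $K_{M,\epsilon}$ is a Jordan domain contained in the disk $X^0_w$, the vertices $v_0,\dots,v_{N-1}$ occur along $\CC=\partial X^0_w$ in cyclic order (up to overall reversal): otherwise one could label four of them $w_1,w_2,w_3,w_4$ in the order $\mathcal{E}$ visits them but with $\CC$-order $w_1,w_3,w_2,w_4$, and then the sub-arc of $\mathcal{E}$ from $w_1$ to $w_2$ not meeting $w_3,w_4$ would be a Jordan arc in $X^0_w$ with endpoints on $\CC$ separating $w_3$ from $w_4$, while the disjoint sub-arc of $\mathcal{E}$ from $w_3$ to $w_4$ joins them --- a contradiction. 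A local orientation check at any $v_i$, or along any $1$-edge of $\mathcal{E}$ contained in $\CC$, using $K_{M,\epsilon}\subset X^0_w$, fixes the orientation as the positive one, which is exactly~(\ref{item:viKM_1}).

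For part~(\ref{item:viKM_3}): by the set-up, in either case the portion of $\mathcal{E}$ from $v_i$ to $v_{i+1}$ lies in a single black $1$-tile $Y$ (for a $1$-edge contained in $\CC$, the black $1$-tile adjacent to it from the black $0$-tile $X^0_b$; for an excursion, Claim~1 of Lemma~\ref{lem:1main_cluster}), hence $v_i,v_{i+1}\in Y$. Part~(\ref{item:viKM_2}) then follows: if $v_i,v_{i+1}$ lay on disjoint $0$-edges, $Y$ would meet disjoint $0$-edges, i.e.\ join opposite sides of $\CC$, contradicting Lemma~\ref{lem:exFC} (and for $\#\post=3$ no two $0$-edges are disjoint); and every $0$-edge $E^0$ contains some $v_i$, because the main cluster meets $E^0$ (Lemma~\ref{lem:1main_cluster}) while $\mathcal{E}$, being an Eulerian circuit in $K_M$ (Lemma~\ref{lem:cluster_sc}), traverses every $1$-edge of every tile of $K_M$ and so passes through any $1$-vertex at which a tile of $K_M$ meets $E^0$.

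For part~(\ref{item:viKM_4}): a secondary tree $K$ shares no tile with $K_M$ and can meet $K_M$ only in $1$-vertices on $\CC$ (at a $1$-vertex lying in $\inte X^0_w$ all white tiles are connected, so two distinct clusters cannot meet there); being connected, $K$ therefore lies in the closure of a single component $B$ of $X^0_w\setminus K_M$ --- equivalently, the geometric representation $K_\epsilon$ is disjoint from $K_{M,\epsilon}$ (distinct clusters have disjoint geometric representations) and connected, hence lies in one component of $X^0_w\setminus\inte K_{M,\epsilon}$. By part~(\ref{item:viKM_1}), such a component is bounded by a single excursion arc of $\mathcal{E}$ running from some $v_i$ to $v_{i+1}$ together with the arc $[v_i,v_{i+1}]\subset\CC$; therefore $K\cap\CC\subset[v_i,v_{i+1}]$. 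The step I expect to be the main obstacle is the preliminary bookkeeping --- fixing a geometric representation and making precise the correspondence between the excursions of $\mathcal{E}$ and the components of $X^0_w\setminus K_M$, together with the matching of orientations along $\CC$; granting that, (\ref{item:viKM_1})--(\ref{item:viKM_4}) follow quickly, with~(\ref{item:viKM_3}) resting on Claim~1 of Lemma~\ref{lem:1main_cluster} and~(\ref{item:viKM_2}) on Lemma~\ref{lem:exFC}.
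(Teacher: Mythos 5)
Your treatment of (\ref{item:viKM_2})--(\ref{item:viKM_4}) and of the ``cyclic or anti-cyclic'' half of (\ref{item:viKM_1}) matches the paper's proof: the paper likewise realizes $\mathcal{E}$ geometrically as $\partial K_{M,\epsilon}$, gets the cyclic ordering from the non-crossing excursion arcs $\gamma_{i,\epsilon}\subset X^0_w$, derives (\ref{item:viKM_3}) from the trichotomy ($v_i=v_{i+1}$, or $[v_i,v_{i+1}]$ is a $1$-edge, or $v_i,v_{i+1}$ bound a boundary arc lying in a single black $1$-tile by Claim~1 of Lemma~\ref{lem:1main_cluster}), gets (\ref{item:viKM_2}) from (\ref{item:viKM_3}) plus Lemma~\ref{lem:exFC}, and gets (\ref{item:viKM_4}) from the fact that a secondary tree sits inside one complementary component of the main cluster.

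The gap is in how you fix the \emph{orientation} in (\ref{item:viKM_1}), which is the only delicate point of that part. Your primary argument --- a $1$-edge of $\mathcal{E}$ lying on $\CC$ is positively oriented on $\CC$ --- is correct as far as it goes, but it presupposes that $\mathcal{E}$ contains a $1$-edge lying on $\CC$. Nothing guarantees this: the proof of Lemma~\ref{lem:1main_cluster} explicitly allows the main cluster to meet a $0$-edge only in a $1$-vertex, so it can happen that every white $1$-tile of $K_M$ meets $\CC$ only in $1$-vertices, in which case no $1$-edge of $\mathcal{E}$ lies on $\CC$ and this argument says nothing. Your fallback, ``a local orientation check at any $v_i$,'' is not an argument: the local data of $\mathcal{E}$ at a single touching point $v_{i,\epsilon}$ cannot by itself decide which of the two cyclic orders the sequence $v_0,\dots,v_{N-1}$ realizes on $\CC$; some global input is needed. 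The paper supplies it with a winding-number computation ($\mathcal{E}$ has winding number $1$ about interior points of tiles of $K_M$, which is incompatible with the anti-cyclic order). Alternatively, your own picture can be completed: since $\mathcal{E}$ is the \emph{positively oriented} boundary of the Jordan domain $K_{M,\epsilon}\subset X^0_w$, the component $B_i$ of $\inte X^0_w\setminus K_{M,\epsilon}$ lying to the right of the excursion arc $\gamma_{i,\epsilon}$ has boundary consisting of $\gamma_{i,\epsilon}$ together with an arc of $\CC$ that leaves $v_{i,\epsilon}$ in the positive direction and arrives at $v_{i+1,\epsilon}$ from the negative direction, i.e.\ the positively oriented arc from $v_{i,\epsilon}$ to $v_{i+1,\epsilon}$; and this arc contains no other touching point because $\partial B_i\cap\partial K_{M,\epsilon}=\gamma_{i,\epsilon}$. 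Either way, this step needs to be written down; as stated it is missing.
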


\begin{proof}

  (\ref{item:viKM_1})
  Let $K_{M,\epsilon}$ be a geometric representation of $K_M$ as in
  Lemma \ref{lem:cluster_sc} (\ref{item:cluster_sc3}).
  % Represent the connection geometrically as in Section
%   \ref{sec:connections-1} (see also Lemma
%   \ref{lem:Jordan_spanning_tree}) to get a Jordan curve
%   $\gamma_\epsilon$. 
  The path $\gamma_i$ on $\mathcal{E}$ between
  $v_i$ and 
  $v_{i+1}$ is then represented by a Jordan arc $\gamma_{i,\epsilon}$  
 %   Consider a geometric representation of the connection at a
%   $1$-vertex $v\in \CC$. Recall that all white $1$-tiles are
%   disconnected at $v$ 
  with endpoints $v_{i,\epsilon},v_{i+1,\epsilon}$, such that
  $\abs{v_i-v_{i,\epsilon}}, \abs{v_{i+1}-v_{i+1,\epsilon}}$ are
  arbitrarily small. Since all white 
  $1$-tiles are disconnected at every $1$-vertex $v\in \CC$ we can
  assume that $v_{i,\epsilon}\in \CC$ 
  and
  $\gamma_{i,\epsilon}\subset X^0_w$ for all $i$ .

  The arcs $\gamma_{i,\epsilon}$ are
  non-crossing, thus the points $\{v_{i,\epsilon}\}$ are ordered
  cyclically or anti-cyclically on $\CC$. Hence the points $\{v_i\}$
  are ordered cyclically or anti-cyclically on $\CC$.

  \smallskip
  The winding number of $\mathcal{E}$ around $x\notin \mathcal{E}$ is
  $1$ if and only if $x$ is in the interior of a white $1$-tile of the
  main tree. This follows from an inductive argument as in
  Corollary \ref{cor:spanning_tree_ind}. 

  Assume the points  $\{v_i\}$ are ordered  anti-cyclically on $\CC$. Let
  $\CC_i$ be the (positively oriented) arc on $\CC$ between
  $v_i,v_{i+1}$. Then $\gamma_i+\CC_i$ has winding number $0$ around
  any point $x$ in the  interior of a $1$-tile of the main tree. Thus
  $\mathcal{E}+\CC$ has winding number $0$ around such an $x$. This is
  a contradiction. 

  \medskip
  (\ref{item:viKM_3})
%    Let $\overline{K}_M$ be the filled-in cluster of $K_M$, see
%   (\ref{eq:deffilledK}). Then the set of $1$-vertices in
%   $\overline{K}_M\cap \CC$ is equal to the set $\{v_i\}$. 
  Consider $v_i, v_{i+1}$. Then either
  \begin{itemize}
  \item $v_i=v_{i+1}$ in which case the statement is trivial;
  \item or $[v_i,v_{i+1}]$ is a $1$-edge, property
    (\ref{item:viKM_3}) is then clear again;
  \item or $v_i,v_{i+1}$ are the boundary points of a boundary arc $a$
    of $K_M$, as in Claim~1 from the proof of Lemma
    \ref{lem:1main_cluster}. In this case there is a black $1$-tile
    $Y\supset a$.   
  \end{itemize}

  \medskip
  (\ref{item:viKM_2})
  This follows immediately from (\ref{item:viKM_3}) and the assumption
  that no $1$-tile intersects disjoint $0$-edges. Furthermore $K_M$
  intersects a $0$-edge $E$ if and only if it intersects it in some
  $1$-vertex. The set of all $1$-vertices in which $K_M$ intersects
  $\CC$ is equal to the set $\{v_i\}$. Thus, since $K_M$ intersects
  each $0$-edge, it follows that each $0$-edge contains one point
  $v_i$. 

  \medskip
  (\ref{item:viKM_4})
  The reader is reminded of Claim~1 and Claim~2 in the proof of
  Lemma~\ref{lem:1main_cluster}.   
  For every secondary component $K$ there is an arc $a$ contained in a
  (non-trivial) black $1$-tile $Y$ such that $\inte K$ is in the
  component of $X^0_w\setminus a$ not intersecting all $0$-edges. Let
  $v_i,v_{i+1}$ be the endpoints of $a$ (see the discussion from
  (\ref{item:viKM_3})), then
  \begin{equation*}
    K\cap \CC\subset [v_i,v_{i+1}].
  \end{equation*}

\end{proof}

\subsection{Decomposing $X^0_b$}
\label{sec:decomposing-x0_b}

We now decompose the \emph{black} $0$-tile $X^0_b$. 
Consider the \emph{black} $1$-tiles in $X^0_b$. Construct clusters
of black $1$-tiles as before. Namely assume that all black $1$-tiles
are connected at each $1$-vertex $v\in X^0_b\setminus\CC$. All (black
and white) $1$-tiles in $X^0_b$  are
disconnected at each $1$-vertex $v\in \CC$. Pick a spanning tree in
each cluster (of black $1$-tiles in $X^0_b$). 
This \emph{defines the connections} at all $1$-vertices $v \in
X^0_b\setminus \CC$, they will not be changed anymore in the 
construction.    
As in Lemma~\ref{lem:1main_cluster}, there is exactly one such tree
(of black $1$-tiles in $X^0_b$) that intersects all $0$-edges. 

\smallskip
Consider now the \emph{white}
$1$-tiles in $X^0_b$. The connections at $1$-vertices $v\in
X^0_b\setminus \CC$ are already given (they are all disconnected at
each $1$-vertex $v\in \CC$).    

\begin{lemma}
  \label{lem:white_X0b}
  Every white cluster $K$ in $X^0_b$ as above 
  \begin{itemize}
  \item is a tree;
  \item furthermore
    \begin{equation*}
      K\cap\CC\subset [v,w],
    \end{equation*}
    where $v,w\in \CC$ are $1$-vertices contained in a \emph{single}
    white $1$-tile.  
  \end{itemize}
\end{lemma}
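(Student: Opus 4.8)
The plan is to mirror the structure of the analysis of $X^0_w$ from Lemma~\ref{lem:1main_cluster} and Lemma~\ref{lem:viKM}, but now applied to the \emph{white} $1$-tiles sitting inside the \emph{black} $0$-tile $X^0_b$. The key point is that the roles of ``black'' and ``white'' are swapped: inside $X^0_b$ the black $1$-tiles are all connected (away from $\CC$) and it is the black $1$-tiles that play the role of separators for the white clusters, just as the black $1$-tiles did for white clusters inside $X^0_w$. So I would first establish the analogue of Claim~1 and Claim~2 from the proof of Lemma~\ref{lem:1main_cluster}: a boundary arc of a white cluster $K\subset X^0_b$ is contained in a single black $1$-tile (since all black $1$-tiles meeting an interior $1$-vertex are connected there), and two distinct white $1$-tiles in $X^0_b$ lie in different white clusters if and only if some black $1$-tile separates them into distinct complementary components.

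For the first bullet (that $K$ is a tree), I would argue that each white cluster $K\subset X^0_b$ is a tree because any cycle in the connection graph would, by the argument of Lemma~\ref{lem:cluster_sc} $(\ref{item:cluster_sc3})\Rightarrow(\ref{item:cluster_sc1})$, force a geometric representation $K_\epsilon$ to fail to be a Jordan domain; but the white $1$-tiles in $X^0_b$ form a union of disjoint trees because we are free to \emph{choose} the spanning tree in each black cluster so that \emph{no} white $1$-tiles are connected along the black part — actually the cleaner route is: since the white $1$-tiles in $X^0_b$ are disconnected at all $1$-vertices on $\CC$, the only connections occur at interior $1$-vertices, and at such a vertex the cnc-partition $\pi_w\cup\pi_b$ has $\pi_b$ the trivial one-block partition (all black $1$-tiles connected), forcing $\pi_w$ to be a partition into singletons by Lemma~\ref{lem:prop_comp} (complementary to the one-block partition is the all-singletons partition). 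Hence in fact no two white $1$-tiles are ever connected in $X^0_b$, so each white cluster is a single white $1$-tile, which is trivially a tree. This also makes the tree claim immediate. [I would double-check whether the intended setup really connects \emph{all} black $1$-tiles at interior vertices, or only those in one spanning tree; if the latter, the argument still works because white $1$-tiles remain pairwise disconnected — the complementary partition to any $\pi_b$ containing all the relevant black indices in one block still puts each white index in its own singleton block.]

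For the second bullet, I would use the separation description: since each white $1$-tile $K\subset X^0_b$ is a single tile, the set $K\cap\CC$ consists of the $1$-vertices of $K$ that lie on $\CC$. I need to show these all lie in a single positively oriented arc $[v,w]$ whose endpoints $v,w$ are themselves both $1$-vertices of one white $1$-tile. If $K$ meets $\CC$ in a $1$-edge $E$, then $E$ is itself a sub-arc of $\CC$ and its two endpoints $v,w$ are $1$-vertices of $K$; taking $[v,w]=E$ works since $K=X$ contains both. If $K$ meets $\CC$ only in isolated $1$-vertices, I would invoke the fact (from Lemma~\ref{lem:exFC}) that no $1$-tile joins opposite sides of $\CC$: this bounds how $K$ can wrap around $\CC$, forcing $K\cap\CC$ into one $0$-edge when $\#\post\ge 4$, or into the union of at most two adjacent $0$-edges. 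One then identifies the extreme $1$-vertices $v,w$ of $K\cap\CC$ along $\CC$; because $K$ is a single white $1$-tile, $v$ and $w$ are two of its boundary $1$-vertices, so the single white $1$-tile $K$ itself is the witnessing tile, and $K\cap\CC\subset[v,w]$ follows because everything in $K\cap\CC$ lies between its extreme points and the complementary arc $(w,v)$ would have to be crossed by $K$, contradicting the ``no joining opposite sides'' / non-link hypothesis.

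The main obstacle I anticipate is the second bullet's geometric claim — pinning down precisely why $K\cap\CC$ cannot be spread across the curve in a way that makes the enclosing arc $[v,w]$ ``go the long way around.'' This is exactly where the hypotheses of Lemma~\ref{lem:exFC} (no $1$-tile joins opposite sides of $\CC$, and the $1$-tiles do not form a link) must be used carefully: one needs to rule out that the $1$-vertices of the single white $1$-tile $K$ on $\CC$ are interleaved on $\CC$ with other structure in a way forcing the minimal containing arc to be all of $\CC$. I would handle this by arguing about the cyclic order of the $1$-vertices of a single tile $K$ on $\CC$: since $K$ is a closed Jordan domain whose boundary meets $\CC$ in finitely many points and no arc of $\partial K$ crosses from one ``side'' of $\CC$ to an opposite one, the trace $\partial K\cap\CC$ lies in a connected sub-arc, and its endpoints are the desired $v,w$, both visibly $1$-vertices of $K$. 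The definition of ``link'' (Definition~\ref{def:link}, referenced but not yet available in the excerpt) presumably encodes precisely the bad configuration to be excluded, so the argument will be a short deduction once that definition is in hand.
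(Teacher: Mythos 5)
The central step of your argument is false: you claim that at each interior $1$-vertex $v$ of $X^0_b$ the black partition $\pi_b(v)$ is the one-block partition, so that the complementary $\pi_w(v)$ consists of singletons and every white cluster in $X^0_b$ is a single white $1$-tile. But ``all black $1$-tiles connected at interior vertices'' is only the \emph{preliminary} connection used to define the black clusters; the connection actually in force is the one defined by the chosen spanning trees of black $1$-tiles, and producing a spanning tree (a tree in the sense of Definition~\ref{def:cluster}, built as in Corollary~\ref{cor:spanning_tree_ind}) in general requires splitting the block $\odd_n$ at some interior vertices in order to destroy cycles. Whenever $\pi_b(v)$ has $m\geq 2$ blocks, the complementary $\pi_w(v)$ has only $n+1-m<n$ blocks, so some white $1$-tiles \emph{are} connected at $v$; your bracketed hedge is exactly backwards, since refining $\pi_b$ coarsens the complementary $\pi_w$. (If white clusters in $X^0_b$ really were singletons, the lemma would be vacuous and the later construction, which attaches these clusters as secondary trees with nontrivial boundary circuits, e.g.\ in Lemma~\ref{lem:boundary_K2nd}, would make no sense.) Consequently both bullets fail as argued: the tree property is not ``trivial,'' and $K\cap\CC$ is not the trace of a single tile.

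The paper's argument is genuinely different. For the first bullet it argues by contradiction: if $K$ were not a tree it would have two distinct boundary circuits, and one finds a white $1$-tile $X\subset K$ and an interior $1$-vertex $v$ at which $1$-edges $E,E'\subset X$ from the two circuits meet; the black $1$-tiles $Y\supset E$ and $Y'\supset E'$ then lie in distinct black trees, yet they were connected at $v$ \emph{before} the spanning trees were picked, hence lie in the same black cluster and therefore in the same black tree --- a contradiction. For the second bullet one repeats the analysis of Lemmas~\ref{lem:1main_cluster} and~\ref{lem:viKM} with the colors reversed inside $X^0_b$: there is a unique black main tree meeting all $0$-edges, its boundary circuit visits $1$-vertices $w_0,\dots,w_{\widetilde{N}}$ on $\CC$ in positive cyclic order with consecutive $w_i,w_{i+1}$ contained in a single \emph{white} $1$-tile, and each white cluster $K$ satisfies $K\cap\CC\subset[w_i,w_{i+1}]$ for some $i$. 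Note in particular that the witnessing white $1$-tile in the statement is in general \emph{not} contained in $K$, which your reading of the statement also misses.
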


\begin{proof}
  Assume $K$ is not a tree. Then $K$ has at least two distinct
  boundary circuits (see Lemma \ref{lem:cluster_sc}). 

  \begin{claim}
    There is a (white) $1$-tile $X\subset K$ and a $1$-vertex $v\in X$
    at which $1$-edges $E,E'\subset X$ from distinct boundary circuits
    intersect.  
  \end{claim}
  If the claim were not true we could partition $K$ into $1$-tiles
  containing $1$-edges from distinct boundary circuits. These
  partitions, and therefore $K$, would not be connected by Lemma
  \ref{lem:prop_successor}.  

  \smallskip
  Let $v, E,E'$ be as in the claim. Note that $v\notin \CC$, since
  all $1$-tiles are disconnected at $\CC$.  

  Consider the \emph{black} $1$-tiles $Y,Y'\subset X^0_b$ that
  contain $E,E'$. Let $K_b,K_b'\subset X^0_b$ be the black clusters
  containing $Y,Y'$. Since they are by assumption trees, they are
  distinct (again by Lemma \ref{lem:cluster_sc}). 

  \smallskip
  On the other hand the (black) $1$-tiles $Y,Y'$ were connected at
  $v$, before spanning trees were picked. This means they are in the
  same tree ($K_b=K_b'$), which is a contradiction.

  \medskip
  The arguments from Lemma \ref{lem:1main_cluster} and Lemma
  \ref{lem:viKM} apply verbatim to
  $X^0_b$. Thus there is a unique black tree $K_{M,b}\subset X^0_b$ that
  intersects each $0$-edge. Let $w_0,\dots, w_{\widetilde{N}}$ be the
  $1$-vertices that the boundary circuit of $K_{M,b}$ visits (in this
  order); note that these points are ordered positively
  on $\CC$ (recall that $1$-edges in a boundary circuit of a cluster
  were always \emph{positively oriented} as boundary of \emph{white}
  $1$-tiles they are contained in, regardless of the color of the
  cluster). As in Lemma \ref{lem:viKM} one obtains that the endpoints 
  $w_i,w_{i+1}$ of each arc
  $[w_i,w_{i+1}]$ are contained in a single white $1$-tile. Each set
  $K\cap\CC$ is contained in one such  arc $[w_i,w_{i+1}]$. 
%   Let $K_{b,\epsilon}$, $K_{\epsilon}$ be geometric representations of
%   $K_b$, $K$. 
  
%   As in the proof of Lemma
%   \ref{lem:traverse_right} let $\gamma^b_{i,\epsilon}$ be the boundary
%   arcs of $\partial K_{b,\epsilon}$ between consecutive points on
%   $\CC$. There is one such arc $\gamma^b_{i,\epsilon}$ that seperates
%   $\inte K_{b,\epsilon}$ from $\inte K_\epsilon$. 

%   Then $\inte K_{\epsilon}$ lies in the component of $X^0_b\setminus
%   \gamma^b_{i,\epsilon}$, whose closure does not intersect disjoint
%   $0$-edges.  
  
\end{proof}

We call the (white) trees from the previous lemma the \defn{secondary 
  trees} in $X^0_b$.  Let us record the following immediate
consequence of Lemma \ref{lem:viKM} and Lemma \ref{lem:white_X0b}. 

\begin{lemma}
  \label{lem:secondary_tree}
  No secondary tree (in $X^0_w$ or $X^0_b$) intersects disjoint
  $0$-edges. 
\end{lemma}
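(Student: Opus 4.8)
The plan is to deduce the lemma from two facts already in hand: the localization statements for secondary trees (Lemma~\ref{lem:viKM}(\ref{item:viKM_4}) for $X^0_w$ and Lemma~\ref{lem:white_X0b} for $X^0_b$), and the fact that a single $1$-tile never joins opposite sides of $\CC$ (Lemma~\ref{lem:exFC}). The case $\#\post=3$ is trivial: the three $0$-edges of $\CC$ pairwise share a postcritical point, so there are no two disjoint $0$-edges and there is nothing to prove. So from now on assume $k=\#\post\geq 4$.

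Now let $K$ be a secondary tree in $X^0_w$. By Lemma~\ref{lem:viKM}(\ref{item:viKM_4}), $K\cap\CC\subseteq[v,w]$ for two $1$-vertices $v=v_i$, $w=v_{i+1}$ consecutive among the vertices visited by the boundary circuit of the main tree, and by Lemma~\ref{lem:viKM}(\ref{item:viKM_3}) there is a black $1$-tile $Y$ with $v,w\in Y$; moreover this arc $[v,w]$ does not meet every $0$-edge (in the proof of Lemma~\ref{lem:viKM}(\ref{item:viKM_4}) the arc $[v,w]$, together with a boundary arc $a\subseteq Y$, bounds a component of $X^0_w\setminus a$ meeting $\CC$ exactly in $[v,w]$ and not meeting all $0$-edges). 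The decisive observation is purely combinatorial: since $[v,w]$ is a connected sub-arc of $\CC=E_0\cup\dots\cup E_{k-1}$, the $0$-edges it meets form a cyclically consecutive run $E_m,E_{m+1},\dots,E_n$, whose first member $E_m$ contains $v$ and whose last member $E_n$ contains $w$ (this needs a little care at the endpoints, since a common endpoint $p_j$ of $E_{j-1}$ and $E_j$ lies in both). As $Y$ contains both $v$ and $w$ it meets both $E_m$ and $E_n$; as $Y$ does not join opposite sides of $\CC$ (Lemma~\ref{lem:exFC}), $E_m$ and $E_n$ are not disjoint, i.e.\ $n-m\equiv 0$ or $\pm 1\pmod k$. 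A run of length $k-1$ would have endpoints differing by $2$ cyclically, hence disjoint for $k\geq 4$, and the run cannot be all of $\CC$ since $[v,w]$ misses some $0$-edge; therefore the run has length at most $2$. Thus $K$ meets at most two adjacent $0$-edges, in particular no two disjoint ones.

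For a secondary tree $K$ in $X^0_b$ the argument is identical, with Lemma~\ref{lem:white_X0b} supplying the arc $[v,w]$ — which again does not meet all $0$-edges and whose endpoints $v,w$ now lie in a common \emph{white} $1$-tile $Y$ — and Lemma~\ref{lem:exFC} again forbidding $Y$ from joining opposite sides. The combinatorial step goes through verbatim.

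I do not expect a real obstacle: the text itself records the lemma as an immediate consequence, and the above is little more than bookkeeping. The only points that need attention are (a) that the arc $[v,w]$ produced by Lemma~\ref{lem:viKM}(\ref{item:viKM_4}) (and its $X^0_b$-analogue) genuinely fails to meet every $0$-edge, which forces one to look inside the proof of that lemma rather than merely quote its statement, and (b) the endpoint convention for which $0$-edge is counted as the first/last member of the run, so that "$Y$ contains $v$ and $w$" really does pin down the two extreme $0$-edges of the run.
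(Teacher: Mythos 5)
Your reduction of the statement to ``$[v,w]$ meets only a cyclically consecutive run of $0$-edges whose extreme members are non-disjoint'' is sound, and the $X^0_b$ case does transfer as you say. The gap is in the step that excludes the run from being all of $\CC$. You justify ``$[v,w]$ misses some $0$-edge'' by asserting that, in the proof of Lemma~\ref{lem:viKM}~(\ref{item:viKM_4}), the component $B$ of $X^0_w\setminus a$ containing $\inte K$ meets $\CC$ exactly in $[v,w]$. It does not: $a$ is a closed arc with endpoints $v,w$, so $B$ is disjoint from $\{v,w\}$ and $B\cap\CC$ is the \emph{open} arc $(v,w)$. Hence ``$B$ does not intersect all $0$-edges'' only yields a $0$-edge $E^0$ with $(v,w)\cap E^0=\emptyset$; the closed arc $[v,w]$ may still touch $E^0$ at the endpoint $v$ (or $w$). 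In that situation $E^0$ does belong to your run, the run is a priori all $k$ of the $0$-edges, its extreme members are cyclically adjacent (so your non-disjointness test is passed), and yet the run contains disjoint pairs such as $E_{m+1},E_{m+3}$, both of which are met by $[v,w]$ in more than a single endpoint. So the argument as written does not rule out $K$ meeting disjoint $0$-edges; the offending configuration has to be excluded by some other input.

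The missing input is parts~(\ref{item:viKM_1}) and~(\ref{item:viKM_2}) of Lemma~\ref{lem:viKM}, together with their verbatim analogues for $X^0_b$ established in the proof of Lemma~\ref{lem:white_X0b}: every $0$-edge contains some point $v_l$, while the open arc $(v_i,v_{i+1})$ contains no $v_l$. If your run had length at least $3$ (in particular if it were all of $\CC$), a middle $0$-edge of the run would lie entirely in $(v_i,v_{i+1})$ and would therefore contain a $v_l$ in that open arc, a contradiction. This bounds the run length by $2$ outright and closes the gap; it also makes Lemma~\ref{lem:viKM}~(\ref{item:viKM_3}) and the ``no $1$-tile joins opposite sides'' property from Lemma~\ref{lem:exFC} unnecessary for this lemma, which is presumably why the paper records the statement as an immediate consequence of Lemmas~\ref{lem:viKM} and~\ref{lem:white_X0b} alone.
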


We will need to break up boundary circuits.
\begin{definition}[Subpaths of boundary circuits]
  \label{def:path_pq_connection}
  Let $\EC$ be a boundary circuit, $D,E\subset \EC$ two
  $1$-edges. Then $\EC(D,E)$ is the positively oriented subpath
  (of $1$-edges) of $\EC$ with initial $1$-edge $D$, terminal
  $1$-edge $E$. Note that $\EC(E,E)=E$. 
\end{definition}

% In the next lemma we consider a secondary tree $K\subset X^0_b$ with
% boundary circuit $\EC$. Consider two distinct $1$-vertices $v,w\in
% (\EC\cap\CC)$. Let $E_0,E_0'\subset \EC$ and $E_1,E_1'\subset \EC$ be
% succeeding $1$-edges at $v,w$. Let $[v,w]$ be the \emph{positively
%   oriented arc on $\CC$}. Note that $[v,w]$ is negatively oriented as
% boundary of $X^0_b$. 
 
% \begin{lemma}
% %  \label{lem:boundary_K2nd}
%   The subpath $\EC(E_1',E_0)$ does not intersect $\CC$ outside of
%   $[v,w]$. 
% \end{lemma}

% \begin{proof}
%   Recall that each geometric representation $\EC_\epsilon$ of the boundary
%   circuit $\EC$ is a (positively oriented Jordan curve). Thus if
%   $\EC(E_1',E_0)$ intersect $\CC\setminus [v,w]$ in a $1$-vertex $u\in
%   \CC$,
%   it follows that (the other subpath) $\EC(E_0',E_1)$ intersects $u$
%   as well.  
%   Let $E_u, E_u'\subset \EC(E_1',E_0)$ be the succeeding $1$-edges at
%   $u$. Note that every $1$-tile $X\subset K$ is either in the cluster
%   with boundary $\EC(E_1',E_u)\cup [v,u]$ or the one with boundary
%   $\EC(E_u',E_1)\cup [u,w]$.  
%   it follows  , and the corresponding geometric representations of
%   the subpaths $\EC(E_1', E_0)$, $\EC(E_0',E_1)$. Note that $\EC_\epsilon$
%   is a Jordan curve. Thus if $\EC(E_1',E_0)$ intersect $\CC$ outside $[v,w]$
%   $\EC(E_1',E_0)$ does not intersect $\CC$ outside $[v,w]$. 

% \end{proof}

In the next lemma we consider a secondary tree $K\subset X^0_b$ with
boundary circuit $\EC$. Consider two distinct $1$-vertices $v,w\in
(\EC\cap\CC)$. Let $E_v,E_v'\subset \EC$ and $E_w,E_w'\subset \EC$ be
succeeding $1$-edges at $v,w$. 

Let $x,y\in \CC$, in the following we write $[x,y]_b$ for the boundary
arc on $\CC=\partial X^0_b$ between $x,y$ that is \emph{positively
  oriented} with respect to $X^0_b$ (thus negatively oriented on $\CC$).

\begin{figure}
  \centering
   \begin{overpic}
      [width=11cm, %grid, 
      tics =20]{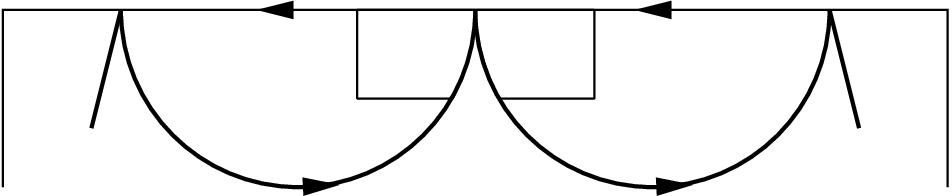}
      \put(90,-3){$X^0_b$}
      \put(86,20.5){$v$}
      \put(49,20.5){$u$}
      \put(11.5,20.5){$w$}
      \put(14,15){$E_w'$}
      \put(82,15){$E_v$}
      \put(39,15){$X$}
      \put(45,15){$E_u$}
      \put(51,15){$E_u'$} 
      \put(57,15){$X'$}
  \end{overpic}
  \caption{Illustration to Lemma~\ref{lem:boundary_K2nd}.}
  \label{fig:lemma79}
\end{figure}
 
\begin{lemma}
  \label{lem:boundary_K2nd}
  The subpath $\EC(E_w',E_v)$ does not intersect
  $[v,w]_b\setminus\{v,w\}$.  
\end{lemma}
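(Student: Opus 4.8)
The plan is to argue by contradiction using the fact that $K$ is a tree, hence has a single boundary circuit $\EC$ whose geometric representation $K_\epsilon$ is a Jordan domain (Lemma~\ref{lem:cluster_sc}), together with the structural facts about secondary trees in $X^0_b$ collected in Lemma~\ref{lem:white_X0b} and Lemma~\ref{lem:secondary_tree}. First I would set up notation: let $K_\epsilon\subset X^0_b$ be a geometric representation of the secondary (white) tree $K$, so $\partial K_\epsilon$ is a Jordan curve. Since all $1$-tiles are disconnected at every $1$-vertex on $\CC$, the boundary circuit $\EC$ meets $\CC$ exactly in the $1$-vertices of $\EC\cap\CC$, and near each such $1$-vertex $v$ the geometric representation $\partial K_\epsilon$ passes from $E_v$ to $E_v'$ staying inside $X^0_b$ except for touching $\CC$ at a point near $v$ (cf.\ the discussion after Definition~\ref{def:conn_geom_repres} and Lemma~\ref{lem:deform_geom_real}). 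Thus $\partial K_\epsilon$ is a Jordan curve contained in $X^0_b$ (together with finitely many touching points on $\CC$).

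Next I would extract the key topological consequence. Going along $\EC$ from the end of $E_w'$ to the start of $E_v$, the subpath $\EC(E_w',E_v)$ is an arc of the boundary circuit; its geometric realization is a Jordan arc $\alpha$ inside $X^0_b$ running (approximately) from $w$ to $v$. Likewise the complementary subpath $\EC(E_v',E_w)$ realizes as a Jordan arc $\beta$ from $v$ to $w$, and $\alpha\cup\beta=\partial K_\epsilon$ is a Jordan curve, so $\alpha$ and $\beta$ meet only at (points near) $v$ and $w$. Now suppose, for contradiction, that $\EC(E_w',E_v)$ intersects $[v,w]_b\setminus\{v,w\}$ in some $1$-vertex $u\in\CC$. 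Then $u$ lies on $\EC$ with succeeding $1$-edges $E_u,E_u'\subset\EC$, and $u$ appears in the arc $\EC(E_w',E_v)$, so $\alpha$ touches $\CC$ at (a point near) $u$. The point $u$ lies on the \emph{closed} arc $[v,w]_b$ of $\CC$, i.e.\ on the boundary of $X^0_b$ on the side ``between'' $v$ and $w$ that is positively oriented for $X^0_b$.

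The heart of the argument is then a planarity/winding contradiction: the Jordan curve $\partial K_\epsilon = \alpha\cup\beta$ lies in the closed Jordan domain $X^0_b$ and meets $\partial X^0_b=\CC$ only in the finite set of $1$-vertices of $\EC\cap\CC$. Consider the three consecutive $\CC$-touching points $v, u, w$ (with $u$ strictly between $v$ and $w$ on the arc $[v,w]_b$, hence strictly between them also in the cyclic order on $\CC$ in which $\EC\cap\CC$ is listed, using Lemma~\ref{lem:viKM}(\ref{item:viKM_1}) applied to $X^0_b$ and the ordering statement in Lemma~\ref{lem:white_X0b}). The arc $\beta=\EC(E_v',E_w)$ joins $v$ to $w$ inside $X^0_b$; since $\partial K_\epsilon$ is a Jordan curve in $X^0_b$ touching $\CC$ in those finitely many points, $\beta$ together with the sub-arc of $\CC$ from $w$ to $v$ that does \emph{not} contain $u$ bounds a Jordan subdomain $D\subset X^0_b$, and $u\notin\overline D$. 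But $u\in\alpha\subset\partial K_\epsilon$, while by the cyclic ordering $u$ must be separated from the rest of $\alpha$'s $\CC$-contacts by $v$ and $w$ — this forces $\alpha$ either to cross $\beta$ (contradicting that $\partial K_\epsilon$ is simple) or to cross $\CC$ (contradicting $K_\epsilon\subset X^0_b$ and that it only touches $\CC$). Either way we get the desired contradiction, so $\EC(E_w',E_v)$ cannot meet $[v,w]_b\setminus\{v,w\}$.

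\textbf{Main obstacle.} The routine part is the contradiction-by-contradiction skeleton; the delicate point is making the planar separation argument fully rigorous — precisely, keeping careful track of orientations (the circuit $1$-edges are positively oriented as boundary of the \emph{white} tiles they lie in, even though $K$ is a white tree inside the \emph{black} $0$-tile, so ``positively oriented on $\CC$'' and ``positively oriented for $X^0_b$'' differ by a sign, which is exactly why the statement is phrased with $[v,w]_b$), and handling the finitely many $\CC$-touching points of $\partial K_\epsilon$ correctly. I would discharge this by choosing $\epsilon$ small and working with $\partial K_\epsilon$ as a genuine Jordan curve in the sphere whose intersection with $\CC$ is a finite set, applying the Jordan curve theorem to $\partial K_\epsilon$, and using the ordering of $\{v_i\}$ (resp.\ $\{w_i\}$) on $\CC$ from Lemma~\ref{lem:viKM} / Lemma~\ref{lem:white_X0b} to pin down which complementary component each of $u$, and the rest of $\EC\cap\CC$, lies in. Figure~\ref{fig:lemma79} is the picture to keep in mind throughout.
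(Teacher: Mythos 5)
Your overall skeleton (work with a geometric representation $K_\epsilon$, use that $\partial K_\epsilon$ is a single Jordan curve, argue by contradiction at the hypothetical intersection point $u$) matches the paper's setup, but the step that is supposed to produce the contradiction has a genuine gap. You claim that the presence of a touching point $u\in (v,w)_b$ on $\alpha=\EC(E_w',E_v)$ ``forces $\alpha$ either to cross $\beta$ or to cross $\CC$.'' That is not a valid planarity deduction: a Jordan curve in $X^0_b$ can perfectly well touch $\CC$ at $v$, $u$, $w$ with $u$ on the arc between $v$ and $w$ and with $u$ lying on either of the two subarcs of the curve — which configurations are excluded depends entirely on the \emph{orientation} of the boundary circuit relative to $\partial X^0_b$, i.e.\ exactly the point you defer to your ``main obstacle'' paragraph. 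The cyclic-ordering statement you invoke to rule out the bad configuration is not supplied by the lemmas you cite: Lemma~\ref{lem:viKM}(\ref{item:viKM_1}) is proved only for the main tree $K_M\subset X^0_w$, and Lemma~\ref{lem:white_X0b} gives only the containment $K\cap\CC\subset[v,w]$ for a secondary tree, not an ordering of the $\CC$-visits along its boundary circuit. In fact, a cyclic-ordering statement for $\EC\cap\CC$ (with the correct orientation) is essentially equivalent to the lemma you are trying to prove, so as written the argument is circular at its crucial step; it would also have to cope with a $\CC$-vertex being visited more than once by $\EC$.

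The ingredient your proof is missing — and which the paper uses to close the argument — is combinatorial, not purely topological: if $\EC(E_w',E_v)$ passed through $u\in(v,w)_b$ with succeeding $1$-edges $E_u,E_u'$ there, then $\inte K$ would be separated into the parts bounded by $\EC(E_w',E_u)\cup[u,w]_b$ and $\EC(E_u',E_v)\cup[v,u]_b$, so $E_u$ and $E_u'$ would lie in \emph{different} white $1$-tiles $X,X'$. But $E_u'$ succeeds $E_u$ at $u$, so by Definition~\ref{def:successor} the tiles $X,X'$ would then be \emph{connected} at $u$ — contradicting the construction of the connection in $X^0_b$, where no $1$-tiles are connected at any $1$-vertex on $\CC$. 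You state this disconnectedness in your setup but never use it in the contradiction; without it (or an independently proved, correctly oriented cyclic-order statement) the argument does not go through.
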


\begin{proof}
  The situation is illustrated in Figure~\ref{fig:lemma79}. 
  Assume the statement is false, meaning that $\EC(E_w',E_v)$ intersects
  $[v,w]_b\setminus\{v,w\}$ in 
  a $1$-vertex $u$ ($\in \CC$). Let $E_u,E_u'\subset \EC(E_w',E_v)$ be the
  succeeding $1$-vertices at $u$. Then $\inte K$ is divided into
  points bounded by (having winding number $1$) $\EC(E_w',E_u)\cup
  [u,w]_b$ and $\EC(E_u',E_v)\cup [v,u]_b$. 

  Thus $E_u,E_u'$ are contained in different white $1$-tiles
  $X,X'\subset K$. Thus $X,X'$ are connected at $u$. This contradicts
  the construction of $K$, where no $1$-tiles are connected at any
  $1$-vertex in $\CC$.  
\end{proof}

\subsection{Connecting the trees}
\label{sec:connecting-trees}

The secondary trees are attached to the main tree at the $1$-vertices
on $\CC$. 

Initially all white $1$-tiles are disconnected at each
$1$-vertex $v\in \CC$. To use the results from
Section~\ref{sec:adding-clusters} we want the connections at all
$1$-vertices $v\in \CC$ to be cnc-partitions. Thus we now assume that
all black $1$-tiles are all connected at each $1$-vertex 
$v\in \CC$, thus the connections form cnc-partitions as desired.

\medskip
We first add secondary trees to ensure that all points of $\post$ are
contained in the main tree. 
Consider the main tree $K_M$ (in $X^0_w$) from Section
\ref{sec:constr-conn}. Let $v_0,\dots, v_{N-1}$ be the $1$-vertices on
$\CC$ along the boundary circuit $\EC$ of $K_M$, see
Lemma~\ref{lem:viKM}.  
% Let $E_0,\dots ,E_{k-1}$ be the
% $0$-edges, positively oriented as subarcs of $\CC$.

Consider one (positively oriented) $0$-edge $E^0$ with terminal point
$p\in \post$, let $v_i$ be the
last of the $1$-vertices as above on $E^0$. Then either
\begin{itemize}
\item $v_i=p$. Let $E_j\subset \EC$ be last $1$-edge with terminal
  point $v_i$, $E_{j+1}\subset \EC$ be the succeeding $1$-edge. 
  The connection at $p$ is now marked by $E_j,E_{j+1}$, see Corollary
  \ref{cor:marking}. 
\item $v_i\notin \post$. Consider the $1$-edge $E=[v_i,w]\subset E^0$
  succeeding $v_i$ in $\CC$. Let $K$ be the secondary cluster
  containing $E$. This means $K$ contains the (unique) white $1$-tile
  containing $E$. Add $K$ to the main tree $K_M$ at $v_i$. Note that
  no white $1$-tile is connected at $v_i$, so this is possible by
  Lemma~\ref{lem:change_conn}. We obtain a new main tree, still
  denoted by $K_M$. 
\item Repeat the above procedure till the main tree contains $p$. 
\end{itemize}

The added secondary components will only intersect the $0$-edges
preceding and succeeding $E^0$. 
Then we want to use the
same procedure on the other $0$-edges. There is one problem however:
we may encounter a $1$-edge $E$ as above that belongs to a secondary
component already added before (when the above procedure was applied
to a \emph{different} $0$-edge $\widetilde{E}^0$). This may lead to a
boundary circuit of $K_M$ in which the postcritical points are
traversed not in the same order as in $\CC$, violating (C~\ref{item:prop_conn_2}).  

To elaborate, let $E^0_1=E^0$, and $E^0_2,E^0_3$ be the 
$0$-edges succeeding $E^0_1$. Let $q$ be the terminal point of $E^0_2$,
and $v_j$ be the last of the points $\{v_i\}$ on 
$E^0_2$.  The described problem occurs if there is a secondary
component $K$ containing a $1$-edge 
in $[v_i,p]\subset E^0_1$ and a $1$-edge in $[v_j,q]\subset E^0_2$. By
Lemma \ref{lem:viKM} (\ref{item:viKM_3}) and (\ref{item:viKM_4}) 
as well as Lemma \ref{lem:white_X0b} this can only happen if there
are white/black $1$-tiles \defn{linked} in a certain way, see Figure
\ref{fig:link}.   

\begin{figure}
  \centering
  \includegraphics[width=5cm]{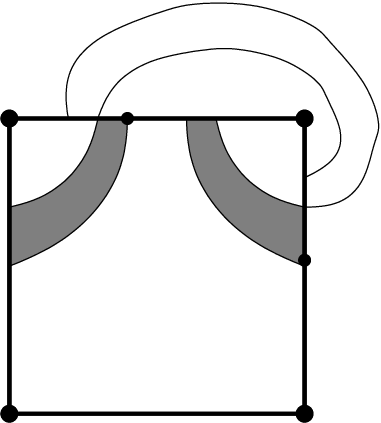}
  \begin{picture}(10,10)
    \put(-25,30){$E^0_1$}
    \put(-60,55){$X_1$}
    \put(-125,55){$X_2$}
    \put(-10,135){$Y$}
    \put(-80,120){$E^0_2$}
    \put(-165,30){$E^0_3$}
    \put(-27,58){$v_i$}
    \put(-95,105){$v_j$}
    \put(-42,103){$p$}
    \put(-155,110){$q$}
  \end{picture}
  \caption{A link.}
  \label{fig:link}
\end{figure}

\begin{definition}[Link]
  \label{def:link}
  A \defn{link} means that there exists the following.
  \begin{itemize}
  \item A (black) $1$-tile $X_1$ containing $v_i\in E^0_1$ and intersecting 
    $E^0_2$.
  \item A (black) $1$-tile $X_2$ containing $v_j\in E^0_2$ and intersecting
    $E^0_3$.
  \item A (white) $1$-tile $Y$ intersecting $[v_i,p]\subset E^0_1$ and
    $[v_j,q]\subset E^0_2$. 
  \end{itemize}
\end{definition}

Thus we have given the description of the last property in
Lemma~\ref{lem:exFC}. 

\begin{proof}
  [Proof of Lemma \ref{lem:exFC}]
  We essentially recall the proof of
  \cite[Theorem~13.2]{expThurMarkov}, see also
  \cite[Theorem~13.3]{expThurMarkov} and its proof.

  More precisely, we break up each $0$-edge into two $0$-arcs and use
  the same arguments as in \cite{expThurMarkov} to show that there is
  an $f^n$-invariant curve $\widetilde{\CC}$, such that no $n$-tile
  connects disjoint $0$-arcs.

  \smallskip
  Let $k_0$ be a fixed integer such that there are at least twice as
  many $k_0$-vertices as postcritical points (recall that the number
  of $n$-vertices grows exponentially). Fix a Jordan curve $\CC\subset
  S^2$ such that $\post\subset \CC$; additionally $\CC$ has the
  property that each arc on $\CC$
  between two consecutive postcritical points $p,q$ contains a
  $k_0$-vertex distinct from $p,q$. Let $P$ be the set of all such
  $k_0$-vertices and postcritical points. The points in $P$ divide
  divide $\CC$ into \emph{$0$-arcs}. Each $0$-edge on $\CC$ is divided
  into two $0$-arcs. 

  \smallskip
  Consider the $n$-tiles given in terms of $(f,\CC)$ where $n\geq
  k_0$. Since $f$ is expanding $n$-tiles get arbitrarily small,
  meaning that $\max_{X\in \X^n} (\diam X) \to 0$ as $n\to \infty$. 
  This implies by \cite[Lemma~10.17]{expThurMarkov} that there is an
  $n_0\geq k_0$ such for all $n\geq n_0$ there is a Jordan curve
  $\CC'\subset f^{-n}(\CC)$ isotopic to $\CC$ rel.\ $P$ (thus $P\subset
  \CC'$). Furthermore no 
  $n$-tile  joins opposite sides of $(\CC',P)$. This means there is no
  $n$-tile that intersects disjoint closed \emph{$0$-arcs} into which
  $P$ divides the curve $\CC'$. 
%  In particular no $n$-tile joins opposite sides of $\CC'$.

  \smallskip
  Let $H\colon S^2\times [0,1] \to S^2$ be an isotopy rel.\ $P$ that
  deforms $\CC$ to $\CC'$, i.e., $H_1(\CC)= \CC'$. Then $\widehat{F}:=
  H_1\circ f^{n}$ is a Thurston map, such that $\CC'$ is
  $\widehat{F}$-invariant, since
  $\widehat{F}(\CC')=H_1(f^n(\CC'))\subset H_1(\CC)= \CC'$. The
  $1$-tiles for $(\widehat{F},\CC')$ are exactly the $n$-tiles for
  $(f,\CC)$. Since no $1$-tile for $(\widehat{F},\CC')$ joins opposite
  sides of $\CC'$, we can choose $\widehat{F}$ to be expanding, see
  \cite[Corollary~12.18]{expThurMarkov}. Furthermore no $1$-tile for
  $(\widehat{F}, \CC')$ intersects disjoint $0$-arcs of $\CC'$.

  \smallskip
  The map $\widehat{F}$ is Thurston equivalent to $f^n$. Since they
  are both expanding, they are actually topologically conjugate, i.e.,
  there is a homeomorphism $h\colon S^2\to S^2$, such that
  $h\circ \widehat{F}\circ h^{-1} = f^n$ (see
  \cite[Theorem~10.4]{expThurMarkov}). Let $\widetilde{\CC}:= 
  h(\CC')$. Note that $\widetilde{\CC}$ is $f^n$-invariant, since
  $f^n(\widetilde{\CC})= h\circ \widehat{F}\circ
  h^{-1}(\widetilde{\CC}) = h\circ \widehat{F}(\CC')\subset h(\CC')=
  \widetilde{\CC}$. 

  We call the images of $0$-arcs on $\CC'$ by $h$ the $0$-arcs of
  $\widetilde{\CC}$. The images of $1$-tiles for $(\widehat{F}, \CC')$
  by $h$ are the $n$-tiles for $(f,\widetilde{\CC})$. It follows that
  no $n$-tile (for $(f,\widetilde{\CC})$) intersects disjoint $0$-arcs
  of $\widetilde{\CC}$. Recall that each $0$-edge of $\widetilde{\CC}$
  contains exactly two $0$-arcs. 

  \medskip
  With this choice of $F=f^n$ and $\widetilde{\CC}$ we will show that
  a link as in Definition~\ref{def:link} cannot
  occur. Let $A_j^-,A_j^+$ be the two $0$-arcs in $E^0_j$, where 
  $A_j^+$ succeeds $A_j^-$ in $\CC$. Then the white $1$-tile $Y$ has
  to intersect $E^0_2$ in $\inte A^-_2$, while the black $1$-tile $X_2$ has
  to intersect $E^0_2$ in $\inte A^+_2$. The claim follows. 

\end{proof}

% \begin{lemma}[No links for suitable iterates]
%   \label{lem:avoiding_links}
%   There is an iterate $F=f^n$ such that there are no links as above.
% \end{lemma}
  
% \begin{proof}
%   Divide each $0$-edge (arbitrarily) into two arcs, then $\CC$
%   consists of $2k$ such \defn{arcs}. Since $f$ is
%   expanding, there is a suitable iterate $F=f^n$ such that no $1$-tile
%   (with respect to $F$) intersects disjoint arcs. 

% \end{proof}

% From now on we assume that the iterate $F=f^n$ is chosen to satisfy
% the above. This \emph{finishes the choice of the iterate $F=f^n$}. We
% obtain the following. 
Since we assumed that $F=f^n$ and $\CC$ were chosen to satisfy the
properties from Lemma~\ref{lem:exFC}, there are no links. Thus the
following holds. 
Let $K$ be a secondary cluster added (to the
main tree) when considering the $0$-edge $E^0$; $\widetilde{K}$ a
secondary cluster added when considering a distinct $0$-edge
$\widetilde{E}^0$.

\begin{cor}
  \label{cor:disjoint_sec_cluster}
  The secondary clusters $K, \widetilde{K}$, given as in the setting
  as above, are distinct.
\end{cor}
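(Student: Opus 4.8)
The plan is to show that the two secondary clusters $K,\widetilde{K}$ cannot coincide by tracing back what it would mean for a single secondary cluster to be involved in the "add" procedure for two distinct $0$-edges. First I would recall from Section~\ref{sec:connecting-trees} exactly when a secondary cluster is added: when processing a positively oriented $0$-edge $E^0$ with terminal point $p$, and $v_i$ is the last of the $1$-vertices $\{v_i\}$ of $\EC$ on $E^0$ with $v_i\notin\post$, one takes the $1$-edge $E=[v_i,w]\subset E^0$ succeeding $v_i$ in $\CC$ and adds the secondary cluster $K$ containing the white $1$-tile containing $E$. So if $K$ is added when processing $E^0$, then $K$ contains a $1$-edge lying in $E^0$ (more precisely in $[v_i,p]\subset E^0$), and likewise if $\widetilde K$ is added when processing $\widetilde E^0$, then $\widetilde K$ contains a $1$-edge in $\widetilde E^0$.

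Next I would argue that a single secondary cluster meeting two distinct $0$-edges forces a link. The relevant $0$-edges to consider are consecutive ones: by the structure of the procedure (each added secondary cluster only meets the $0$-edges preceding and succeeding the one currently processed, as remarked just before Lemma~\ref{lem:viKM}~(\ref{item:viKM_4})), the only way the clash described in Section~\ref{sec:connecting-trees} arises is with $E^0_1=E^0$ and $E^0_2$ its successor (and terminal point $q$, last relevant vertex $v_j$). So suppose $K=\widetilde K$. Then $K$ is a secondary cluster (in $X^0_w$ or in $X^0_b$, via Lemma~\ref{lem:white_X0b}) containing a $1$-edge in $[v_i,p]\subset E^0_1$ and a $1$-edge in $[v_j,q]\subset E^0_2$. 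Since $K$ is a tree and does not intersect disjoint $0$-edges (Lemma~\ref{lem:secondary_tree}), the white $1$-tiles realizing these two $1$-edges must be joined inside $K$ through the black $1$-tiles adjacent at the separating $1$-vertices; combining this with Lemma~\ref{lem:viKM}~(\ref{item:viKM_3}), (\ref{item:viKM_4}) and Lemma~\ref{lem:white_X0b} (which give black $1$-tiles $X_1\ni v_i$ meeting $E^0_2$ and $X_2\ni v_j$ meeting $E^0_3$, and a white $1$-tile $Y$ meeting both arcs $[v_i,p]$ and $[v_j,q]$) yields exactly the configuration of Definition~\ref{def:link}.

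Finally, since $F=f^n$ and $\CC$ were chosen (Lemma~\ref{lem:exFC}) so that the $1$-tiles for $(F,\CC)$ do not form a link, this configuration is impossible. Hence $K=\widetilde K$ is contradictory, so $K,\widetilde K$ are distinct. The main obstacle, I expect, is the bookkeeping in the middle step: one must carefully verify that the only possible way for one secondary cluster to get attached on two different $0$-edges is through the precise $E^0_1/E^0_2$ adjacency described before Definition~\ref{def:link}, and then correctly identify the three tiles $X_1,X_2,Y$ of the link from the tree/boundary-circuit structure of $K$ together with the no-disjoint-$0$-edge property; everything after that is an immediate appeal to Lemma~\ref{lem:exFC}.
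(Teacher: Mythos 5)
Your proposal is correct and follows essentially the same route as the paper: the corollary is stated there as an immediate consequence of the preceding discussion, which reduces the coincidence $K=\widetilde K$ to the existence of a secondary cluster meeting $[v_i,p]\subset E^0_1$ and $[v_j,q]\subset E^0_2$ for consecutive $0$-edges, shows via Lemma~\ref{lem:viKM}~(\ref{item:viKM_3}),(\ref{item:viKM_4}) and Lemma~\ref{lem:white_X0b} that this forces a link as in Definition~\ref{def:link}, and then invokes the choice of $F$ and $\CC$ in Lemma~\ref{lem:exFC} to exclude links. Your identification of the tiles $X_1,X_2,Y$ and your caveat about the bookkeeping match the (equally terse) argument in the text.
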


Thus we can apply the above procedure to each $0$-edge. This yields
the (new) 
main tree (still denoted by $K_M$). Note that $K_M\supset \post$ by
construction. More precisely $K_M$ contains the marked succeeding
$1$-edges $E(p),E'(p)$ at each postcritical point $p$. 
%
%For each postcritical point $p$ we pick succeeding
%$1$-edges $E,E'\subset K_M$ at $p$. These \emph{mark} the connection
%at $p$ (see Corollary \ref{cor:marking}).
This means that $K_{M,\epsilon}\supset \post$
(for any geometric representation $K_{M,\epsilon}$ of
$K_M$), see Lemma~\ref{lem:p_on_boundary}.  
%Equivalently the boundary circuit $\EC$ of $K_M$ contains
%the marked succeeding $1$-edges $E(p),E'(p)$ for each $p\in \post$.    

\subsection{Main tree is in the right homotopy class}
\label{sec:proof-main-theorem}

Recall from Definition \ref{def:path_pq_connection} how a boundary
circuit $\EC$ was broken up into subpaths. 
Assume $\EC$ contains the marked succeeding $1$-edges $E(p),E'(p)$ at
$p\in \post$, as well as the marked succeeding $1$-edges $E(q),E'(q)$
at $q\in
\post$. Then 
\begin{align*}
  & \EC(p,q):=\EC(E'(p), E(q));
  &&
  \text{and for any $1$-edge } E\subset \EC
  \\ 
  &\EC(p,E):=\EC(E'(p), E),
  &&
  \EC(E,q):= \EC(E, E(q)).
  \intertext{Furthermore if $E,E'$ are succeeding in $\EC$ we define}
  & \EC(E',E)= \emptyset.
\end{align*}

We are now ready to finish the proof of Theorem~\ref{thm:main}. $K_M$
is the main tree as constructed in Section~\ref{sec:connecting-trees}.  

\begin{lemma}
  \label{lem:main_tree_homotopy}
  The main tree $K_M$ is in the right homotopy class, i.e.,
  satisfies \emph{(C~\ref{item:prop_conn_2})}.
\end{lemma}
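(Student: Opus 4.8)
The plan is to verify property (C~\ref{item:prop_conn_2}), that $\partial K_{M,\epsilon}$ is orientation-preserving isotopic to $\CC$ rel.\ $\post$, by applying Theorem~\ref{thm:Isotopy_rel_post} (Epstein-Zieschang). That is, it suffices to check that for each $0$-edge $E^0$ between consecutive postcritical points $p,q$, the corresponding arc of $\partial K_{M,\epsilon}$ — which is (the geometric realization of) the subpath $\EC(p,q)$ of the boundary circuit $\EC$ of $K_M$ — is isotopic rel.\ $\post$ to $E^0$. First I would record that, by the construction in Section~\ref{sec:connecting-trees}, $K_M$ contains the marked succeeding $1$-edges $E(p),E'(p)$ at every postcritical point $p$, so $\post\subset K_{M,\epsilon}$ and the subpaths $\EC(p,q)$ are well-defined; moreover, by Corollary~\ref{cor:gamman_po} / the orientation conventions, $\partial K_{M,\epsilon}$ traverses the postcritical points in the same cyclic order as $\CC$ (this uses Lemma~\ref{lem:viKM}~(\ref{item:viKM_1}) and~(\ref{item:viKM_2}), i.e. the $1$-vertices $v_i$ are positively ordered and each $0$-edge contains at least one).

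Next I would analyze what $\EC(p,q)$ looks like. Write $\EC(p,q)$ as the concatenation of the pieces of the original main-tree boundary circuit between consecutive $1$-vertices $v_i,v_{i+1}$ on $E^0$, together with the boundary circuits of the secondary trees that were attached along $E^0$ during the procedure of Section~\ref{sec:connecting-trees} (this decomposition comes from Lemma~\ref{lem:add_tree_boundary}). By Lemma~\ref{lem:viKM}~(\ref{item:viKM_3}) each consecutive pair $v_i,v_{i+1}$ lies in a common (black) $1$-tile, and by Lemma~\ref{lem:viKM}~(\ref{item:viKM_4}), Lemma~\ref{lem:white_X0b}, and Lemma~\ref{lem:secondary_tree} each attached secondary tree $K$ satisfies $K\cap\CC\subset[v_i,v_{i+1}]$ for an appropriate arc and does not meet disjoint $0$-edges. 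The goal is to show that all of $\EC(p,q)$ stays ``close to'' $E^0$ in the sense that it only meets the $0$-edge $E^0$ and its two neighbors $E^0_{\mathrm{prev}},E^0_{\mathrm{next}}$, and the intersections with the neighbors are controlled; then Theorem~\ref{thm:isotopy2} applies directly and gives the isotopy rel.\ $\post$.

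The technical heart is to show that the arc $\partial K_{M,\epsilon}$ between $p$ and $q$ (i.e.\ $\EC(p,q)_\epsilon$) only enters neighboring $0$-edges, never a disjoint one. Since no $1$-tile joins opposite sides of $\CC$ (Lemma~\ref{lem:exFC}) and no secondary tree meets disjoint $0$-edges (Lemma~\ref{lem:secondary_tree}), the pieces between consecutive $v_i$ and the attached trivial secondary trees can each only touch $E^0$ and the two adjacent $0$-edges; the no-link condition of Lemma~\ref{lem:exFC} (established in the proof given above, using the two $0$-arcs $A^\pm_j$) is exactly what rules out a secondary tree simultaneously reaching into $[v_i,p]\subset E^0_1$ and $[v_j,q]\subset E^0_2$ in a way that would force postcritical points out of order, which is why Corollary~\ref{cor:disjoint_sec_cluster} holds and the attachment procedure is globally consistent. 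I would also invoke Lemma~\ref{lem:boundary_K2nd} to control that the relevant subpaths of secondary boundary circuits do not stray across an arc $[v,w]_b$. Assembling these, $\EC(p,q)_\epsilon$ meets only $E^0$ and its two neighbors, with endpoints $p,q\in\post$; Lemma~\ref{lem:add_trivial_tree} guarantees that attaching the trivial secondary trees does not change the isotopy class rel.\ $\post$; and then Theorem~\ref{thm:isotopy2} yields that $\EC(p,q)_\epsilon$ is isotopic rel.\ $\post$ to $E^0$. Applying this for every $0$-edge and feeding the result into Theorem~\ref{thm:Isotopy_rel_post} gives that $\partial K_{M,\epsilon}$ is isotopic rel.\ $\post$ to $\CC$; the orientations agree because every $1$-edge in $\gamma^1=\partial K_{M,\epsilon}$ is positively oriented (Corollary~\ref{cor:gamman_po} / Lemma~\ref{lem:orientation-d-fold-cover}).

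The main obstacle I anticipate is the bookkeeping in the previous paragraph: precisely controlling which $0$-edges each piece of $\EC(p,q)$ can touch and ensuring the order of postcritical points is preserved. This is where the no-link hypothesis and Corollary~\ref{cor:disjoint_sec_cluster} must be used carefully — one has to check that a secondary tree attached while processing $E^0$ cannot ``wrap around'' and reappear on a non-adjacent $0$-edge, and that processing the $0$-edges in order never creates a conflict (a secondary tree we would want to attach to two different $0$-edges). Once the combinatorial intersection pattern is pinned down so that Theorem~\ref{thm:isotopy2}'s hypothesis ($\CC_i\cap\gamma_j\ne\emptyset$ only for $j=i-1,i,i+1$) is verified, the rest is a routine application of the topological lemmas in Section~\ref{sec:some-topol-lemm}.
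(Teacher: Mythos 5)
Your proposal follows essentially the same route as the paper's proof: reduce via Theorem~\ref{thm:Isotopy_rel_post}/Theorem~\ref{thm:isotopy2} to showing that each subpath $\EC(p,q)$ meets only $E^0$ and its adjacent $0$-edges, decompose $\EC(p,q)$ via Lemma~\ref{lem:add_tree_boundary} into pieces of the original main-tree circuit and pieces of secondary-tree boundary circuits, and control these with Lemmas~\ref{lem:viKM}, \ref{lem:white_X0b}, \ref{lem:secondary_tree}, \ref{lem:boundary_K2nd} together with the no-link condition. The only place your sketch is looser than the paper is the \emph{initial} subpath of $\EC(p,q)$: it is assembled from the complementary halves of boundary circuits of secondary trees attached while processing the \emph{preceding} $0$-edge, so ``secondary trees do not meet disjoint $0$-edges'' alone does not exclude those halves from reaching a $0$-edge disjoint from $E^0$ — this is precisely what the paper's Claim~2 establishes (via Lemma~\ref{lem:boundary_K2nd} for the trees in $X^0_b$ and Lemma~\ref{lem:viKM}~(\ref{item:viKM_4}) for those in $X^0_w$), and you have correctly identified Lemma~\ref{lem:boundary_K2nd} as the tool for exactly this step.
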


\begin{proof}
  Let $\EC$ be the boundary circuit of $K_M$. 
  Consider a $0$-edge $E^0$ with initial/terminal
  points $p,q\in\post$; and the subpath $\EC(p,q)\subset\EC$ as
  defined above. We will prove the following.

  \begin{claim1}
    $\EC(p,q)$ does not intersect any $0$-edge disjoint with
    $E^0$.  
  \end{claim1}

  The statement of the lemma follows quickly from this claim. 
  Namely consider a geometric representation $K_{M,\epsilon}$ of $K_M$,
  where 
  the neighborhoods $U(v)$ from (\ref{eq:defhv}) were
  chosen such that $U(v)\cap \CC=\emptyset$ whenever $v\notin \CC$. 
  It follows from Claim 1 that the (positively oriented) arc on
  $\partial K_{M,\epsilon}$ from $p$ to $q$ does not intersect
  $0$-edges disjoint from $E^0$. Theorem \ref{thm:isotopy2} now
  finishes the proof.  

  \medskip
  To prove Claim 1 we go through the construction of $K_M$. 
  Consider $K_{M,0}$, the main tree from
  Section \ref{sec:constr-conn} (before any secondary tree was
  added), with boundary circuit $\EC_0$. 
  Let $w_0,w_1\in E^0$ be the first/last $1$-vertices on $E^0$ that
  $\EC_0$ visits; and $E_0,E_0'\subset \EC_0$ as well as
  $E_1,E_1'\subset \EC_0$ be the 
  first/last succeeding $1$-edges at $w_0,w_1$. Consider
  $\EC_0(E_0',E_1)$, note that $\EC(E'_0,E_1)=\EC(E'_0, E_0)=\emptyset$ in the case
  that $\EC_0$ intersects $E^0$ only once. This subpath does not intersect any $0$-edge
  disjoint from $E^0$ by Lemma \ref{lem:viKM} (in fact it may only
  intersect adjacent $0$-edges if $w_0=p$ or $w_1 =q$). 

  Note that $\EC_0(E'_0, E_1)$ is a subpath of $\EC(p,q)$, or
  $\EC(E'_0,E_1)= \EC_0(E'_0,E_1)$, which we call the \emph{middle}
  subpath of $\EC(p,q)$. The remaining subpaths of $\EC(p,q)$ are
  given as follows. Let $D_0$ be the $1$-edge preceding $E'_0$ in
  $\EC$ and $D_1$ be the $1$-edge succeeding $E_1$ in $\EC$. Then 
  the \emph{initial} subpath of $\EC(p,q)$ is $\EC(p,D_0)$ (connecting $p$ to
  $\EC(E'_0,E_1)$), and the \emph{terminal} subpath of $\EC(p,q)$ is
  $\EC(D_1',q)$ (connecting $\EC(E'_0, E_1)$ to $q$). Note that the
  initial and/or the terminal subpath may be empty. We focus our
  attention for now on the terminal subpath.

%   Let us focus our attention again on the subpath we are
%   investigating, namely $\EC(p,q)$. 
%   Note that $\EC(p,q)$ consists of \emph{three subpaths}; namely the
%   initial  
%   subpath $\EC(p,D_0)$, the middle subpath which is
%   $\EC(E_0',E_1)=\EC_0(E_0',E_1)$, and the terminal subpath
%   $\EC(D_1',q)$. 

  \smallskip
  Let $K_1,\dots, K_m$ be the secondary trees that were added in
  Section \ref{sec:connecting-trees} to
  ``reach'' the postcritical point $q$. The last secondary tree $K_m$
  contains the postcritical point $q$ by construction. 
  
  Let $K_{M,j}$ be the main tree obtained when the
  secondary tree $K_{j}$ was added to $K_{M,j-1}$ at the $1$-vertex
  $w_j\in E^0$. Let $E_j,E_j'\subset K_{M,j-1}$, and $D_j,D_j'\subset
  K_j$ be the succeeding $1$-edges associated to adding $K_j$ to
  $K_{M,j-1}$ by Lemma \ref{lem:add_successor}. Note that by
  construction the $1$-vertices of $K_{M,j}$ closest to $q$ on the
  $0$-edge $E^0$ are contained in $K_j\subset K_{M,j}$. 
  Thus $K_{j+1}$ is attached to $K_{M,j}$ at
  $1$-edges contained in $K_j$.

  Thus if we denote by $\EC_j$ the boundary circuit of the secondary tree
  $K_j$, then $D_j,D_j',E_{j+1}$, $E_{j+1}'\in \EC_j$ and
  \begin{equation*}
    \EC_j \text{ consists of the two (non-empty) subpaths } \EC_j(D_j', E_{j+1}),
    \EC_j(E_{j+1}',D_j), 
  \end{equation*}
  for $j=1,\dots, m-1$, we break $\EC_m$ up
  into the (non-empty) subpaths $\EC_m(D'_m,q)$, $\EC_m(q,D_m)$. 
 
  \begin{figure}
    \centering
    \includegraphics[width=12cm]{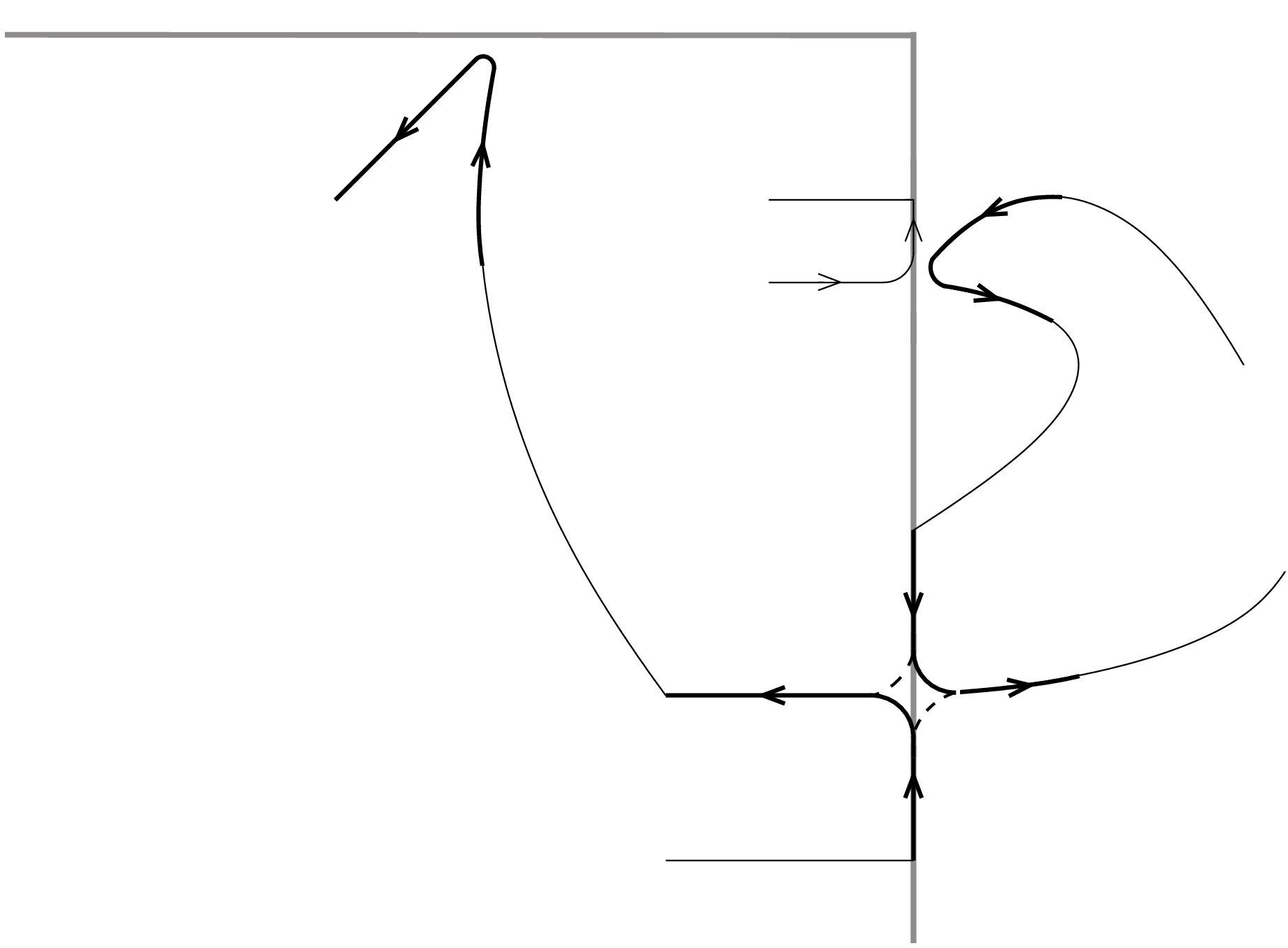}
    \begin{picture}(10,10)
      \put(-100,250){${q}$}
      \put(-120,214){${E^0}$}
      \put(-118,187){$\scriptstyle{D_2'}$}
      \put(-150,187){$\scriptstyle{K_2}$}
      \put(-130,165){$\scriptstyle{D_2}$}
      \put(-50,130){${K_1}$}
      \put(-93,200){$\scriptstyle{E_2}$}
      \put(-93,162){$\scriptstyle{E_2'}$}
      \put(-97,92){$D_1$}
      \put(-90,55){$D_1'$}
      \put(-120,40){$E_1$}
      \put(-145,73){$E_1'$}
      \put(-180,50){$K_{M,0}$}
      \put(-213,205){$\scriptstyle{E_N}$}
      \put(-257,215){$\scriptstyle{E_N'}$}
      \put(-350,247){${r}$}
      \put(-330,225){${E^0}'$}
      \put(-96,5){$p$}
      \put(-220,130){$\EC_0$}
      \put(-40,65){$\EC_1$}
    \end{picture}
    \caption{Adding $K_j$ to $K_{M,j-1}$.}
    \label{fig:addKj2K}
  \end{figure}

  Lemma
  \ref{lem:add_successor} implies that the terminal
    subpath $\EC(D_1',q)$ is given as the concatenation of (subpaths
  from the boundary circuits from the secondary trees $K_j$)  
  \begin{equation}
    \label{eq:EC_tail}
    \EC_1(D_1',E_2),\EC_2(D_2',E_3),\dots, \EC_m(D_m',q),
  \end{equation}
  see Figure \ref{fig:addKj2K}. It
  follows from Lemma \ref{lem:secondary_tree} that $\EC(D_1',q)$ does
  not intersect any $0$-edge disjoint from $E^0$.

  \medskip
  It remains to show that the initial subpath does not
  intersect a $0$-edge disjoint from $E^0$. 

  Instead of looking at the initial subpath of $\EC(p,q)$ we consider
  the initial subpath of $\EC(q,r)$. Here $r$ is the terminal point of
  the $0$-edge ${E^0}'$ succeeding $E^0$. Let $E_N\subset \EC_0$ be
  the first $1$-edge intersecting ${E^0}'$ in a $1$-vertex $w_N$. The
  initial subpath of  
  $\EC(q,r)$ is $\EC(q,E_N)$; it is given as the concatenation of
  \begin{equation*}
    \EC_m(q,D_m),\EC_{m-1}(E_m',D_{m-1}),\dots, \EC_1(E_2',D_1),
    \EC_0(E_1',E_N); 
  \end{equation*}
  where $D_j,E_j'$ are as above. These are the ``complementary
  subpaths'' to the ones in (\ref{eq:EC_tail}) (of the boundary
  circuits of the secondary trees $K_j$). See again
  Figure~\ref{fig:addKj2K}.

  It remains to show that this path does not intersect a $0$-edge
  disjoint from ${E^0}'$. Clearly $\EC_0(E_1',E_N)$ intersects
  $\CC$ only at the endpoints, which are in $E^0$ and ${E^0}'$. 

  \smallskip
  Recall that $\EC_j(E_{j+1}',D_j)\subset K_j$, where $K_j$ does not
  intersect disjoint $0$-edges. Thus $\EC_j(E_{j+1}',D_j)$ may only
  intersect $E^0,{E^0}'$, or the $E^0$ preceding $0$-edge
  $\widetilde{E}^0$. 

  \begin{claim2}
    The subpath $\EC_j(E_{j+1}',D_j)$ does not intersect
    $\widetilde{E}^0$. 
  \end{claim2}
  This is clear if $K_j\subset X^0_w$, since then $K_j\cap \CC\subset
  [w_1,q]\cup [q,w_N]$ by Lemma \ref{lem:viKM} (\ref{item:viKM_4}). 
  
  Assume now that $K_j\subset X^0_b$. Let $w$ be the initial point of
  $\EC_j(E_{j+1}',D_j)$ and $v$ be its terminal point. Note that by
  construction $w\in E^0$ is closer to $q$ on $E^0$ than $v\in
  E^0$. From Lemma \ref{lem:boundary_K2nd} it follows that
  $\EC_j(E_{j+1}',D_j)\subset [v,w]\subset E^0\setminus\{p\}$.   Claim~2 follows. 
%   Assume $K_j\subset X^0_b$. Furthermore assume that Claim 2 is false,
%   this means there is a 
%   $1$-vertex $\widetilde{w}\subset \widetilde{E}^0$ that is contained
%   in both $\EC_j(E_{j+1}', D_j)$ and $\EC_j(D_j',E_{j+1})$. Thus there
%   are (at least) four $1$-edges in $\EC_j$ containing
%   $\widetilde{w}$. Therefore 
%   there are two white $1$-tiles $X_1,X_2\subset K_j$ connected at
%   $\widetilde{w}$ in contradiction to our construction. Thus Claim 2
%   is proved.
  
  \bigskip
  The argument that the initial subpath $\EC(p,D_0)$ does not
  intersect $0$-edges disjoint from $E^0$ is completely
  analogous. This finishes the proof of Claim 1, thus the proof of the lemma.

\end{proof}

We finish the construction of the main tree, i.e., of the connection
of $1$-tiles by adding the 
remaining secondary trees to the main tree arbitrarily, to
form the spanning tree $K_M$. The previous lemma, together with
Lemma~\ref{lem:add_trivial_tree} implies that $K_M$ satisfies
properties (C~\ref{item:prop_conn_1}) and (C~\ref{item:prop_conn_2}).  
Thus there is a pseudo-isotopy $H^0$ as required in
Definition~\ref{def:pseudo-isotopy-h0}, by
Lemma~\ref{lem:H0epsH0}. This yields 
the invariant Peano curve by Sections~\ref{sec:appr-gn},
\ref{sec:constr-g}. The proof of Theorem~\ref{thm:main} is thus
finished.

\section{Combinatorial construction of $\gamma^n$}
\label{sec:comb-constr-gamm}

The $(n+1)$-th approximation $\gamma^{n+1}$ of the invariant Peano
curve $\gamma$ was constructed as a deformation of $\gamma^{n}$ by
$H^n$. Here 
$H^n$ was the lift of the ``initial pseudo-isotopy'' $H^0$ by $F^n$. 
In this section we give an \emph{alternative} way to construct
$\gamma^{n+1}$ from $\gamma^n$, namely in a purely
\emph{combinatorial} fashion.  

Recall from Lemma \ref{lem:g1bdK} that the first approximation
$\gamma^1$ may be obtained as the \emph{boundary circuit} of the
white spanning tree, defined via the \emph{connection of $1$-tiles}.
Here we construct the \defn{connection of $n$-tiles} (which will again  
satisfy (C \ref{item:prop_conn_1}), (C \ref{item:prop_conn_2})), such
that $\gamma^n$ is the boundary circuit of the white tree of
$n$-tiles. See Figure \ref{fig:H0H1H2g} for an illustration of the
desired connections of $n$-tiles. 

\smallskip
The connections of $n$-tiles \emph{could} be constructed from the
approximations $\gamma^n$ (using Lemma \ref{lem:prop_successor}). We
do however take the opposite route here, namely we construct the
connections inductively and show that their boundary circuits are the 
approximations as defined before.

\subsection{Connection of $n$-tiles}
\label{sec:connection-n-tiles}

We give the (inductive) description of the connection of
$n$-tiles first, before showing that it has the desired properties. 

\smallskip
Fix $n\geq 1$.
Assume the connection of $n$-tiles is given. This means at each
$n$-vertex $v$ a cnc-partition $\pi^n_w(v)\cup \pi^n_b(v)$ is defined;
if $v=p\in\post$ it is marked (see Definition \ref{def:connection}). The
connection satisfies properties (C \ref{item:prop_conn_1}), (C
\ref{item:prop_conn_2}) and the (single) boundary circuit is equal to
the $n$-th approximation $\gamma^n$ (viewed as an Eulerian circuit).  

\smallskip
Consider now an $(n+1)$-vertex $v$. The connection of $(n+1)$-tiles at
$v$ is defined as follows. 

\begin{case}[1]
  $v$ is not an $n$-vertex.  

  Note, that this implies that $v$ is \emph{not a critical point}. Thus
  we can define the connection at $v$ as the ``pullback'' of the
  connection at $F(v)$. 
  
  More precisely let
  $w:= F(v)$ ($\in \V^n$). Let $X^n_0, \dots, X^n_{2m-1}$ be the
  $n$-tiles around $w$ 
  (labeled mathematically positively around $w$). Label the
  $(n+1)$-tiles around $v$, $X^{n+1}_0,\dots, X^{n+1}_{2m-1}$, such that
  $F(X^{n+1}_j)= X^n_j$ ($j=0,\dots, 2m-1$). Then
  \begin{equation}
    \label{eq:defpi_n_case1}
    X^{n+1}_i, X^{n+1}_j \text{ are connected at } v
    \;:\Leftrightarrow\; X^n_i,
    X^n_j  \text{ are connected at } w.
  \end{equation}
  In other words, the connection (of $(n+1)$-tiles) at $v$ is defined
  by  
  \begin{equation*}
    \pi^{n+1}_w(v)\cup \pi^{n+1}_b(v):= \pi^n_w(w)\cup \pi^n_b(w). 
  \end{equation*}
\end{case}

\smallskip
\begin{case}[2]
  $v$ is an $n$-vertex ($v\in \V^{n+1}\cap \V^n$).

  Then $p:= F^n(v)\in \post=\V^0$. 
  Consider two white $(n+1)$-tiles $X^{n+1}, Y^{n+1}\ni v$. They
  are connected (at $v$) if and only if they are 
  \begin{itemize}
  \item either contained in the image of the \emph{same} (white)
    $n$-tile $X^n$ by the 
    pseudo-isotopy $H^n$,
    \begin{equation*}
      X^{n+1},Y^{n+1} \subset H^n_1(X^n)
    \end{equation*}
    and their images by $F^n$ are connected, meaning the $1$-tiles
    \begin{equation*}
      F^n(X^{n+1}), F^n(Y^{n+1}) \text{ are connected at } p; 
    \end{equation*}
  \item or $X^{n+1},Y^{n+1}$ are contained in the images of
    \emph{connected} $n$-tiles $X^n, Y^n\ni v$, 
    \begin{align*}
      &X^{n+1}\subset H^n_1(X^n) , \,Y^{n+1}\subset H^n_1(Y^n) 
      \quad \text{and} 
      \\
      &X^n,Y^n \text{ are connected at } v,
    \end{align*}
    and $X^{n+1},Y^{n+1}$ both map to $1$-tiles that are ``connected
    to the marked succeeding $1$-edges'', meaning the $1$-tiles
    \begin{align*}
      &F^n(X^{n+1}), F^{n}(Y^{n+1}) \text{ are connected at $p$ to the
        white $1$-tiles $X^1,\widetilde{X}^1$}
      \\
      &\text{that \emph{contain} the \emph{marked} succeeding
        $1$-edges $E^1, \widetilde{E}^1$}.  
    \end{align*}
  \end{itemize}
  The connection of black $(n+1)$-tiles at $v$ is defined analogously
  to the above.

  \begin{figure}
    \centering
    \includegraphics[width=12cm]{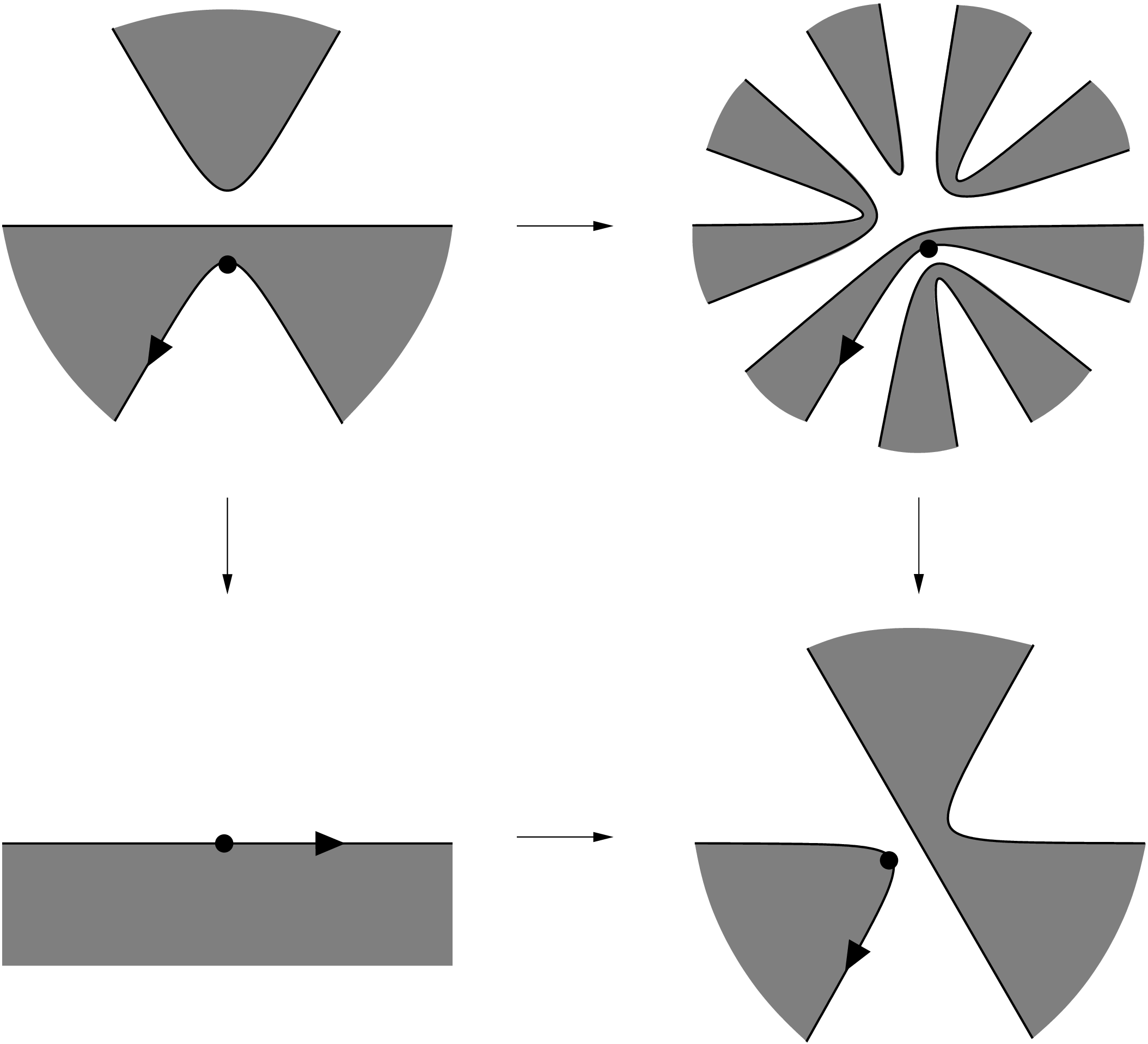}
    \begin{picture}(10,10)
      %
      % labels for commutative diagram
      \put(-88,140){${F^n}$}
      \put(-294,140){${F^n}$}
      \put(-182,67){$H^0$}
      \put(-182,249){$H^n$}
      % 
      % labels for the 0-tiles
      \put(-320,70){$\scriptstyle{X^{0}_{w}}$}
      \put(-320,10){$\scriptstyle{X^{0}_{b}}$}
      \put(-275,67){$\scriptstyle{p}$}     
      \put(-250,67){$\scriptstyle{\widetilde{E}^0}$}     
      % 
      % labels for 1-tiles
      \put(-80,0){$\scriptstyle{X^{1}_{0}}$}
      \put(-110,-9){$\scriptstyle{\widetilde{E}^{1}}$}
      \put(-11,25){$\scriptstyle{X^{1}_{1}}$}
      \put(-17,92){$\scriptstyle{X^{1}_{2}}$}
      \put(-63,127){$\scriptstyle{X^{1}_{3}}$}
      \put(-140,92){$\scriptstyle{X^{1}_{4}}$}
      \put(-153,25){$\scriptstyle{X^{1}_{2k-1}}$}      
      \put(-82,61){$\scriptstyle{p}$}

      %
      % labels for n-tiles
      \put(-280,180){$\scriptstyle{X^n_0}$}
      \put(-222,200){$\scriptstyle{X^n_1}$}
      \put(-222,270){$\scriptstyle{X^n_2}$}  
      \put(-287,311){$\scriptstyle{X^n_3}$}
      \put(-338,270){$\scriptstyle{X^n_4}$}
      \put(-354,200){$\scriptstyle{X^n_{2m-1}}$}
      \put(-317,175){$\scriptstyle{\widetilde{E}^n}$}
      \put(-280,223){$\scriptstyle{v}$}
      %
      % labels for (n+1)-tiles
      \put(-105,170){$\scriptstyle{X^{n+1}_0}$}
      \put(-113,177){$\scriptstyle{\widetilde{E}^{n+1}}$}
      \put(-80,168){$\scriptstyle{X^{n+1}_1}$}      
      \put(-55,172){$\scriptstyle{X^{n+1}_2}$}
      \put(-30,184){$\scriptstyle{X^{n+1}_3}$}
      \put(-15,205){$\scriptstyle{X^{n+1}_4}$}
      \put(-4,230){$\scriptstyle{X^{n+1}_5}$}
      \put(-5,253){$\scriptstyle{X^{n+1}_6}$}
      \put(-10,277){$\scriptstyle{X^{n+1}_7}$}
      \put(-30,295){$\scriptstyle{X^{n+1}_8}$}
      \put(-50,310){$\scriptstyle{X^{n+1}_9}$}                                    
      \put(-80,312){$\scriptstyle{X^{n+1}_{10}}$}
      \put(-105,312){$\scriptstyle{X^{n+1}_{11}}$}
      \put(-128,297){$\scriptstyle{X^{n+1}_{12}}$}
      \put(-148,280){$\scriptstyle{X^{n+1}_{13}}$}
      \put(-155,254){$\scriptstyle{X^{n+1}_{14}}$}
      \put(-160,228){$\scriptstyle{X^{n+1}_{15}}$}
      \put(-152,207){$\scriptstyle{X^{n+1}_{16}}$}
      \put(-147,187){$\scriptstyle{X^{n+1}_{2km-1}}$}
      \put(-74,230){$\scriptstyle{v}$}
    \end{picture}
    \caption{Inductive construction of connections.}
    \label{fig:ind_conn}
  \end{figure}

  \smallskip
  We will \emph{formalize} the description above.   To do this, we
  will first have to \emph{label} the involved $1$-tiles, $n$-tiles,
  and $(n+1)$-tiles in a \emph{consistent manner}. See Figure
  \ref{fig:ind_conn} for an illustration.  
  
  Recall from Lemma
  \ref{lem:Hn_maps_edges} that for each $(j+1)$-edge $E^{j+1}$ there
  is a unique arc $A^j$ contained in a $j$-edge $E^j$ that is deformed 
  by the pseudo-isotopy $H^j$ to $E^{j+1}$. Since we will often want
  to keep track of where such an $E^{j+1}$-edge ``comes from'', we use
  the \emph{notation} 
  \begin{equation*}
    H^j\colon A^j\subset E^j\to E^{j+1},
  \end{equation*}
  in this case.

  We will single out one $0$-, $1$-, $n$-, and $(n+1)$-edge. Let
  $\widetilde{E}^0$ be the $0$-edge with initial point $p$
  ($\widetilde{E}^0$ is positively oriented as boundary of the white
  $0$-tile $X^0_w$). The $1$-edge $\widetilde{E}^1$ is the
  \emph{marked} one with initial point $p$. Thus there is an arc
  $\widetilde{A}^0\ni p$, such that $H^0\colon \widetilde{A}^0\subset
  \widetilde{E}^0 \to \widetilde{E}^1$. We choose (arbitrarily) one
  $n$-edge $\widetilde{E}^n\ni v$ such that
  $F^n(\widetilde{E}^n)=\widetilde{E}^0$. Finally we choose the
  $(n+1)$-edge $\widetilde{E}^{n+1}\ni v$, such that there is an
  $n$-arc $\widetilde{A}^n\ni v$ satisfying $H^n\colon
  \widetilde{A}^n\subset \widetilde{E}^n \to \widetilde{E}^{n+1}$. 
 
  %The natural choice is to use the \emph{marked
  %  succeeding edges} $E,\widetilde{E}$ for this labeling. More
  %precisely we will use $\widetilde{E}$ so that the ``first'' tile
  %will be to the ``right'' of $\widetilde{E}$. 

  Let $2m$ be the number of $n$-tiles containing $v$ (this means that
  $m= \deg_{F^n}(v)$) and $2k$ the number of $1$-tiles containing
  $p$. Then the number of $(n+1)$-tiles containing $v$ is $2km$. 

  \smallskip
  The $1$-tiles $X^1_0,\dots, X^1_{2k-1}$ around $p$, the $n$-tiles $X^n_0,
  \dots, X^n_{2m-1}$ around $v$, and the $(n+1)$-tiles $X^{n+1}_0,\dots,
  X^{n+1}_{2km-1}$ around $v$ are labeled mathematically positively
  (around $p$, $v$ respectively) and such that $\widetilde{E}^1\subset
  X^1_0$, $\widetilde{E}^n\subset X^n_0$, $\widetilde{E}^{n+1}\subset
  X^{n+1}_0$. 

  Recall that white tiles are always labeled
  by even, black tiles by odd indices. Thus $X^1_0,X^n_0,X^{n+1}_0$
  are all \emph{white} tiles. This finishes the labelling.

%   \smallskip
%   Let $\widetilde{E}^0$ be the $0$-edge with initial point $p$, here
%   each $0$-edge is positively oriented as boundary of $X^0_w$. 

%   \smallskip
%   Let $E^1,\widetilde{E}^1$ be the succeeding $1$-edges at $p$ that
%   \emph{mark} the 
%   connection (see Corollary \ref{cor:marking}). This means that an arc
%   $\widetilde{A}^0\subset \widetilde{E}^0$ containing $p$ is deformed
%   by $H^0$ to $\widetilde{E}^1$. 
  
%   The $1$-tiles around
%   $p$, $X^1_0,\dots, X^1_{2k-1}$, are labeled (mathematically
%   positively around $p$) such that $\widetilde{E}^1\subset X^1_0$.  

%   \smallskip
%   The $n$-tiles around $v$ are $X^n_0, \dots, X^n_{2m-1}$, labeled
%   mathematically positively around $v$; otherwise arbitrary. 
  %In Case
  %(3) however, we \emph{will} specify which $n$-tile is labeled
  %$X^n_0$.  

%   \smallskip
%   Let $\widetilde{E}^n\subset X^n_0$ be the $n$-edge
%   with \emph{initial point} $v\in \widetilde{E}^n$ (here $n$-edges are
%   positively oriented as boundaries of white $n$-tiles). This means
%   that $F^n(\widetilde{E}^n) =\widetilde{E}^0$.
%   There is an arc $\widetilde{A}^n\subset
%   \widetilde{E}^n$, such that $\widetilde{A}^n\ni 
%   v$, that is deformed by $H^n$ to an $(n+1)$-edge
%   $\widetilde{E}^{n+1}\ni v$ (see 
%   Lemma \ref{lem:Hn_maps_edges}). 
%   Note that $F^n(\widetilde{E}^{n+1})= \widetilde{E}^1$, since $H^n$
%   is the lift of $H^0$ by $F^n$.  
%   The $(n+1)$-tiles $X^{n+1}_0,\dots,
%   X^{n+1}_{2km-1}$ are labeled mathematically positively around $v$,
%   such that $\widetilde{E}^{n+1}\subset X^{n+1}_0 $. This finishes the
%   labeling of tiles.

  \smallskip
  The blocks $b^{n+1}$ of the cnc-partition $\pi^{n+1}_w(v)\cup
  \pi^{n+1}_b(v)$ are defined as follows. For each block $b^1\in
  \pi^1_w(v)\cup \pi^1_b(v)$ and each $j=0,\dots, m-1$ there is a
  block 
  \begin{equation}
    \label{eq:defbn+1}
    b^{n+1} = b^{n+1}_{j}(b^1) = b^1 + 2kj = \{ i + 2kj \mid i \in
    b^1\}.  
  \end{equation}
%   This should be interpreted as follows. We consider initially (for
%   $j=0$) the first $2k$ $(n+1)$-tiles $X^{n+1}_0, \dots,
%   X^{n+1}_{2k-1}$.  
%   They are connected (at $v$) if they are mapped to
%   connected $1$-tiles by $F^n$. Similarly the second (for $j=1$)
%   $2k$ $(n+1)$-tiles $X^{n+1}_{2k}, \dots, X^{n+1}_{4k-1}$ are
%   connected at $v$ if they map to connected $1$-tiles by $F^n$ and so
%   on.  
  This corresponds to the first part of the description above.

%   Consider now a white $n$-tile $X^n_j\ni v$. The pseudo-isotopy $H^n$
%   deforms it to $(n+1)$-tiles. 

%   Some blocks $b^{n+1}$ as defined above will be connected (meaning we
%   will take their union), namely ones that map to $1$-tiles adjacent
%   to the marked succeeding $1$-edges.

  \smallskip
  Now let $b^1_{\star}\in \pi^1_w(p)$ be the block
  containing $0$; it contains indices of white $1$-tiles that are
  connected to the marked succeeding $1$-edges at $p$. The sets
  $b^{n+1}_j(b^1_{\star})=b^1_{\star} +2kj$ are defined as in
  (\ref{eq:defbn+1}), they 
  contain indices of $(n+1)$-tiles that are mapped to ($1$-tiles with
  indices in) $b^1_{\star}$ by $F^n$. 
  For each block
  $b^n\in\pi^n_w(v)$ there is a block
  $b^{n+1}_{\star}\in\pi^{n+1}_w(v)$ given by
  \begin{equation}
    \label{eq:defbn+1star}
    b^{n+1}_{\star}= b^{n+1}_{\star}(b^n) := \bigcup
    \{ b^1_{\star} +2kj \mid 2j \in b^n\}.  
  \end{equation}
  This is the formal description of the second part described above.

  In the same fashion let $c^1_{\star}\in \pi^1_b(p)$ be the block
  containing $2k-1$. It contains indices of black $1$-tiles connected
  to the marked succeeding $1$-edges at $p$. For each block
  $c^n\in\pi^n_b(v)$ there is a block $c^{n+1}_{\star}\in
  \pi^{n+1}_b(v)$ given by
  \begin{equation}
    \label{eq:defcn+1star}
    c^{n+1}_{\star}= c^{n+1}_{\star}(c^n) := \bigcup \{
    c^1_{\star} + 2kj \mid 2j+1\in c^n\}.  
  \end{equation}
  
  The cnc-partition $\pi^{n+1}_w(v)\cup \pi^{n+1}_b(v)$ consists of
  all blocks $b^{n+1}_j(b^1)$ as in (\ref{eq:defbn+1}), where $b^1\neq
  b^1_{\star}, c^1_{\star}$; as well as all blocks
  $b^{n+1}_{\star}=b^{n+1}(b^n), c^{n+1}_{\star}=
  c^{n+1}_{\star}(c^n)$ as above.  

%   We will define the connection of white $(n+1)$-tiles first, the
%   connection of black $(n+1)$-tiles will be defined analogously. 
%   Let $b^1_0, \dots, b^1_N$ be the \emph{blocks} of $\pi^1_w(p)$. We
%   assume that $0\in b^1_0$; this block plays a special role since it
%   is ``adjacent to the marked succeeding $1$-edges''. 

%   \smallskip
%   We divide the $(n+1)$-tiles into ``the first $2k$, the second $2k$'',
%   and so on. They are connected if the corresponding $1$-tiles are
%   connected. More precisely we consider the $(n+1)$-tiles
%   $X^{n+1}_0,\dots, X^{n+1}_{2k-1}$. Then
%   \begin{equation*}
%     X^{n+1}_i, X^{n+1}_j \text{ are connected if } X^1_i, X^1_j \text{
%       are connected at } v,
%   \end{equation*}
%   for all $0\leq i,j \leq 2k-1$. 
%   Similarly 
%   \begin{equation*}
%     X^{n+1}_{2kl+i}, X^{n+1}_{2kl+j} \text{ are connected if } X^1_i,
%     X^1_j \text{ are connected at } v,
%   \end{equation*}
%   for all $0\leq i,j \leq 2k-1$, $l=0,\dots, m-1$. Formally we form
%   blocks
%   \begin{equation*}
%     b^{n+1} = b^{n+1}_{j,l} = b^1_j + 2kl = \{ i + 2kl \mid i \in
%     b^1_j\}, 
%   \end{equation*}
%   for all $j=0, \dots, N$ and $l=0, \dots, m-1$. For each block
%   $b^n\in \pi^n_w(v)$ there is a block $b^{n+1}=b^{n+1}(b^n)\in
%   \pi^{n+1}_w(v)$ given by
%   \begin{equation*}
%     b^{n+1}:= \bigcup_{j\in b^n} b^{n+1}_{0,j}. 
%   \end{equation*}
\end{case}

\begin{case}[3]
  $v\in \post$.

  Note that $\post= \V^0\subset \V^n$. This case is thus a subcase of
  Case (2). 
  The cnc-partition $\pi^{n+1}_w(v)\cup \pi^{n+1}_b(v)$ is thus
  already constructed in Case (2). 
  It remains to \emph{mark} it.   Recall that in Case (2) the $n$-edge 
  $\widetilde{E}^n$ with $F^n(\widetilde{E}^n)=\widetilde{E}^0$, was
  chosen \emph{arbitrarily}. Now however, we let $\widetilde{E}^n$ be
  the \emph{marked} $n$-edge with initial point $v$.
 
  The marked $(n+1)$-edge with initial point $v$ is
  $\widetilde{E}^{n+1}$ (recall that there is an arc
  $\widetilde{A}^n\ni v$ such that $H^n\colon \widetilde{A}^n\subset
  \widetilde{E}^n\to \widetilde{E}^{n+1}$). 

%   Let 
%   $E^n,\widetilde{E}^n\ni v$ be the
%   succeeding $n$-edges that \emph{mark} the connection of $n$-tiles at
%   $v$. 

%   white $n$-tile was labeled $X^n_0$. Here however, the labeling is
%   chosen such that
%   $X^n_0\supset \widetilde{E}^n$ (the marked $n$-edge with initial
%   point $v$). 
%   Let $A^n\subset E^n$, $\widetilde{A}^n\subset
%   \widetilde{E}^n$ be the arcs 
%   %(contained in the marked edges of $\pi^n_w(v)\cup \pi^n_b(v)$) 
%   that contain $v$, and are deformed to
%   the $(n+1)$-edges $E^{n+1},\widetilde{E}^{n+1}$. 
%   Then $E^{n+1},
%   \widetilde{E}^{n+1}$ are the marked succeeding $(n+1)$-edges of
%   $\pi^{n+1}_w(v)\cup \pi^{n+1}_b(v)$. 

  Alternatively consider the blocks $b^{n+1}=b^{n+1}(0)\in
  \pi^{n+1}_w(v)$, $c^{n+1}=c^{n+1}(2km-1)\in \pi^{n+1}_b(v)$ such
  that $0\in b^{n+1}$ and $2km-1\in c^{n+1}$. These two adjacent
  blocks mark the connection of $(n+1)$-tiles at $p$ (see Corollary
  \ref{cor:marking}).    
\end{case}

\subsection{Properties of connections}
\label{sec:prop-conn}

Here we prove that the connections of $n$-tiles defined above have the
desired properties.

\begin{prop}
  \label{prop-conn-n}
  The connection  of $n$-tiles as defined in Section
  \ref{sec:connection-n-tiles} satisfies the following.
  \begin{enumerate}
  \item
    \label{item:conn_n_cnc}
    Each $\pi^n_w(v)\cup \pi^n_b(v)$ is a \emph{cnc-partition}.
  \item 
    \label{item:conn_n_C1C2}
    The connection of $n$-tiles satisfies properties {\upshape (C
      \ref{item:prop_conn_1}), (C \ref{item:prop_conn_2})} from
    Definition~\ref{def:prop_conn}.
  \item
    \label{item:conn_n_bdgn}
    The (single) \emph{boundary circuit} of the cluster of white
    $n$-tiles is \emph{equal} to the \emph{$n$-th approximation}
    $\gamma^n$ (viewed as an Eulerian circuit). 
  \end{enumerate}
\end{prop}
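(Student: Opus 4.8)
The plan is to prove Proposition~\ref{prop-conn-n} by induction on $n$. The base case $n=1$ is exactly the content of Section~\ref{sec:construction-h0}: the connection of $1$-tiles there was constructed to satisfy (C~\ref{item:prop_conn_1}) and (C~\ref{item:prop_conn_2}), it consists of cnc-partitions by Definition~\ref{def:connection}, and by Lemma~\ref{lem:g1bdK} its boundary circuit equals $\gamma^1$. So assume the statement holds for some $n\geq 1$, and consider the connection of $(n+1)$-tiles defined case-by-case in Section~\ref{sec:connection-n-tiles}.

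First I would verify part~(\ref{item:conn_n_cnc}), that each $\pi^{n+1}_w(v)\cup\pi^{n+1}_b(v)$ is a cnc-partition. In Case~1 this is immediate since the partition is literally a copy of the cnc-partition at $F(v)$, transported by the local homeomorphism $F$ near the non-critical point $v$. The real work is Case~2 (which subsumes Case~3). Here I would argue directly from the combinatorial formulas (\ref{eq:defbn+1}), (\ref{eq:defbn+1star}), (\ref{eq:defcn+1star}): the blocks $b^{n+1}_j(b^1)=b^1+2kj$ for $b^1\neq b^1_\star,c^1_\star$ sit in disjoint ``sectors'' $[2kj,2k(j+1)-1]$ and within each sector inherit the non-crossing property of $\pi^1_w(p)\cup\pi^1_b(p)$ (shifted); the merged blocks $b^{n+1}_\star(b^n)$ and $c^{n+1}_\star(c^n)$ glue together, across sectors $2kj$ with $2j\in b^n$, the pieces $b^1_\star+2kj$ (resp. $c^1_\star+2kj$), and the non-crossing of this gluing is governed precisely by the non-crossing of $\pi^n_w(v)\cup\pi^n_b(v)$, using that $b^1_\star$ is the block containing $0$ (hence ``at the seam'' of each sector) and $c^1_\star$ the block containing $2k-1$. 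I would phrase this as: a crossing in $\pi^{n+1}$ would project, under the map collapsing each sector to a single even/odd pair, to a crossing in $\pi^n$; and within a single sector it would give a crossing in $\pi^1$. Complementarity of $\pi^{n+1}_w$ and $\pi^{n+1}_b$ then follows from the adjacency characterization in Lemma~\ref{lem:prop_comp}, checking that consecutive indices $i,i+1$ lie in adjacent blocks — again split into ``both in the same sector'' (reduces to $\pi^1$) and ``at a seam $i=2k(j+1)-1$, $i+1=2k(j+1)$'' (reduces to the adjacency of the blocks of $\pi^n$ containing $2j+1$ and $2j+2$).

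Next, for parts~(\ref{item:conn_n_C1C2}) and~(\ref{item:conn_n_bdgn}) I would proceed in the order: first establish~(\ref{item:conn_n_bdgn}) directly, then deduce~(\ref{item:conn_n_C1C2}) from it together with the results already proved in Sections~\ref{sec:appr-gn}--\ref{sec:constr-g}. For~(\ref{item:conn_n_bdgn}) the key is Lemma~\ref{lem:arcs_successors}: the connection data at each $(n+1)$-vertex determines the successor of every $(n+1)$-edge, and hence the boundary circuit; so it suffices to check that with the connection defined above, the successor relation on $(n+1)$-edges agrees with the one induced by the Eulerian circuit $\gamma^{n+1}=H^n_1(\gamma^n)$ of Definition~\ref{def:gamma_n}. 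That agreement is precisely what Lemma~\ref{lem:gamma_n_rel} describes: $E'$ succeeds $D'$ in $\gamma^{n+1}$ iff either the arcs $A',B'$ deforming to them lie in the same $n$-edge with $B'$ succeeding $A'$, or they lie in different $n$-edges $E(A'),E(B')$ that succeed each other in $\gamma^n$ with the terminal/initial endpoints matching. Unwinding the definition of the connection at an $(n+1)$-vertex $v$: if $v$ is not an $n$-vertex (Case~1) the connection is pulled back from $F(v)$, which matches the ``same $n$-edge'' clause since $F^n$ carries the local picture isomorphically; if $v\in\V^n$ (Case~2) the two bullets in the description — ``same white $n$-tile $X^n$, images connected at $p$'' versus ``connected $n$-tiles $X^n,Y^n$, images connected to the marked succeeding $1$-edges'' — correspond exactly to the two clauses of Lemma~\ref{lem:gamma_n_rel}, using the inductive hypothesis that $\gamma^n$ is the boundary circuit of the white $n$-tile cluster and that (C~\ref{item:prop_conn_2}) holds at level $n$ so that $\gamma^n$ traverses $0$-edges (hence postcritical points) in the cyclic order of $\CC$. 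The marking in Case~3 matches the base-point bookkeeping of Section~\ref{sec:parametrizing-gn}. Once~(\ref{item:conn_n_bdgn}) is known, (C~\ref{item:prop_conn_1}) — that the white $(n+1)$-tiles form a spanning tree — follows from Lemma~\ref{lem:cluster_sc}: $\gamma^{n+1}$ is surjective onto $\bigcup\E^{n+1}$ by ($H^n$~\ref{item:Hn_2}) and is a single circuit, so each white $(n+1)$-tile appears and the cluster has exactly one boundary circuit, forcing it to be a tree; and (C~\ref{item:prop_conn_2}), that $\partial K^{n+1}_\epsilon$ is orientation-preserving isotopic to $\CC$ rel.\ $\post$, follows because $\partial K^{n+1}_\epsilon$ is isotopic rel.\ $\post$ to $\gamma^{n+1}_\epsilon=H^n_{1-\epsilon}(\gamma^n_\epsilon)$, which is isotopic rel.\ $\post$ to $\gamma^n_\epsilon$ (small deformation, $H^n$ frozen outside the tiny neighborhood $V$ of $\V^{n+1}$ by ($H^n$~\ref{item:Hn_3})), hence by the inductive hypothesis isotopic rel.\ $\post$ to $\CC$, with orientations matching by Corollary~\ref{cor:gamman_po}.

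The main obstacle I expect is part~(\ref{item:conn_n_cnc}) in Case~2 — the purely combinatorial verification that the ``two-scale'' partition built from $\pi^1$ (inside each sector) and $\pi^n$ (across sectors), with the seam-blocks $b^1_\star,c^1_\star$ playing a distinguished role, is genuinely non-crossing and self-complementary. The geometric intuition (Figure~\ref{fig:ind_conn}: the $(n+1)$-tiles around $v$ are organized into $m$ groups of $2k$, each group a shrunk copy of the picture around $p$, and the groups are wired together following the connection around $v$) makes it plausible, but turning it into a clean proof requires being careful about how the marked seam interacts with a block $b^n$ of $\pi^n_w(v)$ that has several elements, i.e., when several sectors get glued along their $b^1_\star$-pieces; one must confirm no such gluing can ``jump over'' a block living in an intermediate sector. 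I would handle this by the projection/restriction argument sketched above, possibly phrased via the geometric realization of Lemma~\ref{lem:geom_real_comp_part}: realize $\pi^1$ by chords in a small disk, make $m$ scaled copies arranged around an annulus, realize $\pi^n$ by chords in the outer region touching only the seam points, and observe the union is a chord system in the disk, which by Lemma~\ref{lem:geom_real_comp_part} is automatically a cnc-partition.
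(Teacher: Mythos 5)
Your proposal is correct and follows the same overall skeleton as the paper's proof: induction on $n$, Case~1 dismissed as a pullback, the combinatorial work concentrated in Case~2, part~(\ref{item:conn_n_bdgn}) established by matching the successor relation of the connection (via Lemma~\ref{lem:arcs_successors}) against the successor relation of $\gamma^{n+1}$ (via Lemma~\ref{lem:gamma_n_rel}), and part~(\ref{item:conn_n_C1C2}) extracted afterwards. The one place where your route genuinely diverges is the verification of non-crossing in part~(\ref{item:conn_n_cnc}), Case~2: the paper works through five explicit sub-cases of pairs of blocks, pinning each block into an interval (e.g.\ $b^{n+1}_{\star}\subset[j^nk,\,i^1+i^nk]$, $c^{n+1}_{\star}\subset[i^1+i^nk+1,\,j^nk-1]$ in the delicate case where $b^n,c^n$ are adjacent), whereas you propose a collapse-each-sector projection plus, as a fallback, a geometric realization by chord diagrams (small copies of the $\pi^1$-diagram in each sector, wired by the $\pi^n$-diagram, invoking Lemma~\ref{lem:geom_real_comp_part} to conclude the result is automatically a cnc-partition). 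You correctly identify that the bare projection argument can lose a crossing exactly in the adjacent-$b^n,c^n$ case, which is where the paper resorts to the interval computation; your chord-diagram realization is a legitimate and arguably cleaner way to close that case, at the cost of having to check that the assembled chord system really is a family of disjoint arcs with the prescribed seam behavior. Your treatment of part~(\ref{item:conn_n_C1C2}) (spanning tree from the single Eulerian boundary circuit via Lemma~\ref{lem:cluster_sc}, homotopy class from the frozen-outside-$V$ isotopy chain and Corollary~\ref{cor:gamman_po}) is more explicit than the paper's one-line reference to Section~\ref{sec:glimit_jordan} and is sound.
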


\begin{proof}

  To  be able to keep the notation from Section
  \ref{sec:connection-n-tiles} we will prove the statements for the
  connection of $(n+1)$-tiles. 

  \smallskip
  (\ref{item:conn_n_cnc})
  The statement will be proved by induction. Thus we assume that
  $\pi^n_w(w)\cup \pi^n_b(w)$ is a cnc-partition for each $n$-vertex
  $w$. Consider now an arbitrary $(n+1)$-vertex $v$. We want to show
  that $\pi^{n+1}_w(v)\cup \pi^{n+1}_b(v)$ is a cnc-partition. 
  This is trivial in Case (1) (i.e., if $v$ is not an
  $n$-vertex). Thus assume that we are in Case (2), 
  i.e., that $v\in \V^{n+1}\cap \V^n$.   
  
  \smallskip
  (\ref{item:conn_n_cnc}a)
  We first prove that $\pi^{n+1}_w(v)\cup \pi^{n+1}_b(v)$ is
  \emph{non-crossing}. Consider first two blocks
  \begin{equation*}
    b^{n+1}=b^{n+1}_i(b^1),\, c^{n+1}=b^{n+1}_j(c^1)\in
    \pi^{n+1}_w(v)\cup \pi^{n+1}_b(v)     
  \end{equation*}
  as in (\ref{eq:defbn+1}), where $i,j=0,\dots, m-1$ 
  and $b^1,c^1\in \pi^1_w(p)\cup \pi^1_b(p)\setminus \{b^1_{\star},
  c^1_{\star}\}$. If $i\ne j$ the blocks $b^{n+1},c^{n+1}$ are
  non-crossing, since $b^{n+1}, c^{n+1}$ are contained in disjoint
  intervals; namely $b^{n+1}\subset [2ki, 2k(i+1)-1]$,
  $c^{n+1}\subset [2kj, 2k(j+1)-1]$.

  If $i=j$ the blocks $b^{n+1},c^{n+1}$ are non-crossing, since the
  blocks $b^1,c^1$ are. 

  \smallskip
  (\ref{item:conn_n_cnc}b)
  Now let $b^{n+1}=b^{n+1}_i(b^1)$ be as before and $b^{n+1}_{\star}=
  b^{n+1}_{\star}(b^n)=\bigcup \{b^1_{\star} + 2kj \mid 2j\in
  b^n\}$ be as in (\ref{eq:defbn+1star}) (where $b^n\in 
  \pi^n_w(v)$). Assume without loss of generality that $i=0$. Then
  $b^{n+1}$ is contained in one component of $[0, 2k-1]\setminus
  b^1_{\star}$. Each set $b^1_{\star} +2kj$ distinct from
  $b^1_{\star}$ is contained in an interval distinct from $[0,
  2k-1]$. It follows that $b^{n+1}, b^{n+1}_{\star}$ are non-crossing.   

  That $b^{n+1}$ and $c^{n+1}_{\star}$ (as in (\ref{eq:defcn+1star}))
  are non-crossing is shown by the same argument.

  \smallskip
  (\ref{item:conn_n_cnc}c)
  Now let $b^{n+1}_{\star}= b^{n+1}_{\star}(b^n)$ be as before and
  $\tilde{b}^{n+1}_{\star}= b^{n+1}_{\star}(\tilde{b}^n)$ be a
  distinct set as in (\ref{eq:defbn+1star}), meaning that the block
  $\tilde{b}^n\in\pi^n_w(v)$ is distinct from $b^n$. Since
  $b^n,\tilde{b}^n$ are non-crossing it follows that $b^{n+1}_{\star},
  \tilde{b}^{n+1}_{\star}$ are non-crossing. The same argument shows
  that distinct $c^{n+1}_{\star},\tilde{c}^{n+1}_{\star}$ as in
  (\ref{eq:defcn+1star}) are non-crossing. 

  \smallskip
  (\ref{item:conn_n_cnc}d)
  Consider now two sets $b^{n+1}_{\star}= b^{n+1}_{\star}(b^n)$,
  $c^{n+1}_{\star}=c^{n+1}_{\star}(c^n)$ as in (\ref{eq:defbn+1star})
  and (\ref{eq:defcn+1star}) ($b^n\in \pi^n_w(v)$, $c^n\in
  \pi^n_b(v)$). Recall that $\pi^n_w(v)\cup \pi^n_b(v)$ 
  is a cnc-partition by inductive hypothesis. Assume first that
  $b^n,c^n$ are not adjacent (see Lemma \ref{lem:prop_comp}), i.e.,
  they do not contain indices $i$ and $i+1$ respectively. Then it
  follows from the fact that $b^n,c^n$ are non-crossing, that
  $b^{n+1}_{\star}, c^{n+1}_{\star}$ are non-crossing. 

  (\ref{item:conn_n_cnc}e)
  Now let $b^n,c^n$ be adjacent. Recall that $0\in b^1_{\star},
  2k-1\in c^1_{\star}$. Thus there is an index $i^1\in b^1_{\star}$
  such that $i^1+1 \in c^1_{\star}$, since $\pi^1_w(p)\cup \pi^1_b(p)$
  is a cnc-partition. This means that
  \begin{equation*}
    b^1_{\star} \subset [0,i^1], 
    \quad c^1_{\star} \subset [i^1+1, 2k-1]. 
  \end{equation*}
  Similarly, since $b^n,c^n$ are adjacent, there are indices $i^n,
  j^n\in b^n$, such that $i^n+1, j^n-1\in c^n$; meaning that
  \begin{equation*}
    b^n\subset [j^n,i^n],
    \quad
    c^n \subset [i^n+1, j^n-1].
  \end{equation*}
  Here we are using the notation from (\ref{eq:notation_pi1}). From
  this we obtain the smallest and biggest elements in
  $b^{n+1}_{\star}=b^{n+1}_{\star}(b^n),
  c^{n+1}_{\star}=c^{n+1}_{\star}(c^n)$ according to 
  (\ref{eq:defbn+1star}), (\ref{eq:defcn+1star}), namely
  \begin{equation*}
    b^{n+1}_{\star} \subset [j^nk, i^1+i^nk],
    \quad
    c^{n+1}_{\star} \subset [i^1 + i^nk+1, j^nk -1]. 
  \end{equation*}
  Thus $b^{n+1}_{\star}$, $c^{n+1}_{\star}$ are non-crossing. 

  \smallskip
  We now prove that $\pi^{n+1}_w(v), \pi^{n+1}_b(v)$ are
  \emph{complementary}. Let $i^{n+1}= 0,\dots, 2km-1$ be arbitrary. We have
  to show that the two blocks of $\pi^{n+1}_w(v)\cup \pi^{n+1}_b(v)$
  containing $i^{n+1},i^{n+1}+1$ are adjacent. 

  If we are in case
  (\ref{item:conn_n_cnc}a), i.e., if $i^{n+1}\in b^{n+1}=b^{n+1}_i(b^1)$,
  $i^{n+1}+1\in c^{n+1}=b^{n+1}_j(c^1)$, where $b^1,c^1 \in
  \pi^1_w(p)\cup \pi^1_b(p)\setminus \{b^1_{\star}, 
  c^1_{\star}\}$, it follows that $i=j$. Then $b^1,c^1$
  are adjacent, which implies that $b^{n+1}, c^{n+1}$ are adjacent.   

  When we are in case (\ref{item:conn_n_cnc}b) it follows that
  $b^1,b^1_{\star}$ are adjacent. This implies that
  $b^{n+1},b^{n+1}_{\star}$ are adjacent. 

  Cases (\ref{item:conn_n_cnc}c) and (\ref{item:conn_n_cnc}d) cannot
  happen.

  In case (\ref{item:conn_n_cnc}e) it is clear from the description
  that $j^nk, i^1+i^nk\in b^{n+1}_{\star}$ and $i^1 + i^nk+1, j^nk
  -1\in c^{n+1}_{\star}$. Thus $b^{n+1}_{\star},c^{n+1}_{\star}$ are
  adjacent.  

  \medskip
  (\ref{item:conn_n_bdgn})
  Let $D^{n+1},\widetilde{D}^{n+1}$ be two $(n+1)$-edges. We have to
  show that
  \begin{align*}
    &D^{n+1},\widetilde{D}^{n+1} \text{ are succeeding in }
    \gamma^{n+1}\quad \text{if and only if} 
    %\intertext{if and only if}
    \\
    &\text{they are succeeding with respect to the
      connection of $(n+1)$-tiles}.
  \end{align*}
  We keep the notation from Section \ref{sec:connection-n-tiles}. Case
  (1) is again clear. Thus we assume that we are in Case (2), meaning
  that $v\in \V^{n+1}\cap \V^n$. Recall that $\widetilde{E}^0$ is the
  $0$-edge with initial point $p=F^n(v)$ and $\widetilde{E}^1\ni p$ the
  marked $1$-edge (some arc $\widetilde{A}^0\subset \widetilde{E}^0$
  containing $p$ is deformed by $H^0$ to $\widetilde{E}^1$). 

  Let
  $\widetilde{E}^0=\widetilde{E}^{n}_0,\dots,
  \widetilde{E}^{n}_{m-1}\ni v$
  be all $n$-edges such that $F^n(\widetilde{E}^n_j)=\widetilde{E}^0$
  (labeled mathematically positively around $v$). 

  Consider the $(n+1)$-edges $\widetilde{E}^{n+1}_{j}$ such that
  $H^n\colon \widetilde{A}^n_j \subset \widetilde{E}^n_j \to
  \widetilde{E}^{n+1}_j$, for some arc $\widetilde{A}^n_j \ni v$.
%  Each $\widetilde{E}^n_j$ contains an arc $\widetilde{A}^n_j\ni
%  v$ that is 
%  deformed to an $(n+1)$-edge $\widetilde{E}^{n+1}_j\ni v$ by
%  $H^{n}$. 
  These $(n+1)$-edges $\widetilde{E}^{n+1}_0,\dots,
  \widetilde{E}^{n+1}_{m-1}$ are again labeled mathematically
  positively around $v$. Note that these are not all of the
  $(n+1)$-edges containing $v$. 

  \smallskip
  {\it Claim.} $F^n(\widetilde{E}^{n+1}_j)= \widetilde{E}^1$
  for all $j=0,\dots, m-1$. 

  To prove the claim we first note that $F^n(\widetilde{E}^{n+1}_j)$
  is a $1$-edge which we denote by $\widetilde{D}^1$. Since
  $\widetilde{A}^n_j\subset \widetilde{E}^n_j$, the arc 
  $\widetilde{B}^0:= F^n(\widetilde{A}^n_j)$ is contained in
  $\widetilde{E}^0=F^n(\widetilde{E}^n_j)$, with initial point
  $p=F^n(v)$. Since $H^n$ is the lift of $H^0$ by $F^n$ it holds 
  \begin{equation*}
    \widetilde{D}^1= F^n(\widetilde{E}^{n+1}_j)=
    F^n(H^n_1(\widetilde{A}^n_j)) = H^0_1(F^n(\widetilde{A}^n_j))=
    H^0_1(\widetilde{B}^0).
  \end{equation*}
  The unique arc in $\widetilde{E}^0$ with initial point $p$ that is
  deformed to a $1$-edge is $\widetilde{A}^0$. Thus
  $\widetilde{B}^0=\widetilde{A}^0$, thus $\widetilde{D}^1=
  \widetilde{E}^1$, proving the claim.

  \smallskip
  Note that a sector of sufficiently small radius between
  $\widetilde{E}^{n+1}_j,\widetilde{E}^{n+1}_{j+1}$ is mapped
  \emph{bijectively} by $F^n$ to some neighborhood of $p$ with
  $\widetilde{E}^1$ removed. 

  \smallskip
  Assume now that the $(n+1)$-edges $D^{n+1},\widetilde{D}^{n+1}$
  are succeeding in $\gamma^{n+1}$ at the $(n+1)$-vertex $v$. This is
  the case if and only if there 
  are distinct arcs $A^n,\widetilde{A}^n\ni x$ such that $H^n\colon
  A^n \subset D^n \to D^{n+1}$, $H^n\colon \widetilde{A}^n \subset
  \widetilde{D}^n \to \widetilde{D}^{n+1}$ ($D^n,\widetilde{D}^n \in
  \E^n$).  
  %$\subset \bigcup \E^n$
  %succeeding at a point $x$, that are deformed by $H^n$ to
  %$D^{n+1},\widetilde{D}^{n+1}$ and 
  Either
  \begin{itemize}
  \item $A^n,\widetilde{A}^n$ are contained in the \emph{same}
    $n$-edge, equivalently $x\notin \V^n$. Note that
    $\widetilde{D}^{n+1}\neq \widetilde{E}^{n+1}_j$ for all
    $j=0,\dots, m-1$. 

    Note that $H^n_1(x)=v$. If $D^{n+1}= \widetilde{E}^{n+1}_j$ (for
    a $j=0,\dots, m-1$) it would follow that both endpoints of
    $\widetilde{E}^{n+1}_j$ are equal to $v$, which is impossible. 

    It follows that $D^{n+1}, \widetilde{D}^{n+1}$ are
    contained in one sector between $\widetilde{E}^{n+1}_j,
    \widetilde{E}^{n+1}_{j+1}$, since $H^n$ is a pseudo-isotopy; 
  \item or $x=v$ and $A^n,\widetilde{A}^n$ are contained in $n$-edges
    that succeed at $v$. Then $\widetilde{D}^{n+1}=
    \widetilde{E}^{n+1}_j$ for some $j=0,\dots, m-1$ in this case. 
  \end{itemize}

  Consider two $(n+1)$-edges $D^{n+1},\widetilde{D}^{n+1}\ni v$, such
  that 
  $\widetilde{D}^{n+1}\neq \widetilde{E}^{n+1}_j$ (for all $j=0,
  \dots, m-1$).  
  They are succeeding in $\gamma^{n+1}$ at 
  $v$ if and only if they are contained in one sector between
  $\widetilde{E}^{n+1}_{j}, \widetilde{E}^{n+1}_{j+1}$ and the 
  $1$-edges $F^n(D^{n+1}),
  F^n(\widetilde{D}^{n+1})$ are succeeding in $\gamma^{1}$ (since
  $F^n$ is bijective on this sector). This
  happens if and only if $D^{n+1}, \widetilde{D}^{n+1}$ are succeeding
  with respect to $\pi^{n+1}_w(v)\cup \pi^{n+1}_b(v)$ by definition
  (see (\ref{eq:defbn+1})). 

  \smallskip
  Let $E^0\ni p$ be the $0$-edge with terminal point $p$, i.e., the
  one preceding $\widetilde{E}^0$. 
  Let $E^n_0,\dots, E^n_{m-1}$ be all
  $n$-edges such that $F^n(E^n_j)=E^0$, labeled such that $E^n_j$ lies
  between $\widetilde{E}^n_j, \widetilde{E}^n_{j+1}$. Then
  $\widetilde{E}^n_j,E^n_j$ are both contained in the same white
  $n$-tile $X^n_j$. Thus $E^n_i, \widetilde{E}^n_j$ are succeeding (at
  $v$) if and only if $i,j$ are succeeding indices of a block $b^n\in
  \pi^n_w(v)$. 

  Consider the $1$-edge $E^1$ such that $H^0\colon A^0\subset E^0 \to
  E^1$, for an arc $A^0\ni p$. Let $X^1_l$ be the white $1$-tile
  containing $E^1$.  
%   Let $A^0\subset E^0$ be an arc, such that $p\in
%   A^0$, that is deformed to a $1$-edge $E^1\subset X^1_l$. 
  Now consider the $(n+1)$-edge $E^{n+1}_j$  such that $H^n\colon A^n_j
  \subset E^n_j \to E^{n+1}_j$, for an arc $A^n_j\ni v$. 
  Since $H^n$ is a pseudo-isotopy it follows that $E^{n+1}_j$ is in
  the sector between
  $\widetilde{E}^{n+1}_j,\widetilde{E}^{n+1}_{j+1}$; indeed
    it follows that $E^{n+1}_j\subset X^{n+1}_{2kj+l}$, since the 
  diagram in Figure \ref{fig:ind_conn} commutes (recall that
  $\widetilde{E}^{n+1}_j\subset X^{n+1}_{2kj}$).

  Consider now two $(n+1)$-edges $D^{n+1},
  \widetilde{D}^{n+1}=\widetilde{E}^{n+1}_j\ni v$. They are succeeding
  in $\gamma^{n+1}$ if and only if $D^{n+1}= E^{n+1}_{i}\subset
  X^{n+1}_{2ki+l}$, where $i,j$ are succeeding indices of a block
  $b^n\in \pi^n_w(v)$. This happens if and only if they are succeeding
  with respect to $\pi^{n+1}_w(v)\cup \pi^{n+1}_b(v)$ by definition
  (see (\ref{eq:defbn+1star})) (in the notation from
  (\ref{item:conn_n_cnc}e) $i=i^n, j=j^n, l=i^1$). 

  \smallskip
  (\ref{item:conn_n_C1C2}) follows as in Section
  \ref{sec:glimit_jordan}. 
\end{proof}

\section{Invariant Peano curve implies Expansion}
\label{sec:invar-peano-curve}

In this section we prove Theorem~\ref{thm:peano_implies_exp}.
Thus we assume that for some iterate $F=f^n$ there is a Peano
curve $\gamma\colon S^1\to S^2$ (onto), such that $F(\gamma(z))=
\gamma(z^d)$ for all $z\in S^1$ (where $d=\deg F$). We want to show
that $f$ is expanding. 

The following is \cite[Lemma 6.3]{expThurMarkov}.
\begin{lemma}
  \label{lem:fFexpanding}
  Let $f$ be a Thurston map and $F=f^n$, where $n\in \N$. Then $f$
  is expanding if and only if $F$ is expanding.
\end{lemma}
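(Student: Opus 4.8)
\textbf{Plan of proof for Lemma~\ref{lem:fFexpanding}.}

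The plan is to prove both directions by comparing the mesh of $f^{-m}(\CC)$ for a fixed Jordan curve $\CC\supset\post(f)=\post(F)$. Recall (Definition~\ref{def:f}~(\ref{def:fexpanding})) that $f$ is expanding iff $\mesh f^{-m}(\CC)\to 0$ as $m\to\infty$, and that by \cite[Lemma~8.1]{expThurMarkov} this is independent of the chosen curve; we fix one $\CC$ once and for all and note $\post(f)=\post(F)$, so the same $\CC$ works for both maps.

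For the easy direction, suppose $f$ is expanding. Then $F^{-m}(\CC)=f^{-nm}(\CC)$, and $(\mesh f^{-nm}(\CC))_m$ is a subsequence of $(\mesh f^{-j}(\CC))_j$, which tends to $0$. Hence $\mesh F^{-m}(\CC)\to 0$, i.e.\ $F$ is expanding. (One should observe the sequence $\mesh f^{-j}(\CC)$ need not be monotone, but that is irrelevant: a subsequence of a null sequence is null.)

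For the converse, suppose $F=f^n$ is expanding, so $\mesh f^{-nm}(\CC)\to 0$. The point is to control the intermediate levels $f^{-(nm+r)}(\CC)$ for $0\le r<n$. First I would fix $r\in\{0,1,\dots,n-1\}$ and set $\CC_r:=f^{-r}(\CC)$, which is a finite union of Jordan arcs (a graph) containing $\post(f)$ in its vertex set — not a single Jordan curve, so one cannot directly invoke the definition with $\CC_r$ in place of $\CC$. Instead one can enlarge: either extend $\CC_r$ to a Jordan curve $\widetilde{\CC}_r\supset\CC_r\supset\post$ (every finite graph in $S^2$ containing $\post$ sits inside some Jordan curve through $\post$), and then $f^{-nm}(\widetilde{\CC}_r)\supset f^{-nm}(\CC_r)=f^{-(nm+r)}(\CC)$, so $\mesh f^{-(nm+r)}(\CC)\le \mesh f^{-nm}(\widetilde{\CC}_r)$; alternatively, argue directly that each component $W$ of $S^2\setminus f^{-(nm+r)}(\CC)$ maps under $f^r$ into a single component of $S^2\setminus f^{-nm}(\CC)$ (since $f^r\big(f^{-(nm+r)}(\CC)\big)\subset f^{-nm}(\CC)$), hence $f^r(W)$ has diameter at most $\mesh f^{-nm}(\CC)$. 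Then I would use uniform continuity of $f^r$ (on the compact sphere, in any fixed metric compatible with the topology) together with the fact that $f^r$ is a finite-to-one covering branched over finitely many points: pulling back a set of small diameter through $f^r$ yields components of small diameter. Concretely, by uniform continuity of $f^r$ there is a modulus $\omega_r$ with $\omega_r(t)\to 0$ as $t\to 0$ such that $\diam A\le t\Rightarrow$ each component of $f^{-r}(A)$ has diameter $\le \omega_r(t)$ — here one uses that $f^r$ is an open map and that $S^2$ is locally connected, so small connected sets have small connected preimages. Applying this with $A$ a component of $S^2\setminus f^{-nm}(\CC)$ gives $\mesh f^{-(nm+r)}(\CC)\le \max_{0\le r<n}\omega_r\big(\mesh f^{-nm}(\CC)\big)$. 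Letting $m\to\infty$, the right side goes to $0$, and since every large $j$ is of the form $nm+r$, we conclude $\mesh f^{-j}(\CC)\to 0$, i.e.\ $f$ is expanding.

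The main obstacle is the converse direction's step of controlling preimage diameters under $f^r$: one must be careful that $f^r$ is branched, so the naive ``inverse is Lipschitz'' estimate fails near critical points, and that preimages of connected sets are connected only after one accounts for the covering structure. The clean way around this is to phrase everything via components of complements of the graphs $f^{-j}(\CC)$ and use that $f^r$ maps such a component into a single complementary component at the coarser level, combined with a soft uniform-continuity argument rather than any metric distortion bound; this avoids any reference to visual metrics or local similarity and keeps the proof purely topological. (Since the lemma is quoted from \cite[Lemma~8.4]{expThurMarkov}, in the paper itself one would simply cite it, but the above is the argument one would reconstruct.)
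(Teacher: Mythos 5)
The paper itself gives no proof of this lemma---it is quoted verbatim from \cite[Lemma~8.4]{expThurMarkov}---so there is no internal argument to compare against; what follows is an assessment of your reconstruction on its own terms. Your forward direction is correct (a subsequence of a null sequence is null). Your backward direction is also essentially correct \emph{via the second of your two offered routes}: each component $W$ of $S^2\setminus f^{-(nm+r)}(\CC)$ is a component of $f^{-r}(A)$ for a single component $A$ of $S^2\setminus f^{-nm}(\CC)$, and one then needs that components of $f^{-r}$-preimages of connected sets of small diameter are uniformly small. That last step is the crux, and calling it ``uniform continuity of $f^r$'' is a misnomer---uniform continuity controls images, not preimage components. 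The correct mechanism (which your parenthetical gestures at) is compactness plus finite-to-one-ness: if connected sets $A_k$ with $\diam A_k\to 0$ had preimage components $W_k$ with $\diam W_k\geq\epsilon$, a Hausdorff-limit of the $\overline{W_k}$ would be a nondegenerate continuum mapped by $f^r$ to a single point, contradicting $\deg f^r<\infty$. With that substitution the argument closes.

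Your \emph{first} alternative in the backward direction, however, would fail and should be deleted. The graph $\CC_r=f^{-r}(\CC)$ has vertices of degree $2q\geq 4$ at critical points of $f^r$, so it is not contained in any Jordan curve; the parenthetical claim that every finite graph containing $\post$ sits inside a Jordan curve is false. Moreover, even granting such a curve $\widetilde{\CC}_r\supset\CC_r$, your inequality runs backwards: enlarging the set $f^{-nm}(\widetilde{\CC}_r)\supset f^{-(nm+r)}(\CC)$ makes the complementary components \emph{smaller}, so one gets $\mesh f^{-nm}(\widetilde{\CC}_r)\leq\mesh f^{-(nm+r)}(\CC)$, which bounds the known-small quantity by the one you are trying to control. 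Since you carry through the second route, the proof stands, but only that route is load-bearing.
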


% \begin{proof}
%   ($\Rightarrow$) is clear.

%   \smallskip
%   ($\Leftarrow$) By assumption it holds that $\lim_k \mesh f^{-kn}\CC
%   =0$ Since $f$ is uniformly continuous on (the compact set) $S^2$ it
%   follows that 
%   $$\lim_k \mesh f^{-kn+j}\CC=0$$ 
%   for all $j=0,\dots, n-1$
%   as desired. 
% \end{proof}

We will use the following equivalent formulation of ``expanding'' due to
Ha\"{i}ssinsky-Pilgrim \cite{HaiPil}. For a proof of the following
lemma we refer the reader to \cite[Proposition~6.2]{expThurMarkov}. 

\begin{lemma}
  \label{lem:expaning}
  A Thurston map $F$ is \emph{expanding} if and only if there exists a
  finite open cover $\mathcal{U}^0$ of $S^2$ by connected sets such
  that the following holds.

  Denote by $\mathcal{U}^n$ the set of connected components of
  $F^{-n}(U)$, for all $U\in \U^0$. Then
  \begin{equation*}
    \mesh \U^n\to 0 \text{ as } n\to \infty.
  \end{equation*}
  Here $\mesh \U^n$ denotes the biggest diameter of a set in $\U^n$. 
\end{lemma}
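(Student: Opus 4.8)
The plan is to prove both implications directly, using only the tile/edge/vertex structure of $(F,\CC)$ (where $\CC\supseteq\post$ is the fixed Jordan curve, so ``$F$ expanding'' means $\mesh F^{-n}(\CC)\to 0$) and two facts already available: $F^n$ maps each $n$-tile homeomorphically onto a $0$-tile, and $F^{-n}(\CC)=\bigcup\E^n$ is connected.

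For the direction $(\Rightarrow)$ I would take $\U^0$ to consist of the interiors $\inte X^0_w,\inte X^0_b$; a small open disk $D_p$ about each $0$-vertex $p$ with $\overline{D_p}\cap\post=\{p\}$; and a small open neighborhood $N(\inte E^0)$ of the interior of each $0$-edge $E^0$, disjoint from $\post$ and contained in $\inte X^0_w\cup\inte E^0\cup\inte X^0_b$. Then one checks (using the homeomorphism $F^n\colon(\text{$n$-tile})\to(\text{$0$-tile})$, and that the smallness of $D_p,N(\inte E^0)$ is a property of the $0$-skeleton, independent of $n$) that each component of $F^{-n}(\inte X^0_w)$ (resp. $\inte X^0_b$) is a single $n$-tile interior; each component of $F^{-n}(D_p)$ is a neighborhood of one $n$-vertex $v$ inside the union $\Phi^n_v$ of all $n$-tiles at $v$; and each component of $F^{-n}(N(\inte E^0))$ lies in the union of the two $n$-tiles sharing one $n$-edge. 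In every case all the $n$-tiles involved share a common $n$-vertex or $n$-edge, so each is contained in a ball of radius $\mesh F^{-n}(\CC)$ about that common point; hence every component has diameter $\le 2\,\mesh F^{-n}(\CC)$ and $\mesh\U^n\to 0$.

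For $(\Leftarrow)$, given such a cover $\U^0$, I would first note that any finite open connected cover refining $\U^0$ still satisfies $\mesh(\cdot)^n\to 0$ (a component of $F^{-n}(V)$ lies in a component of $F^{-n}(U)$ whenever $V\subseteq U$); so I may replace $\U^0$ by a refinement whose members all have diameter below a Lebesgue number $\eta$ of the cover $\{\inte X^0_w,\inte X^0_b\}\cup\{D_p\}\cup\{N(\inte E^0)\}$ of Para 2, whence each $U\in\U^0$ sits inside $\inte X^0_w$, $\inte X^0_b$, some $D_p$, or some $N(\inte E^0)$. Now fix an $n$-tile $W$ with $n$-edges $E_1,\dots,E_k$ and $n$-vertices $v_1,\dots,v_k$. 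Covering each $\inte E_j$ by boundedly many $N(\inte E^0)$-type members and each $v_j$ by a $D_p$-type member, and taking the preimage components meeting $E_j$ resp. containing $v_j$, I get a chain of at most $k(2+\#\U^0)$ sets, each a component of some $F^{-n}(U)$ (so of diameter $\le\mesh\U^n$), consecutive ones overlapping, with union containing $\partial W$; hence $\diam\partial W\le\delta_n:=k(2+\#\U^0)\,\mesh\U^n\to 0$.

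The one genuinely topological step --- and the part I expect to require the most care --- is passing from $\diam\partial W\to 0$ to $\diam W\to 0$, which fails in general on $S^2$ (of the two complementary components of $\partial W$ one may be almost all of the sphere). Here I would use that $\post\subseteq F^{-n}(\CC)$ (since $F^n(\post)\subseteq\post\subseteq\CC$) and that $F^{-n}(\CC)$ is connected: $\inte W$ is disjoint from $F^{-n}(\CC)$, so $F^{-n}(\CC)\subseteq S^2\setminus\inte W$; were $W$ the ``large'' complementary component, then $S^2\setminus\inte W$ would lie in the ``small'' one (diameter $\le 2\delta_n$), forcing $\post$ into a set of diameter $\le 2\delta_n$, impossible once $2\delta_n<\diam\post$ (a fixed positive number as $\#\post\ge 3$). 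Thus for all large $n$ each $n$-tile is the small complementary component of its boundary, so $\diam W\le 2\delta_n\to 0$; that is, $\mesh F^{-n}(\CC)\to 0$ and $F$ is expanding. The remaining routine points are the precise choice of the neighborhoods $D_p,N(\inte E^0)$ so that the Para 2 preimage-component statements hold for all $n$ at once, and the bound on the chain length in Para 3 being independent of $n$.
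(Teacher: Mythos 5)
The paper does not prove this lemma at all; it defers to \cite[Proposition~8.2]{expThurMarkov}, so there is no internal argument to compare against. Your route is essentially the standard one from that reference: for $(\Rightarrow)$ the cover by tile interiors, edge flowers, and vertex neighborhoods is exactly the open-star cover of the cell decomposition, and for $(\Leftarrow)$ the strategy of bounding $\diam\partial W$ by a chain of preimage components and then using $\post\subset F^{-n}(\CC)$, $\#\post\ge 3$ to identify $\inte W$ as the small complementary component is the right (and standard) one. Structurally the proof is correct.

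Two steps, however, do not work as literally written. First, the chain bound $k(2+\#\U^0)$: for a connected $V\in\U^0$ the set $E_j\cap F^{-n}(V)$ is carried homeomorphically by $F^n|_{E_j}$ onto $E^0\cap V$, and a connected open set can meet an arc in arbitrarily many components; so the number of components of $F^{-n}(V)$ meeting $E_j$ is not bounded by your count, and the specific components you select (``the one containing $v_j$'', ``the one in the edge flower'') need not cover $E_j$ near its endpoints. The repair is to chain in the \emph{target}: subdivide the fixed $0$-edge $E^0=F^n(E_j)$ into $N_0$ subarcs, each of diameter below a Lebesgue number of $\U^0$ and hence contained in a single $U\in\U^0$; each subarc pulls back under the homeomorphism $F^n|_{E_j}$ to a connected set, which therefore lies in a single component of $F^{-n}(U)$, giving $\diam E_j\le N_0\,\mesh\U^n$ with $N_0$ independent of $n$ and making your refinement step unnecessary. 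Second, you tacitly assume that one of the two complementary components of the Jordan curve $\partial W$ has diameter $\le 2\delta_n$ once $\delta_n$ is small. This is true for a fixed metric on $S^2$ but is not automatic (it fails uniformly over metrics, e.g.\ on a dumbbell), so it needs a sentence: transport $\partial W$ by a fixed homeomorphism to the round sphere, where two metrics on a compact space inducing the same topology are uniformly equivalent; there a curve of small diameter lies in a small ball whose complement is connected, hence is contained in one complementary component, and the other component is small. With these two repairs the argument is complete.
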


\begin{proof}
  [Proof of Theorem~\ref{thm:peano_implies_exp}]
  Let $\gamma\colon S^1\to S^2$ be a Peano curve (onto), such that
  \begin{equation}
    \label{eq:semi_conjugacy}
    F(\gamma(z))= \gamma(z^d) \text{ for all } z\in S^1 \quad (\text{where }
    d=\deg F).  
  \end{equation}

  Fix a point $x^0\in S^2$. Let $W(x^0)\subset S^2$ be an open
  neighborhood 
  of $x^0$ that is a Jordan domain. Furthermore we assume that
  $W(x^0)$ is sufficiently small such that each component of
  $F^{-1}(W(x^0))$ contains exactly one point of $F^{-1}(x^0)$. 
%   If $x^0$ is not a postcritical
%   point we require that $U(x^0)$ does not contain a postcritical
%   point; if $x^0$ is a postcritical point we require that $U(x^0)$
%   does not contain a second postcritical point. 

  \smallskip
  Consider $\gamma^{-1}(W(x^0))=: \I(x^0)=
  \bigcup I_j\subset S^1$, this is a (countable) union of open arcs
  $I_j$. Let  
  \begin{align*}
    &\J(x^0):= \bigcup \{I_j \mid \gamma(I_j)\ni x^0\}\subset
    S^1,
    \\
    &V(x^0):= \gamma(\J(x^0))\subset S^2. 
  \end{align*}
  Note that $\gamma(S^1\setminus \J(x^0))$ is a compact set
  that does not contain $x^0$. Thus $V(x^0)$ is a neighborhood of
  $x^0$. 

  \smallskip
  Fix a $x^n\in F^{-n}(x^0)$. Let $V^n(x^n)\subset S^2$ be the path
  component of $F^{-n}(V(x^0))$ containing $x^n$.

  As before we view the circle as $\R/\Z$, the map $z\mapsto z^d$ is
  then 
  given as 
  $\phi_d\colon \R/\Z\to \R/\Z$,  $t\mapsto dt (\bmod 1)$. 
  Let $\J^n:=
  \phi_{d^{n}}^{-1}(\J(x^0))$. Note that $\J^n=\bigcup J^n_j$ is a 
  (countable) union of open intervals, each of which has length $\leq
  d^{-n}$. Thus uniform continuity of $\gamma$ implies that
  \begin{equation*}
    \diam \gamma(J^n_j) \leq \omega(d^{-n}) \to 0 \text{ as } n\to
    \infty,  
  \end{equation*}
  where $\omega$ is the modulus of continuity of $\gamma$. 

  From (\ref{eq:semi_conjugacy}) it follows that each set
  $\gamma(J^n_j)$ contains a point $x^n_j\in F^{-n}(x^0)$. If
  $x^n_j\neq x^n$ then $\gamma(J^n_j)$ is contained in a component of
  $F^{-n}(W(x^0))$ distinct from the one containing $x^n$, thus
  $\gamma(J^n_j)\cap V^n(x^n)=\emptyset$. It follows
  that 
  \begin{equation*}
    \gamma^{-1}(V^n(x^n))= \bigcup \{J^n_j \mid \gamma(J^n_j)\ni x^n\}
    =: \J^n(x^n). 
  \end{equation*}
  Since $\gamma(J^n_i)\cap \gamma(J^n_j)\ni x^n$ for $J^n_i,
  J^n_j\subset \J^n(x^n)$, it follows that
  \begin{equation*}
    \diam V^n(x^n)\leq 2 \omega(d^{-n}).
  \end{equation*}
  The sets $V^0(x^0)$ are not necessarily open, and $\inte V^0(x^0)$
  is not necessarily connected. Let $U(x^0)\subset V^0(x^0)$ be an
  open connected set containing $x^0$. Pick a finite subcover
  $\U^0$ of
  $\{U(x^0) | x^0\in S^2\}$. From the above it follows that $\mesh
  \U^n \to 0$ as $n\to \infty$. Thus $F$ is expanding by
  Lemma~\ref{lem:expaning}, hence $f$ is expanding by
  Lemma~\ref{lem:fFexpanding}.  
% Nevertheless we can clearly extract a
%   finite open cover of connected sets from the components of $\inte
%   V^0(x^0)$ (for all $x^0\in S^2$). The above implies that the mesh
%   size of preimages of this cover goes to $0$. 
%   By Lemma \ref{lem:fFexpanding} it
%   follows that $F$, thus $f$ (Lemma \ref{deflem:expaning}), is
%   expanding. 
\end{proof}

\section{An Example}
\label{sec:an-example}
The obvious question to ask is whether an iterate $F=f^n$ is necessary
in Theorem~\ref{thm:main} (or whether one may choose $n=1$). None of
the assumptions in Section~\ref{sec:construction-h0} seem to be
necessary. It is possible to show (similarly as in
\cite[Example~13.12]{expThurMarkov}) that the map $f$ for which Milnor
constructs an invariant Peano curve in \cite{MilnorMating} does not
have an invariant Jordan curve 
$\CC\supset \post$; also the $1$-tiles do intersect
disjoint $0$-edges.    

In this section we consider an example of an expanding Thurston map
$h$, where no pseudo-isotopy $H^0$ as desired exists. 
This means that for any Jordan curve $\CC\supset \post$ (not
necessarily invariant) there is no pseudo-isotopy $H^0$ rel\
$\post(h)$ as in
Definition \ref{def:pseudo-isotopy-h0} such that
$H^0_1(\CC)=\bigcup \E^1= h^{-1}(\CC)$. 

Thus one
has to take an iterate (in fact $h^2$ will do) in our
construction. Of course there could be a Peano curve $\gamma$ which
semi-conjugates $z^d$ to $h$, but a substantially different proof would be
required. 
% One might possibly show that an iterate is necessary in Theorem
% \ref{thm:main}, by showing that our map $h$
% does not arise from a mating. 

\begin{figure}
  \centering
  \includegraphics[scale=0.4]{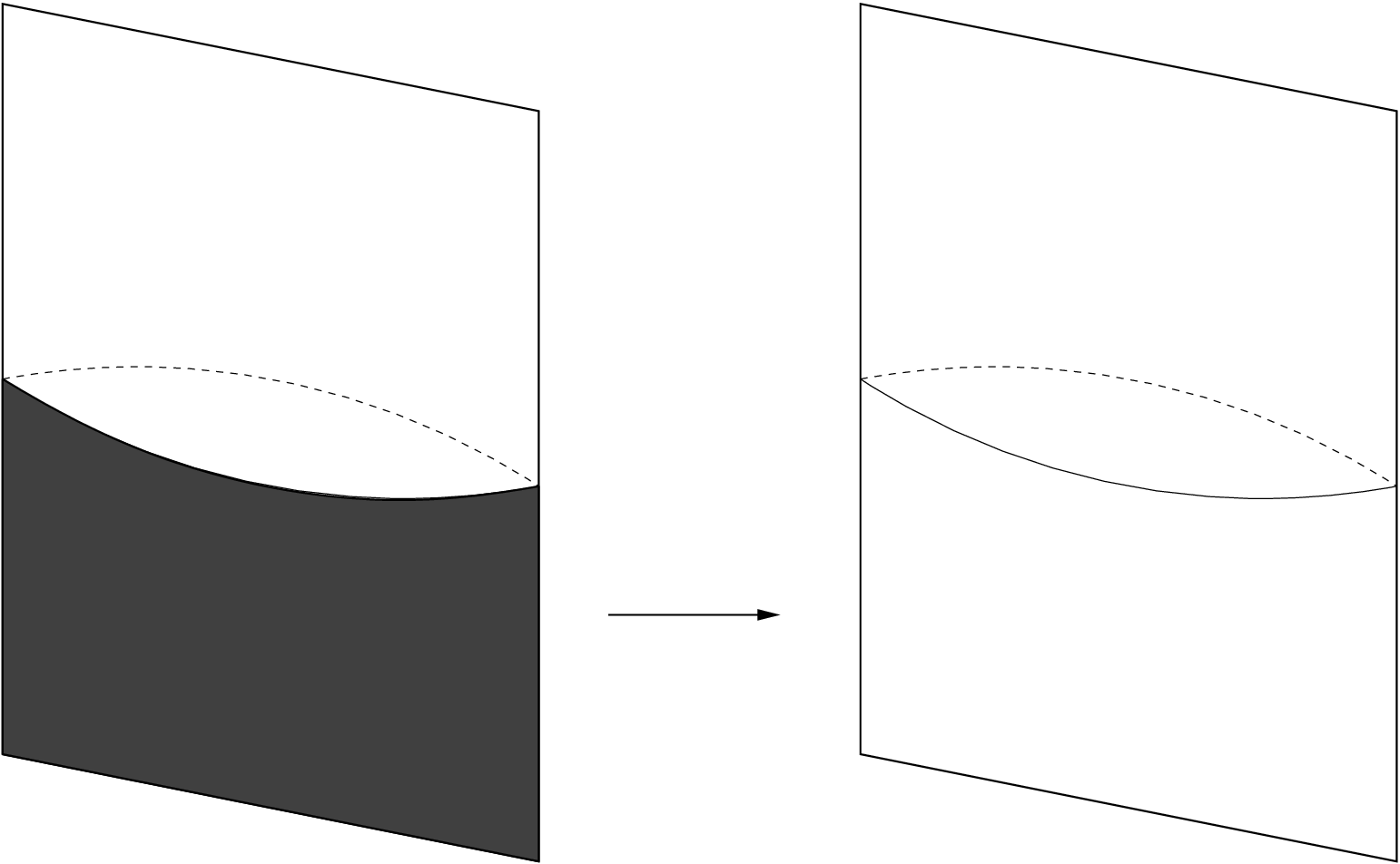}
  \begin{picture}(10,10)
    \put(-315,10){$\scriptstyle p_0\mapsto p_0$}
    \put(-328,100){$\scriptstyle c_1\mapsto p_1$}
    \put(-315,187){$\scriptstyle p_3\mapsto p_0$}
    \put(-200,170){$\scriptstyle p_2\mapsto p_3$}
    \put(-180,80){$\scriptstyle c_2 \mapsto p_2$}
    \put(-180,0){$\scriptstyle p_1\mapsto p_3$}
    \put(-158,60){$h$}
    \put(-120,14){$\scriptstyle p_0$}
    \put(0,0){$\scriptstyle p_1$}
    \put(0,160){$\scriptstyle p_2$}
    \put(-120,188){$\scriptstyle p_3$}
  \end{picture}
  \caption{The map $h$.}
  \label{fig:maph}
\end{figure}

\smallskip
The map $h$ is a \defn{Latt\`{e}s map} as the map $g$ from Section
\ref{sec:example}. 
Start with the square $[0,\sqrt{2}/2]\times [0,1]$, which is mapped by
a Riemann 
map to the upper half plane. This extends to a meromorphic map
$\wp=\wp_L\colon \C\to \CDach$, which is periodic with respect to the
lattice 
$L=\sqrt{2}\Z\times 2\Z$. Consider the map 
\begin{equation}
  \label{eq:defpsi_h}
  \psi\colon \C\to \C,
  \quad
  \psi(z)=\sqrt{2}iz.   
\end{equation}
Note that $\psi(L)\subset L$. 
The map $h$ is the one that makes the following
diagram commute.  
\begin{equation*}
  \xymatrix{
    \C \ar[r]^\psi \ar[d]_{\wp} &
    \C \ar[d]^{\wp}
    \\
    S^2 \ar[r]_h & S^2
  }
\end{equation*}
The degree of $h$ is $2$. 
Again one may use $\wp$ to push the Euclidean metric from $\C$ to the
sphere $S^2$. 
In this metric 
the upper and lower half plane are both isometric to the
rectangle $[0,\sqrt{2}/2]\times [0,1]$. Two such rectangles glued
together along their boundaries form a \defn{pillow} as before. 
Divide each rectangle
horizontally in two. The small rectangles are similar to the big
ones. The map $h$ is given by mapping each small rectangle (they are
the $1$-tiles) to big ones (the $0$-tiles)
as indicated in Figure~\ref{fig:maph}. The critical points are
$c_1,c_2$, the postcritical points are $p_0,p_1,p_2,p_3$; they are
mapped as follows (this is known as the \defn{ramification portrait}).
\begin{equation}
  \label{eq:ramification_h}
  \xymatrix @R=1pt{
    c_1 \ar[r]^{2 : 1} & p_1 \ar[dr] & &
    \\
    & & p_3 \ar[r] & p_0 \ar@(r,u)[]
    \\
    c_2 \ar[r]^{2 : 1} & p_2 \ar[ur] & &
  }
\end{equation}

\begin{lemma}
  \label{lem:noH0forh}
  Let 
  $\gamma^0=\CC\supset\post(h)$ be (any such) Jordan curve, and
  $\gamma^1$ be an Eulerian circuit in $h^{-1}(\CC)$ such that $h\colon
  \gamma^1\to \gamma^0$ is a $d$-fold cover. Then there is no
  pseudo-isotopy $H^0$ rel.\ $\post(h)$ as in Definition
  \ref{def:pseudo-isotopy-h0} that deforms $\gamma^0$ to $\gamma^1$.   
\end{lemma}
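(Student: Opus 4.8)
The plan is to argue that the existence of such a pseudo-isotopy $H^0$ would force the combinatorial/homotopical structure near the two critical points to be of a specific type, and then to check that for $h$ this type is incompatible with the ramification portrait (\ref{eq:ramification_h}). Concretely, by Lemma~\ref{lem:H0epsH0} (applied to $h$ in place of $F$), the existence of $H^0$ deforming $\gamma^0=\CC$ to the prescribed $\gamma^1$ is equivalent to the existence of a connection of $1$-tiles whose white connection graph is a spanning tree $K$ with $\partial K_\epsilon$ orientation-preserving isotopic rel.\ $\post(h)$ to $\CC$. So the first step is to reduce the statement to: there is no connection of the four white $1$-tiles of $h$ (for any Jordan curve $\CC\supset\post(h)=\{p_0,p_1,p_2,p_3\}$) forming a spanning tree in the right homotopy class.

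The second step is to enumerate the possible connections. Since $\deg h = 2$, every $1$-vertex has local degree $1$ or $2$; the only vertices of local degree $2$ are the critical values' preimages, i.e.\ the critical points $c_1,c_2$ (here $\deg_h c_i = 2$, so $4$ $1$-tiles meet there and the cnc-partition is of $[4]$, of which there are $C_2=2$: the connected one and the disconnected one), while at every other $1$-vertex only $2$ tiles (one white, one black) meet and the connection is forced. Thus the \emph{only} freedom in the connection of $1$-tiles is the binary choice at each of $c_1$ and $c_2$ of whether to connect the two white $1$-tiles there (equivalently, whether to connect the two black ones — by complementarity exactly one of the two colors gets connected when the cnc-partition is nontrivial, and neither when it is trivial). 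This gives a small finite list of candidate connections. For each of these I would compute the white connection graph, check whether it is a spanning tree (it must connect all $4$ white $1$-tiles using at most the available connections), and for those that are spanning trees compute the boundary circuit $\partial K_\epsilon$ and its cyclic order of postcritical points.

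The third step, which I expect to be the crux, is the homotopy-class obstruction: even when a spanning tree $K$ exists, one must show $\partial K_\epsilon$ is \emph{not} isotopic rel.\ $\post(h)$ to $\CC$ — equivalently that the induced Eulerian circuit $\gamma^1$ traverses $p_0,p_1,p_2,p_3$ in a cyclic order, or with a homotopy class of connecting arcs, incompatible with $\CC$. Using the ramification portrait, $h$ sends the $0$-edge structure around the pillow in a prescribed way; since $h$ is a subdivision rule on the pillow (two rectangles each cut horizontally into two), the $1$-tiles and their adjacencies along $h^{-1}(\CC)$ are completely explicit, and one reads off that connecting the white tiles at $c_1$ forces the boundary circuit to ``wrap around'' $p_1$ (the value of $c_1$) in a way that, combined with the forced connection at $c_2$ needed to make $K$ span, changes the homotopy class of the arc between, say, $p_0$ and $p_2$ relative to $\{p_1,p_3\}$. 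The cleanest way to make this rigorous is to exhibit, for each surviving candidate, a $0$-edge $E^0$ such that the subpath of the boundary circuit between its endpoints necessarily crosses a disjoint $0$-edge — the negation of Claim~1 in the proof of Lemma~\ref{lem:main_tree_homotopy} — and invoke Theorem~\ref{thm:Isotopy_rel_post} (or its corollary Theorem~\ref{thm:isotopy2}) to conclude non-isotopy. The main obstacle is organizing this finite case check cleanly and in a way that is genuinely independent of the choice of $\CC$; the natural device is to work on the pillow, where $\CC$ can be normalized (up to isotopy rel.\ $\post(h)$, using Theorem~\ref{thm:Isotopy_rel_post}) to the standard equator, reducing ``any $\CC$'' to finitely many combinatorial types of $h^{-1}(\CC)$.

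Finally, I would remark that $h^2$ \emph{does} admit such an $H^0$: one has $\psi^2(z) = 2iz\cdot i = -2z$ wait — $\psi^2(z)=(\sqrt2 i)^2 z = -2z$, so $h^2$ is (up to the pillow structure) a $\times 2$ subdivision analogous to $g$ from Section~\ref{sec:example}, and one can apply the construction of Section~\ref{sec:construction-h0} directly; this confirms that ``sufficiently high iterate'' is genuinely needed here and is not an artifact of the method.
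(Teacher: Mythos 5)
Your reduction via Lemma~\ref{lem:H0epsH0} and the enumeration of candidate connections is sound in outline (though note that $\deg h=2$, so there are \emph{two} white $1$-tiles, not four, and the only genuine freedom is whether to connect them at $c_1$ or at $c_2$; also, by complementarity exactly one color is connected at each critical point --- there is no ``neither'' option). The gap is in your third step. Your proposed obstruction --- read off the cyclic order of the postcritical points along the boundary circuit, or else exhibit a $0$-edge whose subpath meets a disjoint $0$-edge and ``invoke Theorem~\ref{thm:isotopy2} to conclude non-isotopy'' --- cannot close the argument. Theorem~\ref{thm:isotopy2} is a \emph{sufficient} condition for isotopy; the failure of its hypothesis proves nothing, and the negation of Claim~1 of Lemma~\ref{lem:main_tree_homotopy} is not an isotopy invariant. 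To prove non-isotopy rel.\ $\post(h)$ one must show some arc of $\gamma^1$ lies in the wrong homotopy class in the four-punctured sphere, which requires an actual invariant. More seriously, there is a case --- $\CC$ passing through the postcritical points in the cyclic order $p_0,p_1,p_3,p_2$ --- in which $\gamma^1$ visits the postcritical points in the \emph{same} cyclic order as $\CC$, so the cyclic-order test is silent; this is exactly where the real work lies. The paper handles it by lifting $H^0$ to the orbifold covering $\C$ by $\wp$ and using that the pulled-back tiling is pointwise symmetric about each ramification point in $\tfrac{\sqrt2}{2}\Z\times\Z$; this yields the relations $2\tilde c_2=\tilde p_3=\tilde p_1+\tilde p_2$, $\pm\sqrt2\, i\,\tilde c_2=\tilde p_2$, and $\pm\sqrt2\, i\,\tilde p_1=\tilde p_3$, whose combination forces $\tilde p_3=0$, a contradiction. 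Nothing in your sketch supplies a substitute for this computation.

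A further problem is your normalization device: a Jordan curve through four marked points on $S^2$ is \emph{not} unique up to isotopy rel.\ the marked points (Dehn twists give infinitely many classes), so you cannot reduce ``any $\CC$'' to the standard equator. The paper sidesteps this by using only the cyclic order of $\post(h)$ on $\CC$ (finitely many cases) together with the lattice symmetry, which is insensitive to the isotopy class of $\CC$. Your closing observation that $\psi^2(z)=-2z$, so that $h^2$ behaves like the map $g$ and admits the required $H^0$, is correct and consistent with the paper.
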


\begin{proof}[Sketch of Proof]
  The proof is a (rather tedious) case by case analysis. There are
  however only two cases that are essentially different. One of each
  is presented.  

  \begin{case}[1]
    The curve $\CC$ goes through $p_0,p_1,p_2,p_3$ (in this cyclic
    order). 
    
    \smallskip
    We fix an orientation of $\CC$. 
    Let $U_w,U_b$ be the two components of $S^2\setminus \CC$, where
    the positively oriented boundary of $U_w$ is $\CC$.
    The closures of $U_w, U_b$ are the white/black $0$-tiles
    $X^0_w=U_w\cup \CC$, $X_b=U_b\cup \CC$ as before. 
    Similarly  
    we define the (white) $1$-tiles as closures of components of
    $h^{-1}(U_w)$. 

    \begin{figure}
      \centering
      \includegraphics[scale=0.6]{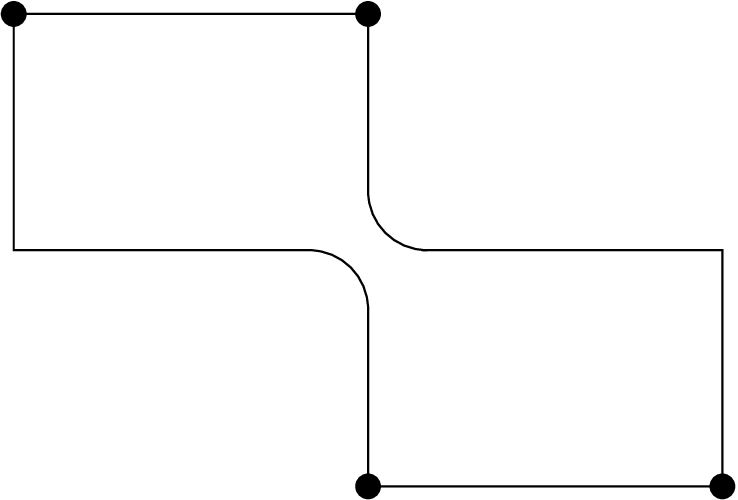}
      \begin{picture}(10,10)
        \put(0,0){$\scriptstyle p_0\mapsto p_0$}
        \put(-12,78){$\scriptstyle c_1\mapsto p_1$}
        \put(-100,78){$\scriptstyle c_2\mapsto p_2$}
         \put(-103,140){$\scriptstyle (p_2 \text{ or } p_1) \mapsto p_3$}
         \put(-250,140){$\scriptstyle p_3\mapsto p_0$}
         \put(-225,63){$\scriptstyle c_1\mapsto p_1$}
         \put(-170,0){$\scriptstyle (p_1\text{ or } p_2) \mapsto p_3$}
      \end{picture}
      \caption{An Eulerian circuit in $h^{-1}(\CC)$ (Case (1)).}
      \label{fig:Eulerh}
    \end{figure}

    \smallskip
    Since the degree of $h$ is $2$,
    there are two white $1$-tiles. They intersect at the critical
    points $c_1,c_2$. 
    The boundary of each $1$-tile contains $4$ points that are
    mapped to $p_0,p_1,p_2,p_3$ (in this cyclic order). 
    There are two
    different Eulerian circuits $\gamma^1$ in $h^{-1}(\CC)$ such that
    $h\colon \gamma^1\to \gamma^0$ is a $2$-fold cover. They
    correspond to 
    connecting the two $1$-tiles either at $c_1$ or at $c_2$. One
    situation (connection at $c_2$) is shown in Figure
    \ref{fig:Eulerh}. Note that the cyclic ordering of the
    postcritical points (shown as dots) is different from the one on
    $\CC$. Thus there is no pseudo-isotopy $H^0$ as desired
    that deforms $\CC=\gamma^0$ to $\gamma^1$.

  \end{case}
  
  When $\CC$ goes through the postcritical points in the order
  $(p_0,p_2,p_1,p_3)$, $(p_0,p_3,p_1,p_2)$, $(p_0,p_3,p_2,p_1)$ the same
  argument works.

  \begin{case}[2] The curve $\CC$ goes through $p_0,p_1,p_3,p_2$ (in this
    cyclic order). The $0$- and $1$-tiles are defined and colored as
    before (see Section \ref{sec:thurston-maps-as}).

    \smallskip
    As before there are two different Eulerian circuits $\gamma^1$ in
    $h^{-1}(\CC)$, such that $h\colon \gamma^1 \to \gamma^0$ is a
    $2$-fold cover. They correspond to whether the white $1$-tiles are
    connected at $c_1$ or $c_2$. Assume they are connected at
    $c_2$. The argument when they are connected at $c_1$ is again
    completely analog.  
    
    \smallskip
    Assume that the pseudo-isotopy $H^0$ is as in Definition
    \ref{def:pseudo-isotopy-h0}. 
    Then $H^0$ deforms (the white $0$-tile) $X^0_w$ to the two
    $1$-tiles.

%     Let $X^0_w,X^0_b$ be the white/black $0$-tile given by $\CC$. More
%     precisely $U_w,U_b$ is the component of $S^2\setminus \CC$ having
%     $\CC$ as positively (negatively) oriented boundary, $X_{1,2}=\clos
%     U_{1,2}$. The $1$-tiles are the closures of components of
%     $h^{-1}(U_{1,2})$. Preimages of $X^0_w,X^0_b$ are colored white/black
%     as before. 

    \smallskip
    In the following we work in the (orbifold) covering. Recall that
    $X^0_w,X^0_b\subset S^2$ are the white/black $0$-tiles (given by 
    $\CC$). 
    Pull this tiling back by $\wp$ to a tiling of $\C$. 
    More precisely, a $0$-tile $\widetilde{X}\subset\C$ is the closure of
    one component of $\wp^{-1}(U_{w,b})$. 
    Similarly as in the proof of (\ref{eq:fnXntoXhomeo}) one shows that $\wp\colon
    \widetilde{X}\to X_{w,b}$ is a homeomorphism. 
    We color one such $0$-tile $\widetilde{X}\subset \C$ white/black
    if it is the preimage of $X^0_w,X^0_b$.  
    This gives
    a tiling of the plane $\C$ into white/black $0$-tiles. 
    
    Recall that the
    ramification points of $\wp$ are the points in
    $\sqrt{2}/2\Z\times \Z$. 
    At each such ramified point $c\in\sqrt{2}/2\Z\times\Z$ two white and two
    black tiles intersect. Furthermore the
    map $\wp$ is symmetric with respect to each such point. This means
    that $\wp(c+z)=\wp(c-z)$ for all $z\in \C$. Thus
    the tiling of $\C$ is pointwise symmetric with respect to each such
    point $c$.

    We now define the $1$-tiles in $\C$. They
    may be obtained in two different ways; either as preimages
    of $1$-tiles in $S^2$ by $\wp$, or as preimages of $0$-tiles
    $\widetilde{X}\subset\C$ by $\psi$ (\ref{eq:defpsi_h}).

    Fix one white $0$-tile $\widetilde{X}\subset \C$. 
    Note 
    that $\widetilde{X}$ has $4$ vertices
    $\tilde{p}_0,\tilde{p}_1,\tilde{p}_2,\tilde{p}_3\in
    \sqrt{2}/2\Z\times \Z$, they are mapped by $\wp$ to
    $p_0,p_1,p_2,p_3$. We can assume that $\tilde{p}_0=0$.

%     There are two white $1$-tiles $\widetilde{X}^1\subset \C$ containing
%     $0$. They are mapped to the $0$-tile $\widetilde{X}$ by $\pm \psi$.  

    As in Lemma \ref{lem:lift_degenerate_isotopies} the pseudo-isotopy
    $H^0$ lifts to a 
    pseudo-isotopy (rel.\ $\sqrt{2}/2\Z\times \Z$)
    $\widetilde{H}^0\colon \C\times [0,1]\to \C$. 
    Note that $\widetilde{H}^0$
    deforms $\widetilde{X}$ to
    two $1$-tiles (in $\C$) connected at a point $\tilde{c}_2$. Here
    $\wp(\tilde{c}_2)=c_2$.    

    \begin{figure}
      \centering
      \includegraphics[scale=0.5]{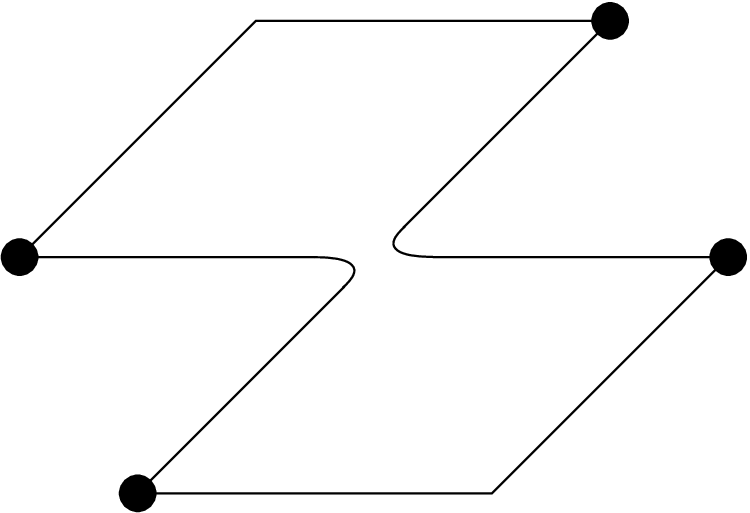}
      \begin{picture}(10,10)
        \put(-185,0){$\scriptstyle\tilde{p}_0\mapsto
          \tilde{p}_0$}
        \put(-67,-3){$\scriptstyle\mapsto \tilde{p}_1$}
        \put(-16,43){$\scriptstyle
          \tilde{p}_1\mapsto\tilde{p}_3$}
        \put(-75,68){$\scriptstyle\tilde{c}_2\mapsto
          \tilde{p}_2$} 
        \put(-27,120){$\scriptstyle\tilde{p}_3\mapsto
          \tilde{p}_0$}
        \put(-145,120){$\scriptstyle \mapsto\tilde{p}_1 $}
        \put(-188,45){$\scriptstyle\tilde{p}_2\mapsto\tilde{p}_3$}
      \end{picture}
      \caption{Eulerian circuit (Case (2)).}
      \label{fig:Eulerh2}
    \end{figure}

    \smallskip
    The ordering of the postcritical points along $\CC$ together with
    (\ref{eq:ramification_h}) implies that the situation looks as in
    Figure \ref{fig:Eulerh2}. Here ``$\mapsto \tilde{p}_j$'' labels a
    point $\tilde{z}$ that satisfies $h(\wp(\tilde{z}))=p_j$.   

    The symmetry of the $1$-tiles with respect to the point
    $\tilde{c}_2$ implies that
    \begin{equation*}
      \label{eq:hsymm}
      2\tilde{c}_2 = \tilde{p}_3=\tilde{p}_1 + \tilde{p}_2. 
    \end{equation*}

    Note that $\tilde{c}_2,\tilde{p}_1$ are contained in the same
    $1$-tile $\widetilde{X}^1$, which contains $\tilde{p}_0=0$. There
    are two $0$-tiles containing $\tilde{p}_0$, symmetric with respect
    to the origin. Thus $\pm
    \psi(\widetilde{X}^1)=\pm
    \sqrt{2}i\widetilde{X}^1=\widetilde{X}$. Therefore
    \begin{align*}
      \label{eq:hsymm2}
      &\pm \sqrt{2}i \tilde{c}_2=\tilde{p}_2
      \\
      &\pm \sqrt{2} i \tilde{p}_1=\tilde{p}_3.
      \intertext{Combining these three equations yields}
      \tilde{p}_2=\pm \sqrt{2}i \tilde{c}_2
      &=
      \pm \frac{\sqrt{2}}{2}i \tilde{p}_3 
      =
      \pm \frac{\sqrt{2}}{2} i \left(\pm \sqrt{2} i \tilde{p}_1\right)
      =
      -\tilde{p}_1.
      \intertext{Thus}
      &\tilde{p}_3=\tilde{p}_1 + \tilde{p}_2=0.
    \end{align*}
    This is a contradiction.
 
    \smallskip
    If $\CC$ goes through the postcritical points in the cyclical order
    $p_0,p_2,p_3,p_1$ the argument is completely analog to the one above. 
  \end{case}
\end{proof}

\section{Open Problems and concluding remarks}
\label{sec:open-probl-concl}

A rational map of degree $d$ can naturally be viewed as a point in
$\C^{2d+1}$ via its coefficients.
Consider a postcritically finite rational map $f$ without periodic
critical points. This is an expanding Thurston map in our sense, the
Julia set is all of $S^2$. 
M. Rees has shown that such a map can be disturbed in a set of
positive measure (in $\C^{2d+1}$) such that the Julia set stays
$S^2$ \cite{0611.58038}. 

\begin{open}
  Let $f$ be a rational map with Julia set $S^2$. Does Theorem
  \ref{thm:main} hold in this case?
\end{open}
   
On the other hand one may ask if the theorem continues to hold if the
Julia set is not the whole sphere. This however is false. Namely
Kameyama gives an example of a postcritically finite rational map where
no such semi-conjugacy exists (see Section 4 in \cite{MR1961296}). 

\smallskip
Finally one can ask if a corresponding result holds in the group case.
\begin{open}
  Let $\Gamma$ be a Gromov-hyperbolic group whose boundary at infinity
  is $S^2$. Is there a Peano curve $\gamma\colon S^1\to S^2$
  invariant under a non-trivial normal subgroup of $\Gamma$? 
\end{open}
A positive answer might conceivably open another line of attack
on \emph{Cannon's conjecture}.

\bibliographystyle{alpha}
\bibliography{main}

\end{document}